\definecolor{todocolor}{HTML}{D7E1E5}
\numberwithin{equation}{section}
\let\@wraptoccontribs\wraptoccontribs
\theoremstyle{plain}
\newtheorem{Thm}[equation]{Theorem}
\newtheorem{Cor}[equation]{Corollary}
\newtheorem{Lem}[equation]{Lemma}
\newtheorem{Prop}[equation]{Proposition}
\newtheorem*{Thm-non}{Theorem}
\theoremstyle{definition}
\theoremstyle{remark}
\newtheorem{Rem}[equation]{Remark}
\newcommand{\ol}{\overline}
\newcommand\iso{\,\vphantom{j^{X^2}}\smash{\overset{\sim}{\vphantom{\rule{0pt}{0.20em}}\smash{\longrightarrow}}}\,}
\newcommand*\pFq[6][8]{%
    \begingroup
    \pFqmuskip=#1mu\relax
    \mathcode`\,=\string"8000
    \begingroup\lccode`\~=`\,
    \lowercase{\endgroup\let~}\pFqcomma
    {}_{#2}F_{#3}{\left(\genfrac..{0pt}{}{#4}{#5};#6\right)}%
    \endgroup}
\newcommand{\pFqcomma}{\mskip\pFqmuskip}
\def\be{\begin{eqnarray}}
\def\ee{\end{eqnarray}}
\newcommand{\sfrac}[2]{{\textstyle\frac{#1}{#2}}}
\newcommand{\Pop}{\mathrm{P}}
\newcommand{\ID}{\mathrm{I}}
\newcommand{\rtt}{\mathrm{rtt}}
\newcommand{\End}{\mathrm{End}}
\newcommand{\fg}{\mathfrak{g}}
\newcommand{\fh}{\mathfrak{h}}
\newcommand{\gl}{\mathfrak{gl}}
\newcommand{\ssl}{\mathfrak{sl}}
\newcommand{\BZ}{\mathbb{Z}}
\newcommand{\BC}{\mathbb{C}}
\g@addto@macro\bfseries{\boldmath}
\m@th\displaystyle{##}$}{$\m@th\displaystyle{##}$\hfil}{\lbrace}{.}
\newcommand{\fosp}{\mathfrak{osp}}
\newcommand{\fso}{\mathfrak{so}}
\newcommand{\fsp}{\mathfrak{sp}}
\newcommand{\sfv}{{\mathsf{v}}}
\newcommand{\sfe}{{\mathsf{e}}}
\newcommand{\sff}{{\mathsf{f}}}
\newcommand{\sfh}{{\mathsf{h}}}
\newcommand{\sfr}{{\mathsf{r}}}
\newcommand{\Parity}{\Upsilon_V}
\newcommand{\parity}{\Upsilon_{\mathsf{V}}}
\newcommand{\Gr}{\mathrm{gr}}
\newcommand{\unl}{\underline}
\newcommand{\wt}{\widetilde}
\newcommand{\VV}{{\mathsf{V}}}
\newcommand{\sfc}{{\mathsf{c}}}
\newcommand{\sz}{{\mathfrak{z}}}
\newcommand{\sft}{{\mathsf{t}}}
\newcommand{\sfT}{{\mathsf{T}}}
\newcommand{\sfR}{{\mathsf{R}}}
\newcommand{\sfE}{{\mathsf{E}}}
\newcommand{\sfF}{{\mathsf{F}}}
\newcommand{\sfH}{{\mathsf{H}}}
\newcommand{\sfx}{{\mathsf{x}}}
\newcommand{\lt}{z}
\begin{document}
\title[Orthosymplectic Yangians]{\large{\textbf{Orthosymplectic Yangians}}}

\author{Rouven Frassek}
\address{R.F.: University of Modena and Reggio Emilia, FIM, Via G.~Campi~213/b, 41125 Modena, Italy}
\email{rouven.frassek@unimore.it}

\author{Alexander Tsymbaliuk}
\address{A.T.: Purdue University, Department of Mathematics, West Lafayette, IN 47907, USA}
\email{sashikts@gmail.com}

\begin{abstract}
We study the RTT orthosymplectic super Yangians and present their Drinfeld realizations for any parity sequence,
generalizing the results of~\cite{jlm} for non-super case,~\cite{m} for a standard parity sequence, and~\cite{p,t}
for the super $A$-type.
\end{abstract}

\maketitle
\tableofcontents



\section{Introduction}
\label{sec:intro}


\subsection{Summary}
\label{ssec:summary}
\

The original definition of Yangians $Y(\fg)$ associated to any simple Lie algebra $\fg$ is due to~\cite{d1},
where these algebras are realized as Hopf algebras with a finite set of generators (known as the
\emph{$J$-realization}). The representation theory of such algebras is best developed using their alternative
\emph{(new) Drinfeld realization} (also known as the \emph{current realization}) proposed in~\cite{d2}, though
the Hopf algebra structure is much more involved in this presentation (for example, a proof of the coproduct
formula was given only recently in \cite{gnw}).

For $\fg=\gl_n$, a closely related algebra $Y^\rtt(\gl_n)$ was studied earlier in the work of Faddeev's school
on the \emph{quantum inverse scattering method}, see e.g.~\cite{frt}, where the algebra generators were encoded
by an $n\times n$ square matrix $T(u)$ subject to a single \emph{RTT relation}
\begin{equation}\label{eq:rtt intro}
  R(u-v)T_1(u)T_2(v) = T_2(v)T_1(u)R(u-v)
\end{equation}
involving Yang's $R$-matrix $R(u)$ satisfying the \emph{Yang-Baxter equation} with a spectral parameter
\begin{equation}\label{eq:YB intro}
   R_{12}(u)R_{13}(u+v)R_{23}(v) = R_{23}(v)R_{13}(u+v)R_{12}(u) \,.
\end{equation}
We note that the $\ssl_n$-version $Y^\rtt(\ssl_n)$ is recovered by imposing an extra relation
\begin{equation}\label{eq:sl-vs-gl}
  \mathrm{qdet}\, T(u)=1 \,.
\end{equation}
The Hopf algebra structure on both $Y^\rtt(\gl_n)$ and $Y^\rtt(\ssl_n)$ is extremely simple with the coproduct
\begin{equation}\label{eq:coproduct-T}
  \Delta\colon T(u)\mapsto T(u)\otimes T(u) \,.
\end{equation}
This \emph{RTT realization} is well suited for the development of both the representation theory and the
corresponding integrable systems (involving Bethe subalgebras on the mathematical side).

An explicit isomorphism from the new Drinfeld to the RTT realizations of type $A$ Yangians is constructed
using the Gauss decomposition of $T(u)$, a complete proof been provided in~\cite{bk} (curiously enough the
trigonometric version of this result was established a decade earlier in~\cite{df}). A similar explicit
isomorphism for the remaining classical $BCD$-types was obtained only a decade later in~\cite{jlm}, where it
was again constructed using the Gauss decomposition of the generating matrices $T(u)$ which are subject to
the RTT relations~\eqref{eq:rtt intro} with the rational solutions of~\eqref{eq:YB intro} first discovered
in~\cite{zz}. An implicit existence of such an isomorphism for any $\fg$ was noted by Drinfeld back in the 80's,
while a detailed proof of his result was established only recently in~\cite{w}.

Finally, we note that the RTT realization of the (antidominantly) shifted Yangians $Y_\mu(\fg)$ from~\cite{bfn}
was recently obtained in~\cite{fpt,ft} for classical $\fg$. This significantly simplifies some of their basic
structures such as the coproduct homomorphisms
  $\Delta\colon Y_{\mu_1+\mu_2}(\fg)\to Y_{\mu_1}(\fg)\otimes Y_{\mu_2}(\fg)$,
cf.~\eqref{eq:coproduct-T}, and allows to introduce integrable systems on the corresponding quantized Coulomb
branches of $3d$ $\mathcal{N}=4$ quiver gauge theories. An important aspect of this setup in $A$-type is that
the central series $\mathrm{qdet}\, T(u)$ encodes all \emph{masses} of the corresponding physical theory,
cf.~\eqref{eq:sl-vs-gl}.

\medskip
The theory of Yangians associated with Lie superalgebras is still far from a full development. In particular,
there is no uniform $J$- or Drinfeld realizations of those. The cases studied mostly up to date involve rather
the RTT realization. The general linear RTT Yangians $Y^\rtt(\gl(n|m))$ and the orthosymplectic RTT Yangians
$Y^\rtt(\fosp(N|2m))$ first appeared in~\cite{n} and~\cite{aacfr}, respectively, using the super-analogs of
the Yang's and Zamolodchikov-Zamolodchikov's rational $R$-matrices.

A novel feature of Lie superalgebras is that they admit several non-isomorphic Dynkin diagrams. The isomorphism
of the Lie superalgebras corresponding to different Dynkin diagrams of the same finite/affine type was obtained
by Serganova in the Appendix to~\cite{lss}. Likewise, one may define various quantizations of the universal
enveloping superalgebras starting from different Dynkin diagrams, and establishing isomorphisms among those is
quite a non-trivial task. In the case of quantum finite/affine superalgebras in their Drinfeld-Jimbo realization,
this was accomplished by Yamane in~\cite{y} two decades ago.

Despite the absence of the definition of super Yangians, the rational setup admits some benefits. As an example,
the RTT realization of $Y^\rtt(\gl(n|m))$ manifestly provides an isomorphism between these algebras corresponding
to different Dynkin diagrams, which is far from being obvious when considering their Drinfeld realizations as
developed in~\cite{p,t}. We note however that the \emph{positive subalgebras} in the Drinfeld realization do
essentially depend on a choice of the Dynkin~diagram.

\medskip
One of the major objectives of the present note is to generalize~\cite[\S2]{t} to the orthosymplectic Yangians.
To this end, we study the RTT Yangians $Y^\rtt(\fosp(N|2m))$ and their extended versions $X^\rtt(\fosp(N|2m))$
associated to an arbitrary Dynkin diagram. Alike the aforementioned $\gl(n|m)$-type, these algebras are manifestly
isomorphic, while their Drinfeld realizations look quite different. In fact, one of our key results is the Drinfeld
realization of these algebras for all Dynkin~diagrams. We note that the case of $N\geq 3$ and the standard
Dynkin diagram was recently treated in~\cite{m}.

Our approach is quite straightforward, generalizing~\cite{bk} for $A$-type, \cite{jlm} for $BCD$-types,
and~\cite{m} for the distinguished Dynkin diagram. The above crucially used the rank reduction embeddings
that are compatible with the Gauss decompositions. Let us emphasize that while the proof of the existence
of such embeddings solely utilized the RTT formalism in non-super case of~\cite{jlm}, this approach is not
fully applicable in the present setup (due to the possible singularity of $R(u)$ at $u=1$), and we rather
use an update of the corresponding core computation from~\cite{m}. With the help of these embeddings, the
quadratic relations in the Drinfeld presentation of orthosymplectic Yangians are derived from the super
$A$-type analogue and rank $\leq 2$ cases handled by brute force. Additionally, we also have Serre relations
(standardly deduced from their Lie-theoretic counterparts). The Drinfeld realization of $\fosp(1|2)$-Yangians
previously appeared in~\cite{acfr}, where many details were missing and an opposite Gauss decomposition was used.

We note that the orthosymplectic type simultaneously resembles all three classical types $B, C, D$.
In the sequel note~\cite{ft2}, we construct orthosymplectic Lax matrices generalizing our orthogonal
and symplectic Lax matrices from~\cite{ft,fkt}.

\medskip
While we were preparing the present note and~\cite{ft2}, the work~\cite{mr} appeared that independently treats
the $N=1$ case. The arguments of~\emph{loc.cit.} are quite similar to ours and also crucially rely on the
Drinfeld realization of $X^\rtt(\fosp(1|2))$, thus filling in the aforementioned gaps of~\cite{acfr}.


\subsection{Outline}
\label{ssec:outline}
\

The structure of the present paper is the following:

\medskip
\noindent
$\bullet$
In Section~\ref{sec:Lie orthosymplectic}, we recall basic results on the orthosymplectic Lie superalgebras
$\fosp(V)$. We recover their Dynkin diagrams of \cite{fss} from the \emph{parity sequences}
  $\Parity\in \{\bar{0},\bar{1}\}^{\lfloor \dim(V)/2 \rfloor}$
(see Subsection~\ref{ssec:Dynkin diagrams}) as well as recall their Serre-type presentations from~\cite{z}
highlighting the presence of the higher order Serre relations of orders $3$, $4$, $6$, or $7$ for specific
parity sequences $\Parity$ (see Subsection~\ref{ssec:Serre-classical}).

\medskip
\noindent
$\bullet$
In Section~\ref{sec:RTT orthosymplectic}, we introduce the RTT (extended) Yangians $X^\rtt(\fosp(V)), Y^\rtt(\fosp(V))$
and establish their basic properties. We emphasize that both algebras $X^\rtt(\fosp(V))$ and $Y^\rtt(\fosp(V))$
depend (up to isomorphism) only on the total number of $\bar{0}$'s and $\bar{1}$'s in $\Parity$, according to
Lemma~\ref{lem:isomorphism of RTT yangians} and Corollary~\ref{cor:isom-RTT-yangians}. Thus, all of them are
isomorphic to the (extended) orthosymplectic Yangians $X^\rtt(\fosp(N|2m))$ and $Y^\rtt(\fosp(N|2m))$ of~\cite{aacfr},
which correspond to the \emph{standard parity} case (where all $\bar{0}$'s are placed after all $\bar{1}$'s).
This observation allows us to generalize some of the basic structural results of~\cite{aacfr}, such as the tensor
product decomposition~\eqref{eq:extended-vs-nonextended} and the PBW-type results of Proposition~\ref{prop:assoc.graded}
and Corollary~\ref{cor:pbw-thm}, to arbitrary parity sequences $\Parity$.

The rest of this note is devoted to the Gauss decomposition~\eqref{eq:gauss-osp} of the generator matrix~$T(u)$.
To this end, we first establish our key technical tool of \emph{rank reduction} in Theorem~\ref{thm:embedding}
(the proof of which closely follows the arguments of~\cite[\S3]{m}). The latter implies the commutativity of some
of the generating currents, see Corollary~\ref{cor:commutativity}. We also establish Lemma~\ref{lem:ef-com-tl} that
significantly simplifies several computations in the rest of the note. Finally, we recall the defining relations
among the generating currents of the super $A$-type Yangians $Y^\rtt(\gl(\VV))$ in Theorem~\ref{thm:Drinfeld-A},
and deduce the corresponding relations for the currents of $X^\rtt(\fosp(V))$ with $\Parity=\parity$, see
Corollaries~\ref{cor:A-type relations},~\ref{cor:other-A-type relations}.

\medskip
\noindent
$\bullet$
In Section~\ref{sec:Gauss and Serre}, we recover explicit formulas for all entries of the matrices $E(u),F(u),H(u)$
from the Gauss decomposition~\eqref{eq:gauss-osp} in terms of the generating currents $e_i(u),f_i(u),h_i(u)$
from~(\ref{eq:HFE-matrices},~\ref{eq:hfe-generating}). We also derive a factorized formula for the central series
$c_V(u)$ of~\eqref{eq:central-c} in Lemmas~\ref{lem:cseries-evenN-even},~\ref{lem:cseries-evenN-odd},~\ref{lem:cseries-oddN}.
In Subsection~\ref{ssec:Serre-Yangian}, we establish some higher order relations generalizing those from
Subsection~\ref{ssec:Serre-classical}.

\medskip
\noindent
$\bullet$
In Section~\ref{sec:rank 1-2}, we establish quadratic relations between the generating currents
$e_i(u),f_i(u),h_\imath(u)$ of $X^\rtt(\fosp(V))$ in rank $\leq 2$. The arguments are straightforward (though tedious)
and we present them in a uniform way (eliminating the smaller rank reduction of~\cite{jlm} for non-super types).

\medskip
\noindent
$\bullet$
In Section~\ref{sec:Drinfeld orthosymplectic}, we present Drinfeld realizations of RTT (extended) orthosymplectic
super Yangians $X^\rtt(\fosp(V))$ and $Y^\rtt(\fosp(V))$, associated with any parity sequence, see
Theorems~\ref{thm:Main-Theorem-ext}~and~\ref{thm:Main-Theorem-nonext}. The corresponding relations
follow from those for $Y^\rtt(\gl(\VV))$ and $Y^\rtt(\ssl(\VV))$ through Corollaries
\ref{cor:A-type relations},~\ref{cor:other-A-type relations}, the commutativity of
Corollary~\ref{cor:commutativity}, the Serre relations (the higher order ones generalize those from
Subsection~\ref{ssec:Serre-Yangian}), and the quadratic relations in rank $\leq 2$ as established in
Section~\ref{sec:rank 1-2}. To~prove the sufficiency of these relations, we use the standard argument
(originating from~\cite{bk}) of passing through the associated graded algebras and utilize the PBW result
of Corollary~\ref{cor:pbw-thm}.

\medskip
\noindent
$\bullet$
In Appendix~\ref{sec:6-fold fusion}, we recall the isomorphisms $X^\rtt(\fso_3) \simeq Y^\rtt(\gl_2)$,
$Y^\rtt(\fso_3) \simeq Y^\rtt(\ssl_2)$ of~\cite{amr}, see Proposition~\ref{prop:so3=gl2}, whose proof is based on
the important \emph{6-fold $R$-matrix fusion} of Lemma~\ref{lem:amr-fusion}. We then establish similar isomorphisms
$X^\rtt(\fso_6) \simeq Y^\rtt(\gl_4)$, $Y^\rtt(\fso_6) \simeq Y^\rtt(\ssl_4)$ in Proposition~\ref{prop:so6=gl4},
the proof of which is based on the analogous 6-fold $R$-matrix fusion of Lemma~\ref{lem:new-fusion}. Finally,
we explain in Remark~\ref{rem:sop22=gl12-Lie} why applying the above $R$-matrix fusion approach to $Y^\rtt(\gl(1|2))$
recovers an algebra that looks surprisingly different from $X^\rtt(\fosp(2|2))$, despite $\fosp(2|2)\simeq \ssl(1|2)$.
We conclude by matching the resulting two $16\times 16$ $R$-matrices with those of~\cite{rm},
see Remark~\ref{rem:comparison-to-RM}.


\subsection{Acknowledgement}
\label{ssec:acknowl}
\

A.T.\ is grateful to M.~Finkelberg for stimulating discussions on orthosymplectic quantum groups; to A.~Molev
for a correspondence regarding~\cite{mr} (which has a partial overlap with the $N=1$ case of the present note)
and a discussion of the isomorphism between $\fosp(2|2)$ and $\ssl(1|2)$ Yangians.
We are very grateful to the referees for their useful suggestions that improved the exposition.

The work of R.F. was partially supported in part by the INFN grant Gauge and String Theory (GAST), by the
``INdAM–GNFM Project'' codice CUP-E53C22001930001, by the FAR UNIMORE project CUP-E93C23002040005, and by
the PRIN project ``2022ABPBEY'' CUP-E53D23002220006.
A.T.\ gratefully acknowledges NSF Grants DMS-$2037602$ and DMS-$2302661$.


\section{Orthosymplectic Lie superalgebras}
\label{sec:Lie orthosymplectic}

In this section, we recall the basic results on orthosymplectic Lie superalgebras. We recover their
various Dynkin diagrams from the parity sequences, and discuss their Serre-type presentations.


\subsection{Setup and notations}
\label{ssec:setup}
\

Fix $N\geq 1, m\geq 0$, and consider the set $\mathbb{I}:=\{1,2,\ldots,N+2m\}$ equipped with
an involution $'$ :
\begin{equation}\label{eq:index-inv}
  i':=N+2m+1-i \,.
\end{equation}
Consider a superspace $V=V_{\bar{0}}\oplus V_{\bar{1}}$ with a $\BC$-basis $v_1,\ldots,v_{N+2m}$
such that each $v_i$ is either \emph{even} (that is, $v_i\in V_{\bar{0}}$) or \emph{odd}
(that is, $v_i\in V_{\bar{1}}$), the dimensions are $\dim(V_{\bar{0}})=N, \dim(V_{\bar{1}})=2m$,
and the vectors $v_i,v_{i'}$ have the same parity for any $i$ (in particular,
$v_{(N+1)/2+m}\in V_{\bar{0}}$ for odd $N$), cf.~\eqref{eq:index-inv}. The latter condition means that
\begin{equation}\label{eq:parity-sym}
  \ol{i}=\ol{i'} \,,
\end{equation}
where for $i\in \mathbb{I}$, we define its $\BZ_2$-parity $\ol{i}\in \BZ_2$ via:
\begin{equation}\label{eq:index-parity}
  \ol{i}=
  \begin{cases}
    \bar{0} & \text{if } v_i\in V_{\bar{0}} \\
    \bar{1} & \text{if } v_i\in V_{\bar{1}}
  \end{cases} \,.
\end{equation}
We also define the sequence $\theta_V:=(\theta_1,\theta_2,\ldots,\theta_{N+2m})$ of $\pm 1$'s via:
\begin{equation}\label{eq:theta}
  \theta_i=1 \qquad \mathrm{and} \qquad \theta_{i'}=(-1)^{\ol{i}}
  \qquad \mathrm{for\ any} \qquad 1\leq i\leq \lceil \sfrac{N}{2} \rceil+m \,.
\end{equation}
It implies that
\begin{equation}\label{eq:theta-symmetry}
  \theta_{i'}=(-1)^{\ol{i}}\theta_i \qquad \forall\, i\in \mathbb{I} \,.
\end{equation}

For a superalgebra $A$ and its two homogeneous elements $x$ and $x'$, we define
\begin{equation}\label{eq:super commutator}
  [x,x']=\mathrm{ad}_{x} (x'):=xx'-(-1)^{|x|\cdot |x'|}x'x
    \qquad \mathrm{and} \qquad
  \{x,x'\}:=xx'+(-1)^{|x|\cdot |x'|}x'x \,,
\end{equation}
where $|x|$ denotes the $\BZ_2$-grading of $x$ (i.e.\ $x\in A_{|x|}$)
and we use conventions $(-1)^{\bar{0}}=1, (-1)^{\bar{1}}=-1$.

Given two superspaces $A=A_{\bar{0}}\oplus A_{\bar{1}}$ and $B=B_{\bar{0}}\oplus B_{\bar{1}}$,
their tensor product $A\otimes B$ is also a superspace with
  $(A\otimes B)_{\bar{0}}=A_{\bar{0}}\otimes B_{\bar{0}}\oplus A_{\bar{1}}\otimes B_{\bar{1}}$
and
  $(A\otimes B)_{\bar{1}}=A_{\bar{0}}\otimes B_{\bar{1}}\oplus A_{\bar{1}}\otimes B_{\bar{0}}$.
Furthermore, if $A$ and $B$ are superalgebras, then $A\otimes B$ is made into a superalgebra, the
\emph{graded tensor product} of the superalgebras $A$ and $B$, via the following multiplication:
\begin{equation}\label{eq:graded tensor product}
  (x\otimes y)(x'\otimes y')=(-1)^{|y|\cdot |x'|} (xx')\otimes (yy')
  \quad \mathrm{for\ any}\ x\in A_{|x|}, x'\in A_{|x'|}, y\in B_{|y|},y'\in B_{|y'|} \,.
\end{equation}
We will use only the graded tensor products of superalgebras throughout this paper.


\subsection{Orthosymplectic Lie superalgebras}
\label{ssec:osp-definition}
\

A standard basis of the general linear Lie superalgebra $\gl(V)$ is formed by the elements
$E_{ij}$ $(1\leq i,j\leq N+2m)$ of parity $\ol{i}+\ol{j}$ with the commutation relations
\begin{equation*}
  [E_{ij},E_{k\ell}] =
  \delta_{kj} E_{i \ell} - \delta_{\ell i}(-1)^{(\ol{i}+\ol{j})(\ol{k}+\ol{\ell})}\, E_{kj} \,.
\end{equation*}
Consider a bilinear form $B_G\colon V\times V\to \BC$ defined by the anti-diagonal matrix
\begin{equation*}
  G=(g_{ij})_{i,j=1}^{N+2m} \qquad \mathrm{with} \qquad g_{ij}=\delta_{ij'}\theta_i \,.
\end{equation*}
We regard the orthosymplectic Lie superalgebra $\fosp(V)$ associated with the bilinear form
$B_G$ as the Lie subalgebra of $\gl(V)$ spanned by the elements
\begin{equation}\label{eq:F-elements}
  F_{ij} = E_{ij} - (-1)^{\ol{i}\cdot \ol{j}+\ol{i}} \theta_i\theta_j\, E_{j'i'}
  \qquad  \forall\ 1\leq i,j\leq N+2m \,.
\end{equation}
We note that $F_{j'i'}=-(-1)^{\ol{i}\cdot \ol{j}+\ol{i}} \theta_i\theta_j\cdot F_{ij}$.
Furthermore, the elements
\begin{equation}\label{eq:F-basis}
  \Big\{ F_{ij} \,\Big|\, i+j<N+2m+1 \Big\}\bigcup
  \Big\{ F_{ii'} \,\Big|\, |v_i|=\bar{1} \,, 1\leq i\leq \sfrac{N}{2}+m \Big\}
\end{equation}
form a basis of $\fosp(V)$. In what follows, we shall also need the explicit commutation relations:
\begin{equation}\label{eq:F-commutation}
  [F_{ij},F_{k\ell}] =
  \delta_{kj} F_{i \ell} - \delta_{\ell i} (-1)^{(\ol{i}+\ol{j})(\ol{k}+\ol{\ell})}\, F_{kj} -
  \delta_{k i'} (-1)^{\ol{i}\cdot \ol{j}+\ol{i}}\theta_i\theta_j\, F_{j' \ell} +
  \delta_{\ell j'} (-1)^{\ol{i}\cdot \ol{k} + \ol{\ell}\cdot \ol{k}} \theta_{i'}\theta_{j'}\, F_{k i'} \,.
\end{equation}

\medskip
The Lie superalgebra $\fosp(V)$ is $\BZ_2$-graded: $\fosp(V)=\fosp(V)_{\bar{0}} \oplus \fosp(V)_{\bar{1}}$.
We choose the Cartan subalgebra $\fh$ of $\fosp(V)$ (which by definition is just a Cartan subalgebra of
$\fosp(V)_{\bar{0}}$) to consist of all diagonal matrices. Thus, $\fh$ has a basis $\{F_{ii}\}_{i=1}^{\sfr}$
with $\sfr=\lfloor N/2\rfloor+m$. Let $\{e^*_i\}_{i=1}^{\sfr}$ denote the dual basis of $\fh^*$. We consider
the \emph{root space decomposition} $\fosp(V)=\fh\oplus\bigoplus_{\alpha\in \Delta} \fosp(V)_\alpha$, where
$\Delta\subset \fh^*$ is the root system. We also have a decomposition $\Delta=\Delta_{0}\cup \Delta_{1}$
into \emph{even} and \emph{odd}~roots.


\subsection{Dynkin diagrams with labels via parity sequences}
\label{ssec:Dynkin diagrams}
\

In this subsection, we explain how various Dynkin diagrams (with labels) of the orthosymplectic
Lie superalgebras $\fosp(V)$ can be easily read off the corresponding \textbf{parity sequence}
\begin{equation}\label{eq:parity sequence}
  \Parity:=
  \left(|v_1|,\ldots,|v_{\sfr}|\right) = \left(\ol{1},\ldots,\ol{\sfr}\, \right) \in \big\{\bar{0},\bar{1}\big\}^{\sfr}
  \qquad \mathrm{where} \quad \sfr = \lfloor N/2 \rfloor+m \,.
\end{equation}

\medskip
Following~\cite[\S2.1]{z} (cf.~\cite[\S2.2]{fss}), let us first recall the construction of the Cartan
matrices and Dynkin diagrams for the orthosymplectic Lie superalgebras $\fosp(V)$. To this end, we
consider the non-degenerate invariant bilinear form $(\cdot,\cdot)\colon \fosp(V)\times \fosp(V)\to \BC$
defined via
  $$(X,Y)=\frac{1}{2}\mathrm{sTr}(XY) \,,$$
that is the \emph{supertrace} form associated with the natural action $\fosp(V)\curvearrowright V$.
Its restriction to the Cartan subalgebra $\fh$ of $\fosp(V)$ is non-degenerate, thus giving rise to an
identification $\fh\simeq \fh^*$ and inducing a bilinear form $(\cdot,\cdot)\colon \fh^*\times \fh^*\to \BC$.
Explicitly, we have (for $1\leq i,j\leq \sfr$):
\begin{equation}\label{eq:form-dual-Cartan}
  (e^*_i,e^*_j)=\delta_{ij}(-1)^{\ol{i}}=\delta_{ij}\cdot
   \begin{cases}
     1 & \mbox{if } v_i\in V_{\bar{0}} \\
    -1 & \mbox{if } v_i\in V_{\bar{1}}
   \end{cases} \,.
\end{equation}

\begin{Rem}
We note that~\cite{fss} used $\{\epsilon_k\}_{k=1}^{\lfloor N/2\rfloor} \cup \{\delta_i\}_{i=1}^m$ with
$(\epsilon_k,\epsilon_l)=\mp \delta_{kl}$, $(\delta_i,\delta_j)=\pm \delta_{ij}$. Our uniform choice
of $\{e^*_i\}_{i=1}^{\lfloor N/2\rfloor+m}$ with the pairing~\eqref{eq:form-dual-Cartan} is better suited
for the discussions~below.
\end{Rem}

A root $\beta\in \Delta$ is called \emph{isotropic} if $(\beta,\beta)=0$ (in particular, $\beta\in \Delta_1$).
In what follows, we need
\begin{equation}\label{eq:weird-constant}
  l^2_{\min}:=\min\big\{|(\beta,\beta)| \,\big|\, \beta\in \Delta, \mathrm{not\ isotropic}\big\} \,.
\end{equation}
Let $\Pi=\{\alpha_1,\ldots,\alpha_\sfr\}$ be the set of simple roots of $\Delta$, relative to a Borel subalgebra of
$\fosp(V)$ (that is, the maximal solvable subalgebra of $\fosp(V)$ containing a Borel subalgebra of $\fosp(V)_{\bar{0}}$).
Define the \emph{symmetrized Cartan matrix} of $\fosp(V)$ associated with the choice $\Pi$ of simple roots via
\begin{equation}\label{eq:symm-Cartan-matrix}
  B=(b_{ij})_{i,j=1}^\sfr  \quad \mathrm{with} \quad b_{ij}=(\alpha_i,\alpha_j) \,.
\end{equation}
We also define the diagonal matrix $D=\mathrm{diag}(d_1,\ldots,d_\sfr)$ via (cf.~\eqref{eq:weird-constant})
\begin{equation}\label{eq:Cartan-symmetrizers}
  d_i=
  \begin{cases}
    \frac{(\alpha_i,\alpha_i)}{2} & \mbox{if } (\alpha_i,\alpha_i)\ne 0 \\
    l^2_{\min}/2^{\varkappa} & \mbox{if } (\alpha_i,\alpha_i)=0
  \end{cases} \,,
    \qquad \mathrm{where} \quad
  \varkappa=
  \begin{cases}
    0 & \mbox{if } N\ \mathrm{is\ odd} \\
    1 & \mbox{if } N\ \mathrm{is\ even}
  \end{cases} \,.
\end{equation}
Finally, we define the \emph{Cartan matrix} of $\fosp(V)$ associated with the choice $\Pi$ of simple roots via
\begin{equation}\label{eq:nonsymm-Cartan-matrix}
  A=D^{-1}B=(a_{ij})_{i,j=1}^\sfr \,.
\end{equation}

\medskip
Let us now recall a construction of the \emph{Dynkin diagram} of $\fosp(V)$ from the Cartan matrix $A$.
It is a graph with $\sfr$ vertices, colored in one of the three colors: vertex $i$ is colored white
  $\begin{tikzpicture}
    \begin{scope}[shift={(0.7,0)}]
     \draw[fill=white!50] (5,0.75) circle (0.3cm);
    \end{scope}
   \end{tikzpicture}$
if $\alpha_i$ is an even root, gray
  $\begin{tikzpicture}
    \begin{scope}[shift={(0.7,0)}]
      \draw[fill=gray!50] (5,0.75) circle (0.3cm);
    \end{scope}
   \end{tikzpicture}$
if $\alpha_i$ is an odd isotropic root, black
  $\begin{tikzpicture}
    \begin{scope}[shift={(0.7,0)}]
      \draw[fill=black] (5,0.75) circle (0.3cm);
    \end{scope}
   \end{tikzpicture}$
if $\alpha_i$ is an odd not isotropic root.
We join $i$-th and $j$-th vertices with $n_{ij}$ lines, where:
\begin{equation}\label{eq:Dynkin-edges}
  n_{ij}=
  \begin{cases}
    \max\{|a_{ij}|,|a_{ji}|\} & \mbox{if } a_{ii}+a_{jj}\geq 2 \\
    |a_{ij}| & \mbox{if } a_{ii}=a_{jj}=0
  \end{cases} \,.
\end{equation}
Finally, if the $i$-th vertex is not gray and is connected by more than one edge to the $j$-th vertex, then
we orient them from $i$-th towards $j$-th if $a_{ij}=-1$, and from $j$-th towards $i$-th if $a_{ij}<-1$.

\medskip
In the discussions below, we follow the notations of~\cite{fss}:
\begin{itemize}

\item[--]
use a small black dot
  $\begin{tikzpicture}
    \begin{scope}[shift={(0.7,0)}]
      \draw[fill=black] (0.3,0) circle (0.2cm);
    \end{scope}
   \end{tikzpicture}$
in a Dynkin diagram to represent a white or gray vertex

\item[--]
use an integer $K$ to denote the number of gray vertices among those small black dots.

\end{itemize}
The corresponding Lie superalgebras form four classical series, which we now treat case-by-case.

\medskip
\noindent
$\bullet$ $N=2n$ with $n>1$ (which corresponds to the so-called $D(n,m)$-series).

In this case, the root system is (cf.~\cite[(2.9)]{fss}):
\begin{equation*}
  \Delta =
  \Big\{ \pm e^*_i \pm e^*_j \,\Big|\, 1\leq i < j\leq n+m \Big\} \bigcup
  \Big\{ \pm 2e^*_i \,\Big|\, v_i\in V_{\bar{1}} \,, 1\leq i\leq n+m \Big\} \,.
\end{equation*}
The latter follows from the explicit description of the basis~\eqref{eq:F-basis}, in particular, $2e^*_i$
correspond to nonzero $F_{ii'}$. The choice of simple roots crucially depends on the $\BZ_2$-parity of
the vector $v_{n+m}$:
\begin{itemize}

\item[(1)]
If $v_{n+m}\in V_{\bar{0}}$, then the simple positive roots are the same as in the $D_{n+m}$-type:
\begin{equation*}
  \alpha_1=e^*_1-e^*_2 \,,\, \alpha_2=e^*_2-e^*_3 \,,\, \ldots \,,\,
  \alpha_{n+m-1}=e^*_{n+m-1}-e^*_{n+m} \,,\, \alpha_{n+m}=e^*_{n+m-1}+e^*_{n+m} \,;
\end{equation*}

\item[(2)]
If $v_{n+m}\in V_{\bar{1}}$, then the simple positive roots are as follows:
\begin{equation*}
  \alpha_1=e^*_1-e^*_2 \,,\, \alpha_2=e^*_2-e^*_3 \,,\, \ldots \,,\,
  \alpha_{n+m-1}=e^*_{n+m-1}-e^*_{n+m} \,,\, \alpha_{n+m}=2e^*_{n+m} \,,
\end{equation*}
since we have $e^*_{n+m-1}+e^*_{n+m}=(e^*_{n+m-1}-e^*_{n+m})+2e^*_{n+m}$.

\end{itemize}
Likewise, the highest root $\theta$ depends on the $\BZ_2$-parity of the vector $v_1$:
\begin{itemize}

\item[(A)]
If $v_{1}\in V_{\bar{0}}$, then $\theta=e^*_1+e^*_2$ as in the $D_{n+m}$-type;

\item[(B)]
If $v_{1}\in V_{\bar{1}}$, then $\theta=2e^*_1$.

\end{itemize}

Let us now use the above to read off the Dynkin diagrams of~\cite{fss} together with their labels
$\{a_i\}_{i=1}^{n+m}$, the latter defined as the coefficients of the highest root in the basis of
simple roots
  \[ \theta=\sum_{i=1}^{n+m} a_i\alpha_i \,. \]

\noindent
\underline{Case 1}: $\Parity=(\bar{1} , \ast , \ldots , \ast , \bar{1} , \bar{0})$
with each $\ast$ being either $\bar{0}$ or $\bar{1}$.

In this case, we get the following diagram with labels from~\cite[Table~2]{fss}:
\begin{equation*}
\begin{tikzpicture}
\foreach \a in {1} {
    \begin{scope}[shift={(0.7*\a,0)}]
      \draw[fill=black] (0.3*\a,0) circle (0.2cm);
      \draw[black,thick] (0.3*\a+0.2,0)--(0.3*\a+0.8,0);
    \end{scope}
  }
      \draw[black,dashed] (2.1,0)--(4,0);
      \draw[black,thick] (4,0)--(5,0.75);
      \draw[black,thick] (4,0)--(5,-0.75);
      \draw[black,thick] (4.9,1)--(4.9,-1);
      \draw[black,thick] (5.1,1)--(5.1,-1);
      \draw[fill=black] (2,0) circle (0.2cm);
      \draw[fill=black] (4,0) circle (0.2cm);
      \draw[fill=gray!50] (5,0.75) circle (0.3cm);
      \draw[fill=gray!50] (5,-0.75) circle (0.3cm);
       \node [below] at (2,-0.3) {$2$};
       \node [below] at (1,-0.3) {$2$};
       \node [below] at (4,-0.3) {$2$};
       \node [] at (5.5,-0.75) {$1$};
       \node [] at (5.5,0.75) {$1$};
\end{tikzpicture}
\end{equation*}
Indeed, we have $(\alpha_{n+m-1},\alpha_{n+m-1})=(\alpha_{n+m},\alpha_{n+m})=0$ and
$(\alpha_{n+m-1},\alpha_{n+m})=-2\ne 0$. The number $K$ of gray dots among
$\begin{tikzpicture}
\foreach \a in {1} {
    \begin{scope}[shift={(0.7*\a,0)}]
      \draw[fill=black] (0.3*\a,0) circle (0.2cm);
    \end{scope}
  }
\end{tikzpicture}$
is \unl{even} since it equals the number of $1\leq i\leq n+m-2$ such that $\ol{i}\ne \ol{i+1}$
and $\ol{1}=\ol{n+m-1}$. Finally, the labels on the diagram are read off the equality:
\begin{equation*}
  2e^*_1=2(e^*_1-e^*_2)+\cdots+2(e^*_{n+m-2}-e^*_{n+m-1})+(e^*_{n+m-1}-e^*_{n+m})+(e^*_{n+m-1}+e^*_{n+m}) \,.
\end{equation*}

\noindent
\underline{Case 2}: $\Parity=(\bar{0} , \ast , \ldots , \ast , \bar{1} , \bar{0})$
with each $\ast$ being either $\bar{0}$ or $\bar{1}$.

In this case, we get the following diagram with labels from~\cite[Table~2]{fss}:
\begin{equation*}
\begin{tikzpicture}
\foreach \a in {1} {
    \begin{scope}[shift={(0.7*\a,0)}]
      \draw[fill=black] (0.3*\a,0) circle (0.2cm);
      \draw[black,thick] (0.3*\a+0.2,0)--(0.3*\a+0.8,0);
    \end{scope}
  }
      \draw[black,dashed] (2.1,0)--(4,0);
      \draw[black,thick] (4,0)--(5,0.75);
      \draw[black,thick] (4,0)--(5,-0.75);
      \draw[black,thick] (4.9,1)--(4.9,-1);
      \draw[black,thick] (5.1,1)--(5.1,-1);
      \draw[fill=black] (2,0) circle (0.2cm);
      \draw[fill=black] (4,0) circle (0.2cm);
      \draw[fill=gray!50] (5,0.75) circle (0.3cm);
      \draw[fill=gray!50] (5,-0.75) circle (0.3cm);
       \node [below] at (2,-0.3) {$2$};
       \node [below] at (1,-0.3) {$1$};
       \node [below] at (4,-0.3) {$2$};
       \node [] at (5.5,-0.75) {$1$};
       \node [] at (5.5,0.75) {$1$};
\end{tikzpicture}
\end{equation*}
This is analogous to the Case 1, except that now $K$ is \unl{odd} and the labels on the diagram
are rather read off the following equality:
\begin{equation*}
  e^*_1+e^*_2 =
  (e^*_1-e^*_2)+2(e^*_2-e^*_3)+\cdots+2(e^*_{n+m-2}-e^*_{n+m-1})+(e^*_{n+m-1}-e^*_{n+m})+(e^*_{n+m-1}+e^*_{n+m}) \,.
\end{equation*}

\noindent
\underline{Case 3}: $\Parity=(\bar{0} , \ast , \ldots , \ast , \bar{0} , \bar{0})$
with each $\ast$ being either $\bar{0}$ or $\bar{1}$.

In this case, we get the following diagram with labels from~\cite[Table~2]{fss}:
\begin{equation*}
\begin{tikzpicture}
\foreach \a in {1} {
    \begin{scope}[shift={(0.7*\a,0)}]
      \draw[fill=black] (0.3*\a,0) circle (0.2cm);
      \draw[black,thick] (0.3*\a+0.2,0)--(0.3*\a+0.8,0);
    \end{scope}
  }
      \draw[black,dashed] (2.1,0)--(4,0);
      \draw[black,thick] (4,0)--(5,0.75);
      \draw[black,thick] (4,0)--(5,-0.75);
      \draw[fill=black] (2,0) circle (0.2cm);
      \draw[fill=black] (4,0) circle (0.2cm);
      \draw[fill=white!50] (5,0.75) circle (0.3cm);
      \draw[fill=white!50] (5,-0.75) circle (0.3cm);
       \node [below] at (2,-0.3) {$2$};
       \node [below] at (1,-0.3) {$1$};
       \node [below] at (4,-0.3) {$2$};
       \node [] at (5.5,-0.75) {$1$};
       \node [] at (5.5,0.75) {$1$};
\end{tikzpicture}
\end{equation*}
Indeed, we have $(\alpha_{n+m-1},\alpha_{n+m-1})=(\alpha_{n+m},\alpha_{n+m})=2$,
$(\alpha_{n+m-1},\alpha_{n+m})=0$. The number $K$ of gray dots among
$\begin{tikzpicture}
\foreach \a in {1} {
    \begin{scope}[shift={(0.7*\a,0)}]
      \draw[fill=black] (0.3*\a,0) circle (0.2cm);
    \end{scope}
  }
\end{tikzpicture}$
is \unl{even} since it equals the number of $1\leq i\leq n+m-2$ such that $\ol{i}\ne \ol{i+1}$
and $\ol{1}=\ol{n+m-1}$. The labels on the diagram are read off the same equality as in Case 2:
\begin{equation*}
  e^*_1+e^*_2 =
  (e^*_1-e^*_2)+2(e^*_2-e^*_3)+\cdots+2(e^*_{n+m-2}-e^*_{n+m-1})+(e^*_{n+m-1}-e^*_{n+m})+(e^*_{n+m-1}+e^*_{n+m}) \,.
\end{equation*}

\noindent
\underline{Case 4}: $\Parity=(\bar{1} , \ast , \ldots , \ast , \bar{0} , \bar{0})$
with each $\ast$ being either $\bar{0}$ or $\bar{1}$.

In this case, we get the following diagram with labels from~\cite[Table~2]{fss}:
\begin{equation*}
\begin{tikzpicture}
\foreach \a in {1} {
    \begin{scope}[shift={(0.7*\a,0)}]
      \draw[fill=black] (0.3*\a,0) circle (0.2cm);
      \draw[black,thick] (0.3*\a+0.2,0)--(0.3*\a+0.8,0);
    \end{scope}
  }
      \draw[black,dashed] (2.1,0)--(4,0);
      \draw[black,thick] (4,0)--(5,0.75);
      \draw[black,thick] (4,0)--(5,-0.75);
      \draw[fill=black] (2,0) circle (0.2cm);
      \draw[fill=black] (4,0) circle (0.2cm);
      \draw[fill=white!50] (5,0.75) circle (0.3cm);
      \draw[fill=white!50] (5,-0.75) circle (0.3cm);
       \node [below] at (2,-0.3) {$2$};
       \node [below] at (1,-0.3) {$2$};
       \node [below] at (4,-0.3) {$2$};
       \node [] at (5.5,-0.75) {$1$};
       \node [] at (5.5,0.75) {$1$};
\end{tikzpicture}
\end{equation*}
This is analogous to the Case 3, except that now $K$ is \unl{odd} and the labels on the diagram
are rather read off the same equality as in Case 1:
\begin{equation*}
  2e^*_1 =
  2(e^*_1-e^*_2)+2(e^*_2-e^*_3)+\cdots+2(e^*_{n+m-2}-e^*_{n+m-1})+(e^*_{n+m-1}-e^*_{n+m})+(e^*_{n+m-1}+e^*_{n+m}) \,.
\end{equation*}

\noindent
\underline{Case 5}: $\Parity=(\bar{1} , \ast , \ldots , \ast , \bar{1})$
with each $\ast$ being either $\bar{0}$ or $\bar{1}$.

In this case, we get the following diagram with labels from~\cite[Table~2]{fss}:
\begin{equation*}
\begin{tikzpicture}
\foreach \a in {1} {
    \begin{scope}[shift={(0.7*\a,0)}]
      \draw[fill=black] (0.3*\a,0) circle (0.2cm);
      \draw[black,thick] (0.3*\a+0.2,0)--(0.3*\a+0.8,0);
    \end{scope}
  }
      \draw[black,dashed] (2.1,0)--(4,0);
      \draw[black,thick] (4,-0.1)--(5.5,-0.1);
      \draw[black,thick] (4,0.1)--(5.5,0.1);
      \draw[black,thick] (4.7,0)--(5,0.3);
      \draw[black,thick] (4.7,0)--(5,-0.3);
      \draw[fill=black] (2,0) circle (0.2cm);
      \draw[fill=black] (4,0) circle (0.2cm);
      \draw[fill=white] (5.5,0) circle (0.3cm);
       \node [below] at (2,-0.3) {$2$};
       \node [below] at (1,-0.3) {$2$};
       \node [below] at (4,-0.3) {$2$};
       \node [below] at (5.5,-0.3) {$1$};
\end{tikzpicture}
\end{equation*}
Indeed, we have $\alpha_{n+m}=2e^*_{n+m}$ so that $(\alpha_{n+m},\alpha_{n+m})=-4$
and $(\alpha_{n+m-1},\alpha_{n+m})=2$. The number $K$ of gray dots among
$\begin{tikzpicture}
\foreach \a in {1} {
    \begin{scope}[shift={(0.7*\a,0)}]
      \draw[fill=black] (0.3*\a,0) circle (0.2cm);
    \end{scope}
  }
\end{tikzpicture}$
is \unl{even} since it equals the number of $1\leq i\leq n+m-1$ such that $\ol{i}\ne \ol{i+1}$
and $\ol{1}=\ol{n+m}$. The labels on the diagram are read off the following equality:
\begin{equation*}
  2e^*_1=2(e^*_1-e^*_2)+2(e^*_2-e^*_3)+\cdots+2(e^*_{n+m-1}-e^*_{n+m})+(2e^*_{n+m}) \,.
\end{equation*}

\noindent
\underline{Case 6}: $\Parity=(\bar{0} , \ast , \ldots , \ast , \bar{1})$
with each $\ast$ being either $\bar{0}$ or $\bar{1}$.

In this case, we get the following diagram with labels from~\cite[Table~2]{fss}:
\begin{equation*}
\begin{tikzpicture}
\foreach \a in {1} {
    \begin{scope}[shift={(0.7*\a,0)}]
      \draw[fill=black] (0.3*\a,0) circle (0.2cm);
      \draw[black,thick] (0.3*\a+0.2,0)--(0.3*\a+0.8,0);
    \end{scope}
  }
      \draw[black,dashed] (2.1,0)--(4,0);
      \draw[black,thick] (4,-0.1)--(5.5,-0.1);
      \draw[black,thick] (4,0.1)--(5.5,0.1);
      \draw[black,thick] (4.7,0)--(5,0.3);
      \draw[black,thick] (4.7,0)--(5,-0.3);
      \draw[fill=black] (2,0) circle (0.2cm);
      \draw[fill=black] (4,0) circle (0.2cm);
      \draw[fill=white] (5.5,0) circle (0.3cm);
       \node [below] at (2,-0.3) {$2$};
       \node [below] at (1,-0.3) {$1$};
       \node [below] at (4,-0.3) {$2$};
       \node [below] at (5.5,-0.3) {$1$};
\end{tikzpicture}
\end{equation*}
This is analogous to the Case 5, except that now $K$ is \unl{odd} and the labels on
the diagram are rather read off the following equality:
\begin{equation*}
  e^*_1+e^*_2=(e^*_1-e^*_2)+2(e^*_2-e^*_3)+\cdots+2(e^*_{n+m-1}-e^*_{n+m})+(2e^*_{n+m}) \,.
\end{equation*}

\medskip
\noindent
$\bullet$ $N=2$ (which corresponds to the so-called $C(m+1)$-series.\footnote{We warn the reader not to
confuse this with the symplectic $C_{m+1}$-series, corresponding to $\fosp(0|2m+2)$.})

The descriptions of simple roots $\{\alpha_i\}_{i=1}^{m+1}$ and the highest root $\theta$ are the same as for
even $N>2$. The corresponding parity sequence $\Upsilon_V$ consists of a single $\bar{0}$ and $m$ $\bar{1}$'s,
hence, the following cases:

1) For $\Upsilon_V=(\bar{0} , \bar{1} , \ldots , \bar{1})$, one clearly obtains the (labelled) Dynkin diagram
of~\cite[p.~463]{fss}:\footnote{We note that this choice actually differs from the standard choice made
in~\cite{m} for even $N>2$, cf.~\eqref{eq:distinguished=standard}.}
\begin{equation*}
\begin{tikzpicture}
\foreach \a in {1} {
    \begin{scope}[shift={(0.7*\a,0)}]
      \draw[fill=gray!50] (0.3*\a,0) circle (0.3cm);
      \draw[black,thick] (0.3*\a+0.3,0)--(0.3*\a+0.7,0);
    \end{scope}
  }
      \draw[black,dashed] (2.1,0)--(4,0);
      \draw[black,thick] (4,-0.1)--(5.5,-0.1);
      \draw[black,thick] (4,0.1)--(5.5,0.1);
      \draw[black,thick] (4.7,0)--(5,0.3);
      \draw[black,thick] (4.7,0)--(5,-0.3);
      \draw[fill=white] (2,0) circle (0.3cm);
      \draw[fill=white] (4,0) circle (0.3cm);
      \draw[fill=white] (5.5,0) circle (0.3cm);
       \node [below] at (2,-0.3) {$2$};
       \node [below] at (1,-0.3) {$1$};
       \node [below] at (4,-0.3) {$2$};
       \node [below] at (5.5,-0.3) {$1$};
\end{tikzpicture}
\end{equation*}

2) For $\Upsilon_V=(\bar{1} , \ldots , \bar{1} , \bar{0} , \bar{1} , \ldots , \bar{1})$, one obtains the
following (labelled) Dynkin diagram with two consecutive black dots being gray and the rest being white:
\begin{equation*}
\begin{tikzpicture}
\foreach \a in {1} {
    \begin{scope}[shift={(0.7*\a,0)}]
      \draw[fill=black] (0.3*\a,0) circle (0.2cm);
      \draw[black,thick] (0.3*\a+0.2,0)--(0.3*\a+0.8,0);
    \end{scope}
  }
      \draw[black,dashed] (2.1,0)--(4,0);
      \draw[black,thick] (4,-0.1)--(5.5,-0.1);
      \draw[black,thick] (4,0.1)--(5.5,0.1);
      \draw[black,thick] (4.7,0)--(5,0.3);
      \draw[black,thick] (4.7,0)--(5,-0.3);
      \draw[fill=black] (2,0) circle (0.2cm);
      \draw[fill=black] (4,0) circle (0.2cm);
      \draw[fill=white] (5.5,0) circle (0.3cm);
       \node [below] at (2,-0.3) {$2$};
       \node [below] at (1,-0.3) {$2$};
       \node [below] at (4,-0.3) {$2$};
       \node [below] at (5.5,-0.3) {$1$};
\end{tikzpicture}
\end{equation*}

3) For $\Upsilon_V=(\bar{1} , \ldots , \bar{1} , \bar{0})$, one obtains the following (labelled) Dynkin diagram:
\begin{equation*}
\begin{tikzpicture}
\foreach \a in {1} {
    \begin{scope}[shift={(0.7*\a,0)}]
      \draw[fill=white] (0.3*\a,0) circle (0.3cm);
      \draw[black,thick] (0.3*\a+0.3,0)--(0.3*\a+0.7,0);
    \end{scope}
  }
      \draw[black,dashed] (2.1,0)--(4,0);
      \draw[black,thick] (4,0)--(5,0.75);
      \draw[black,thick] (4,0)--(5,-0.75);
      \draw[black,thick] (4.9,1)--(4.9,-1);
      \draw[black,thick] (5.1,1)--(5.1,-1);
      \draw[fill=white] (2,0) circle (0.3cm);
      \draw[fill=white] (4,0) circle (0.3cm);
      \draw[fill=gray!50] (5,0.75) circle (0.3cm);
      \draw[fill=gray!50] (5,-0.75) circle (0.3cm);
       \node [below] at (2,-0.3) {$2$};
       \node [below] at (1,-0.3) {$2$};
       \node [below] at (4,-0.3) {$2$};
       \node [] at (5.5,-0.75) {$1$};
       \node [] at (5.5,0.75) {$1$};
\end{tikzpicture}
\end{equation*}

\medskip
\noindent
$\bullet$ $N=2n+1$ with $n\geq 1$ (which corresponds to the so-called $B(n,m)$-series).

In this case, the root system is (cf.~\cite[(2.7)]{fss}):
\begin{equation*}
  \Delta =
  \Big\{ \pm e^*_i \pm e^*_j \,\Big|\, 1\leq i < j\leq n+m \Big\} \bigcup
  \Big\{ \pm e^*_i \,\Big|\, 1\leq i\leq n+m \Big\} \bigcup
  \Big\{ \pm 2e^*_i \,\Big|\, v_i\in V_{\bar{1}} \,, 1\leq i\leq n+m \Big\} \,.
\end{equation*}
The latter follows from the explicit description of the basis~\eqref{eq:F-basis}.
In contrast to the case of even $N$, the simple roots are uniformly given by:
\begin{equation*}
  \alpha_1=e^*_1-e^*_2 \,,\, \alpha_2=e^*_2-e^*_3 \,,\, \ldots \,,\,
  \alpha_{n+m-1}=e^*_{n+m-1}-e^*_{n+m} \,,\, \alpha_{n+m}=e^*_{n+m} \,.
\end{equation*}
Similarly to the case of even $N$, the highest root $\theta$ depends on the parity of $v_1$:
\begin{equation*}
  \theta=
  \begin{cases}
    e^*_1+e^*_2 & \mbox{if } v_{1}\in V_{\bar{0}} \\
    2e^*_1 & \mbox{if } v_{1}\in V_{\bar{1}}
  \end{cases} \,.
\end{equation*}
We shall now match the (labelled) Dynkin diagrams of~\cite[Table 2]{fss} with the parity sequences.

\medskip
\noindent
\underline{Case 1}: $\Parity=(\bar{0} , \ast , \ldots , \ast , \bar{0})$
with each $\ast$ being either $\bar{0}$ or $\bar{1}$.

In this case, we get the following diagram with labels from~\cite[Table~2]{fss}:
\begin{equation*}
\begin{tikzpicture}
\foreach \a in {1} {
    \begin{scope}[shift={(0.7*\a,0)}]
      \draw[fill=black] (0.3*\a,0) circle (0.2cm);
      \draw[black,thick] (0.3*\a+0.2,0)--(0.3*\a+0.8,0);
    \end{scope}
  }
      \draw[black,dashed] (2.1,0)--(4,0);
      \draw[black,thick] (4,-0.1)--(5.5,-0.1);
      \draw[black,thick] (4,0.1)--(5.5,0.1);
      \draw[black,thick] (4.7,0.3)--(5,0);
      \draw[black,thick] (4.7,-0.3)--(5,-0);
      \draw[fill=black] (2,0) circle (0.2cm);
      \draw[fill=black] (4,0) circle (0.2cm);
      \draw[fill=white] (5.5,0) circle (0.3cm);
       \node [below] at (2,-0.3) {$2$};
       \node [below] at (1,-0.3) {$1$};
       \node [below] at (4,-0.3) {$2$};
       \node [below] at (5.5,-0.3) {$2$};
\end{tikzpicture}
\end{equation*}
The number $K$ of gray dots among
$\begin{tikzpicture}
\foreach \a in {1} {
    \begin{scope}[shift={(0.7*\a,0)}]
      \draw[fill=black] (0.3*\a,0) circle (0.2cm);
    \end{scope}
  }
\end{tikzpicture}$
is \unl{even} since it equals the number of $1\leq i\leq n+m-1$ such that $\ol{i}\ne \ol{i+1}$
and $\ol{1}=\ol{n+m}$, while the labels on the diagram are read off the following equality:
\begin{equation*}
  e^*_1+e^*_2=(e^*_1-e^*_2)+2(e^*_2-e^*_3)+\cdots+2(e^*_{n+m-1}-e^*_{n+m})+2(e^*_{n+m}) \,.
\end{equation*}

\noindent
\underline{Case 2}: $\Parity=(\bar{1} , \ast , \ldots , \ast , \bar{0})$
with each $\ast$ being either $\bar{0}$ or $\bar{1}$.

In this case, we get the following diagram with labels from~\cite[Table~2]{fss}:
\begin{equation*}
 \begin{tikzpicture}
\foreach \a in {1} {
    \begin{scope}[shift={(0.7*\a,0)}]
      \draw[fill=black] (0.3*\a,0) circle (0.2cm);
      \draw[black,thick] (0.3*\a+0.2,0)--(0.3*\a+0.8,0);
    \end{scope}
  }
      \draw[black,dashed] (2.1,0)--(4,0);
      \draw[black,thick] (4,-0.1)--(5.5,-0.1);
      \draw[black,thick] (4,0.1)--(5.5,0.1);
      \draw[black,thick] (4.7,0.3)--(5,0);
      \draw[black,thick] (4.7,-0.3)--(5,-0);
      \draw[fill=black] (2,0) circle (0.2cm);
      \draw[fill=black] (4,0) circle (0.2cm);
      \draw[fill=white] (5.5,0) circle (0.3cm);
       \node [below] at (2,-0.3) {$2$};
       \node [below] at (1,-0.3) {$2$};
       \node [below] at (4,-0.3) {$2$};
       \node [below] at (5.5,-0.3) {$2$};
\end{tikzpicture}
\end{equation*}
The number $K$ of gray dots among
$\begin{tikzpicture}
\foreach \a in {1} {
    \begin{scope}[shift={(0.7*\a,0)}]
      \draw[fill=black] (0.3*\a,0) circle (0.2cm);
    \end{scope}
  }
\end{tikzpicture}$
is \unl{odd} since it equals the number of $1\leq i\leq n+m-1$ such that $\ol{i}\ne \ol{i+1}$
and $\ol{1}\ne \ol{n+m}$, while the labels on the diagram are read off the following equality:
\begin{equation*}
  2e^*_1=2(e^*_1-e^*_2)+2(e^*_2-e^*_3)+\cdots+2(e^*_{n+m-1}-e^*_{n+m})+2(e^*_{n+m}) \,.
\end{equation*}

\noindent
\underline{Case 3}: $\Parity=(\bar{1} , \ast , \ldots , \ast , \bar{1})$
with each $\ast$ being either $\bar{0}$ or $\bar{1}$.

In this case, we get the following diagram with labels from~\cite[Table~2]{fss}:
\begin{equation*}
\begin{tikzpicture}
\foreach \a in {1} {
    \begin{scope}[shift={(0.7*\a,0)}]
      \draw[fill=black] (0.3*\a,0) circle (0.2cm);
      \draw[black,thick] (0.3*\a+0.2,0)--(0.3*\a+0.8,0);
    \end{scope}
  }
      \draw[black,dashed] (2.1,0)--(4,0);
      \draw[black,thick] (4,-0.1)--(5.5,-0.1);
      \draw[black,thick] (4,0.1)--(5.5,0.1);
      \draw[black,thick] (4.7,0.3)--(5,0);
      \draw[black,thick] (4.7,-0.3)--(5,-0);
      \draw[fill=black] (2,0) circle (0.2cm);
      \draw[fill=black] (4,0) circle (0.2cm);
      \draw[fill=black] (5.5,0) circle (0.3cm);
       \node [below] at (2,-0.3) {$2$};
       \node [below] at (1,-0.3) {$2$};
       \node [below] at (4,-0.3) {$2$};
       \node [below] at (5.5,-0.3) {$2$};
\end{tikzpicture}
\end{equation*}
The number $K$ of gray dots among
$\begin{tikzpicture}
\foreach \a in {1} {
    \begin{scope}[shift={(0.7*\a,0)}]
      \draw[fill=black] (0.3*\a,0) circle (0.2cm);
    \end{scope}
  }
\end{tikzpicture}$
is \unl{even} since it equals the number of $1\leq i\leq n+m-1$ such that $\ol{i}\ne \ol{i+1}$
and $\ol{1}=\ol{n+m}$, while the labels are read off the same equality as in Case 2:
\begin{equation*}
  2e^*_1=2(e^*_1-e^*_2)+2(e^*_2-e^*_3)+\cdots+2(e^*_{n+m-1}-e^*_{n+m})+2(e^*_{n+m}) \,.
\end{equation*}

\noindent
\underline{Case 4}: $\Parity=(\bar{0} , \ast , \ldots , \ast , \bar{1})$
with each $\ast$ being either $\bar{0}$ or $\bar{1}$.

In this case, we get the following diagram with labels from~\cite[Table~2]{fss}:
\begin{equation*}
\begin{tikzpicture}
\foreach \a in {1} {
    \begin{scope}[shift={(0.7*\a,0)}]
      \draw[fill=black] (0.3*\a,0) circle (0.2cm);
      \draw[black,thick] (0.3*\a+0.2,0)--(0.3*\a+0.8,0);
    \end{scope}
  }
      \draw[black,dashed] (2.1,0)--(4,0);
      \draw[black,thick] (4,-0.1)--(5.5,-0.1);
      \draw[black,thick] (4,0.1)--(5.5,0.1);
      \draw[black,thick] (4.7,0.3)--(5,0);
      \draw[black,thick] (4.7,-0.3)--(5,-0);
      \draw[fill=black] (2,0) circle (0.2cm);
      \draw[fill=black] (4,0) circle (0.2cm);
      \draw[fill=black] (5.5,0) circle (0.3cm);
       \node [below] at (2,-0.3) {$2$};
       \node [below] at (1,-0.3) {$1$};
       \node [below] at (4,-0.3) {$2$};
       \node [below] at (5.5,-0.3) {$2$};
\end{tikzpicture}
\end{equation*}
The number $K$ of gray dots among
$\begin{tikzpicture}
\foreach \a in {1} {
    \begin{scope}[shift={(0.7*\a,0)}]
      \draw[fill=black] (0.3*\a,0) circle (0.2cm);
    \end{scope}
  }
\end{tikzpicture}$
is \unl{odd} since it equals the number of $1\leq i\leq n+m-1$ such that $\ol{i}\ne \ol{i+1}$
and $\ol{1}\ne \ol{n+m}$, while the labels are read off the same equality as in Case 1:
\begin{equation*}
  e^*_1+e^*_2=(e^*_1-e^*_2)+2(e^*_2-e^*_3)+\cdots+2(e^*_{n+m-1}-e^*_{n+m})+2(e^*_{n+m}) \,.
\end{equation*}

\medskip
\noindent
$\bullet$ $N=1$ (which corresponds to the so-called $B(0,m)$-series).

In this case, there is only one parity sequence $\Upsilon_V=(\bar{1} \,,\, \ldots \,,\, \bar{1})$,
that is, $|v_1|=\ldots=|v_{m}|=\bar{1}$. The corresponding root system is (cf.~\cite[(2.8)]{fss}):
\begin{equation*}
  \Delta =
  \Big\{ \pm e^*_i \pm e^*_j \,\Big|\, 1\leq i < j\leq m \Big\} \bigcup
  \Big\{ \pm e^*_i \,\Big|\, 1\leq i\leq m \Big\} \bigcup
  \Big\{ \pm 2e^*_i \,\Big|\, 1\leq i\leq m \Big\} \,,
\end{equation*}
with simple roots given by
\begin{equation*}
  \alpha_1=e^*_1-e^*_2 \,,\, \alpha_2=e^*_2-e^*_3 \,,\, \ldots \,,\,
  \alpha_{m-1}=e^*_{m-1}-e^*_{m} \,,\, \alpha_{m}=e^*_{m} \,,
\end{equation*}
and the highest root
  \[ \theta=2e^*_1 \,. \]
This obviously corresponds to the (labelled) Dynkin diagram of~\cite[p.~463]{fss}:
\begin{equation*}
\begin{tikzpicture}
\foreach \a in {1} {
    \begin{scope}[shift={(0.7*\a,0)}]
      \draw[fill=white] (0.3*\a,0) circle (0.3cm);
      \draw[black,thick] (0.3*\a+0.3,0)--(0.3*\a+0.7,0);
    \end{scope}
  }
      \draw[black,dashed] (2.1,0)--(4,0);
      \draw[black,thick] (4,-0.1)--(5.5,-0.1);
      \draw[black,thick] (4,0.1)--(5.5,0.1);
      \draw[black,thick] (4.7,0.3)--(5,0);
      \draw[black,thick] (4.7,-0.3)--(5,-0);
      \draw[fill=white] (2,0) circle (0.3cm);
      \draw[fill=white] (4,0) circle (0.3cm);
      \draw[fill=black] (5.5,0) circle (0.3cm);
       \node [below] at (2,-0.3) {$2$};
       \node [below] at (1,-0.3) {$2$};
       \node [below] at (4,-0.3) {$2$};
       \node [below] at (5.5,-0.3) {$2$};
\end{tikzpicture}
\end{equation*}

\begin{Rem}
We note the following uniform formula for the first label:
  $a_1=
   \begin{cases}
     1 & \mbox{if } |v_1|=\bar{0} \\
     2 & \mbox{if } |v_1|=\bar{1}
  \end{cases}$.
\end{Rem}

The parity sequence~\eqref{eq:parity sequence} is called \textbf{standard} if
\begin{equation}\label{eq:distinguished=standard}
  \Parity=(\bar{1},\ldots,\bar{1},\bar{0},\ldots,\bar{0}) \,.
\end{equation}


\subsection{Chevalley-Serre type presentation}
\label{ssec:Serre-classical}
\

We conclude this section with the Chevalley-Serre type presentation of the orthosymplectic Lie superalgebras.
This result is a partial case of such a presentation for all simple contragredient Lie superalgebras, established
in~\cite[Main Theorem]{z}. Let $A=(a_{ij})_{i,j}$ be the Cartan matrix of~\eqref{eq:nonsymm-Cartan-matrix}.

\begin{Thm}\cite{z}\label{thm:Chevalley-Serre-Zhang}
The Lie superalgebra $\fosp(V)$ is generated by $\{e_i,f_i,h_i\}_{i=1}^{\sfr}$, with the $\BZ_2$-grading
\begin{equation}\label{eq:Z2-grading}
  |e_i|=|f_i|=
  \begin{cases}
     \bar{0} & \mbox{if } \alpha_i\in \Delta_0 \\
     \bar{1} & \mbox{if } \alpha_i\in \Delta_1
  \end{cases} \,, \quad
  |h_i|=\bar{0} \,,
\end{equation}
subject to the quadratic \emph{Chevalley relations}
\begin{equation}\label{eq:Chevalley-relations}
\begin{split}
  & [h_i,h_j]=0 \,, \\
  & [h_i,e_j]=a_{ij}e_j \,, \quad [h_i,f_j]=-a_{ij}f_j \,, \\
  & [e_i,f_j]=\delta_{ij}h_i \,,
\end{split}
\end{equation}
the \emph{standard Serre relations}
\begin{equation}\label{eq:Lie-Serre-standard}
\begin{split}
  & (\mathrm{ad}_{e_i})^{1-a_{ij}}(e_j)=0=(\mathrm{ad}_{f_i})^{1-a_{ij}}(f_j)
    \qquad \mathrm{for} \ i\ne j \,,\ \mathrm{with}\ a_{ii}\ne 0 \ \mathrm{or}\ a_{ij}=0 \,, \\
  & [e_i,e_i]=0=[f_i,f_i] \qquad \mathrm{if} \ a_{ii}=0 \,,
\end{split}
\end{equation}
and the \emph{higher order Serre relations}
(\ref{eq:Lie-Serre-superA},~\ref{eq:Lie-Serre-new3},~\ref{eq:Lie-Serre-new6},~\ref{eq:Lie-Serre-new7})
that are described in details below.
\end{Thm}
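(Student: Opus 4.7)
The strategy is the standard one for Chevalley--Serre presentations of contragredient (super)algebras: construct an explicit surjection from the Lie superalgebra $\tilde{\fg}$ defined abstractly by the listed generators and relations onto $\fosp(V)$, then argue by a triangular PBW-type decomposition that this surjection is in fact an isomorphism.

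First I would fix the Chevalley generators inside $\fosp(V)$. With the simple roots $\alpha_i$ described in Subsection~\ref{ssec:Dynkin diagrams} (mostly $\alpha_i = e^*_i - e^*_{i+1}$, with the final root adjusted case by case to $e^*_{\sfr-1}+e^*_{\sfr}$, $2e^*_{\sfr}$, or $e^*_{\sfr}$ depending on $N$ and $\Parity$), each $\alpha_i$ corresponds to a root vector $F_{i,i+1}$ or its obvious analogue. I set $e_i := F_{i,i+1}$ (suitably rescaled for the final simple root so that $[e_i,f_i]=h_i$ is consistent with the symmetrizers $d_i$ of~\eqref{eq:Cartan-symmetrizers}), $f_i := F_{i+1,i}$, and take $h_i \in \fh$ to be the coroot determined by the Cartan matrix~\eqref{eq:nonsymm-Cartan-matrix}. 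The Chevalley relations~\eqref{eq:Chevalley-relations} and the $\BZ_2$-grading~\eqref{eq:Z2-grading} are then immediate from~\eqref{eq:F-commutation}, noting $|F_{i,i+1}| = \ol{i} + \ol{i+1}$, which matches the parity of $\alpha_i$.

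Next I would verify the Serre relations. For the standard ones~\eqref{eq:Lie-Serre-standard}: when $a_{ii}=0$, the element $e_i$ is odd isotropic and $[e_i,e_i] = 2e_i^2$ vanishes by a short matrix-unit computation from~\eqref{eq:F-commutation}; for $i\ne j$ the iterated super-commutator $(\mathrm{ad}_{e_i})^{1-a_{ij}}(e_j)$ lands in the would-be root space of weight $\alpha_j + (1-a_{ij})\alpha_i$, and case-checking against the explicit root systems of Subsection~\ref{ssec:Dynkin diagrams} shows this is not a root. The higher order Serre relations arise precisely at the configurations (grey nodes adjacent to short/long edges or flanked by specific parities) where the quadratic and cubic standard Serre elements do not exhaust the ideal; each is checked directly by a finite, if tedious, computation with~\eqref{eq:F-commutation} applied to the relevant chain of three or four consecutive simple root vectors.

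For the sufficiency direction I let $\tilde{\fg}$ be the Lie superalgebra on $\{e_i,f_i,h_i\}_{i=1}^{\sfr}$ modulo the listed relations. The Chevalley relations alone produce a triangular decomposition $\tilde{\fg} = \tilde{\mathfrak{n}}^- \oplus \tilde{\fh} \oplus \tilde{\mathfrak{n}}^+$ (via a standard Verma-module argument: the natural action of $\tilde{\fg}$ on a free module over $\tilde{\mathfrak{n}}^-$ separates the three summands). The surjection $\tilde{\fg}\twoheadrightarrow \fosp(V)$ restricts to surjections $\tilde{\mathfrak{n}}^\pm \twoheadrightarrow \mathfrak{n}^\pm_{\fosp(V)}$, and the goal becomes to exhibit a spanning set of $\tilde{\mathfrak{n}}^+$ indexed by the positive roots of $\Delta$ with the correct multiplicities, which the surjection maps onto a basis of $\mathfrak{n}^+_{\fosp(V)}$. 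The main obstacle, and the true content of Zhang's theorem~\cite{z} specialized to $\fosp(V)$, is precisely the production of this spanning set: one must prove that the listed standard and higher order Serre relations suffice to reduce every iterated super-bracket of the $e_i$'s to a linear combination of canonical monomials indexed by positive roots. The hardest subcases are those parity sequences containing grey nodes where the higher order Serre relations of orders $3,4,6,7$ are indispensable --- omitting any one of them would leave a residual root vector beyond $\mathfrak{n}^+_{\fosp(V)}$ in $\tilde{\mathfrak{n}}^+$, so a careful enumeration of such local configurations on the Dynkin diagrams from Subsection~\ref{ssec:Dynkin diagrams} is unavoidable.
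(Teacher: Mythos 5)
You should first be aware that the paper does not prove this theorem at all: it is quoted as a special case of \cite[Main Theorem]{z}, and the only work the authors do around it is to translate Zhang's higher order Serre relations (\ref{eq:Lie-Serre-superA},~\ref{eq:Lie-Serre-new3},~\ref{eq:Lie-Serre-new6},~\ref{eq:Lie-Serre-new7}) into their parity-sequence language and to record for which $\Parity$ each configuration actually occurs. So there is no in-paper argument to measure your proposal against; the relevant comparison is with Zhang's proof, whose overall architecture (surjection from the abstractly presented $\tilde{\fg}$, triangular decomposition, weight-by-weight comparison of $\tilde{\mathfrak{n}}^{\pm}$ with $\mathfrak{n}^{\pm}_{\fosp(V)}$) your outline reproduces correctly.

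That said, as a proof your proposal has a genuine gap, and you have in effect flagged it yourself: everything before the last paragraph is the easy ``necessity'' direction (the relations hold in $\fosp(V)$, checked from~\eqref{eq:F-commutation} and the explicit root data), while the sufficiency direction --- showing that the ideal generated by~\eqref{eq:Lie-Serre-standard} together with the four families of higher order relations is the \emph{entire} kernel of $\tilde{\fg}\twoheadrightarrow\fosp(V)$ --- is reduced to the statement that one ``must exhibit a spanning set indexed by positive roots'' via ``a careful enumeration of local configurations.'' That enumeration is not a routine afterthought; it is the theorem. In particular, nothing in your sketch explains why no further relations are needed beyond the listed degrees $3,4,6,7$ (e.g.\ why the degree-$6$ and degree-$7$ relations at the configurations~\eqref{eq:pic-Serre-new6},~\eqref{eq:pic-Serre-new7} suffice to kill all spurious weight spaces in $\tilde{\mathfrak{n}}^{+}$ for parity sequences ending in $\bar{0}\bar{1}$), nor does it address the subtlety, noted after~\eqref{eq:pic-Serre-new4a}, that the degree-$4$ relation must be \emph{excluded} when $(\alpha_j,\alpha_t)\cdot(\alpha_t,\alpha_k)\geq 0$. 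If you intend this as a self-contained proof you would need to carry out that dimension count; if you intend it as a reduction to \cite{z}, you should say so explicitly, which is exactly what the paper does.
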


\medskip
We shall now specify the aforementioned \emph{higher order Serre relations} of degrees $4$, $3$, $6$, or $7$.

\medskip
\noindent
$\bullet$ For the sub-diagram
\begin{equation}\label{eq:pic-Serre-new4a}
  \begin{tikzpicture}
  \foreach \a in {1} {
    \begin{scope}[shift={(0.7*\a,0)}]
      \draw[fill=black] (0.3*\a,0) circle (0.2cm);
      \draw[black,thick] (0.3*\a+0.2,0)--(0.3*\a+1.3,0);
    \end{scope}
  }
      \draw[black,dashed] (2.1,0)--(2.5,0);
      \draw[black,thick] (2.5,0)--(4,0);
      \draw[fill=gray] (2.5,0) circle (0.3cm);
      \draw[fill=black] (4,0) circle (0.2cm);
       \node [below] at (1,-0.3) {$j$};
       \node [below] at (2.5,-0.3) {$t$};
       \node [below] at (4,-0.3) {$k$};
  \end{tikzpicture}
    \quad \mathrm{with} \quad
  (\alpha_j,\alpha_t)\cdot (\alpha_t,\alpha_k)<0
\end{equation}
or one of the following sub-diagrams
\begin{equation}\label{eq:pic-Serre-new4b}
  \begin{tikzpicture}
  \foreach \a in {1} {
    \begin{scope}[shift={(0.7*\a,0)}]
      \draw[fill=black] (0.3*\a,0) circle (0.2cm);
      \draw[black,thick] (0.3*\a+0.2,0)--(0.3*\a+1.3,0);
    \end{scope}
  }
      \draw[black,dashed] (2.1,0)--(2.5,0);
      \draw[black,thick] (2.5,-0.1)--(4,-0.1);
      \draw[black,thick] (2.5,0.1)--(4,0.1);
      \draw[black,thick] (3.1,0.3)--(3.4,0);
      \draw[black,thick] (3.1,-0.3)--(3.4,0);
      \draw[fill=gray] (2.5,0) circle (0.3cm);
      \draw[fill=white] (4,0) circle (0.3cm);
       \node [below] at (1,-0.3) {$j$};
       \node [below] at (2.5,-0.3) {$t$};
       \node [below] at (4,-0.3) {$k$};
  \end{tikzpicture}
    \qquad \mathrm{or} \qquad
  \begin{tikzpicture}
  \foreach \a in {1} {
    \begin{scope}[shift={(0.7*\a,0)}]
      \draw[fill=black] (0.3*\a,0) circle (0.2cm);
      \draw[black,thick] (0.3*\a+0.2,0)--(0.3*\a+1.3,0);
    \end{scope}
  }
      \draw[black,dashed] (2.1,0)--(2.5,0);
      \draw[black,thick] (2.5,-0.1)--(4,-0.1);
      \draw[black,thick] (2.5,0.1)--(4,0.1);
      \draw[black,thick] (3.1,0.3)--(3.4,0);
      \draw[black,thick] (3.1,-0.3)--(3.4,0);
      \draw[fill=gray] (2.5,0) circle (0.3cm);
      \draw[fill=black] (4,0) circle (0.3cm);
       \node [below] at (1,-0.3) {$j$};
       \node [below] at (2.5,-0.3) {$t$};
       \node [below] at (4,-0.3) {$k$};
  \end{tikzpicture}
\end{equation}
the associated higher order Serre relations are:
\begin{equation}\label{eq:Lie-Serre-superA}
\begin{split}
  & \big[[e_j,e_t],[e_t,e_k]\big]=0 \,, \\
  & \big[[f_j,f_t],[f_t,f_k]\big]=0 \,.
\end{split}
\end{equation}

\begin{Rem}
(a) We note that the relations $\Big[e_t,\big[e_j,[e_t,e_k]\big]\Big]=0$ and
$\Big[f_t,\big[f_j,[f_t,f_k]\big]\Big]=0$ of~\cite[\S3.2.1(1,2,3)]{z} are equivalent
to~\eqref{eq:Lie-Serre-superA}, due to the relations $[e_t,e_t]=0$ and $[f_t,f_t]=0$.

\medskip
\noindent
(b) The relations~\eqref{eq:Lie-Serre-superA} also hold for the analogues
of~(\ref{eq:pic-Serre-new4a},~\ref{eq:pic-Serre-new4b}) with the white $t$-th vertex.
\end{Rem}

In our setup, sub-diagrams~\eqref{eq:pic-Serre-new4b} occur only if $N=2n+1$, $n+m\geq 3$, $\ol{n+m}\ne \ol{n+m-1}$
(and $k=n+m,t=n+m-1,j=n+m-2$). Likewise, sub-diagrams~\eqref{eq:pic-Serre-new4a} occur either if
$t<\lfloor N/2 \rfloor + m - 1$ and $\ol{t}\ne \ol{t+1}$ (with $j=t-1,k=t+1$) or $N=2n,j=n+m-3,t=n+m-2,k=n+m$ and
$\Parity=(\ast , \ldots , \ast , \bar{1} , \bar{0}, \bar{0})$ where each $\ast$ is either $\bar{0}$ or $\bar{1}$.

\begin{Rem}
As noted in~\cite[\S2.2]{z}, the condition $(\alpha_j,\alpha_t)\cdot (\alpha_t,\alpha_k)<0$ in
sub-diagrams~\eqref{eq:pic-Serre-new4a} cannot be ignored. In our setup, that excludes the corresponding
sub-diagrams for $N=2n, n+m\geq 3$, $t=n+m-2,j=n+m-1,k=n+m$, and
$\Parity=(\ast , \ldots , \ast , \bar{1} , \bar{0}, \bar{0})$ where each $\ast\in \{\bar{0},\bar{1}\}$.
\end{Rem}

\medskip
\noindent
$\bullet$ For the sub-diagram
\begin{equation}\label{eq:pic-Serre-new3}
\begin{tikzpicture}
  \foreach \a in {1} {
    \begin{scope}[shift={(0.7*\a,0)}]
      \draw[fill=black] (0.3*\a,0) circle (0.2cm);
      \draw[black,thick] (0.3*\a,0)--(0.3*\a,0);
    \end{scope}
  }
      \draw[black,thick] (1,0)--(2,0.75);
      \draw[black,thick] (1,0)--(2,-0.75);
      \draw[black,thick] (1.9,1)--(1.9,-1);
      \draw[black,thick] (2.1,1)--(2.1,-1);
      \draw[fill=gray!50] (2,0.75) circle (0.3cm);
      \draw[fill=gray!50] (2,-0.75) circle (0.3cm);
       \node [below] at (1,-0.3) {$i$};
       \node [] at (2.5,-0.75) {$s$};
       \node [] at (2.5,0.75) {$t$};
\end{tikzpicture}
\end{equation}
the associated higher order Serre relations are:
\begin{equation}\label{eq:Lie-Serre-new3}
\begin{split}
  & \big[e_t,[e_s,e_i]\big]-\big[e_s,[e_t,e_i]\big]=0 \,, \\
  & \big[f_t,[f_s,f_i]\big]-\big[f_s,[f_t,f_i]\big]=0 \,,
\end{split}
\end{equation}
cf.~\cite[\S3.2.1(6)]{z}. In our setup, that occurs only if $N=2n$, $n+m\geq 3$, and the parity sequence
is $\Parity=(\ast , \ldots , \ast , \bar{1} , \bar{0})$ where each $\ast$ is either $\bar{0}$ or $\bar{1}$
(and $i=n+m-2,t=n+m-1,s=n+m$).

\medskip
\noindent
$\bullet$ For the sub-diagram
\begin{equation}\label{eq:pic-Serre-new6}
\begin{tikzpicture}
  \foreach \a in {1} {
    \begin{scope}[shift={(0.7*\a,0)}]
      \draw[fill=gray] (0.3*\a,0) circle (0.3cm);
      \draw[black,thick] (0.3*\a+0.3,0)--(0.3*\a+1.3,0);
    \end{scope}
  }
      \draw[black,dashed] (2.1,0)--(2.5,0);
      \draw[black,thick] (2.5,-0.1)--(4,-0.1);
      \draw[black,thick] (2.5,0.1)--(4,0.1);
      \draw[black,thick] (3.1,0)--(3.4,0.3);
      \draw[black,thick] (3.1,0)--(3.4,-0.3);
      \draw[fill=gray] (2.5,0) circle (0.3cm);
      \draw[fill=white] (4,0) circle (0.3cm);
       \node [below] at (1,-0.3) {$j$};
       \node [below] at (2.5,-0.3) {$t$};
       \node [below] at (4,-0.3) {$k$};
\end{tikzpicture}
\end{equation}
the associated higher order Serre relations are:
\begin{equation}\label{eq:Lie-Serre-new6}
\begin{split}
  & \Big[[e_j,e_t],\big[[e_j,e_t],[e_t,e_k]\big]\Big]=0 \,, \\
  & \Big[[f_j,f_t],\big[[f_j,f_t],[f_t,f_k]\big]\Big]=0 \,,
\end{split}
\end{equation}
cf.~\cite[\S3.2.1(4)]{z}. In our setup, that occurs only if $N=2n$, $n+m\geq 3$, and
the parity sequence is $\Parity=(\ast , \ldots , \ast , \bar{1} , \bar{0} , \bar{1})$
where each $\ast$ is either $\bar{0}$ or $\bar{1}$ (and $j=n+m-2,t=n+m-1,k=n+m$).

\medskip
\noindent
$\bullet$ For the sub-diagram
\begin{equation}\label{eq:pic-Serre-new7}
\begin{tikzpicture}
  \foreach \a in {1} {
    \begin{scope}[shift={(0.7*\a,0)}]
      \draw[fill=black] (0.3*\a,0) circle (0.2cm);
      \draw[black,thick] (0.3*\a+0.2,0)--(0.3*\a+1.2,0);
    \end{scope}
  }
      \draw[black,thick] (2.5,0)--(4,0);
      \draw[black,thick] (4,-0.1)--(5.5,-0.1);
      \draw[black,thick] (4,0.1)--(5.5,0.1);
      \draw[black,thick] (4.6,0)--(4.9,0.3);
      \draw[black,thick] (4.6,0)--(4.9,-0.3);
      \draw[fill=white] (2.5,0) circle (0.3cm);
      \draw[fill=gray] (4,0) circle (0.3cm);
      \draw[fill=white] (5.5,0) circle (0.3cm);
       \node [below] at (2.5,-0.3) {$j$};
       \node [below] at (1,-0.3) {$i$};
       \node [below] at (4,-0.3) {$t$};
       \node [below] at (5.5,-0.3) {$k$};
\end{tikzpicture}
\end{equation}
the associated higher order Serre relations are:
\begin{equation}\label{eq:Lie-Serre-new7}
\begin{split}
  & \Big[\big[e_i,[e_j,e_t]\big],\big[[e_j,e_t],[e_t,e_k]\big]\Big]=0 \,, \\
  & \Big[\big[f_i,[f_j,f_t]\big],\big[[f_j,f_t],[f_t,f_k]\big]\Big]=0 \,,
\end{split}
\end{equation}
cf.~\cite[\S3.2.1(5)]{z}. In our setup, that occurs only if $N=2n$, $n+m\geq 4$, and
$\Parity=(\ast , \ldots , \ast , \bar{0} , \bar{0} , \bar{1})$ where each $\ast$ is
either $\bar{0}$ or $\bar{1}$ (and $i=n+m-3,j=n+m-2,t=n+m-1,k=n+m$).

\begin{Rem}
(a) For odd $N$ or even $N$ but with the parity sequences $\Parity$ ending in $\bar{0}\bar{0}$
or $\bar{1}\bar{1}$, there may be only degree $4$ higher order Serre relations.

\medskip
\noindent
(b) For even $N\geq 6-2m$ and parity sequences $\Parity$ ending in $\bar{1}\bar{0}$,
we get new degree $3$ Serre relations.

\medskip
\noindent
(c) For even $N\geq 8-2m$ and parity sequences $\Parity$ ending in $\bar{0}\bar{1}$,
we get new degree $6$ or $7$ Serre relations.
\end{Rem}

\begin{Rem}\label{rem:Serre-parities}
(a) The degree $6$ Serre relations~\eqref{eq:Lie-Serre-new6} as well as the degree $7$ Serre
relations~\eqref{eq:Lie-Serre-new7} always hold in $\fosp(V)$  with $N=2n$ for any parity sequence $\Parity$.

\medskip
\noindent
(b) The degree $3$ Serre relations~\eqref{eq:Lie-Serre-new3} hold in $\fosp(V)$ with $N=2n$
iff $v_{n+m}$ is even (we note that for odd $v_{n+m}$ the corresponding Dynkin diagram does
not have the $s\leftrightarrow t$ $\BZ_2$-symmetry either).
\end{Rem}


\section{RTT orthosymplectic Yangians}
\label{sec:RTT orthosymplectic}

In this section, we recall the definition of the RTT (extended) Yangians of $\fosp(V)$ and their basic properties.
We establish the key rank-reduction result in Theorem~\ref{thm:embedding}, prove Lemma~\ref{lem:ef-com-tl},
and explain the relevance of the defining relations for super $A$-type Yangians to the present setup.


\subsection{RTT extended orthosymplectic super Yangian}
\label{ssec:RTT osp-Yangian}
\

Let $P\colon V\otimes V\to V\otimes V$ be the permutation operator defined by
\begin{equation}\label{eq:P}
  P\, = \sum_{i,j=1}^{N+2m} (-1)^{\ol{j}}\, e_{ij}\otimes e_{ji} \,,
\end{equation}
whose action is explicitly given by:
\begin{equation*}
  P(v_j\otimes v_i)=(-1)^{\ol{i}\cdot \ol{j}}\, v_i\otimes v_j \,.
\end{equation*}
Evoking the definition~\eqref{eq:theta}, we also consider the operator $Q\colon V\otimes V\to V\otimes V$
defined by
\begin{equation}\label{eq:Q}
  Q \, = \sum_{i,j=1}^{N+2m} (-1)^{\ol{i}\cdot \ol{j}}\theta_i\theta_j\, e_{ij}\otimes e_{i'j'} \,,
\end{equation}
whose action is explicitly given by:
\begin{equation*}
  Q(v_a\otimes v_b)=
  \begin{cases}
    0 & \mbox{if } b\ne a' \\
    \sum_{i=1}^{N+2m} \theta_i \, v_i\otimes v_{i'}
      & \mbox{if } b=a' \,,\, a>\lceil \sfrac{N}{2} \rceil+m \\
    (-1)^{\ol{a}}\sum_{i=1}^{N+2m} \theta_i\, v_i\otimes v_{i'}
      & \mbox{if } b=a' \,,\, a\leq \lceil \sfrac{N}{2} \rceil+m
  \end{cases} \,.
\end{equation*}
We also introduce a constant $\kappa$ via:
\begin{equation}\label{eq:kappa}
  \kappa=\sfrac{N}{2}-m-1 \,.
\end{equation}

Consider the \emph{rational} $R$-matrix (a super-version of the one considered in~\cite{zz}):
\begin{equation}\label{eq:osp-Rmatrix}
  R(u)=\ID-\frac{P}{u}+\frac{Q}{u-\kappa} \in \End\,V \otimes \End\,V \,.
\end{equation}
According to~\cite{aacfr}\footnote{While~\cite[Theorem 2.5]{aacfr} established~\eqref{eq:YBE} only for the
standard parity sequence~\eqref{eq:distinguished=standard}, the general case follows immediately by using
the $S(\lfloor \frac{N}{2}\rfloor +m)$-symmetry as in our proof of
Lemma~\ref{lem:isomorphism of RTT yangians}.}, it satisfies the famous Yang-Baxter equation with
a spectral parameter:
\begin{equation}\label{eq:YBE}
  R_{12}(u)R_{13}(u+v)R_{23}(v)=R_{23}(v)R_{13}(u+v)R_{12}(u) \,.
\end{equation}
Following~\cite[\S III]{aacfr}, we define the \emph{RTT extended Yangian of $\fosp(V)$},
denoted by $X^\rtt(\fosp(V))$, to be the associative $\BC$-superalgebra generated by
$\{t^{(r)}_{ij}\}_{1\leq i,j\leq N+2m}^{r\geq 1}$ with the $\BZ_2$-grading
$|t_{ij}^{(r)}|=\ol{i}+\ol{j}$ and subject to the following defining relation
(commonly called the \emph{RTT relation}, see~\eqref{eq:rtt intro}):
\begin{equation}\label{eq:RTT relation}
  R(u-v)T_1(u)T_2(v)=T_2(v)T_1(u)R(u-v) \,,
\end{equation}
viewed as an equality in $\End\,V \otimes \End\,V \otimes X^\rtt(\fosp(V))$. Here, $T(u)$ is the
series in $u^{-1}$ with coefficients in the algebra $\End\,V \otimes X^\rtt(\fosp(V))$, defined by:
\begin{equation}\label{eq:Tmatrix}
  T(u)=\sum_{i,j=1}^{N+2m} (-1)^{\ol{i}\cdot \ol{j} + \ol{j}}\, e_{ij}\otimes t_{ij}(u)
    \qquad \mathrm{with} \qquad
  t_{ij}(u):=\delta_{ij}+\sum_{r\geq 1} t^{(r)}_{ij}u^{-r} \,.
\end{equation}
Therefore,
  $T_1(u)=\sum_{i,j=1}^{N+2m} (-1)^{\ol{i}\cdot \ol{j} + \ol{j}}\, e_{ij} \otimes 1\otimes t_{ij}(u)$
and
  $T_2(v)=\sum_{i,j=1}^{N+2m} (-1)^{\ol{i}\cdot \ol{j} + \ol{j}}\, 1\otimes e_{ij} \otimes t_{ij}(v)$.

\begin{Rem}\label{sign explanation}
We identify the operator $\sum_{i,j=1}^{N+2m} (-1)^{\ol{i}\cdot \ol{j} + \ol{j}}\, e_{ij}\otimes t_{ij}(u)$
with the matrix $(t_{ij}(u))_{i,j=1}^{N+2m}$. Evoking the multiplication~\eqref{eq:graded tensor product}
for the graded tensor products, we see that the extra sign $(-1)^{\ol{i}\cdot \ol{j} + \ol{j}}$ ensures
that the product of matrices is calculated in the usual way.
\end{Rem}

Henceforth, for $A\in \End\,V \otimes \End\,V$ we shall often use the notation
$\langle v_i\otimes v_k | A | v_j\otimes v_\ell \rangle$ to denote the coefficient of $v_i\otimes v_k$ in
$A(v_j\otimes v_\ell)$. In particular,
comparing the matrix coefficients $\langle v_i\otimes v_k | \cdots | v_j\otimes v_\ell \rangle$ of both sides
of the defining relation~\eqref{eq:RTT relation}, it is straightforward to see that the latter is equivalent
to the following system of relations:
\begin{equation}\label{eq:RTT-termwise}
\begin{split}
  & [t_{ij}(u),t_{k\ell}(v)] =
    \frac{(-1)^{\ol{i}\cdot \ol{j}+\ol{i}\cdot \ol{k}+\ol{j}\cdot \ol{k}}}{u-v}
    \Big(t_{kj}(u)t_{i\ell}(v)-t_{kj}(v)t_{i\ell}(u)\Big) - \frac{1}{u-v-\kappa}\, \times \\
  & \left(\delta_{ki'}\sum_{p=1}^{N+2m} (-1)^{\ol{i}+\ol{i}\cdot \ol{j}+\ol{j}\cdot \ol{p}}
            \theta_i\theta_p\, t_{pj}(u)t_{p'\ell}(v) -
          \delta_{\ell j'}\sum_{p=1}^{N+2m} (-1)^{\ol{i}\cdot \ol{k}+\ol{j}\cdot \ol{k}+\ol{i}\cdot \ol{p}}
            \theta_{j'}\theta_{p'}\, t_{kp'}(v)t_{ip}(u) \right)
\end{split}
\end{equation}
for all $1\leq i,j,k,\ell\leq N+2m$. Here, we only use~\eqref{eq:parity-sym},~\eqref{eq:graded tensor product},
and the property~\eqref{eq:theta-symmetry}.

\begin{Rem}\label{rem:tau-antiautom}
As follows from the direct verification using~\eqref{eq:RTT-termwise}, the assignment (cf.~\cite[(2.9)]{m})
\begin{equation}\label{eq:tau-t}
  \tau\colon t_{ij}(u)\mapsto (-1)^{\ol{i}\cdot \ol{j} + \ol{j}}\, t_{ji}(u)
  \qquad \forall\ 1\leq i,j\leq N+2m
\end{equation}
gives rise to an anti-automorphism $\tau$ of the superalgebra $X^\rtt(\fosp(V))$, that is, we have:
\begin{equation*}
  \tau(xy)=(-1)^{|x|\cdot |y|} \tau(y)\tau(x)
  \quad \mathrm{for\ any\ homogeneous} \quad x,y\in X^\rtt(\fosp(V)) \,.
\end{equation*}
\end{Rem}

In the particular case of the standard parity sequence~\eqref{eq:distinguished=standard}, corresponding
to the case $v_1,\ldots,v_m\in V_{\bar{1}}$, we recover the RTT extended Yangian $X^\rtt(\fosp(N|2m))$.
The latter was introduced in~\cite{aacfr} and revised more recently in~\cite{m}; in particular, the
relation~\eqref{eq:RTT-termwise} recovers~\cite[(3.3)]{aacfr}, cf.~\cite[(2.8)]{m}. Meanwhile, for a
general parity sequence we actually get isomorphic superalgebras, due to the following simple result:

\begin{Lem}\label{lem:isomorphism of RTT yangians}
The superalgebra $X^\rtt(\fosp(V))$ depends only on $\dim(V_{\bar{0}}),\dim(V_{\bar{1}})$, up to an isomorphism.
Thus, $X^\rtt(\fosp(V))$ is isomorphic to the RTT extended Yangian $X^\rtt(\fosp(N|2m))$.
\end{Lem}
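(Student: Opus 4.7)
The plan is to exhibit an explicit isomorphism for any two orthosymplectic superspaces $V,V'$ of the same total dimensions $\dim V_{\bar 0}=N$, $\dim V_{\bar 1}=2m$, and then deduce the second assertion by specializing $V'$ to the standard parity case. Since any two parity sequences $\Parity,\Upsilon_{V'}$ with the same numbers of $\bar 0$'s and $\bar 1$'s are connected by a sequence of \emph{adjacent transpositions} $(i,i+1)$ with $1\leq i\leq \sfr-1$, it suffices to construct such an isomorphism whenever $\Upsilon_{V'}$ is obtained from $\Parity$ by a single adjacent swap.

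Given such a swap at position $i$, let $\sigma\in S_{N+2m}$ be the product of the two disjoint transpositions $(i,\ i+1)$ and $((i+1)',\ i')$, so that $\sigma$ commutes with the involution $k\mapsto k'$ of~\eqref{eq:index-inv} and intertwines the two parity functions: $\ol{k}_{V'}=\ol{\sigma(k)}_V$. Define a linear isomorphism $\Phi\colon V\to V'$ of superspaces by $v_k\mapsto \varepsilon_k\, v'_{\sigma(k)}$ for appropriate signs $\varepsilon_k\in\{\pm 1\}$ chosen so that $\Phi$ preserves the bilinear form $B_G$ (which is possible since $\sigma$ preserves the $\theta$-data~\eqref{eq:theta} up to signs that can be absorbed into $\varepsilon_k$).

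The core step is to verify that $\Phi$ intertwines the two $R$-matrices, i.e.\ that
  \[ (\Phi\otimes\Phi)\, R_V(u) = R_{V'}(u)\, (\Phi\otimes\Phi) \eqndot \]
This is immediate for the identity term; the constant $\kappa$ from~\eqref{eq:kappa} depends only on $N,m$, so it is unchanged; and for the summands $P$ and $Q$ defined in~\eqref{eq:P}~and~\eqref{eq:Q} the statement reduces to checking that every sign factor $(-1)^{\ol{i}},(-1)^{\ol{i}\cdot\ol{j}},\theta_i$ transforms covariantly under the relabeling by $\sigma$. Given this, one defines the algebra morphism $\Psi\colon X^\rtt(\fosp(V))\to X^\rtt(\fosp(V'))$ by declaring $T_V(u)\mapsto (\Phi^{-1}\otimes 1)\,T_{V'}(u)\,(\Phi\otimes 1)$ in $\End V\otimes X^\rtt(\fosp(V'))$, which in matrix entries takes the form $t_{ij}(u)\mapsto \pm\, t'_{\sigma(i)\sigma(j)}(u)$ with explicit signs dictated by $\varepsilon_i\varepsilon_j$ and~\eqref{eq:Tmatrix}. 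The RTT relation~\eqref{eq:RTT relation} for $T_V$ is then mapped by $\Psi$ to the conjugate of the RTT relation for $T_{V'}$ by $(\Phi\otimes\Phi)$, which coincides with the defining relation for $T_{V'}$ thanks to the intertwining property of $\Phi$. The inverse isomorphism is produced identically from $\sigma^{-1}$, so $\Psi$ is an isomorphism. Iterating across a chain of adjacent transpositions connecting $\Parity$ to the standard parity~\eqref{eq:distinguished=standard} gives the asserted isomorphism with $X^\rtt(\fosp(N|2m))$.

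The main obstacle is bookkeeping the signs. The graded tensor product convention~\eqref{eq:graded tensor product} together with the non-uniform signs $(-1)^{\ol{i}\cdot\ol{j}+\ol{j}}$ in $T(u)$ and the $\theta_i$'s in $Q$ make it non-trivial to choose the signs $\varepsilon_k$ so that $\Phi$ is simultaneously a morphism of superspaces, an isometry of $B_G$, and an intertwiner of $R$-matrices. However, because $\sigma$ acts by a double transposition that respects the involution $k\mapsto k'$, all sign discrepancies can be absorbed into the two free choices $\varepsilon_i,\varepsilon_{i+1}$ (with $\varepsilon_{i'},\varepsilon_{(i+1)'}$ then forced by the form), so the verification is mechanical rather than structural.
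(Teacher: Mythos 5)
Your proposal is correct and follows essentially the same route as the paper: a parity-matching permutation $\sigma$ commuting with the involution $i\mapsto i'$ induces a relabeling of the generators compatible with the defining relations. The one simplification worth recording is that no signs $\varepsilon_k$ are needed at all — since $\sigma$ intertwines both the parities and the $\theta$-data of~\eqref{eq:theta} on the nose, one may take $\Phi(v_k)=v'_{\sigma(k)}$, so the map on generators is exactly $t^{(r)}_{ij}\mapsto t^{(r)}_{\sigma(i),\sigma(j)}$ as in~\eqref{eq:parity-symmetry}, and neither the reduction to adjacent transpositions nor the sign bookkeeping is required.
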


\begin{proof}
Let $U$ be another superspace with a $\BC$-basis $u_1,\ldots,u_{N+2m}$ such that each $u_i$ is \emph{even} or
\emph{odd}, $|u_i|=|u_{i'}|$, and $\dim(V_{\bar{0}})=\dim(U_{\bar{0}}), \dim(V_{\bar{1}})=\dim(U_{\bar{1}})$.
Pick a permutation $\sigma\in S(\lfloor \sfrac{N}{2} \rfloor + m)$ such that $v_i\in V$ and $u_{\sigma(i)}\in U$
have the same parity for all $1\leq i\leq \lfloor \sfrac{N}{2} \rfloor+m$\footnote{We abstain from using $\sfr$
instead of $\lfloor \sfrac{N}{2} \rfloor+m$ in this section, since we now have a similar looking index $r\geq 1$.}.
We then extend $\sigma$ to a permutation $\sigma\in S(N+2m)$ by
\begin{equation}\label{eq:sigma-extended}
  \sigma(i')=\sigma(i)' \quad \forall\ 1\leq i\leq \lfloor \sfrac{N}{2} \rfloor+m \,, \qquad
  \sigma(\sfrac{N+1}{2}+m)=\sfrac{N+1}{2}+m \qquad \mathrm{for\ odd}\ N \,.
\end{equation}
Then, the assignment
\begin{equation}\label{eq:parity-symmetry}
  t^{(r)}_{ij}\mapsto t^{(r)}_{\sigma(i),\sigma(j)} \qquad \forall\ i,j\in \mathbb{I} \,,\, r\geq 1
\end{equation}
is compatible with~\eqref{eq:RTT-termwise}, thus giving rise to an isomorphism
$X^\rtt(\fosp(V)) \,\iso\, X^\rtt(\fosp(U))$.
\end{proof}


\subsection{RTT orthosymplectic super Yangian}
\label{ssec:center}
\

Consider the matrix \emph{supertransposition} $t$ defined by
  $(A^t)_{ij}=(-1)^{\ol{i}\cdot \ol{j} + \ol{j}}\theta_i\theta_j\, (A)_{j'i'}$.
In particular:
\begin{equation}\label{eq:T-transposed}
  T^t(u)_{ij}=(-1)^{\ol{i}\cdot \ol{j} + \ol{j}}\theta_i\theta_j\, t_{j'i'}(u) \,.
\end{equation}
As shown in~\cite{aacfr}\footnote{While~\cite[Theorem 3.1]{aacfr} established this only for the
standard parity sequence~\eqref{eq:distinguished=standard}, the general case follows immediately
by utilizing the $S(\lfloor \frac{N}{2}\rfloor +m)$-symmetry as in our proof of
Lemma~\ref{lem:isomorphism of RTT yangians} above.}, the product $T(u-\kappa)T^t(u)$ is a scalar matrix:
\begin{equation}\label{eq:central-c}
  T(u-\kappa)T^t(u)=c_V(u)\cdot \mathrm{Id} \,,
\end{equation}
where $c_V(u)=1+\sum_{r\geq 1}c_r u^{-r}$ with all $c_r$ belonging to the center $ZX^\rtt(\fosp(V))$
of $X^\rtt(\fosp(V))$ and, in fact, freely generating $ZX^\rtt(\fosp(V))$, which can be shown as
in~\cite{amr} for non-super case.

For any formal power series $f(u)\in 1+u^{-1}\BC[[u^{-1}]]$, the assignment
\begin{equation}\label{eq:mu-automorphims}
  \mu_f\colon T(u)\mapsto f(u)T(u)
\end{equation}
gives rise to a superalgebra automorphism $\mu_f$ of $X^\rtt(\fosp(V))$. Following~\cite{aacfr,m} for
the standard parity and~\cite{jlm} for non-super case, we define the \emph{RTT Yangian of $\fosp(V)$},
denoted by $Y^{\rtt}(\fosp(V))$, as the following $\BC$-subalgebra of $X^\rtt(\fosp(V))$:
\begin{equation}\label{eq:RTT-non-extended}
  Y^\rtt(\fosp(V)):=\Big\{y\in X^\rtt(\fosp(V)) \,\Big|\, \mu_f(y)=y
  \quad  \forall\, f(u)\in 1+u^{-1}\BC[[u^{-1}]] \Big\} \,.
\end{equation}
Similarly to~\cite[(2.7)]{m} for the standard parity~\eqref{eq:distinguished=standard},
cf.~\cite{amr} for non-super case, we have the tensor product decomposition
\begin{equation}\label{eq:extended-vs-nonextended}
  X^\rtt(\fosp(V)) \simeq ZX^\rtt(\fosp(V)) \otimes Y^\rtt(\fosp(V)) \,.
\end{equation}
Thus, the RTT Yangian $Y^\rtt(\fosp(V))$ can be also realized as a quotient of $X^\rtt(\fosp(V))$:
\begin{equation}\label{eq:non-extended-quotient}
  Y^\rtt(\fosp(V)) \simeq X^\rtt(\fosp(V))/(c_V(u)-g(u))
  \qquad  \forall\, g(u)\in 1+u^{-1}\BC[[u^{-1}]] \,.
\end{equation}

\begin{Rem}\label{rem:tau-generators}
There is a unique series $\sz_V(u)=1+\sum_{r\geq 1} \sz_ru^{-r}$ with $\sz_r\in \BC[c_1,c_2,\ldots]$ satisfying
\begin{equation}\label{eq:c-squareroot}
  \sz_V(u-\kappa)\sz_V(u)=c_V(u) \,,
\end{equation}
cf.~\cite[Theorem 3.1]{amr}. According to~\eqref{eq:central-c}, the automorphisms $\mu_f$ of~\eqref{eq:mu-automorphims}
map $c_V(u)$ to $f(u)f(u-\kappa)c_V(u)$, hence, $\mu_f(\sz_V(u))=f(u)\sz_V(u)$.
Therefore, the series $\{\tau_{ij}(u)\}_{i,j\in \mathbb{I}}$ defined by
\begin{equation}\label{eq:tau-series}
  \delta_{ij}+\sum_{r\geq 1} \tau^{(r)}_{ij}u^{-r}=\tau_{ij}(u):=\sz_V(u)^{-1}t_{ij}(u)
\end{equation}
are $\mu_f$-invariant, and so their coefficients $\{\tau^{(r)}_{ij}\}_{i,j\in \mathbb{I}}^{r\geq 1}$
belong to $Y^\rtt(\fosp(V))$ of~\eqref{eq:RTT-non-extended}. The corresponding matrix
$\mathcal{T}(u)=(\tau_{ij}(u))_{i,j=1}^{N+2m}$ satisfies the RTT relation~\eqref{eq:RTT relation}
and $\mathcal{T}(u-\kappa)\mathcal{T}^t(u)=\mathrm{Id}$. This clarifies why~\eqref{eq:non-extended-quotient}
is usually stated for $g(u)=1$, cf.~\cite[Corollary 3.2]{amr} for non-super case.
\end{Rem}

Evoking Lemma~\ref{lem:isomorphism of RTT yangians}, we thus immediately obtain:

\begin{Cor}\label{cor:isom-RTT-yangians}
The superalgebra $Y^\rtt(\fosp(V))$ depends only on $\dim(V_{\bar{0}}),\dim(V_{\bar{1}})$, up to an isomorphism.
In particular, $Y^\rtt(\fosp(V))$ is isomorphic to $Y^\rtt(\fosp(N|2m))$ of~\cite{aacfr,m}.
\end{Cor}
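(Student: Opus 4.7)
The plan is to show that the explicit isomorphism $\Phi\colon X^\rtt(\fosp(V)) \iso X^\rtt(\fosp(\tilde V))$ produced in Lemma~\ref{lem:isomorphism of RTT yangians} restricts to an isomorphism between the subalgebras $Y^\rtt(\fosp(V))$ and $Y^\rtt(\fosp(\tilde V))$ as defined by~\eqref{eq:RTT-non-extended}. Since $Y^\rtt(\fosp(V))$ is characterized intrinsically as the subalgebra of $\mu_f$-invariants, it is enough to verify that $\Phi$ intertwines the two families of automorphisms $\{\mu_f\}$ on the two sides; writing $\mu^V_f$ and $\mu^{\tilde V}_f$ to distinguish them, this amounts to the identity $\Phi\circ \mu^V_f = \mu^{\tilde V}_f\circ \Phi$ for every $f(u)\in 1+u^{-1}\BC[[u^{-1}]]$.

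This intertwining is immediate from~\eqref{eq:parity-symmetry} and~\eqref{eq:mu-automorphims}: on the generator $t_{ij}(u)$ both compositions act by $t_{ij}(u)\mapsto f(u)\,t_{\sigma(i),\sigma(j)}(u)$, since $\mu_f$ multiplies every entry of $T(u)$ by the same scalar series $f(u)$ and $\Phi$ merely relabels indices by $\sigma$. Consequently, $\Phi$ sends $\mu_f$-invariants to $\mu_f$-invariants in both directions, so $\Phi(Y^\rtt(\fosp(V)))=Y^\rtt(\fosp(\tilde V))$. Taking $\tilde V$ to be the superspace realizing the standard parity sequence~\eqref{eq:distinguished=standard} then identifies $Y^\rtt(\fosp(V))$ with the RTT Yangian $Y^\rtt(\fosp(N|2m))$ of~\cite{aacfr,m}.

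There is essentially no obstacle here beyond what was already established in Lemma~\ref{lem:isomorphism of RTT yangians}; if one preferred an alternative route, one could instead use the quotient realization~\eqref{eq:non-extended-quotient} after first checking that $\Phi$ maps the central series $c_V(u)$ to $c_{\tilde V}(u)$, a fact that follows directly from the compatibility of the supertransposition~\eqref{eq:T-transposed} with the extended permutation~\eqref{eq:sigma-extended} (which preserves parities, the involution $i\mapsto i'$, and the signs $\theta_i$).
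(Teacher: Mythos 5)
Your proposal is correct and is exactly the argument the paper intends: the paper deduces the corollary "immediately" from Lemma~\ref{lem:isomorphism of RTT yangians}, and your verification that the relabeling isomorphism $\Phi$ intertwines the automorphisms $\mu_f$ (hence restricts to the $\mu_f$-invariant subalgebras of~\eqref{eq:RTT-non-extended}) is precisely the omitted step.
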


\begin{Rem}\label{rem:super-to-nonsuper}
For $m=0$, the assignment $T(u)\mapsto T(u)$ gives rise to isomorphisms
  $$X^\rtt(\fosp(N|0)) \iso X^\rtt(\fso_N) \qquad \mathrm{and} \qquad
    Y^\rtt(\fosp(N|0)) \iso Y^\rtt(\fso_N) \,.$$
For $N=0$, the assignment $T(u)\mapsto T(-u)$ gives rise to isomorphisms
  $$X^\rtt(\fosp(0|2m)) \iso X^\rtt(\fsp_{2m}) \qquad \mathrm{and} \qquad
    Y^\rtt(\fosp(0|2m))\simeq Y^\rtt(\fsp_{2m}) \,.$$
Thus, the orthosymplectic setup generalizes classical $BCD$-types all at once.
\end{Rem}


\subsection{Relation to Lie superalgebras and PBW theorem}
\label{ssec:quasiclassical}
\

For $i,j\in \mathbb{I}$, define $\hat{t}^{(1)}_{ij}:=(-1)^{\ol{i}}\, t^{(1)}_{ij}$.
Their commutation relations
\begin{equation*}
  [\hat{t}^{(1)}_{ij},\hat{t}^{(1)}_{k\ell}] =
  \delta_{kj} \hat{t}^{(1)}_{i \ell} -
  \delta_{\ell i} (-1)^{(\ol{i}+\ol{j})(\ol{k}+\ol{\ell})}\, \hat{t}^{(1)}_{kj} -
  \delta_{k i'} (-1)^{\ol{i}\cdot \ol{j}+\ol{i}}\theta_i\theta_j\, \hat{t}^{(1)}_{j' \ell} +
  \delta_{\ell j'} (-1)^{\ol{i}\cdot \ol{k} + \ol{\ell}\cdot \ol{k}} \theta_{i'}\theta_{j'}\, \hat{t}^{(1)}_{k i'}
\end{equation*}
follow immediately by evaluating the $u^{-1}v^{-1}$-coefficients in the defining relation~\eqref{eq:RTT-termwise}.
On the other hand, comparing the $(i,j)$ matrix coefficients of both sides of~\eqref{eq:central-c}, we also obtain:
\begin{equation*}
  \hat{t}^{(1)}_{j'i'} =
  -(-1)^{\ol{i}\cdot \ol{j}+\ol{i}} \theta_i\theta_j\, \hat{t}^{(1)}_{ij} \quad \forall\, i\ne j \,, \qquad
  \hat{t}^{(1)}_{i'i'}+\hat{t}^{(1)}_{ii}=(-1)^{\ol{i}} c_1 \,,
\end{equation*}
where $c_1$ is the coefficient of $u^{-1}$ in $c_V(u)$ from~\eqref{eq:central-c}.
Thus, we get an algebra homomorphism
\begin{equation}\label{eq:iota}
  \iota\colon
  U(\fosp(V)\oplus \BC\cdot \sfc) \longrightarrow X^\rtt(\fosp(V)) \quad \mathrm{given\ by} \quad
  \sfc\mapsto c_1 \,,\, F_{ij}\mapsto \hat{t}^{(1)}_{ij} - \sfrac{(-1)^{\ol{i}}}{2}\delta_{ij}c_1 \,.
\end{equation}
In fact, the homomorphism $\iota$ of~\eqref{eq:iota} is a superalgebra embedding, due to the
Poincar\'{e}-Birkhoff-Witt (PBW) theorem for the RTT extended orthosymplectic Yangians that we recall next.

To this end, let us endow the RTT extended Yangian $X^\rtt(\fosp(V))$ with a filtration defined via
\begin{equation}\label{eq:gradings}
  \deg\, t^{(r)}_{ij}=r-1 \qquad \forall\ i,j\in \mathbb{I} \,,\, r\geq 1 \,.
\end{equation}
Let $\Gr\, X^\rtt(\fosp(V))$ denote the associated graded algebra with respect to this filtration.
For any element $x\in X^\rtt(\fosp(V))$, we use $\wt{x}$ to denote its image in $\Gr\, X^\rtt(\fosp(V))$.
In particular, $\wt{t}^{(r)}_{ij}$ and $\wt{c}_r$ will be the images of $t^{(r)}_{ij}$ and $c_r$
(the coefficient of $u^{-r}$ in $c_V(u)$ from~\eqref{eq:central-c}) in the $(r-1)$-th component of
$\Gr\, X^\rtt(\fosp(V))$. Due to~\eqref{eq:RTT-termwise}, we have a superalgebra homomorphism
\begin{equation}\label{eq:pi-map}
\begin{split}
  & \pi\colon \Gr\, X^\rtt(\fosp(V)) \longrightarrow U(\fosp(V)[\lt])\otimes \BC[\sfc_1,\sfc_2,\dots] \\
  & \quad \mathrm{given\ by} \quad
    \wt{t}^{(r)}_{ij} \mapsto (-1)^{\ol{i}}\, F_{ij} \lt^{r-1} + \sfrac{1}{2}\delta_{ij}\sfc_r
\end{split}
\end{equation}
with $\pi(\wt{c}_r)=\sfc_r$. The following result was stated first in~\cite{aacfr} and proved recently in~\cite{gk}:

\begin{Prop}\label{prop:assoc.graded}
(a) The homomorphism $\pi$ of~\eqref{eq:pi-map} is actually an isomorphism, that is
\begin{equation}\label{eq:quasiclassical-limit-1}
  \Gr\, X^\rtt(\fosp(V)) \simeq U(\fosp(V)[\lt])\otimes \BC[\sfc_1,\sfc_2,\dots] \,.
\end{equation}

\noindent
(b) Endowing the subalgebra $Y^\rtt(\fosp(V))$ of $X^\rtt(\fosp(V))$ with the induced filtration, we have
\begin{equation}\label{eq:quasiclassical-limit-nonextended-2}
  \Gr\, Y^\rtt(\fosp(V)) \,\iso\, U(\fosp(V)[\lt]) \quad \mathrm{via}\ \quad
  \wt{\tau}^{(r)}_{ij} \mapsto (-1)^{\ol{i}}\, F_{ij} \lt^{r-1} \,.
\end{equation}
\end{Prop}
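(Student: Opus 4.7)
The plan is to reduce the general-parity statement to the standard-parity case, where (a) was stated in~\cite{aacfr} and proved in~\cite{gk}, and then to deduce (b) from (a) together with the tensor product decomposition~\eqref{eq:extended-vs-nonextended}.

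For (a), the map $\pi$ is manifestly a well-defined superalgebra homomorphism: extracting the $u^{-r}v^{-s}$-coefficient of the defining relation~\eqref{eq:RTT-termwise} and passing to the leading filtered degree produces exactly the commutator of $(-1)^{\ol{i}} F_{ij}\lt^{r-1}$ with $(-1)^{\ol{k}} F_{k\ell}\lt^{s-1}$ prescribed by~\eqref{eq:F-commutation}, together with the centrality of each $\wt{c}_r$ (the two bad-looking terms proportional to $(u-v-\kappa)^{-1}$ in~\eqref{eq:RTT-termwise} contribute only in lower filtered degree, since $\kappa$ is a scalar). Surjectivity is immediate since $F_{ij}\lt^{r-1}$ and $\sfc_r$ lie in the image. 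For injectivity, invoke the isomorphism~\eqref{eq:parity-symmetry} from Lemma~\ref{lem:isomorphism of RTT yangians}: it preserves the filtration~\eqref{eq:gradings} by construction and therefore induces an isomorphism $\Gr X^\rtt(\fosp(V))\iso \Gr X^\rtt(\fosp(\tilde V))$ which intertwines $\pi$ with its analogue $\tilde\pi$ for $\fosp(N|2m)$ (standard parity), after permuting the basis of $\fosp(V)$ according to the chosen $\sigma\in S(\lfloor N/2\rfloor+m)$. Bijectivity of $\pi$ is thus equivalent for any two parity sequences with the same $(\dim V_{\bar 0},\dim V_{\bar 1})$, so the PBW-type statement of~\cite{gk} in standard parity yields the general case.

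For (b), equip the subalgebra $Y^\rtt(\fosp(V))\subset X^\rtt(\fosp(V))$ with the induced filtration and use the $\mu_f$-invariant generators $\tau^{(r)}_{ij}$ of Remark~\ref{rem:tau-generators}. The relation~\eqref{eq:c-squareroot} is solved inductively by $\sz_V(u)=1+\sum_{r\geq 1}\sz_r u^{-r}$ with $2\sz_r\equiv c_r$ modulo products of lower-index $c_s$'s; consequently the leading filtered component of $\sz_V(u)^{-1}$ at order $u^{-r}$ equals $-\sfrac{1}{2}\wt c_r$ in $\Gr X^\rtt(\fosp(V))$, and so the leading component of $\wt\tau^{(r)}_{ij}$ is $\wt t^{(r)}_{ij}-\sfrac{1}{2}\delta_{ij}\wt c_r$. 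Under $\pi$ from part (a), this maps precisely to $(-1)^{\ol{i}} F_{ij}\lt^{r-1}$, exhibiting the prescribed map. Combining with the tensor decomposition~\eqref{eq:extended-vs-nonextended} (which respects the filtration, since $ZX^\rtt(\fosp(V))$ is polynomial in the $c_r$ of degree $r-1$) and taking associated graded recovers~\eqref{eq:quasiclassical-limit-nonextended-2}.

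The main obstacle is the injectivity of $\pi$ in part (a): this is the genuine PBW content, and the only available proof at present proceeds through the explicit standard-parity arguments of~\cite{gk}. All remaining steps are essentially bookkeeping to transfer this conclusion along the parity-symmetry isomorphism of Lemma~\ref{lem:isomorphism of RTT yangians} and to separate off the central factor so as to pass from $X^\rtt$ to $Y^\rtt$.
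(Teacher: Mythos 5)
Your overall strategy coincides with the paper's: the proposition is not proved in the text but attributed to~\cite{gk} (with the statement going back to~\cite{aacfr}), and the passage from the standard parity sequence to a general one is exactly the $S(\lfloor N/2\rfloor+m)$-symmetry of Lemma~\ref{lem:isomorphism of RTT yangians}, which preserves the filtration~\eqref{eq:gradings} and intertwines $\pi$ with its standard-parity analogue. Your part (b) is also the intended argument: the leading term of $\wt{\tau}^{(r)}_{ij}$ is $\wt{t}^{(r)}_{ij}-\sfrac{1}{2}\delta_{ij}\wt{c}_r$, and the filtered tensor decomposition~\eqref{eq:extended-vs-nonextended} separates off $\BC[\sfc_1,\sfc_2,\dots]$.

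There is, however, one concretely false step in your verification that $\pi$ is a homomorphism. You assert that the two terms of~\eqref{eq:RTT-termwise} proportional to $(u-v-\kappa)^{-1}$ ``contribute only in lower filtered degree.'' They do not: in the sums $\sum_p t_{pj}(u)t_{p'\ell}(v)$ and $\sum_p t_{kp'}(v)t_{ip}(u)$ the summands with $p=j$ (resp.\ $p'=k$) contain the factor $t^{(0)}_{jj}=1$, so the $u^{-r}v^{-s}$-coefficient picks up a single generator $t^{(r+s-1)}_{j'\ell}$ (resp.\ $t^{(r+s-1)}_{ki'}$) of top degree $r+s-2$. These are precisely the third and fourth summands of~\eqref{eq:F-commutation}; discarding them would land you in $U(\gl(V)[\lt])$ rather than $U(\fosp(V)[\lt])$, contradicting your own claim that the leading term reproduces~\eqref{eq:F-commutation}. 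What \emph{is} true, and perhaps what you meant, is that replacing $(u-v-\kappa)^{-1}$ by $(u-v)^{-1}$ only changes lower-degree contributions, i.e.\ the scalar $\kappa$ in the denominator is invisible at the leading filtered level. With that correction the rest of your argument goes through.
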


\begin{Rem}
Considering $Y^\rtt(\fosp(V))$ rather as the filtered quotient $X^\rtt(\fosp(V))/(c_1,c_2,\dots)$,
see Remark~\ref{rem:tau-generators}, we can recast~\eqref{eq:quasiclassical-limit-nonextended-2}
in the following form (which does not involve $\tau$-generators):
\begin{equation}\label{eq:quasiclassical-limit-nonextended}
  \Gr\, Y^\rtt(\fosp(V)) \,\iso\, U(\fosp(V)[\lt]) \quad \mathrm{via}\ \quad
  \wt{t}^{(r)}_{ij} \mapsto (-1)^{\ol{i}}\, F_{ij} \lt^{r-1} \,.
\end{equation}
\end{Rem}

As a direct corollary, one obtains the PBW theorem for the RTT orthosymplectic Yangians:

\begin{Cor}\label{cor:pbw-thm}
The algebra $X^\rtt(\fosp(V))$ (respectively $Y^\rtt(\fosp(V))$) is generated by the elements $t^{(r)}_{ij}$
and $c_r$ (respectively elements $\tau^{(r)}_{ij}$) with the conditions $i+j\leq N+2m+|v_i|$, $r\geq 1$.
Moreover, given any total order on the set of these generators, the ordered monomials, with the powers of odd
generators not exceeding $1$, form a basis of the algebra $X^\rtt(\fosp(V))$ (respectively $Y^\rtt(\fosp(V))$).
\end{Cor}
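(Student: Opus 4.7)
The plan is to deduce the PBW theorem directly from the associated-graded isomorphism~\eqref{eq:quasiclassical-limit-1} of Proposition~\ref{prop:assoc.graded}, combined with the classical PBW theorem for universal enveloping superalgebras.

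First, I would verify that the index set $\{(i,j)\mid i+j\leq N+2m+|v_i|\}$ matches the basis~\eqref{eq:F-basis} of $\fosp(V)$: the inequality $i+j\leq N+2m$ coincides with $i+j<N+2m+1$ when $|v_i|=\bar{0}$, while for $|v_i|=\bar{1}$ the looser bound $i+j\leq N+2m+1$ additionally admits $j=i'$, thereby recovering the elements $F_{ii'}$ from the second set in~\eqref{eq:F-basis}. Consequently, the vectors $\{F_{ij}\lt^{r-1}\}$ with these index constraints and $r\geq 1$ constitute a homogeneous basis of $\fosp(V)[\lt]$ as a super vector space. By the PBW theorem for universal enveloping algebras of Lie superalgebras, the ordered monomials in these basis vectors (with each odd one appearing to a power at most $1$) form a $\BC$-basis of $U(\fosp(V)[\lt])$, and adjoining the polynomial generators $\sfc_r$ produces a $\BC$-basis of $U(\fosp(V)[\lt])\otimes \BC[\sfc_1,\sfc_2,\ldots]$.

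Next I would lift this back to $X^\rtt(\fosp(V))$. Under the isomorphism~\eqref{eq:pi-map}, the image of $t^{(r)}_{ij}$ in the $(r-1)$-st graded piece is $(-1)^{\ol{i}}F_{ij}\lt^{r-1}+\tfrac{1}{2}\delta_{ij}\sfc_r$, while the image of $c_r$ is $\sfc_r$. Hence the image of any ordered monomial in the restricted $t^{(r)}_{ij}$ and in the $c_r$ is, modulo strictly smaller filtration degree, an ordered monomial in the corresponding $F_{ij}\lt^{r-1}$ and $\sfc_r$ (up to an explicit sign). Linear independence of these top symbols, established above, implies linear independence of the ordered monomials in $X^\rtt(\fosp(V))$. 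For generation, let $A\subseteq X^\rtt(\fosp(V))$ denote the subalgebra generated by $\{t^{(r)}_{ij}\}_{i+j\leq N+2m+|v_i|,\,r\geq 1}$ together with $\{c_r\}_{r\geq 1}$; then $\Gr A$ contains all the generators $F_{ij}\lt^{r-1}$ (for the basis indices) and $\sfc_r$ of $\Gr X^\rtt(\fosp(V))$, hence equals the whole associated graded, and a standard induction on filtration degree using separatedness of the filtration~\eqref{eq:gradings} gives $A=X^\rtt(\fosp(V))$.

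For the non-extended algebra $Y^\rtt(\fosp(V))$, the argument is identical after replacing~\eqref{eq:quasiclassical-limit-1} by~\eqref{eq:quasiclassical-limit-nonextended-2}, using the $\tau$-generators of~\eqref{eq:tau-series} in place of the $t$-generators, and dropping the central series. Modulo Proposition~\ref{prop:assoc.graded}, which carries all the genuine content, this is a routine filtered-vs-graded argument; the only point requiring care is the initial case-analysis matching the bound $i+j\leq N+2m+|v_i|$ with~\eqref{eq:F-basis}, and no real obstacle is expected beyond that bookkeeping.
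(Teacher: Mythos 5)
Your proposal is correct and is precisely the standard filtered-versus-graded argument that the paper intends when it states the corollary as a ``direct'' consequence of Proposition~\ref{prop:assoc.graded} (the paper supplies no further details): one matches the index set $i+j\leq N+2m+|v_i|$ with a basis of $\fosp(V)$, applies the classical PBW theorem to $U(\fosp(V)[\lt])\otimes\BC[\sfc_1,\sfc_2,\dots]$, and lifts linear independence and generation through the filtration~\eqref{eq:gradings}. The only point worth phrasing a bit more carefully is that the symbol of a diagonal generator $t^{(r)}_{ii}$ is $(-1)^{\ol{i}}F_{ii}\lt^{r-1}+\tfrac{1}{2}\sfc_r$ rather than a basis element up to sign, but this triangular change of generators clearly preserves the PBW property, so your argument goes through.
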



\subsection{Gauss decomposition and rank reduction}
\label{ssec:key-embedding}
\

To derive the Drinfeld realization of $X^\rtt(\fosp(V))$ and subsequently of $Y^\rtt(\fosp(V))$,
we consider the Gauss decomposition of the generator matrix $T(u)$ from~\eqref{eq:Tmatrix}:
\begin{equation}\label{eq:gauss-osp}
  T(u)=F(u)\cdot H(u)\cdot E(u) \,.
\end{equation}
Here, $H(u),F(u),E(u)$ are the diagonal, lower-triangular, and upper-triangular matrices
\begin{equation}\label{eq:HFE-matrices}
\begin{split}
  & H(u)=\mathrm{diag}\Big(h_1(u),h_2(u),\ldots,h_{2'}(u),h_{1'}(u)\Big) \,, \\
  & F(u)=\left(\begin{array}{cccc}
        1 & 0 & \cdots & 0 \\
        f_{21}(u) & 1 & \cdots & 0 \\
        \vdots & \vdots & \ddots & \vdots \\
        f_{1'1}(u) & f_{1'2}(u) & \cdots  & 1
       \end{array}\right) \,, \qquad
    E(u)=\left(\begin{array}{cccc}
        1 & e_{12}(u) & \cdots & e_{11'}(u) \\
        0 & 1 & \cdots & e_{21'}(u) \\
        \vdots & \vdots & \ddots & \vdots \\
        0 & 0 & \cdots & 1
       \end{array}\right) \,,
\end{split}
\end{equation}
with $h_{\imath}(u),f_{ji}(u),e_{ij}(u)\in X^\rtt(\fosp(V))[[u^{-1}]]$ for $1 \leq \imath\leq N+2m$
and $1\leq i<j\leq N+2m$, cf.~Remark~\ref{sign explanation}. Define the elements
  $\{h^{(r)}_\imath, e^{(r)}_{ij}, f^{(r)}_{ji}\}_{1\leq \imath,i,j\leq N+2m, i<j}^{r\geq 1}$
of $X^\rtt(\fosp(V))$ via
\begin{equation*}
  e_{ij}(u)=\sum_{r\geq 1} e^{(r)}_{ij} u^{-r} \,,\quad
  f_{ji}(u)=\sum_{r\geq 1} f^{(r)}_{ji} u^{-r} \,,\quad
  h_{\imath}(u)=1+\sum_{r\geq 1} h^{(r)}_{\imath} u^{-r} \,.
\end{equation*}
In particular, we have $h_1(u)=t_{11}(u)$, $f_{i1}(u)=t_{i1}(u)t_{11}(u)^{-1}$,
$e_{1i}(u)=t_{11}(u)^{-1}t_{1i}(u)$ for $i>1$.

\begin{Rem}\label{rem:tau-efh}
Completely analogously to~\cite[Lemma 4.1]{m}, one proves by induction that
\begin{equation}\label{eq:tau-efh}
  \tau\colon
  e_{ij}(u) \mapsto (-1)^{\ol{i}\cdot \ol{j} + \ol{j}}\, f_{ji}(u) \,,\quad
  f_{ji}(u) \mapsto (-1)^{\ol{i}\cdot \ol{j} + \ol{i}}\, e_{ij}(u) \,,\quad
  h_{\imath}(u) \mapsto h_\imath(u)
\end{equation}
for $1\leq i<j\leq 1'$, $1\leq \imath\leq 1'$, where $\tau$ is the
anti-automorphism of $X^\rtt(\fosp(V))$ given by~\eqref{eq:tau-t}.
\end{Rem}

One of our \underline{main results} is the Drinfeld realization of $X^\rtt(\fosp(V))$, with the generators
\begin{equation}\label{eq:rtt-generators}
  \Big\{ h_\imath^{(r)}, e_i^{(r)}, f_i^{(r)} \,\Big|\,
  1\leq i\leq \lfloor \sfrac{N}{2} \rfloor+m,
  1\leq \imath \leq \lfloor \sfrac{N}{2} \rfloor+m+1, r\geq 1 \Big\}
\end{equation}
and an explicit collection of the defining relations, where:
\begin{equation*}
\begin{split}
  & e^{(r)}_i=e^{(r)}_{i,i+1} \,,\quad f^{(r)}_i=f^{(r)}_{i+1,i}
    \qquad \mathrm{for}\quad 1\leq i < \lfloor \sfrac{N}{2} \rfloor+m \, \\
  & \begin{cases}
      e^{(r)}_{n+m}=e^{(r)}_{n+m-1,n+m+1} \,,\, f^{(r)}_{n+m}=f^{(r)}_{n+m+1,n+m-1}
        & \mathrm{if}\ N=2n \,,\, \ol{n+m}=\bar{0} \\
      e^{(r)}_{n+m}=e^{(r)}_{n+m,n+m+1} \,,\, f^{(r)}_{n+m}=f^{(r)}_{n+m+1,n+m}
        & \mathrm{if}\ N=2n+1\ \mathrm{or}\ N=2n \,,\, \ol{n+m}=\bar{1}
    \end{cases} \,.
\end{split}
\end{equation*}
We shall use the corresponding generating series $e_i(u),f_i(u)$ defined via
\begin{equation}\label{eq:hfe-generating}
  e_i(u)=\sum_{r\geq 1} e_i^{(r)} u^{-r} \,, \qquad  f_i(u)=\sum_{r\geq 1} f_i^{(r)} u^{-r}
  \qquad \forall\ 1\leq i\leq \lfloor \sfrac{N}{2} \rfloor+m\,.
\end{equation}
The fact that the elements above generate $X^\rtt(\fosp(V))$ is straightforward, see explicit formulas in
Subsections~\ref{ssec:e-currents}--\ref{ssec:c-current}. The aforementioned relations will be read off from
the super $A$-type of~\cite{p,t} (recalled in Subsection~\ref{ssec:RTT gl-Yangian}) as well as rank $\leq 2$
cases, carried out case-by-case in Subsections~\ref{ssec:rank-1}--\ref{ssec:rank-2}. Finally, the proof that
these relations are indeed defining will proceed in the standard way by passing through the associated graded
algebras, see the proof of Theorem~\ref{thm:Main-Theorem-ext}.

\medskip
Let us now introduce the \underline{key ingredient} that will be used through the rest of this paper:
\begin{equation*}
  \mathrm{\textbf{rank\ reduction}\ embeddings} \quad
  \psi_{V,s}\colon X^\rtt(\fosp(V^{[s]})) \hookrightarrow X^\rtt(\fosp(V)) \,.
\end{equation*}
For $1\leq s\leq \lfloor \sfrac{N-1}{2} \rfloor+m$, let $V^{[s]}$ denote the following subspace
of the superspace $V$:
\begin{equation}\label{eq:V truncated}
  V^{[s]}=\mathrm{span}\, \big\{ v_i \,\big|\, s<i<s' \big\} \,.
\end{equation}
Let $X^\rtt(\fosp(V^{[s]}))$ denote the corresponding RTT extended orthosymplectic Yangian, defined via
the RTT relation using the corresponding $R$-matrix $R^{[s]}(u)$, cf.~\eqref{eq:osp-Rmatrix}. To define
the latter, we use the operators $P^{[s]},Q^{[s]}\in \End\,V^{[s]}\otimes \End\,V^{[s]}$ given by the
formulas alike~(\ref{eq:P},~\ref{eq:Q}) but with the indices $s<i,j<s'$ in the summations, while the
associated constant $\kappa^{[s]}$ is easily seen to be related to $\kappa$ of~\eqref{eq:kappa} via:
\begin{equation}\label{eq:kappa truncated}
  \kappa^{[s]}=\kappa-\sum_{i=1}^{s} (-1)^{\ol{i}} \,.
\end{equation}
We also consider the following $(N+2m-2s)\times (N+2m-2s)$ submatrices of~\eqref{eq:HFE-matrices}:
\begin{equation}\label{eq:diagonal truncated}
  H^{[s]}(u)=
  \left(\begin{array}{cccc}
        h_{s+1}(u) & 0 & \cdots & 0 \\
        0 & h_{s+2}(u) & \cdots & 0 \\
        \vdots & \vdots & \ddots & \vdots \\
        0 & 0 & \cdots & h_{(s+1)'}(u)
  \end{array}\right) \,,
\end{equation}
\begin{equation}\label{eq:lower truncated}
  F^{[s]}(u)=
  \left(\begin{array}{cccc}
        1 & 0 & \cdots & 0 \\
        f_{s+2,s+1}(u) & 1 & \cdots & 0 \\
        \vdots & \vdots & \ddots & \vdots \\
        f_{(s+1)',s+1}(u) & f_{(s+1)',s+2}(u) & \cdots & 1
  \end{array}\right) \,,
\end{equation}
\begin{equation}\label{eq:upper truncated}
  E^{[s]}(u)=
  \left(\begin{array}{cccc}
        1 & e_{s+1,s+2}(u) & \cdots & e_{s+1,(s+1)'}(u) \\
        0 & 1 & \cdots & e_{s+2,(s+1)'}(u) \\
        \vdots & \vdots & \ddots & \vdots \\
        0 & 0 & \cdots & 1
  \end{array}\right) \,,
\end{equation}
and define
\begin{equation}\label{eq:T truncated}
  T^{[s]}(u):=F^{[s]}(u)\cdot H^{[s]}(u)\cdot E^{[s]}(u) \,.
\end{equation}
Accordingly, the entries of the matrix $T^{[s]}(u)$ will be denoted by $t^{[s]}_{ij}(u)$ with $s<i,j<s'$.

Generalizing~\cite[Theorem 3.1, Proposition 4.1]{jlm} for non-super case (RTT extended
orthogonal/symplectic Yangians) and~\cite[Theorem~3.1, Proposition~4.2]{m} for $N\geq 3$ and the standard
parity sequence~\eqref{eq:distinguished=standard}, we have the following powerful \emph{rank reduction}:

\begin{Thm}\label{thm:embedding}
The assignment $T_{V^{[s]}}(u)\mapsto T^{[s]}_V(u)$ gives rise to a superalgebra embedding
\begin{equation}\label{eq:psi-embed}
  \psi_{V,s}\colon  X^\rtt(\fosp(V^{[s]})) \hookrightarrow X^\rtt(\fosp(V)) \,,
\end{equation}
where we use indices $V^{[s]}$ and $V$ solely to distinguish the corresponding generator $T$-matrices.
\end{Thm}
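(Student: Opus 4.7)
The plan is to adapt the two-step structure of \cite[\S 3]{m} to arbitrary parity sequences: first establish that the assignment $T_{V^{[s]}}(u) \mapsto T^{[s]}_V(u)$ defines an algebra homomorphism, then promote this to injectivity by a PBW comparison of associated graded algebras. For the homomorphism step, I would reduce to the case $s=1$ by iteration, since $V^{[s+1]}$ is obtained from $V^{[s]}$ by peeling off one further outer pair of basis vectors, and the nested Gauss decomposition \eqref{eq:diagonal truncated}--\eqref{eq:T truncated} is manifestly compatible with this nesting (the composition $\psi_{V^{[1]},s-1} \circ \psi_{V,1}^{-1}\circ \psi_{V,1}$ matches $\psi_{V,s}$ on generators).

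For $s=1$, the heart of the proof is to verify that the entries $t^{[1]}_{ij}(u)$ of $T^{[1]}(u) = F^{[1]}(u) H^{[1]}(u) E^{[1]}(u)$, for indices $1 < i,j < 1'$, satisfy the termwise RTT relations of the form~\eqref{eq:RTT-termwise}, but with the summation range restricted to $1 < p < 1'$ and with the constant $\kappa$ replaced by $\kappa^{[1]} = \kappa - (-1)^{\ol{1}}$. To this end, I would express each $t^{[1]}_{ij}(u)$ in terms of the original generators $t_{ab}(u)$ through the standard quasi-determinantal formulas for the Gauss decomposition~\eqref{eq:gauss-osp}, then compute $[t^{[1]}_{ij}(u), t^{[1]}_{k\ell}(v)]$ directly by repeated application of the original relation~\eqref{eq:RTT-termwise}, carefully tracking the superalgebra signs $(-1)^{\ol{a}\cdot \ol{b}}$ and the $\theta_a$-factors dictated by~\eqref{eq:theta} and~\eqref{eq:Q}.

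The main obstacle is precisely this computation. In the non-super setting of \cite{jlm} one may invoke the fusion of $R(u)$ at an integer point, but that route is unavailable here since $R(u)$ in~\eqref{eq:osp-Rmatrix} can genuinely be singular at $u=1$ for certain parity sequences (this is exactly the point flagged in the introduction). I therefore intend to upgrade the direct approach of \cite[\S 3]{m}: establish the "boundary" identities expressing $[t^{[1]}_{ij}(u), t_{1,b}(v)]$ and $[t^{[1]}_{ij}(u), t_{1',b}(v)]$ (and their transposes) as combinations that only involve $t^{[1]}_{k\ell}$'s up to scalar corrections. The telescoping of contributions from the excised indices $a\in\{1,1'\}$ in the $Q$-term of~\eqref{eq:RTT-termwise} should produce exactly the shift $\kappa \mapsto \kappa^{[1]}$, the crucial sign $(-1)^{\ol{1}}$ emerging from the supertrace-like sum $\theta_1\theta_{1'} = (-1)^{\ol{1}}$. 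The arguments of \cite[\S 3]{m} were presented for the standard parity sequence~\eqref{eq:distinguished=standard}, but inspection shows that none of their core identities uses the specific ordering of parities, so they extend verbatim once the signs and $\theta_i$'s are tracked uniformly.

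Granted the homomorphism property, injectivity is a standard associated graded argument. Endow both $X^\rtt(\fosp(V^{[s]}))$ and $X^\rtt(\fosp(V))$ with the filtration~\eqref{eq:gradings}; the Gauss decomposition formulas show that $\psi_{V,s}$ is filtered and that, modulo lower degree, it sends $\wt{t}^{[s],(r)}_{ij} \mapsto \wt{t}^{(r)}_{ij}$ for all $s<i,j<s'$. Under the isomorphism of Proposition~\ref{prop:assoc.graded}, the induced map on associated graded algebras is identified with the natural embedding $U(\fosp(V^{[s]})[\lt]) \hookrightarrow U(\fosp(V)[\lt])$ (extended by a polynomial map on the central subalgebras) arising from the Lie subalgebra inclusion $\fosp(V^{[s]}) \subset \fosp(V)$. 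Injectivity of this map is immediate from PBW, hence so is injectivity of $\psi_{V,s}$.
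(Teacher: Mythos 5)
Your proposal follows essentially the same route as the paper's proof: reduction to $s=1$ via the tower property $\psi_{V,s}\circ\psi_{V^{[s]},t}=\psi_{V,s+t}$, adaptation of the core computation of \cite[\S3]{m} with the shift $\kappa\mapsto\kappa^{[1]}=\kappa-(-1)^{\ol{1}}$ emerging from the excised indices in the $Q$-term, and injectivity via the associated graded comparison using Proposition~\ref{prop:assoc.graded}(a). The one caution is that the extension of \cite{m} is not quite \emph{verbatim} for even $v_1$: the paper must redefine the auxiliary operators $K^\pm,\check{K}^\pm$ and replace $u-v-\kappa-1$ by $u-v-\kappa+1$ throughout, with the final vanishing of the obstruction term resting on $[h_1(u),h_1(v)]=0$ and $h_{1'}(v)=c_V(v+\kappa)h_1(v+\kappa)^{-1}$ — but this is precisely the sign- and $\theta$-tracking you anticipate.
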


\begin{Rem}\label{rem:psi-comment}
(a) First, we note that all $\psi_{V,s}$ can be constructed as compositions of various $\psi_{V^{[?]},1}$.
This is based on the following natural compatibility between the maps~\eqref{eq:psi-embed}:
\begin{equation}\label{eq:psi-tower}
  \psi_{V,s} \circ \psi_{V^{[s]},t} = \psi_{V,s+t} \colon
  X^\rtt(\fosp(V^{[s+t]})) \longrightarrow X^\rtt(\fosp(V)) \,.
\end{equation}

\noindent
(b) The proof of~\cite[Theorem 3.1]{m} establishes Theorem~\ref{thm:embedding} for odd $v_1$
(we note that while the author considers the standard parity~\eqref{eq:distinguished=standard},
the proof of~\cite[Theorem 3.1]{m} only uses $|v_1|=\bar{1}$).

\medskip
\noindent
(c) As noted in~\cite{m}, the proof for the RTT extended orthogonal/symplectic Yangians from~\cite{jlm}
cannot be fully extended to the present setup since the value $R(1)$ is not always well-defined.
\end{Rem}

\begin{proof}[Proof of Theorem~\ref{thm:embedding}]
As follows from Remark~\ref{rem:psi-comment}(a), it suffices to show that $\psi_{V,1}$ is a
superalgebra embedding. The key is to show that it is a superalgebra homomorphism
(to verify its injectivity, it suffices to show that the associated graded
  $\Gr\, \psi_{V,1}\colon \Gr\, X^\rtt(\fosp(V^{[1]})) \to \Gr\, X^\rtt(\fosp(V))$
is injective, which follows from Proposition~\ref{prop:assoc.graded}(a) as in~\cite[Proof of Theorem 3.1]{jlm}).

To prove that $T_{V^{[1]}}(u)\mapsto T^{[1]}_V(u)$ gives rise to a superalgebra homomorphism we consider
two cases depending on the first element of the parity sequence $\Parity$. If $v_1$ is odd (i.e.\ $\Parity$
starts with~$\bar{1}$), then the proof is already contained in~\cite{m}, see Remark~\ref{rem:psi-comment}(b).
The case of even $v_1$ is treated completely similarly, so we shall only identify the key changes in
the respective formulas of~\cite{m}:
\begin{itemize}

\item[$\circ$]
The $\ol{R}(u)$ of~\cite{m} is now given by $\ol{R}(u)=1-\frac{\ol{P}}{u}+\frac{\ol{Q}}{u-\kappa+1}$,
where $\ol{P}=P^{[1]}$ and $\ol{Q}=Q^{[1]}$.

\item[$\circ$]
The operators $K^\pm, \check{K}^\pm \in \End\,V\otimes \End\,V$ of~\cite{m} are now defined as follows:
\begin{equation*}
  K^+=\sum_{i=2}^{2'} \theta_i\, e_{i1}\otimes e_{i'1'} \,,\,
  \check{K}^+=\sum_{i=2}^{2'} \theta_i\, e_{1i}\otimes e_{1'i'} \,,\,
  K^-=\sum_{i=2}^{2'} \theta_i\, e_{i1'}\otimes e_{i'1} \,,\,
  \check{K}^-=\sum_{i=2}^{2'} \theta_i\, e_{1'i}\otimes e_{1i'} \,.
\end{equation*}
Then, the operators $K=K^+ + K^-$ and $\check{K}=\check{K}^+ + \check{K}^-$
still satisfy~\cite[(3.7)--(3.8)]{m}.

\item[$\circ$]
The formula after (3.8) in~\cite{m} shall now read as
\begin{multline*}
  KT_1(u)T_2(v)=
  -\frac{1}{u-v-\kappa+1}\,\ol{Q}T_1(u)T_2(v)+\frac{(u-v+1)(u-v-\kappa)}{(u-v)(u-v-\kappa+1)}\,K^-T_1(u)T_2(v) \, + \\
  \frac{u-v-\kappa}{u-v-\kappa+1}\,K^+T_2(v)T_1(u)R(u-v) \,,
\end{multline*}
while its companion will be
\begin{multline*}
  T_2(v)T_1(u)\check{K}=
  -\frac{1}{u-v-\kappa+1}\,T_2(v)T_1(u)\ol{Q}+\frac{(u-v+1)(u-v-\kappa)}{(u-v)(u-v-\kappa+1)}\,T_2(v)T_1(u)\check{K}^- + \\
  \frac{u-v-\kappa}{u-v-\kappa+1}\,R(u-v)T_1(u)T_2(v)\check{K}^+ \,.
\end{multline*}
Plugging these formulas into~\cite[(3.8)]{m} and rearranging terms, we get the same formula
as in the middle of p.~9 in~\cite{m}, but with $u-v-\kappa+1$ used instead of $u-v-\kappa-1$.

\item[$\circ$]
Using the equalities
  $I_1I_2K^\pm=K^\pm$, $\check{K}^\pm I_1I_2=\check{K}^\pm$, $I_1I_2\wt{P}=\ol{P}=\wt{P}I_1I_2$,
as well as
  $$K^-T_1(u)T_2(v)J_1J_2\ol{T}_2(v)^{-1}\ol{T}_1(u)^{-1}I_1I_2=0 \,,\quad
    I_1I_2\ol{T}_1(u)^{-1}\ol{T}_2(v)^{-1}J_1J_2T_2(v)T_1(u)\check{K}^-=0 \,,$$
we see that the expression of~\cite[(3.9)]{m} still equals that of~\cite[(3.10)]{m},
but with $u-v-\kappa+1$ in place of $u-v-\kappa-1$.

\item[$\circ$]
The expression of~\cite[(3.10)]{m} can be written in the same way using~\cite[(3.12)]{m}
and its companion. Thus, the expression from~\cite[(3.9)]{m} equals
  $-\frac{G_1(u)G_2(v)WG_2(v)G_1(u)}{(u-v-\kappa)(u-v-\kappa+1)}$
with $W$ as in~\cite{m}, so that the only difference is in using $u-v-\kappa+1$ instead of $u-v-\kappa-1$.

\item[$\circ$]
Arguing as in~\cite{m}, we get:
\begin{equation*}
  W=K^+[t_{11}(u),h_{1'}(v)]\check{K}^+=K^+[h_1(u),h_{1'}(v)]\check{K}^+=0 \,.
\end{equation*}
Here, the last equality follows from the identity $h_{1'}(v)=c_V(v+\kappa)h_1(v+\kappa)^{-1}$ established
in~\eqref{eq:mat.coef.4} below and the commutativity $[h_1(u),h_1(v)]=0$ which is a direct consequence of
the formula~\eqref{eq:RTT-termwise} applied to $[t_{11}(u),t_{11}(v)]$ (alternatively, it can be derived
from the super $A$-type reduction of Subsection~\ref{ssec:RTT gl-Yangian}).
Therefore, the expression of~\cite[(3.9)]{m} vanishes.
\end{itemize}

This completes the proof for the case of even $v_1$.
\end{proof}

\begin{Rem}
We note that the main technical difference between the above formulas and those of~\cite[Proof of Theorem 3.1]{m}
is that $\frac{1}{u-v-\kappa-1}$ is replaced with $\frac{1}{u-v-\kappa+1}$ everywhere. One can unify these cases
by using $\frac{1}{u-v-\kappa^{[1]}}$.
\end{Rem}

We shall often use the following consequence of Theorem~\ref{thm:embedding}, verified as
its non-super counterpart of~\cite[Corollary 3.10]{jlm} (cf.~\cite[Corollary~3.3]{m} for
the standard parity sequence~\eqref{eq:distinguished=standard}):

\begin{Cor}\label{cor:commutativity}
For any $1\leq a,b\leq \ell$ and $\ell<i,j<\ell'$, we have the following commutativity:
\begin{equation}\label{eq:important commutativity}
  [t_{ab}(u),t^{[\ell]}_{ij}(v)]=0 \,.
\end{equation}
In particular, $\big\{h_a(u),e_{ab}(u),f_{ba}(u)\big|1\leq a,b\leq \ell\big\}$ commute with
$\big\{h_\imath(v),e_{\imath\jmath}(v),f_{\jmath\imath}(v)\big|\ell<\imath,\jmath<\ell'\big\}$.
\end{Cor}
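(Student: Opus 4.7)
The plan is to mimic the proof of the non-super counterpart~\cite[Corollary 3.10]{jlm}, using Theorem~\ref{thm:embedding} as the core input. The key point is that, once the embedding $\psi_{V,\ell}$ has been established, the commutativity~\eqref{eq:important commutativity} is essentially an algebraic shadow of the fact that the image of $\psi_{V,\ell}$ is built out of ``middle block'' Gauss entries which interact transparently with the ``top-left block'' generators $t_{ab}(u)$.

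I would proceed by induction on $\ell$, using the factorization $\psi_{V,\ell}=\psi_{V,1}\circ\psi_{V^{[1]},\ell-1}$ from Remark~\ref{rem:psi-comment}(a). Since the image of $\psi_{V,\ell}$ is generated by the entries $t^{[\ell]}_{ij}(v)$ (with $\ell<i,j<\ell'$), and these are the $\psi_{V,1}$-image of the corresponding entries of the rank-reduced extended Yangian $X^\rtt(\fosp(V^{[1]}))$, the inductive step reduces to the base case $\ell=1$. For this base case, I would analyze the RTT-relation~\eqref{eq:RTT-termwise} applied to $[t_{1b}(u),t_{ij}(v)]$ (and its companion with index $1'$) for $1<i,j<1'$ and $b\in\{1,1'\}$. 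Under these index constraints, several of the $\delta$-factors in~\eqref{eq:RTT-termwise} collapse, leaving a much simpler expression involving only the ``boundary'' generators $t_{1,*},t_{*,1},t_{1',*},t_{*,1'}$.

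Next I would expand both sides using the Gauss decomposition~\eqref{eq:gauss-osp}: for $1<i,j<1'$ one has
\begin{equation*}
t_{ij}(v)=t^{[1]}_{ij}(v)+f_{i,1}(v)\,h_1(v)\,e_{1,j}(v),
\qquad
t_{1,j}(v)=h_1(v)\,e_{1,j}(v),
\qquad
t_{i,1}(v)=f_{i,1}(v)\,h_1(v),
\end{equation*}
so that $[t_{ab}(u),t_{ij}(v)]$ splits as $[t_{ab}(u),t^{[1]}_{ij}(v)]$ plus a ``correction'' built from products of $f_{i,1}(v),h_1(v),e_{1,j}(v)$ and appropriate $u$-series. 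A direct verification, using the commutativity $[h_1(u),h_1(v)]=0$ (already obtained from~\eqref{eq:RTT-termwise} in the proof of Theorem~\ref{thm:embedding}) and the analogous simpler RTT identities for the boundary pairs $(t_{1,*},t_{1,*})$, shows that the correction cancels exactly the non-vanishing right hand side of the RTT identity, yielding $[t_{ab}(u),t^{[1]}_{ij}(v)]=0$. The sign bookkeeping matches the non-super argument of~\cite[Corollary 3.10]{jlm} verbatim once each swap carries the extra $(-1)^{\ol{i}\,\ol{j}}$ factor dictated by~\eqref{eq:super commutator} and~\eqref{eq:graded tensor product}.

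The ``in particular'' part then follows: by applying the Gauss decomposition to the top-left $\ell\times\ell$ block of $T(u)$, the entries $h_a(u),e_{ab}(u),f_{ba}(u)$ for $a,b\leq\ell$ are polynomials in $\{t_{cd}(u)\}_{c,d\leq\ell}$; dually, by the Gauss decomposition~\eqref{eq:T truncated} of $T^{[\ell]}(v)$, the entries $h_\imath(v),e_{\imath\jmath}(v),f_{\jmath\imath}(v)$ for $\ell<\imath,\jmath<\ell'$ are polynomials in $\{t^{[\ell]}_{ij}(v)\}_{\ell<i,j<\ell'}$, so the commutativity of~\eqref{eq:important commutativity} transfers to the Gauss generators. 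The main obstacle I anticipate is the careful tracking of the $\BZ_2$-signs and of the extra $\theta$-factors from~\eqref{eq:theta} in the correction terms; however, since Theorem~\ref{thm:embedding} has already absorbed those signs into a consistent setup (cf.\ the minor modification $u-v-\kappa\pm 1\to u-v-\kappa^{[1]}$ noted there), the remaining computation is algorithmic.
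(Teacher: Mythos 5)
Your overall strategy (reduce to a direct RTT computation using the Gauss/quasideterminant description of $t^{[\ell]}_{ij}(v)$) is the right one, and your base case is correct: for $a=b=1$ the $Q$-terms of \eqref{eq:RTT-termwise} drop out because $1<i,j<1'$, and the computation with $t^{[1]}_{ij}(v)=t_{ij}(v)-t_{i1}(v)t_{11}(v)^{-1}t_{1j}(v)$ closes up exactly as you describe. (A minor slip: for $\ell=1$ the only admissible pair is $a=b=1$, so the series $t_{1,1'}(u)$ and your case ``$b\in\{1,1'\}$'' play no role.)

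The genuine gap is the inductive step. Knowing $[t_{11}(u),t^{[1]}_{kl}(v)]=0$ together with the statement for $(V^{[1]},\ell-1)$ does \emph{not} yield the statement for $(V,\ell)$: the generators $t_{ab}(u)$ with $a,b\le\ell$ and $(a,b)\ne(1,1)$ are not in the image of $\psi_{V,1}$, so the inductive hypothesis says nothing about them, while the base case only controls $t_{11}(u)$. Concretely, for $\ell=2$, $a=1$, $b=2$ one needs $[t_{12}(u),t^{[2]}_{ij}(v)]=0$; but $[e_{12}(u),t^{[1]}_{ij}(v)]\ne 0$ by Lemma~\ref{lem:ef-com-tl}, so the vanishing only arises from a nontrivial cancellation among the four factors of $t^{[2]}_{ij}(v)=t^{[1]}_{ij}(v)-t^{[1]}_{i2}(v)t^{[1]}_{22}(v)^{-1}t^{[1]}_{2j}(v)$ --- a genuine level-$2$ computation that your reduction skips. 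The argument the paper actually invokes via \cite[Corollary 3.10]{jlm} avoids the induction: write $t^{[\ell]}_{ij}(v)$ as the Schur complement $t_{ij}(v)-\sum_{c,d\le\ell}t_{ic}(v)\big(\widetilde{T}(v)^{-1}\big)_{cd}\,t_{dj}(v)$ with $\widetilde{T}(v)=(t_{cd}(v))_{c,d\le\ell}$, note that for $a,b,c,d\le\ell$ and $\ell<i,j<\ell'$ every $\delta_{\bullet,\bullet'}$ in \eqref{eq:RTT-termwise} vanishes (since $a',b'\ge\ell'$), so all relevant commutators are pure super $A$-type, and then run the $A$-type quasideterminant computation of \cite{bk} in block form for all $a,b\le\ell$ simultaneously. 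If you insist on your induction, you must supplement it at every level with a separate proof of the boundary cases $[t_{1b}(u),t^{[\ell]}_{ij}(v)]=0=[t_{a1}(u),t^{[\ell]}_{ij}(v)]$ for $2\le a,b\le\ell$.
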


As the embeddings $\psi_{V,s}$ of~\eqref{eq:psi-embed} commute with the automorphisms $\mu_f$
of~\eqref{eq:mu-automorphims}, we obtain:

\begin{Cor}
The restriction of $\psi_{V,s}$ to the subalgebra $Y^\rtt(\fosp(V^{[s]}))$ of $X^\rtt(\fosp(V^{[s]}))$
defines a superalgebra embedding $\psi_{V,s}\colon  Y^\rtt(\fosp(V^{[s]})) \hookrightarrow Y^\rtt(\fosp(V))$.
\end{Cor}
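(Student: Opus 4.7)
The plan is to deduce the corollary directly from Theorem~\ref{thm:embedding} by showing that $\psi_{V,s}$ intertwines the family of automorphisms $\mu_f$ of~\eqref{eq:mu-automorphims} on the two RTT extended Yangians, and then invoking the definition~\eqref{eq:RTT-non-extended} of $Y^\rtt$ as the subalgebra of simultaneous $\mu_f$-fixed points.

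The substantive step is to analyze how $\mu_f$ interacts with the Gauss decomposition~\eqref{eq:gauss-osp}. By uniqueness of that decomposition, multiplying $T(u)$ by a scalar series $f(u)\in 1+u^{-1}\BC[[u^{-1}]]$ can only rescale the diagonal factor: one has $F(u)\mapsto F(u)$, $H(u)\mapsto f(u)H(u)$, $E(u)\mapsto E(u)$. In particular, each series $h_{\imath}(u)$ is scaled by $f(u)$ while the off-diagonal series $e_{ij}(u)$ and $f_{ji}(u)$ are $\mu_f$-invariant. Applied to the truncated submatrices~(\ref{eq:diagonal truncated}--\ref{eq:upper truncated}), this immediately yields $\mu_f\bigl(T^{[s]}_V(u)\bigr)=f(u)\,T^{[s]}_V(u)$. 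Denoting by $\mu_f^V$ and $\mu_f^{V^{[s]}}$ the corresponding automorphisms of $X^\rtt(\fosp(V))$ and $X^\rtt(\fosp(V^{[s]}))$, the equality
\begin{equation*}
  \mu_f^V\bigl(\psi_{V,s}(T_{V^{[s]}}(u))\bigr)
  = f(u)\,T^{[s]}_V(u)
  = \psi_{V,s}\bigl(f(u)\,T_{V^{[s]}}(u)\bigr)
  = \psi_{V,s}\bigl(\mu_f^{V^{[s]}}(T_{V^{[s]}}(u))\bigr)
\end{equation*}
holds on the generators of the source algebra, hence, since both $\mu_f^V\circ \psi_{V,s}$ and $\psi_{V,s}\circ \mu_f^{V^{[s]}}$ are superalgebra homomorphisms, it holds on all of $X^\rtt(\fosp(V^{[s]}))$.

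The remaining step is purely formal. For any $y\in Y^\rtt(\fosp(V^{[s]}))$, i.e.\ $\mu_f^{V^{[s]}}(y)=y$ for every admissible $f$, the intertwining established above gives $\mu_f^V(\psi_{V,s}(y))=\psi_{V,s}(\mu_f^{V^{[s]}}(y))=\psi_{V,s}(y)$, so $\psi_{V,s}(y)\in Y^\rtt(\fosp(V))$ by~\eqref{eq:RTT-non-extended}. Injectivity of the restricted map is inherited from the injectivity of $\psi_{V,s}$ on the whole extended Yangian, already established in Theorem~\ref{thm:embedding}. I anticipate no real obstacle: the entire argument pivots on the elementary observation that $\mu_f$ acts only on the diagonal factor of the Gauss decomposition and is therefore compatible with the passage to the truncated subblocks~(\ref{eq:diagonal truncated}--\ref{eq:upper truncated}).
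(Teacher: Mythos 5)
Your proposal is correct and follows essentially the same route as the paper: the paper's proof consists of the single observation that $\psi_{V,s}$ commutes with the automorphisms $\mu_f$, and your argument via uniqueness of the Gauss decomposition (so that $\mu_f$ rescales only the diagonal factor $H(u)$, whence $\mu_f(T^{[s]}_V(u))=f(u)\,T^{[s]}_V(u)$) is a valid justification of that claim.
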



\subsection{Useful lemma}
\label{ssec:useful-lemma}
\

The following result generalizes~\cite[Lemma 4.3]{jlm} for non-super case (cf.~\cite[Lemma 4.3]{m}, where
a different proof is provided for $N\geq 3$ and the standard parity sequence~\eqref{eq:distinguished=standard}):

\begin{Lem}\label{lem:ef-com-tl}
For $\ell < i,j,k < \ell'$ with $k\ne j'$, the following relations hold in $X^\rtt(\fosp(V))$:
\begin{equation}\label{eq:useful-commutator-1}
  [e_{\ell k}(u),t^{[\ell]}_{ij}(v)]=\frac{(-1)^{\ol{\ell}\cdot \ol{i}+\ol{\ell}\cdot \ol{k}+\ol{i}\cdot \ol{k}}}{u-v}
  t^{[\ell]}_{ik}(v) \Big(e_{\ell j}(v) - e_{\ell j}(u)\Big) \,,
\end{equation}
\begin{equation}\label{eq:useful-commutator-2}
  [f_{k\ell}(u),t^{[\ell]}_{ji}(v)]=\frac{(-1)^{\ol{\ell}\cdot \ol{j}+\ol{\ell}\cdot \ol{k}+\ol{j}\cdot \ol{k}}}{u-v}
  \Big(f_{j\ell}(u) - f_{j\ell}(v)\Big) t^{[\ell]}_{ki}(v) \,.
\end{equation}
\end{Lem}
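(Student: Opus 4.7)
The plan is to reduce to the base case $\ell = 1$ via the rank-reduction embedding of Theorem~\ref{thm:embedding}, then carry out a direct RTT computation, and finally obtain~\eqref{eq:useful-commutator-2} from~\eqref{eq:useful-commutator-1} by symmetry. For the reduction, observe that the embedding $\psi_{V,\ell-1}\colon X^\rtt(\fosp(V^{[\ell-1]})) \hookrightarrow X^\rtt(\fosp(V))$ sends $T_{V^{[\ell-1]}}$ to $T^{[\ell-1]}_V$ and hence, by uniqueness of the Gauss decomposition of a product $F\cdot H\cdot E$, identifies the Gauss factors of the source with the relevant sub-blocks of the Gauss factors of $T_V$. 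In particular, $e^{V^{[\ell-1]}}_{1k}(u)$ matches $e_{\ell k}(u)$, and, invoking the tower identity $\psi_{V,\ell-1}\circ\psi_{V^{[\ell-1]},1}=\psi_{V,\ell}$ from Remark~\ref{rem:psi-comment}(a) together with the definition~\eqref{eq:T truncated} of $T^{[s]}$, the doubly-reduced entry $(t^{V^{[\ell-1]}})^{[1]}_{ij}(v)$ matches $t^{[\ell]}_{ij}(v)$. Thus it suffices to treat the base case $\ell=1$.

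For the base case of~\eqref{eq:useful-commutator-1}, the Gauss decomposition gives $e_{1k}(u) = h_1(u)^{-1}\, t_{1k}(u)$, and the Schur-complement identity
\[
   t^{[1]}_{ij}(v) \;=\; t_{ij}(v) - t_{i1}(v)\, h_1(v)^{-1}\, t_{1j}(v) \qquad \text{for } 1<i,j<1'\,.
\]
By Corollary~\ref{cor:commutativity}, $h_1(u)=t_{11}(u)$ commutes with $t^{[1]}_{ij}(v)$, hence so does $h_1(u)^{-1}$, reducing the computation to $h_1(u)^{-1}\,[t_{1k}(u), t^{[1]}_{ij}(v)]$. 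Expanding the latter via the super-Leibniz rule and the Schur-complement identity produces a linear combination of commutators $[t_{1k}(u), t_{ab}(v)]$ with $(a,b)\in\{(i,j),(i,1),(1,j),(1,1)\}$, each evaluated via~\eqref{eq:RTT-termwise}. The key observation is that the $Q$-contributions on the right of~\eqref{eq:RTT-termwise} are proportional to $\delta_{a,1'}$ or $\delta_{b,k'}$, and under our hypotheses $1<i,j,k<1'$ together with $k\ne j'$ (i.e.\ $j\ne k'$) both Kronecker deltas vanish in every one of these four commutators. Only the $P$-contributions survive; telescoping the resulting rational expression and re-folding it with the help of $h_1(v)^{-1}t_{1j}(v) = e_{1j}(v)$ and $t^{[1]}_{ik}(v) = t_{ik}(v) - t_{i1}(v)h_1(v)^{-1}t_{1k}(v)$ delivers the right-hand side of~\eqref{eq:useful-commutator-1}.

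To derive~\eqref{eq:useful-commutator-2}, apply the anti-automorphism $\tau$ of Remark~\ref{rem:tau-antiautom}. Its action on the off-diagonal Gauss generators is recorded in Remark~\ref{rem:tau-efh}; applying $\tau$ to the Gauss decomposition $T^{[\ell]}(v) = F^{[\ell]}(v)H^{[\ell]}(v)E^{[\ell]}(v)$ and invoking uniqueness of the Gauss decomposition identifies $\tau(t^{[\ell]}_{ij}(v))$ with an explicit sign times $t^{[\ell]}_{ji}(v)$. Combining this with the super-anti-automorphism identity $\tau([x,y]) = -[\tau(x),\tau(y)]$ and carefully collecting signs converts~\eqref{eq:useful-commutator-1} into~\eqref{eq:useful-commutator-2}. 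The main obstacle in the proof is the sign bookkeeping in the super-Leibniz expansion and the subsequent telescoping in the base case; by contrast, the vanishing of the $Q$-contributions is purely combinatorial and reflects precisely the constraint $k\ne j'$ together with the assumption $1<i,j,k<\ell'$.
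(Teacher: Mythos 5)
Your proposal is correct and follows essentially the same route as the paper: reduce to $\ell=1$ via the rank-reduction embedding, use $t^{[1]}_{ij}(v)=t_{ij}(v)-t_{i1}(v)t_{11}(v)^{-1}t_{1j}(v)$ together with the commutativity of $h_1(u)$ with $t^{[1]}_{\bullet\bullet}(v)$, expand via the (super-)Leibniz rule applying~\eqref{eq:RTT-termwise} to the four resulting commutators (whose $Q$-terms vanish precisely because $i,j,k\ne 1'$ and $k\ne j'$), and deduce~\eqref{eq:useful-commutator-2} from~\eqref{eq:useful-commutator-1} by the anti-automorphism $\tau$ and~\eqref{eq:tau-efh}. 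The only cosmetic difference is that the paper starts from the defining relation for $[t_{1k}(u),t_{ij}(v)]$ and rearranges, whereas you expand $[t_{1k}(u),t^{[1]}_{ij}(v)]$ directly; the computation is the same.
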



\noindent
To prevent the confusion with the generator $\bar{1}\in \BZ_2$, we use $|v_1|$ instead of $\ol{1}$ in the proof below.

\begin{proof}
It suffices to verify both relations for $\ell=1$, as the general case then follows immediately
from Theorem~\ref{thm:embedding}. Let us verify~\eqref{eq:useful-commutator-1} for $\ell=1$
(the relation~\eqref{eq:useful-commutator-2} follows by applying the anti-automorphism $\tau$ of $X^\rtt(\fosp(V))$
given by~\eqref{eq:tau-t} to~\eqref{eq:useful-commutator-1} and using the formulas~\eqref{eq:tau-efh}).

First, we note that
\begin{equation}\label{eq:eft-1}
  t^{[1]}_{ij}(v)=t_{ij}(v)-f_{i1}(v)h_1(v)e_{1j}(v)=t_{ij}(v)-t_{i1}(v)t_{11}(v)^{-1}t_{1j}(v) \,.
\end{equation}
Thus, the defining relation
  $[t_{1k}(u),t_{ij}(v)]=
  \frac{(-1)^{|v_1|(\ol{i}+\ol{k})+\ol{i}\cdot \ol{k}}}{u-v}
  \big(t_{ik}(u)t_{1j}(v)-t_{ik}(v)t_{1j}(u)\big)$
of~\eqref{eq:RTT-termwise}, which uses $i\ne 1'$ and $k\ne j'$, can be written in the following form:
\begin{multline}\label{eq:eft-2}
  [t_{1k}(u),t^{[1]}_{ij}(v)] + [t_{1k}(u),f_{i1}(v)h_1(v)e_{1j}(v)] =
  \frac{(-1)^{|v_1|(\ol{i}+\ol{k}) + \ol{i}\cdot \ol{k}}}{u-v}
    \Big(t^{[1]}_{ik}(u)t_{1j}(v)-t^{[1]}_{ik}(v)t_{1j}(u)\Big) \, + \\
  \frac{(-1)^{|v_1|(\ol{i}+\ol{k}) + \ol{i}\cdot \ol{k}}}{u-v}
    \Big(f_{i1}(u)h_1(u)e_{1k}(u)t_{1j}(v)-f_{i1}(v)h_1(v)e_{1k}(v)t_{1j}(u)\Big) \,.
\end{multline}
Let us evaluate the second summand in the left-hand side of~\eqref{eq:eft-2}:
\begin{multline}\label{eq:eft-3}
  [t_{1k}(u),f_{i1}(v)h_1(v)e_{1j}(v)] =
  [t_{1k}(u),t_{i1}(v)]e_{1j}(v)+(-1)^{(|v_1|+\ol{i})(|v_1|+\ol{k})} t_{i1}(v) [t_{1k}(u),t_{11}(v)^{-1}t_{1j}(v)] = \\
  [t_{1k}(u),t_{i1}(v)]e_{1j}(v)-(-1)^{(|v_1|+\ol{i})(|v_1|+\ol{k})}
  \Big(f_{i1}(v) [t_{1k}(u),t_{11}(v)]e_{1j}(v) - f_{i1}(v) [t_{1k}(u),t_{1j}(v)]\Big) = \\
  \frac{(-1)^{|v_1|(\ol{i}+\ol{k})+\ol{i}\cdot \ol{k}}}{u-v}
  \Big(t_{ik}(u)t_{11}(v)e_{1j}(v)-t_{ik}(v)t_{11}(u)e_{1j}(v)\Big) \, - \\
  \frac{(-1)^{|v_1|(\ol{i}+\ol{k})+\ol{i}\cdot \ol{k}}}{u-v}
  \Big(f_{i1}(v)t_{1k}(u)t_{11}(v)e_{1j}(v)-f_{i1}(v)t_{1k}(v)t_{11}(u)e_{1j}(v)\Big) \, + \\
  \frac{(-1)^{|v_1|(\ol{i}+\ol{k})+\ol{i}\cdot \ol{k}}}{u-v}
  \Big(f_{i1}(v)t_{1k}(u)t_{1j}(v)-f_{i1}(v)t_{1k}(v)t_{1j}(u)\Big) =
  \frac{(-1)^{|v_1|(\ol{i}+\ol{k})+\ol{i}\cdot \ol{k}}}{u-v} \, \times \\
  \Big(t_{ik}(u)h_1(v)e_{1j}(v)-t_{ik}(v)h_1(u)e_{1j}(v)+f_{i1}(v)h_1(v)e_{1k}(v)h_1(u)\big(e_{1j}(v)-e_{1j}(u)\big)\Big) \,,
\end{multline}
where we used $[t_{1k}(u),t_{11}(v)^{-1}]=-t_{11}(v)^{-1}[t_{1k}(u),t_{11}(v)]t_{11}(v)^{-1}$
in the second equality and applied~\eqref{eq:RTT-termwise} three times in the third equality.
Combining~\eqref{eq:eft-2} and~\eqref{eq:eft-3}, we thus obtain:
\begin{multline}\label{eq:eft-4}
  [t_{1k}(u),t^{[1]}_{ij}(v)] = \frac{(-1)^{|v_1|(\ol{i}+\ol{k})+\ol{i}\cdot \ol{k}}}{u-v}
  \Big(t_{ik}(v)h_1(u)e_{1j}(v)-f_{i1}(v)h_1(v)e_{1k}(v)h_1(u)e_{1j}(v)-t^{[1]}_{ik}(v)t_{1j}(u)\Big) \\
  = \frac{(-1)^{|v_1|(\ol{i}+\ol{k})+\ol{i}\cdot \ol{k}}}{u-v}
    t^{[1]}_{ik}(v)h_1(u) \Big(e_{1j}(v)-e_{1j}(u)\Big) \,.
\end{multline}
As $t_{1k}(u)=h_1(u)e_{1k}(u)$ and $h_1(u)$ commutes with both $t^{[1]}_{ij}(v),t^{[1]}_{ik}(v)$
by Corollary~\ref{cor:commutativity}, we get:
\begin{equation*}
  [e_{1k}(u),t^{[1]}_{ij}(v)] =
  \frac{(-1)^{|v_1|(\ol{i}+\ol{k})+\ol{i}\cdot \ol{k}}}{u-v}
  t^{[1]}_{ik}(v) \Big(e_{1j}(v)-e_{1j}(u)\Big)
\end{equation*}
which is precisely~\eqref{eq:useful-commutator-1} for $\ell=1$.
\end{proof}


\subsection{RTT Yangian in super A-type: revision}\label{ssec:RTT gl-Yangian}
\

Fix $n,m\geq 0$ and consider a superspace $\VV=\VV_{\bar{0}}\oplus \VV_{\bar{1}}$ with a $\BC$-basis
$\sfv_1,\ldots,\sfv_{n+m}$ such that each $\sfv_i$ is either \emph{even} or \emph{odd} and
$\dim(\VV_{\bar{0}})=n, \dim(\VV_{\bar{1}})=m$. We define the corresponding parity sequence
$\parity:=(|\sfv_1|,\ldots,|\sfv_{n+m}|)\in \{\bar{0},\bar{1}\}^{n+m}$. Let
$\Pop\colon \VV\otimes \VV\to \VV\otimes \VV$ be the permutation operator defined via
$\Pop = \sum_{i,j=1}^{n+m} (-1)^{\ol{j}}\, e_{ij}\otimes e_{ji}$, cf.~\eqref{eq:P}.
Consider the \emph{rational} $R$-matrix:
\begin{equation}\label{eq:gl-Rmatrix}
  \sfR(u)=\ID-\frac{\Pop}{u} \in \End\,\VV \otimes \End\,\VV \,,
\end{equation}
which satisfies the Yang-Baxter equation with a spectral parameter, cf.~\eqref{eq:YBE}:
\begin{equation}\label{eq:YBE-gl}
  \sfR_{12}(u)\sfR_{13}(u+v)\sfR_{23}(v) = \sfR_{23}(v)\sfR_{13}(u+v)\sfR_{12}(u) \,.
\end{equation}
The \emph{RTT Yangian of $\gl(\VV)$}, denoted by $Y^\rtt(\gl(\VV))$, is defined as the associative
$\BC$-superalgebra generated by $\{\sft^{(r)}_{ij}\}_{1\leq i,j\leq n+m}^{r\geq 1}$ with the
$\BZ_2$-grading $|\sft_{ij}^{(r)}|=\ol{i}+\ol{j}$ and subject to the following defining RTT relation,
cf.~(\ref{eq:rtt intro},~\ref{eq:RTT relation}):
\begin{equation}\label{eq:RTT relation-gl}
  \sfR(u-v)\sfT_1(u)\sfT_2(v)=\sfT_2(v)\sfT_1(u)\sfR(u-v) \,,
\end{equation}
viewed as an equality in $\End\,\VV \otimes \End\,\VV \otimes Y^\rtt(\gl(\VV))$. Here, $\sfT(u)$ is
the series in $u^{-1}$ with coefficients in the algebra $\End\, \VV \otimes Y^\rtt(\gl(V))$, defined by:
\begin{equation}\label{eq:Tmatrix-gl}
  \sfT(u)=\sum_{i,j=1}^{n+m} (-1)^{\ol{i}\cdot \ol{j} + \ol{j}}\, e_{ij}\otimes \sft_{ij}(u)
    \qquad \mathrm{with} \qquad
  \sft_{ij}(u):=\delta_{ij}+\sum_{r\geq 1} \sft^{(r)}_{ij}u^{-r} \,.
\end{equation}
The relation~\eqref{eq:RTT relation-gl} is equivalent to the following system of relations:
\begin{equation}\label{eq:RTT-termwise-gl}
  [\sft_{ij}(u),\sft_{k\ell}(v)]=
  \frac{(-1)^{\ol{i}\cdot \ol{j}+\ol{i}\cdot \ol{k}+\ol{j}\cdot \ol{k}}}{u-v}
  \Big(\sft_{kj}(u)\sft_{i\ell}(v)-\sft_{kj}(v)\sft_{i\ell}(u)\Big)
\end{equation}
for all $1\leq i,j,k,\ell\leq n+m$, cf.~\eqref{eq:RTT-termwise}.

For any formal power series $f(u)\in 1+u^{-1}\BC[[u^{-1}]]$, the assignment
\begin{equation}\label{eq:mu-automorphims-gl}
  \mu_f\colon \sfT(u)\mapsto f(u)\sfT(u)
\end{equation}
gives rise to a superalgebra automorphism $\mu_f$ of $Y^\rtt(\gl(\VV))$, cf.~\eqref{eq:mu-automorphims}.
The \emph{RTT Yangian of $\ssl(\VV)$}, denoted by $Y^{\rtt}(\ssl(\VV))$, is defined as the following
subalgebra of $Y^\rtt(\gl(\VV))$:
\begin{equation}\label{eq:RTT-non-extended-gl}
  Y^\rtt(\ssl(\VV)):=\Big\{y\in Y^\rtt(\gl(\VV)) \,\Big|\, \mu_f(y)=y
          \quad \forall\, f(u)\in 1+u^{-1}\BC[[u^{-1}]] \Big\} \,.
\end{equation}

\begin{Rem}
In contrast to~\eqref{eq:extended-vs-nonextended}, we note that we have the tensor product decomposition
$Y^\rtt(\gl(\VV)) \simeq ZY^\rtt(\gl(\VV)) \otimes Y^\rtt(\ssl(\VV))$ only for $n\ne m$, while for $n=m$
the center $ZY^\rtt(\gl(\VV))$ of $Y^\rtt(\gl(\VV))$ actually belongs to $Y^\rtt(\ssl(\VV))$,
see~\cite[Theorem 2.48]{t} (generalizing~\cite{g}).
\end{Rem}

For the parity sequence $\parity=(\bar{0},\ldots,\bar{0},\bar{1},\ldots,\bar{1})$,
reverse to~\eqref{eq:distinguished=standard}, that is:
\begin{equation*}
  |\sfv_1|=\ldots=|\sfv_n|=\bar{0} \qquad \mathrm{and} \qquad |\sfv_{n+1}|=\ldots=|\sfv_{n+m}|=\bar{1} \,,
\end{equation*}
we recover the RTT Yangians $Y^\rtt(\gl(n|m))$, $Y^\rtt(\ssl(n|m))$. By~\cite[Lemmas 2.24, Corollary 2.38]{t},
we have $Y^\rtt(\gl(\VV))\simeq Y^\rtt(\gl(n|m))$ and $Y^\rtt(\ssl(\VV))\simeq Y^\rtt(\ssl(n|m))$,
cf.~Lemma~\ref{lem:isomorphism of RTT yangians}, Corollary~\ref{cor:isom-RTT-yangians}.

In what follows, we shall use the Drinfeld realization of $Y^\rtt(\gl(\VV))$ established in~\cite{t}
(cf.~\cite{p}), generalizing~\cite{g}. To this end, we consider the Gauss decomposition of the matrix
$\sfT(u)$ from~\eqref{eq:Tmatrix-gl}:
\begin{equation*}
  \sfT(u)=\sfF(u)\cdot \sfH(u)\cdot \sfE(u) \,,
\end{equation*}
where $\sfH(u),\sfF(u),\sfE(u)$ are the diagonal, lower-triangular, and upper-triangular matrices
with matrix coefficients $\sfh_{\imath}(u), \sff_{ji}(u), \sfe_{ij}(u)$, as in~\eqref{eq:HFE-matrices}.
The coefficients of the series $\sfe_i(u)=\sfe_{i,i+1}(u)$, $\sff_i(u)=\sff_{i+1,i}(u)$, $\sfh_\imath(u)$
with $1\leq i<n+m, 1\leq \imath\leq n+m$ generate $Y^\rtt(\gl(\VV))$. Furthermore, one can specify
all the defining relations (thus recovering the Drinfeld realization of $Y^\rtt(\gl(\VV))$):

\begin{Thm}\cite[Theorem 2.32]{t}\label{thm:Drinfeld-A}
The algebra $Y^\rtt(\gl(\VV))$ is isomorphic to the $\BC$-superalgebra $Y(\gl(\VV))$ generated by
$\{\sfe_i^{(r)}, \sff_i^{(r)},\sfh_\imath^{(r)} \,|\, 1\leq i<n+m, 1\leq \imath\leq n+m, r\geq 1\}$
with the $\BZ_2$-grading $|\sfe_i^{(r)}|=|\sff_i^{(r)}|=\ol{i}+\ol{i+1}$, $|\sfh_\imath^{(r)}|=\bar{0}$,
and subject to the following defining relations:
\begin{equation}\label{eq:Atype-hh}
  [\sfh_\imath(u),\sfh_\jmath(v)]=0 \,,
\end{equation}
\begin{equation}\label{eq:Atype-eh}
  [\sfh_\imath(u),\sfe_j(v)]=(-1)^{\ol{\imath}} (\delta_{\imath,j+1}-\delta_{\imath j})\,
  \frac{\sfh_\imath(u)\big(\sfe_j(u)-\sfe_j(v)\big)}{u-v} \,,
\end{equation}
\begin{equation}\label{eq:Atype-fh}
  [\sfh_\imath(u),\sff_j(v)]=(-1)^{\ol{\imath}}(\delta_{\imath j}-\delta_{\imath,j+1})\,
  \frac{\big(\sff_j(u)-\sff_j(v)\big)\sfh_\imath(u)}{u-v} \,,
\end{equation}
\begin{equation}\label{eq:Atype-ef}
  [\sfe_i(u),\sff_j(v)]=(-1)^{\ol{i+1}}\delta_{ij}\,
  \frac{\sfh_i(u)^{-1}\sfh_{i+1}(u)-\sfh_i(v)^{-1}\sfh_{i+1}(v)}{u-v} \,,
\end{equation}
\begin{equation}\label{eq:Atype-ee-1}
  \begin{cases}
    [\sfe_i(u),\sfe_i(v)]=0 & \mathrm{if}\  \ol{i}\ne \ol{i+1} \\
    [\sfe_i(u),\sfe_i(v)]=(-1)^{\ol{i}}\, \frac{(\sfe_i(u)-\sfe_i(v))^2}{u-v} & \mathrm{if}\ \ol{i}=\ol{i+1}
  \end{cases} \,,
\end{equation}
\begin{equation}\label{eq:Atype-ff-1}
  \begin{cases}
    [\sff_i(u),\sff_i(v)]=0 & \mathrm{if}\ \ol{i}\ne \ol{i+1} \\
    [\sff_i(u),\sff_i(v)]=-(-1)^{\ol{i}}\, \frac{(\sff_i(u)-\sff_i(v))^2}{u-v} & \mathrm{if}\ \ol{i}=\ol{i+1}
  \end{cases} \,,
\end{equation}
\begin{equation}\label{eq:Atype-ee-2}
  u[\sfe^\circ_i(u),\sfe_j(v)]-v [\sfe_i(u),\sfe^\circ_j(v)]=(-1)^{\ol{j}}\delta_{j,i+1}\sfe_i(u)\sfe_j(v)
  \quad \mathrm{for} \ i<j \,,
\end{equation}
\begin{equation}\label{eq:Atype-ff-2}
  u[\sff^\circ_i(u),\sff_j(v)]-v [\sff_i(u),\sff^\circ_j(v)]=-(-1)^{\ol{j}}\delta_{j,i+1}\sff_j(v)\sff_i(u)
  \quad \mathrm{for} \ i<j \,,
\end{equation}
degree $2$ Serre relations
\begin{equation}\label{eq:deg2-serre}
  \big[\sfe_i(u),\sfe_{j}(v)]=0 \,,\quad \big[\sff_i(u),\sff_{j}(v)]=0
  \qquad \mathrm{if} \quad i\ne j,j\pm 1
\end{equation}
as well as degree $3$ Serre relations
\begin{equation}\label{eq:Atype-serre}
  \begin{cases}
    \big[\sfe_i(u_1),[\sfe_i(u_2),\sfe_{i\pm 1}(v)]\big] +
    \big[\sfe_i(u_2),[\sfe_i(u_1),\sfe_{i\pm 1}(v)]\big] = 0  \\
    \big[\sff_i(u_1),[\sff_i(u_2),\sff_{i\pm 1}(v)]\big] +
    \big[\sff_i(u_2),[\sff_i(u_1),\sff_{i\pm 1}(v)]\big] = 0
  \end{cases}
  \qquad \mathrm{if}\quad \ol{i}=\ol{i+1}
\end{equation}
and degree $4$ Serre relations
\begin{multline}\label{eq:Atype-superserre}
  \begin{cases}
    \big[[\sfe_{i-1}(u),\sfe_i(v_1)],[\sfe_i(v_2),\sfe_{i+1}(w)]\big] +
    \big[[\sfe_{i-1}(u),\sfe_i(v_2)],[\sfe_i(v_1),\sfe_{i+1}(w)]\big] = 0  \\
    \big[[\sff_{i-1}(u),\sff_i(v_1)],[\sff_i(v_2),\sff_{i+1}(w)]\big] +
    \big[[\sff_{i-1}(u),\sff_i(v_2)],[\sff_i(v_1),\sff_{i+1}(w)]\big] = 0
  \end{cases} \ \ \mathrm{if} \ \ \ol{i}\ne \ol{i+1}
\end{multline}
where
\begin{equation*}
\begin{split}
  & \sfe_i(u)=\sum_{r\geq 1} \sfe_i^{(r)} u^{-r} \,,\quad
    \sff_i(u)=\sum_{r\geq 1} \sff_i^{(r)} u^{-r} \,,\quad
    \sfh_\imath(u)=1+\sum_{r\geq 1} \sfh_\imath^{(r)} u^{-r} \,, \\
  & \sfe^\circ_i(u)=\sum_{r\geq 2} \sfe_i^{(r)} u^{-r} \,,\quad
    \sff^\circ_i(u)=\sum_{r\geq 2} \sff_i^{(r)} u^{-r} \,.
\end{split}
\end{equation*}
\end{Thm}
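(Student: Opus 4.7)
The plan is to adapt the Brundan--Kleshchev strategy of~\cite{bk} to the super setting, as carried out in~\cite{p,t}. Let $Y(\gl(\VV))$ denote the abstract superalgebra defined by the stated generators and relations. The argument proceeds in three stages: extract the Drinfeld generators from $\sfT(u)$, verify that they satisfy relations (\ref{eq:Atype-hh})--(\ref{eq:Atype-superserre}), and then prove sufficiency via an associated-graded comparison.

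\emph{Generation.} First I use the Gauss decomposition $\sfT(u)=\sfF(u)\sfH(u)\sfE(u)$ to extract series $\sfh_\imath(u),\sfe_{ab}(u),\sff_{ba}(u)$ with coefficients in $Y^\rtt(\gl(\VV))$. Inverting this factorization writes every $\sft_{ij}(u)$ as a noncommutative polynomial in the Gauss entries, while the non-simple currents $\sfe_{ab}(u),\sff_{ba}(u)$ with $|a-b|>1$ can in turn be reconstructed recursively from the simple ones $\sfe_i(u),\sff_i(u)$ via iterated graded commutators (the super-analogue of the recursions used in Subsections~\ref{ssec:e-currents}--\ref{ssec:c-current} for the orthosymplectic case). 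Once Stage~2 is complete, this yields a surjective superalgebra homomorphism $\Phi\colon Y(\gl(\VV)) \twoheadrightarrow Y^\rtt(\gl(\VV))$.

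\emph{Verification of relations.} The main tool here is the $A$-type analogue of the rank-reduction Theorem~\ref{thm:embedding}: embeddings $\psi_s\colon Y^\rtt(\gl(\VV^{[s]})) \hookrightarrow Y^\rtt(\gl(\VV))$ whose image contains the Drinfeld currents indexed by $i>s$ and commutes with the Gauss data attached to the first $s$ rows/columns. This reduces each relation to a computation inside a rank $\leq 3$ subalgebra. In rank $1$, the relations (\ref{eq:Atype-hh})--(\ref{eq:Atype-ff-1}) follow directly from~\eqref{eq:RTT-termwise-gl} restricted to indices $\{\imath,\imath+1\}$, splitting the diagonal case $\ol{i}=\ol{i+1}$ from the isotropic case $\ol{i}\ne\ol{i+1}$. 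In rank $2$, the quadratic relations (\ref{eq:Atype-ef}), (\ref{eq:Atype-ee-2}), (\ref{eq:Atype-ff-2}) and the cubic Serre relations (\ref{eq:Atype-serre}) follow from iterated applications of~\eqref{eq:RTT-termwise-gl} to triples of adjacent indices. The commutativity (\ref{eq:deg2-serre}) for non-adjacent roots is immediate from rank-reduction.

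\emph{Sufficiency via associated graded.} I equip both $Y(\gl(\VV))$ and $Y^\rtt(\gl(\VV))$ with the loop filtration assigning degree $r-1$ to every generator superscripted $(r)$. The surjection $\Phi$ is filtration-preserving, and the relations (\ref{eq:Atype-hh})--(\ref{eq:Atype-superserre}) degenerate in the associated graded to the defining relations of the current superalgebra $U(\gl(\VV)[\lt])$; hence $\Gr\, Y(\gl(\VV))$ is a quotient of $U(\gl(\VV)[\lt])$. The super-analogue of Proposition~\ref{prop:assoc.graded}(b) gives $\Gr\, Y^\rtt(\gl(\VV))\simeq U(\gl(\VV)[\lt])$, so $\Gr\,\Phi$ is an isomorphism and consequently $\Phi$ itself is.

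The main obstacle is the degree $4$ super-Serre relation~\eqref{eq:Atype-superserre} for isotropic simple roots. Unlike the classical cubic Serre relation, it cannot be reduced to a rank $2$ sub-Yangian and requires working in the rank $3$ subalgebra generated by $\sfe_{i-1},\sfe_i,\sfe_{i+1}$ together with their Gauss companions. The hard step is a four-parameter identity for $\bigl[[\sfe_{i-1}(v),\sfe_i(u_1)],[\sfe_i(u_2),\sfe_{i+1}(w)]\bigr]$ derived by repeatedly applying~\eqref{eq:RTT-termwise-gl} and carefully tracking the sign factors coming from the graded commutators; one then symmetrizes over $u_1\leftrightarrow u_2$ and observes that the symmetrized expression vanishes thanks to the nilpotency $[\sfe_i(u),\sfe_i(v)]=0$ coming from~\eqref{eq:Atype-ee-1} in the isotropic case.
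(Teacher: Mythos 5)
This theorem is not proved in the paper at all: it is quoted verbatim from \cite[Theorem 2.32]{t}, so there is no in-paper argument to compare against. Your sketch reconstructs exactly the Brundan--Kleshchev strategy used in \cite{t} (and in \cite{bk,g,p}), which is also the strategy this paper deploys for its own orthosymplectic analogue Theorem~\ref{thm:Main-Theorem-ext}: Gauss decomposition, rank reduction to verify the relations in low rank, and injectivity via the associated graded against the PBW theorem. So the approach is the right one and matches the source.

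Two places where the sketch is thinner than the actual argument are worth flagging. (i) In the sufficiency step, the assertion that $\Gr\,Y(\gl(\VV))$ is a quotient of $U(\gl(\VV)[\lt])$ does not follow merely from degenerating the listed relations: those relations involve only the simple currents, so one must first define all root vectors $\sfe^{(r)}_{ab},\sff^{(r)}_{ba}$ by iterated brackets and then prove \emph{inductively} that their symbols satisfy the full loop-algebra commutation relations --- the $A$-type analogue of~\eqref{eq:key-commut-rel} and of Lemmas~\ref{lem:ind-claim-1},~\ref{lem:ind-claim-2}. This induction (commuting with $\sfh^{(2)}_\imath$ to raise modes, handling the cases where the two roots are not separated by a Cartan element) is where most of the work in \cite{bk,t} actually sits; your sketch only gestures at it via ``reconstructed recursively.'' (ii) For the degree $4$ super-Serre relation~\eqref{eq:Atype-superserre}, saying the symmetrized expression ``vanishes thanks to the nilpotency $[\sfe_i(u),\sfe_i(v)]=0$'' is not quite the mechanism: the proof in \cite{t} (and in the analogous Lemma~\ref{lem:Serre-deg4b} of this paper) extracts an identity such as $[t_{13}(u),e^{(1)}_{24}]=0$ directly from a coefficient of the RTT relation, deduces the Serre relation with the inner generators specialized to their first modes, and then upgrades to arbitrary spectral parameters by commuting with Cartan modes, cf.\ Remark~\ref{rem:Serre-extosp-series-general}. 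Neither point is a wrong turn, but both are where the real content of the proof lives, and a complete write-up would have to supply them.
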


Let us record an important consequence of the relations~(\ref{eq:Atype-eh},~\ref{eq:Atype-fh})
that we shall often use:

\begin{Cor}\label{cor:A-resonance}
The following relations hold in $Y^\rtt(\gl(\VV))$:
\begin{align}
  \sfh_i(u)\sfe_i(u) & = \sfe_i\big(u-(-1)^{\ol{i}}\big)\sfh_i(u) \,,
    \label{eq:h-to-e-1}\\
  \sfh_{i+1}(u)\sfe_i(u) & = \sfe_i\big(u+(-1)^{\ol{i+1}}\big)\sfh_{i+1}(u) \,,
    \label{eq:h-to-e-2} \\
  \sff_i(u)\sfh_i(u) & = \sfh_i(u)\sff_i\big(u-(-1)^{\ol{i}}\big) \,,
    \label{eq:h-to-f-1} \\
  \sff_i(u)\sfh_{i+1}(u) & = \sfh_{i+1}(u)\sff_i\big(u+(-1)^{\ol{i+1}}\big)
    \label{eq:h-to-f-2}
\end{align}
for any $1\leq i\leq n+m-1$.
\end{Cor}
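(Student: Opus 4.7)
The plan is to derive all four identities by the same short manipulation applied to the relevant commutation relations from Theorem~\ref{thm:Drinfeld-A}. In each case one clears the simple-pole denominator $(u-v)^{-1}$ and then performs a formal substitution of the form $v=u\pm(-1)^{\ol{\bullet}}$; the choice of sign is dictated by the sign on the right-hand side of~\eqref{eq:Atype-eh} or~\eqref{eq:Atype-fh} and is designed to annihilate the left-hand side after clearing, leaving exactly the desired ``resonance'' identity.

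For~\eqref{eq:h-to-e-1}, I would specialize~\eqref{eq:Atype-eh} at $\imath=j=i$, multiply through by $(u-v)$, and gather the $\sfh_i(u)\sfe_i(v)$ terms to obtain
\[
 \bigl((u-v)-(-1)^{\ol{i}}\bigr)\sfh_i(u)\sfe_i(v)
 = (u-v)\sfe_i(v)\sfh_i(u) - (-1)^{\ol{i}}\sfh_i(u)\sfe_i(u) \,.
\]
Now set $v = u-(-1)^{\ol{i}}$. This substitution is legitimate at the level of formal series: since $\sfe_i(v)=\sum_{r\geq 1}\sfe_i^{(r)}v^{-r}$ has no constant term, expanding each $(u-(-1)^{\ol{i}})^{-r}$ as a power series in $u^{-1}$ defines $\sfe_i\bigl(u-(-1)^{\ol{i}}\bigr)\in Y^\rtt(\gl(\VV))[[u^{-1}]]$. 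The prefactor on the left vanishes and~\eqref{eq:h-to-e-1} drops out. Identity~\eqref{eq:h-to-e-2} is obtained identically from the $\imath=i+1,j=i$ case of~\eqref{eq:Atype-eh}, which carries the opposite overall sign; the analogous rearrangement yields
\[
 \bigl((u-v)+(-1)^{\ol{i+1}}\bigr)\sfh_{i+1}(u)\sfe_i(v)
 = (u-v)\sfe_i(v)\sfh_{i+1}(u) + (-1)^{\ol{i+1}}\sfh_{i+1}(u)\sfe_i(u) \,,
\]
and the substitution $v=u+(-1)^{\ol{i+1}}$ finishes the argument.

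The two $\sff$-identities~\eqref{eq:h-to-f-1} and~\eqref{eq:h-to-f-2} are proved in the same way, starting instead from~\eqref{eq:Atype-fh} with $(\imath,j)=(i,i)$ and $(\imath,j)=(i+1,i)$ respectively; the only difference is that the surviving $\sfh$-factor appears on the left of the $\sff$, producing~\eqref{eq:h-to-f-1} and~\eqref{eq:h-to-f-2} on the nose.

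There is no real obstacle here: the only mildly subtle point is justifying that the substitution $v\mapsto u+c$ (for $c\in\{\pm 1\}$) makes sense inside the algebra. This is standard since $\sfe_i(v)$ and $\sff_i(v)$ have zero constant term, so $(u+c)^{-r}$ may be expanded geometrically in $u^{-1}$; equivalently one can check the identity coefficient-by-coefficient in the generators $\sfe_i^{(r)},\sff_i^{(r)},\sfh_\imath^{(r)}$ from the corresponding $u^{-p}v^{-q}$-coefficients of~(\ref{eq:Atype-eh},~\ref{eq:Atype-fh}), arriving at the same conclusion.
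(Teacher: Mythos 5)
Your proposal is correct and is essentially identical to the paper's own argument: the paper rewrites \eqref{eq:Atype-eh} as $\left(u-v-(-1)^{\ol{i}}\right)\sfh_i(u)\sfe_i(v)+(-1)^{\ol{i}}\, \sfh_i(u)\sfe_i(u) = (u-v)\sfe_i(v)\sfh_i(u)$, plugs in $v=u-(-1)^{\ol{i}}$, and notes the other three relations are proved similarly. Your remarks on the legitimacy of the substitution (since $\sfe_i(v),\sff_i(v)$ have no constant term) are a reasonable addition that the paper leaves implicit.
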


\begin{proof}
Let us rewrite $\imath=j=i$ case of~\eqref{eq:Atype-eh} in the following form:
\begin{equation}\label{eq:mat.coef.7}
  \left(u-v-(-1)^{\ol{i}}\right)\sfh_i(u)\sfe_i(v)+(-1)^{\ol{i}}\, \sfh_i(u)\sfe_i(u) =
  (u-v)\sfe_i(v)\sfh_i(u) \,.
\end{equation}
Plugging $v=u-(-1)^{\ol{i}}$ above, we obtain~\eqref{eq:h-to-e-1}.
The other three relations are proved similarly.
\end{proof}

Let us finally explain the relevance of the above super $A$-type to the present orthosymplectic setup.
To this end, we fix $V$ with $N=2n$ or $N=2n+1$ and set $\VV=\mathrm{span}\, \{v_i\}_{i=1}^{n+m}$.
In particular, $V$ and $\VV$ have the same parity sequences: $\Parity=\parity$. Then, the defining
relations~\eqref{eq:RTT-termwise} for $1\leq i,j,k,\ell\leq n+m$ coincide with~\eqref{eq:RTT-termwise-gl}.
Therefore, we have a superalgebra homomorphism
\begin{equation}\label{eq:gl-to-osp}
  Y^\rtt(\gl(\VV))\longrightarrow X^\rtt(\fosp(V)) \quad \mathrm{given\ by} \quad
  \sft_{ij}(u)\mapsto t_{ij}(u) \quad \forall\ 1\leq i,j\leq n+m \,,
\end{equation}
which is injective due to the PBW theorems for $Y^\rtt(\gl(\VV))$ and $X^\rtt(\fosp(V))$,
see Corollary~\ref{cor:pbw-thm}. Combining this with Theorem~\ref{thm:Drinfeld-A}, we obtain:

\begin{Cor}\label{cor:A-type relations}
For $N=2n$ or $N=2n+1$, the currents $\{e_i(u),f_i(u),h_\imath(u)\}_{i<n+m}^{\imath\leq n+m}$
of~(\ref{eq:HFE-matrices},~\ref{eq:hfe-generating}) satisfy the relations from Theorem~\ref{thm:Drinfeld-A}.
\end{Cor}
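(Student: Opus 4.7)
My approach is to promote the map~\eqref{eq:gl-to-osp} to a superalgebra embedding and then transfer the Drinfeld-type relations of Theorem~\ref{thm:Drinfeld-A} across it.

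First I would verify in detail that~\eqref{eq:gl-to-osp} is a well-defined superalgebra homomorphism. The key observation is that for indices $1\leq i,j,k,\ell\leq n+m$ with $N=2n$ or $N=2n+1$, we have $i'=N+2m+1-i\geq N+m+1>n+m\geq k$, so $k\neq i'$, and likewise $\ell\neq j'$. Consequently the two Kronecker deltas $\delta_{ki'}$ and $\delta_{\ell j'}$ in the RTT-relation~\eqref{eq:RTT-termwise} both vanish, and that relation collapses precisely to the $A$-type RTT-relation~\eqref{eq:RTT-termwise-gl}. Injectivity of~\eqref{eq:gl-to-osp} is then immediate from the PBW theorem: the assignment $\sft_{ij}^{(r)}\mapsto t_{ij}^{(r)}$ sends a PBW basis of $Y^\rtt(\gl(\VV))$ to a linearly independent family in $X^\rtt(\fosp(V))$ by Corollary~\ref{cor:pbw-thm}.

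Next I would show that this embedding sends the Gauss currents of $Y^\rtt(\gl(\VV))$ to the corresponding Gauss currents of $X^\rtt(\fosp(V))$. Write $T(u)$ in block form with top-left $(n+m)\times(n+m)$ submatrix $A(u)=(t_{ij}(u))_{1\leq i,j\leq n+m}$. Because $F(u)$ from~\eqref{eq:HFE-matrices} is lower-triangular and $E(u)$ is upper-triangular, the top-left $(n+m)\times(n+m)$ blocks of $F(u)$, $H(u)$, $E(u)$ provide a genuine Gauss decomposition of $A(u)$ alone. Hence the entries $h_\imath(u), e_i(u), f_i(u)$ for $\imath\leq n+m$ and $i<n+m$ are polynomial expressions in $\{t_{ab}(u), t_{aa}(u)^{-1}\}_{a,b\leq n+m}$, and under the embedding~\eqref{eq:gl-to-osp} they agree with the images of $\sfh_\imath(u), \sfe_i(u), \sff_i(u)$ from the Gauss decomposition of $\sfT(u)$.

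Finally, I would invoke Theorem~\ref{thm:Drinfeld-A}: all the listed identities are satisfied by the Gauss currents $\sfh_\imath(u), \sfe_i(u), \sff_i(u)$ of $Y^\rtt(\gl(\VV))$, and applying the superalgebra homomorphism~\eqref{eq:gl-to-osp} transports each of them to the analogous identity among $h_\imath(u), e_i(u), f_i(u)$ in $X^\rtt(\fosp(V))$ for the stated index range. I do not anticipate any serious obstacle: the only genuine point to check is the block-Gauss compatibility in the previous paragraph, which is a standard consequence of the triangularity of $F(u)$ and $E(u)$.
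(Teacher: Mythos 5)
Your proposal is correct and follows essentially the same route as the paper: the RTT-relation~\eqref{eq:RTT-termwise} for indices $\leq n+m$ has vanishing $\delta_{ki'}$ and $\delta_{\ell j'}$ terms and so reduces to~\eqref{eq:RTT-termwise-gl}, yielding the homomorphism~\eqref{eq:gl-to-osp} (injective by the PBW theorems), and the leading principal block compatibility of the Gauss decomposition then transports the relations of Theorem~\ref{thm:Drinfeld-A}. (Your intermediate bound ``$i'\geq N+m+1$'' should read $i'\geq N+m+1-n$, but the conclusion $i'>n+m$ is unaffected.)
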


Likewise, the submatrix $T'(u)=(t_{ij}(u))_{i,j\in \mathbb{I}'}$ of $T(u)$ with
$\mathbb{I}'=\{1,2\ldots,n+m-1,n+m+1\}$ also defines an embedding
$Y^\rtt(\gl(\VV))\hookrightarrow X^\rtt(\fosp(V))$ via $\sfT(u)\mapsto T'(u)$.
Moreover, if $N=2n$ and $|v_{n+m}|=\bar{0}$, then we have the following important equalities
(which follow from~\eqref{eq:ef-vanishing-rk1}):
\begin{equation}\label{eq:ef-vanishing}
  e_{n+m,n+m+1}(u)=0=f_{n+m+1,n+m}(u) \,.
\end{equation}
Thus, in this case the Gauss decomposition of the submatrix $T'(u)$ is formed by the corresponding submatrices
of $F(u),H(u),E(u)$ from~\eqref{eq:gauss-osp}. Combining this with Theorem~\ref{thm:Drinfeld-A}, we obtain:

\begin{Cor}\label{cor:other-A-type relations}
The currents
  $\{e_{i+\delta_{i,n+m-1}}(u), f_{i+\delta_{i,n+m-1}}(u),
     h_{\imath+\delta_{\imath,n+m}}(u)\}_{i<n+m}^{\imath\leq n+m}$
satisfy the relations from Theorem~\ref{thm:Drinfeld-A}, if $N=2n$ and $|v_{n+m}|=\bar{0}$.
\end{Cor}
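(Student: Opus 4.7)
The strategy parallels that of Corollary~\ref{cor:A-type relations} but replaces the ``top'' index $n+m$ by $n+m+1$ throughout. Concretely, set $\mathbb{I}'=\{1,2,\ldots,n+m-1,n+m+1\}$ and consider the submatrix $T'(u):=(t_{ij}(u))_{i,j\in\mathbb{I}'}$. The plan is to show that $\sfT(u)\mapsto T'(u)$ defines a superalgebra embedding $Y^\rtt(\gl(\VV))\hookrightarrow X^\rtt(\fosp(V))$ for a suitable choice of $\VV$, verify that (under the assumptions $N=2n$ and $|v_{n+m}|=\bar{0}$) the Gauss decomposition of $T'(u)$ coincides with the $\mathbb{I}'$-indexed submatrices of $F(u),H(u),E(u)$ from~\eqref{eq:HFE-matrices}, and then invoke Theorem~\ref{thm:Drinfeld-A}. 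The choice of $\VV$ is governed by the parity sequence $(|\sfv_1|,\ldots,|\sfv_{n+m}|):=(|v_1|,\ldots,|v_{n+m-1}|,|v_{n+m+1}|)$; the last entry equals $|v_{n+m}|=\bar{0}$ by the parity symmetry~\eqref{eq:parity-sym} and our assumption.

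For the embedding, the first step is to note that for every $i\in\mathbb{I}'$ one has $i'\in\{n+m,n+m+2,\ldots,2n+2m\}$, none of which belongs to $\mathbb{I}'$. Consequently, for any quadruple $i,j,k,\ell\in\mathbb{I}'$ the Kronecker deltas $\delta_{ki'}$ and $\delta_{\ell j'}$ in the defining relation~\eqref{eq:RTT-termwise} vanish, so \eqref{eq:RTT-termwise} reduces exactly to the $\gl(\VV)$-type RTT-relation~\eqref{eq:RTT-termwise-gl}. This yields a superalgebra homomorphism $Y^\rtt(\gl(\VV))\to X^\rtt(\fosp(V))$ sending $\sft_{ij}(u)$ to $t_{\sigma(i),\sigma(j)}(u)$ where $\sigma$ identifies $\{1,\ldots,n+m\}$ with $\mathbb{I}'$ in the obvious order-preserving way; injectivity is then deduced from the PBW result of Corollary~\ref{cor:pbw-thm} exactly as in~\eqref{eq:gl-to-osp}.

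The main technical point is the second step: matching the Gauss decompositions. Let $F'(u),H'(u),E'(u)$ denote the submatrices of $F(u),H(u),E(u)$ with rows and columns indexed by $\mathbb{I}'$. For $i,j\in\mathbb{I}'$, the $(i,j)$-entry of $T(u)=F(u)H(u)E(u)$ equals $\sum_{k\in\mathbb{I}}F_{ik}(u)h_k(u)E_{kj}(u)$, while the $(i,j)$-entry of $F'(u)H'(u)E'(u)$ is the restricted sum over $k\in\mathbb{I}'$. The only ``missing'' index is $k=n+m$, so it suffices to show that $F_{i,n+m}(u)h_{n+m}(u)E_{n+m,j}(u)=0$ for all $i,j\in\mathbb{I}'$. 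For $i<n+m$ this is immediate from lower-triangularity of $F(u)$; for $i=n+m+1$, the left factor is $f_{n+m+1,n+m}(u)$, and symmetrically for the right factor. The identities $e_{n+m,n+m+1}(u)=0=f_{n+m+1,n+m}(u)$ of~\eqref{eq:ef-vanishing}, which hold precisely because $N=2n$ and $|v_{n+m}|=\bar{0}$, close the argument.

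The hard part is thus the vanishing~\eqref{eq:ef-vanishing}; but this is independently established by the $\fosp$-rank one analysis (via~\eqref{eq:ef-vanishing-rk1}) and can be cited as a black box here. Once both pieces are in hand, the final step is purely formal: under the embedding, the Drinfeld generators $\sfe_i^{(r)},\sff_i^{(r)},\sfh_\imath^{(r)}$ of $Y^\rtt(\gl(\VV))$ extracted from the Gauss factors $\sfF(u),\sfH(u),\sfE(u)$ map respectively to the entries of $F'(u),H'(u),E'(u)$, which are exactly $e_{i+\delta_{i,n+m-1}}^{(r)}, f_{i+\delta_{i,n+m-1}}^{(r)}, h_{\imath+\delta_{\imath,n+m}}^{(r)}$. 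Since the parities $\ol{\imath}_\VV$ in Theorem~\ref{thm:Drinfeld-A} agree with $\ol{\imath+\delta_{\imath,n+m}}_V$ (using $|v_{n+m+1}|=|v_{n+m}|$), applying Theorem~\ref{thm:Drinfeld-A} to the image of the embedding immediately yields the desired relations.
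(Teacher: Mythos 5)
Your proposal is correct and follows essentially the same route as the paper: the submatrix $T'(u)$ indexed by $\mathbb{I}'=\{1,\ldots,n+m-1,n+m+1\}$ gives an embedding of $Y^\rtt(\gl(\VV))$ (since $i'\notin\mathbb{I}'$ for $i\in\mathbb{I}'$, so~\eqref{eq:RTT-termwise} reduces to~\eqref{eq:RTT-termwise-gl}, with injectivity from Corollary~\ref{cor:pbw-thm}), and the vanishing $e_{n+m,n+m+1}(u)=0=f_{n+m+1,n+m}(u)$ of~\eqref{eq:ef-vanishing} is exactly what the paper uses to identify the Gauss decomposition of $T'(u)$ with the $\mathbb{I}'$-indexed submatrices of $F(u),H(u),E(u)$ before invoking Theorem~\ref{thm:Drinfeld-A}. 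Your write-up merely makes explicit the details (the delta-vanishing, the single missing summand $k=n+m$, and the parity bookkeeping via $|v_{n+m+1}|=|v_{n+m}|$) that the paper leaves implicit.
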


Due to the two corollaries above, it thus remains to determine the quadratic relations between the currents
$\{e_i(u),f_i(u),h_\imath(u)\}$ where at least one of the indices is $i=n+m$ or $\imath=n+m+1$, as well as
Serre relations. The latter is partially accomplished in Subsection~\ref{ssec:Serre-Yangian}  (the full treatment
being provided in Section~\ref{sec:Drinfeld orthosymplectic}, see Remark~\ref{rem:Serre-extosp-series-general}),
while the former is essentially reduced to the rank $\leq 2$ cases (due to Corollary~\ref{cor:commutativity})
which are treated case-by-case in Subsections~\ref{ssec:rank-1}--\ref{ssec:rank-2}. But first of all, we shall
provide explicit formulas for all entries of $E(u),F(u),H(u)$ and a factorized formula for the central
series $c_V(u)$ in Subsections~\ref{ssec:e-currents}--\ref{ssec:c-current}.


\section{Explicit Gauss decomposition and higher order relations}
\label{sec:Gauss and Serre}

In this section, we recover explicit formulas for all entries of the matrices $E(u),F(u),H(u)$ in the
Gauss decomposition~\eqref{eq:gauss-osp} as well as a factorized formula for the central series $c_V(u)$
of~\eqref{eq:central-c}. We also establish the higher order relations generalizing those from
Subsection~\ref{ssec:Serre-classical}.


\subsection{Upper triangular matrix explicitly}
\label{ssec:e-currents}
\

In this subsection, we derive explicit formulas for all entries of the matrix $E(u)$
from~(\ref{eq:gauss-osp},~\ref{eq:HFE-matrices}) in terms of the generators $e_i^{(r)}$. We consider
three cases ($N=2n$ and $|v_{n+m}|=\bar{0}$, $N=2n$ and $|v_{n+m}|=\bar{1}$, $N=2n+1$), for which
the formulas resemble those of~\cite{ft} for the $D$-type, $C$-type, and $B$-type, respectively.

\medskip
\noindent
$\bullet$ $N=2n$ and $|v_{n+m}|=\bar{0}$.

This case generalizes (from $m=0$ case) the $D_{n}$-type formulas of~\cite[Lemmas~2.79,~2.80]{ft}:

\begin{Lem}\label{lem:E-entries-evenN}
The following relations hold in $X^\rtt(\fosp(V))$:

\medskip
\noindent
(a) $e_{n+m,n+m+1}(u)=0$.

\medskip
\noindent
(b) $e_{i,j+1}(u)=(-1)^{\ol{j}}\, [e_{ij}(u),e_{j,j+1}^{(1)}]$ for $i<j<i'-1$ and $j\ne n+m$.

\medskip
\noindent
(c) $e_{i,n+m+1}(u)=(-1)^{\ol{n+m-1}}\, [e_{i,n+m-1}(u),e_{n+m}^{(1)}]$ for $1\leq i\leq n+m-2$.

\medskip
\noindent
(d) $e_{(i+1)',i'}(u)=-(-1)^{\ol{i+1}+\ol{i}\cdot \ol{i+1}}\, e_i\big(u+\kappa-\sum_{k=1}^i(-1)^{\ol{k}}\big)$
for $1\leq i\leq n+m-1$.

\medskip
\noindent
(e) $e_{(i+1)',j'}(u) = -(-1)^{\ol{j}\cdot \ol{j+1}}\, [e_{(i+1)',(j+1)'}(u),e_{j}^{(1)}]$
for $1\leq j<i\leq n+m-1$.

\medskip
\noindent
(f) $e_{i i'}(u) = -(-1)^{\ol{i+1}+\ol{i}\cdot \ol{i+1}}\, e_{i}(u)e_{i,(i+1)'}(u) -
     (-1)^{\ol{i}\cdot \ol{i+1}}\, [e_{i,(i+1)'}(u),e_{i}^{(1)}]$
for $1\leq i\leq n+m-1$.

\medskip
\noindent
(g) $e_{i+1,i'}(u) = (-1)^{\ol{i+1}+\ol{i}\cdot \ol{i+1}}\, e_{i}(u)e_{i+1,(i+1)'}(u) -
     (-1)^{\ol{i+1}+\ol{i}\cdot \ol{i+1}}\, e_{i,(i+1)'}(u) - \\
     (-1)^{\ol{i}\cdot \ol{i+1}}\, [e_{i+1,(i+1)'}(u),e_{i}^{(1)}]$
for $1\leq i\leq n+m-2$.

\medskip
\noindent
(h) $e_{i j'}(u)=-(-1)^{\ol{j}\cdot \ol{j+1}}\, [e_{i,(j+1)'}(u),e_j^{(1)}]$ for $1\leq j\leq i-2\leq n+m-2$.

\medskip
\noindent
(i) $e_{n+m,n+m+2}(u)=-e_{n+m}(u)$.
\end{Lem}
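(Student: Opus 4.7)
The plan is to combine three tools: the super $A$-type Drinfeld relations of Theorem~\ref{thm:Drinfeld-A} (applied via Corollaries~\ref{cor:A-type relations} and~\ref{cor:other-A-type relations}), the rank-reduction embeddings $\psi_{V,s}$ of Theorem~\ref{thm:embedding}, and a direct use of the RTT relation~\eqref{eq:RTT-termwise}. The Gauss decomposition~\eqref{eq:gauss-osp} inductively expresses each $e_{ij}(u)$ through $t_{k\ell}(u)$'s of strictly smaller index range, so the task is only to recast these expressions in the prescribed closed form.

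Parts (a), (b), (c), (e), (h) follow essentially at once from the super $A$-type structure. In our case $N=2n$, $|v_{n+m}|=\bar{0}$, Corollary~\ref{cor:other-A-type relations} asserts that the submatrix $T'(u)$ indexed by $\{1,\dots,n+m-1,n+m+1\}$ satisfies the defining relations of $Y^\rtt(\gl(\VV))$, and that its upper-triangular Gauss factor agrees with the corresponding sub-block of $E(u)$; in particular, the first identity in~\eqref{eq:ef-vanishing} is exactly part (a). The standard $A$-type inductive formula $\sfe_{i,j+1}(u)=(-1)^{\ol j}[\sfe_{ij}(u),\sfe_j^{(1)}]$, extracted from~\eqref{eq:Atype-ee-2} at the top-degree coefficient in one variable, then yields (b) for $j<n+m$ and (c) for the step across the branching vertex (where one uses the generator $e_{n+m}^{(1)}=e^{(1)}_{n+m-1,n+m+1}$). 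Parts (e) and (h), which deal with the lower-right block and the anti-diagonal crossings, come from applying the same $A$-type recursion inside the submatrix of $T(u)$ indexed by $\{n+m+1,\dots,1'\}$, which is again governed by $Y^\rtt(\gl)$-type relations by combining Theorem~\ref{thm:embedding} with Corollary~\ref{cor:other-A-type relations}.

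Parts (d), (f), (g), (i) encode the orthosymplectic symmetry $i\mapsto i'$ that is absent from $A$-type. Each of these identities is local to a small sub-block of $T(u)$, and using Theorem~\ref{thm:embedding} we reduce to that sub-block via $\psi_{V,s}$: taking $s=i-1$ in (d), (f), (g) lands us in $X^\rtt(\fosp(V^{[i-1]}))$ and effectively sets $i=1$, while part (i) reduces via $\psi_{V,n+m-2}$ to a rank-$2$ computation in $X^\rtt(\fosp(V^{[n+m-2]}))$. In each such sub-algebra one expands the relevant entry $t_{j'i'}(u)$ or $t_{ii'}(u)$ through its Gauss decomposition, then moves $h$- and $e$-factors past each other using~\eqref{eq:RTT-termwise} together with Lemma~\ref{lem:ef-com-tl}, the latter being tailor-made for commuting $e_{\ell k}(u)$ past $t^{[\ell]}_{ij}(v)$. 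The spectral shift $\kappa-\sum_{k=1}^i(-1)^{\ol k}$ appearing in (d) is exactly the constant $\kappa^{[i]}$ from~\eqref{eq:kappa truncated}, and arises naturally from the $Q$-term of the $R$-matrix~\eqref{eq:osp-Rmatrix} after rank-reduction.

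The main obstacle is the quadratic pair (f)--(g): since these identities are non-linear in the currents, induction on the index distance alone is insufficient. The key step is to expand $t_{ii'}(u)$ and $t_{i+1,i'}(u)$ via~\eqref{eq:gauss-osp}, to carefully collect the new terms produced by the $Q$-contribution in~\eqref{eq:RTT-termwise}, and then to isolate $e_{ii'}(u)$ and $e_{i+1,i'}(u)$ in terms of the simpler currents $e_i(u)$, $e_{i,(i+1)'}(u)$ and $e_{i+1,(i+1)'}(u)$. After rank-reducing to $i=1$, this is a finite and explicit (though tedious) calculation inside the rank-$2$ subalgebra; as a sanity check, upon setting $m=0$ all the resulting formulas should specialize to the $D_n$-type identities of~\cite[Lemmas~2.79, 2.80]{ft}.
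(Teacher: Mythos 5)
Your overall toolkit (rank reduction via Theorem~\ref{thm:embedding}, the two $A$-type reductions, and direct use of~\eqref{eq:RTT-termwise}) matches the paper's, and your treatment of (a), (b), (c), (e) is essentially right --- though note that (a) is not a consequence of Corollary~\ref{cor:other-A-type relations} but its prerequisite: it comes from the rank-one computation~\eqref{eq:ef-vanishing-rk1} in $X^\rtt(\fosp(2|0))$ pushed up by Theorem~\ref{thm:embedding}. The first genuine gap is in parts (d) and (i): the reflection identity $e_{(i+1)'i'}(u)=-(-1)^{\ol{i+1}+\ol{i}\cdot\ol{i+1}}\,e_i\big(u+\kappa-\sum_{k\le i}(-1)^{\ol{k}}\big)$ relates an entry of the bottom-right corner of $E(u)$ to one of the top-left corner with a spectral shift, and this cannot be extracted by ``moving $h$- and $e$-factors past each other'' inside a sub-block: Lemma~\ref{lem:ef-com-tl} explicitly excludes the anti-diagonal case $k=j'$, and the RTT relation alone does not couple $e_{2'1'}(u)$ to $e_{12}(u+\kappa^{[1]})$. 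The missing ingredient is the central (crossing) relation $T(u-\kappa)T^t(u)=c_V(u)\cdot\mathrm{Id}$ of~\eqref{eq:central-c}: rewriting it as $T^t(u+\kappa)=T(u)^{-1}c_V(u+\kappa)$ and comparing the $(2',1')$ (resp.\ $(3',1')$) matrix coefficients is what yields (d) (resp.\ (i)), after which the shift is adjusted via the resonance identity $h_1(u)e_1(u)=e_1(u-(-1)^{\ol{1}})h_1(u)$. Part (f) is also downstream of (d), since one needs $e^{(1)}_{2'1'}=-(-1)^{\ol{1}\cdot\ol{2}+\ol{2}}e^{(1)}_{12}$ to convert the $j=2'$ instance of the recursion into a statement about $e_1^{(1)}$.

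The second gap is part (h). The entries $e_{ij'}(u)$ with $j\le i-2\le n+m-2$ have row index in the first half and column index in the second half, so they do not lie in the bottom-right $(n+m)\times(n+m)$ block, and the ``$A$-type recursion inside the submatrix indexed by $\{n+m+1,\dots,1'\}$'' does not reach them. The paper proves (h) by induction on $i$ (after reducing to $j=1$ by Theorem~\ref{thm:embedding}), expanding $t_{i1'}(u)$ and $t_{i2'}(u)$ in Gauss coordinates as $h_i(u)e_{i1'}(u)+\sum_{j<i}f_{ij}(u)h_j(u)e_{j1'}(u)$; the $j=1$ and $j=2$ terms of that sum are controlled precisely by the already-established quadratic identities (f) and (g), so (h) is logically downstream of them rather than parallel to (b) and (e). Your sketch of (f)--(g) themselves (expand via the Gauss decomposition, track the $Q$-contribution, isolate the new current) is the right idea for (g), but without the central relation feeding into (d), and hence into (f) and (h), the chain of dependencies does not close.
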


\begin{proof}
(a) follows from its validity for the $n=1,m=0$ case as established in~\eqref{eq:ef-vanishing-rk1} below
and Theorem~\ref{thm:embedding}.

\medskip
(b) is similar to~\cite[Lemma 2.79(d,e)]{ft}, cf.~\cite[Lemma 5.15]{jlm}. Due to Theorem~\ref{thm:embedding},
it suffices to establish it for $i=1$ and $1<j<2', j\ne n+m$. To this end, evaluating the $v^{-1}$-coefficients
in the defining relation
\begin{equation}\label{eq:mat.coef.1}
  [t_{1 j}(u),t_{j,j+1}(v)]=\frac{(-1)^{\ol{j}}}{u-v} \Big( t_{j j}(u)t_{1,j+1}(v) - t_{j j}(v)t_{1,j+1}(u) \Big) \,,
\end{equation}
we obtain $[t_{1 j}(u),t^{(1)}_{j,j+1}]=(-1)^{\ol{j}}\, t_{1,j+1}(u)$. As $t_{1 j}(u)=h_1(u)e_{1 j}(u)$,
$t_{1,j+1}(u)=h_1(u)e_{1,j+1}(u)$, $h_1(u)$ commutes with $e^{(1)}_{j,j+1}$ by Corollary~\ref{cor:commutativity},
and $h_1(u)$ is invertible, we get the desired relation:
\begin{equation*}
  e_{1,j+1}(u)=(-1)^{\ol{j}} [e_{1 j}(u),e^{(1)}_{j,j+1}] \,.
\end{equation*}
We note that $e_{j,j+1}^{(1)}=e^{(1)}_j$ for $j<n+m$, and
$e^{(1)}_{j,j+1}=-(-1)^{\ol{j}+\ol{j}\cdot \ol{j+1}}\,e^{(1)}_{(j+1)'}$ for $j>n+m$ by~(d).

\medskip
(c) is completely analogous to part (b), but we replace~\eqref{eq:mat.coef.1} rather with
\begin{multline}\label{eq:mat.coef.2}
  [t_{1,n+m-1}(u),t_{n+m-1,n+m+1}(v)] = \\
  \frac{(-1)^{\ol{n+m-1}}}{u-v} \, \Big( t_{n+m-1,n+m-1}(u)t_{1,n+m+1}(v) - t_{n+m-1,n+m-1}(v)t_{1,n+m+1}(u) \Big) \,.
\end{multline}

(d) is similar to~\cite[(5.18)]{jlm}. Due to the equality
  $\kappa^{[i-1]}-(-1)^{\ol{i}}=\kappa-\sum_{k=1}^i (-1)^{\ol{k}}$,
cf.~\eqref{eq:kappa truncated}, and Theorem~\ref{thm:embedding}, it suffices to establish this
relation for $i=1$. To this end, we rewrite the relation~\eqref{eq:central-c} in the form:
\begin{equation}\label{eq:T-reln-used}
  T^t(u+\kappa)=T(u)^{-1}c_V(u+\kappa) \,.
\end{equation}
Here, we note that $T(u)^{-1}=E(u)^{-1}H(u)^{-1}F(u)^{-1}$. In particular, comparing
the $(1',1')$ matrix coefficients of both sides of~\eqref{eq:T-reln-used}, we find:
\begin{equation}\label{eq:mat.coef.4}
  h_1(u+\kappa)=h_{1'}(u)^{-1}c_V(u+\kappa) \,.
\end{equation}
Likewise, comparing the $(2',1')$ matrix coefficients of both sides of~\eqref{eq:T-reln-used}, we get:
\begin{equation}\label{eq:mat.coef.5}
  (-1)^{\ol{1}+\ol{1}\cdot \ol{2}}\theta_{1'}\theta_{2'}\, t_{1 2}(u+\kappa)=
  -e_{2' 1'}(u)h_{1'}(u)^{-1}c_V(u+\kappa) \,.
\end{equation}
Evoking~\eqref{eq:mat.coef.4} and the equality
  $(-1)^{\ol{1}+\ol{1}\cdot \ol{2}}\theta_{1'}\theta_{2'}=(-1)^{\ol{2}+\ol{1}\cdot \ol{2}}$,
we can rewrite~\eqref{eq:mat.coef.5} as follows:
\begin{equation}\label{eq:mat.coef.6}
  (-1)^{\ol{2}+\ol{1}\cdot \ol{2}}\, h_1(u+\kappa)e_{1 2}(u+\kappa)=
  -e_{2' 1'}(u)h_1(u+\kappa) \,.
\end{equation}
Applying $h_1(u+\kappa)e_1(u+\kappa)=e_1(u+\kappa-(-1)^{\ol{1}})h_1(u+\kappa)$, which follows
from~\eqref{eq:h-to-e-1} and Corollary~\ref{cor:A-type relations}, to the left-hand side
of~\eqref{eq:mat.coef.6} and multiplying both sides by $h_1(u+\kappa)^{-1}$ on the right,
we obtain the desired relation:
\begin{equation*}
  e_{2' 1'}(u)=-(-1)^{\ol{2}+\ol{1}\cdot \ol{2}}\, e_1\big(u+\kappa-(-1)^{\ol{1}}\big) \,.
\end{equation*}

(e) follows from yet another super $A$-type reduction, similar to that of~\cite[Proposition 5.6]{jlm}.
Namely, multiplying the bottom-right $(n+m)\times (n+m)$ submatrices of $F(u),H(u),E(u)$ provides
an $(n+m)\times (n+m)$ matrix satisfying the RTT relation~\eqref{eq:RTT relation-gl} of $A$-type
(with the parity sequence $(\ol{n+m} , \ol{n+m-1} , \ldots , \ol{1})$ which is reverse to $\Parity$).
Therefore, part (e) now follows from part (b) and the equality
$e_{(j+1)',j'}^{(1)}=-(-1)^{\ol{j+1}+\ol{j}\cdot \ol{j+1}}\, e_j^{(1)}$ due to part (d).

\medskip
(f) is similar to~\cite[Lemma~2.80(a)]{ft}. Due to Theorem~\ref{thm:embedding}, it suffices
to establish this relation for $i=1$. Applying the reasoning of part~(b) to $j=2'$, we obtain
  $[t_{1 2'}(u),e^{(1)}_{2' 1'}]=(-1)^{\ol{2}}t_{1 1'}(u)$.
According to part~(d), we have
  $e^{(1)}_{2' 1'}=-(-1)^{\ol{1}\cdot \ol{2}+\ol{2}}\, e^{(1)}_{12}$.
Thus, the above equality reads:
\begin{equation}\label{eq:mat.coef.3}
  [h_1(u)e_{1 2'}(u),e^{(1)}_1]=-(-1)^{\ol{1}\cdot \ol{2}}\, h_1(u)e_{1 1'}(u) \,.
\end{equation}
But evaluating the $v^{-1}$-coefficients in the equality
$[h_1(u),e_1(v)]=-(-1)^{\ol{1}}h_1(u)\frac{e_1(u)-e_1(v)}{u-v}$, which follows from~\eqref{eq:Atype-eh}
and Corollary~\ref{cor:A-type relations}, we obtain $[h_1(u),e^{(1)}_1]=(-1)^{\ol{1}}h_1(u)e_1(u)$.
Plugging this into~\eqref{eq:mat.coef.3}, and multiplying both sides by $h_1(u)^{-1}$ on the left,
we get the desired relation:
\begin{equation}\label{eq:mat.coef.12}
  e_{1 1'}(u) =
  -(-1)^{\ol{2}+\ol{1}\cdot \ol{2}}\, e_{1}(u)e_{1 2'}(u) -
  (-1)^{\ol{1}\cdot \ol{2}}\, [e_{1 2'}(u),e_{1}^{(1)}] \,.
\end{equation}

(g) is similar to~\cite[Lemma~2.80(b)]{ft}. Due to Theorem~\ref{thm:embedding}, it suffices to establish
this relation for $i=1$. To this end, let us compare the $v^{-1}$-coefficients in the defining relation
\begin{equation*}
  [t_{2 2'}(u),t_{2' 1'}(v)]=
  \frac{(-1)^{\ol{2}}}{u-v}\Big(t_{2' 2'}(u)t_{2 1'}(v)-t_{2' 2'}(v)t_{2 1'}(u)\Big) -
  \frac{\sum_{p=1}^{N+2m} (-1)^{\ol{2}\cdot \ol{p}} \theta_p\, t_{p 2'}(u)t_{p' 1'}(v)}{u-v-\kappa}
\end{equation*}
of~\eqref{eq:RTT-termwise}, which together with the equality
  $t_{2' 1'}^{(1)}=e_{2' 1'}^{(1)}=-(-1)^{\ol{2}+\ol{1}\cdot \ol{2}}e^{(1)}_{1 2}$ due to part~(d) implies:
\begin{equation}\label{eq:mat.coef.8}
  [t_{2 2'}(u),e_{1 2}^{(1)}]=-(-1)^{\ol{1}\cdot \ol{2}}\, t_{2 1'}(u)-(-1)^{\ol{2}}\, t_{1 2'}(u) \,.
\end{equation}
Note that
\begin{equation}\label{eq:mat.coef.9}
  t_{2 2'}(u)=h_2(u)e_{2 2'}(u)+f_{21}(u)h_1(u)e_{1 2'}(u) \,.
\end{equation}
Comparing the $v^{-1}$-coefficients of both sides of the equality
$[h_2(u),e_1(v)]=(-1)^{\ol{2}}h_2(u)\frac{e_1(u)-e_1(v)}{u-v}$
from~\eqref{eq:Atype-eh} and Corollary~\ref{cor:A-type relations}, we find
$[h_2(u),e^{(1)}_1]=-(-1)^{\ol{2}}h_2(u)e_1(u)$, so that
\begin{equation}\label{eq:mat.coef.10}
  [h_2(u)e_{2 2'}(u),e_{1}^{(1)}]=
  h_2(u)\left(-(-1)^{\ol{2}}e_{1}(u)e_{2 2'}(u)+[e_{2 2'}(u),e_{1}^{(1)}]\right) \,.
\end{equation}
Comparing the $v^{-1}$-coefficients of both sides of
  $[t_{21}(u),t_{12}(v)]=(-1)^{\ol{1}}\, \frac{t_{11}(u)t_{22}(v)-t_{11}(v)t_{22}(u)}{u-v}$,
we get $[f_{21}(u)h_1(u),e_{1}^{(1)}]=-(-1)^{\ol{1}} (t_{11}(u)-t_{22}(u))$, so that:
\begin{equation}\label{eq:mat.coef.11}
  [f_{21}(u)h_1(u)e_{1 2'}(u),e_{1}^{(1)}]=
  -(-1)^{\ol{2}} \Big(h_1(u)-t_{22}(u)\Big) e_{1 2'}(u)+t_{21}(u)[e_{1 2'}(u),e_{1}^{(1)}] \,.
\end{equation}

Combining~\eqref{eq:mat.coef.12}--\eqref{eq:mat.coef.11}, we immediately obtain the desired equality:
\begin{equation}\label{eq:mat.coef.13}
  e_{2 1'}(u) = (-1)^{\ol{2}+\ol{1}\cdot \ol{2}}e_{1}(u)e_{2 2'}(u) -
  (-1)^{\ol{2}+\ol{1}\cdot \ol{2}}\,e_{1 2'}(u) -
  (-1)^{\ol{1}\cdot \ol{2}}\, [e_{2 2'}(u),e_{1}^{(1)}]  \,.
\end{equation}

(h) is similar to~\cite[Lemma~2.80(c)]{ft}. Due to Theorem~\ref{thm:embedding}, it suffices to establish it
for $j=1$. We shall proceed by induction on $i$. Comparing the $v^{-1}$-coefficients in the defining relation
\begin{equation}\label{eq:mat.coef.14}
  [t_{i 2'}(u),t_{2' 1'}(v)]=\frac{(-1)^{\ol{2}}}{u-v} \Big(t_{2' 2'}(u)t_{i 1'}(v)-t_{2' 2'}(v)t_{i 1'}(u)\Big)
\end{equation}
and evoking the aforementioned equality
  $t_{2' 1'}^{(1)}=e^{(1)}_{2' 1'}=-(-1)^{\ol{2}+\ol{1}\cdot \ol{2}}\,e_{1}^{(1)}$,
we obtain:
\begin{equation}\label{eq:mat.coef.15}
  [t_{i 2'}(u),e_{1}^{(1)}]=-(-1)^{\ol{1}\cdot \ol{2}}\,t_{i 1'}(u) \,.
\end{equation}
Note that the series featuring in~\eqref{eq:mat.coef.15} are explicitly given by:
\begin{equation}\label{eq:mat.coef.16}
\begin{split}
  & t_{i 1'}(u)=h_i(u)e_{i 1'}(u)+\sum_{j=1}^{i-1} f_{ij}(u)h_j(u)e_{j 1'}(u) \,, \\
  & t_{i 2'}(u)=h_i(u)e_{i 2'}(u)+\sum_{j=1}^{i-1} f_{ij}(u)h_j(u)e_{j 2'}(u) \,.
\end{split}
\end{equation}
Comparing the $v^{-1}$-coefficients of both sides of
  $[t_{i1}(u),t_{12}(v)]=\frac{(-1)^{\ol{1}}}{u-v} \big(t_{11}(u)t_{i2}(v)-t_{11}(v)t_{i2}(u)\big)$,
we obtain
  $[t_{i1}(u),e_{1}^{(1)}] = (-1)^{\ol{1}}\, t_{i2}(u) =
   (-1)^{\ol{1}}\, f_{i2}(u)h_2(u)+(-1)^{\ol{1}}\, f_{i1}(u)h_1(u)e_{1}(u)$,
so that:
\begin{multline}\label{eq:mat.coef.17}
  [f_{i1}(u)h_1(u)e_{1 2'}(u),e_{1}^{(1)}] = \\
  f_{i1}(u)h_1(u) \Big([e_{1 2'}(u),e_{1}^{(1)}]+(-1)^{\ol{2}}e_{1}(u)e_{1 2'}(u)\Big) +
  (-1)^{\ol{2}}\, f_{i2}(u)h_2(u)e_{1 2'}(u) \,.
\end{multline}
For $j=2$, we have $[f_{i2}(u),e_{1}^{(1)}]=0$ (which follows from $[f_i(u),e^{(1)}_1]=0$ for $2\leq i\leq n+m$,
see Subsection~\ref{ssec:RTT gl-Yangian}) as well as $[h_2(u),e_{1}^{(1)}]=-(-1)^{\ol{2}}\, h_2(u)e_{1}(u)$
(see the proof of~\eqref{eq:mat.coef.10}), so that:
\begin{equation}\label{eq:mat.coef.18}
  [f_{i2}(u)h_2(u)e_{2 2'}(u),e_{1}^{(1)}]=
  f_{i2}(u)h_2(u) \Big([e_{2 2'}(u),e_{1}^{(1)}]-(-1)^{\ol{2}}e_{1}(u)e_{2 2'}(u)\Big) \,.
\end{equation}
For $2<j\leq i-1$, we similarly have $[f_{ij}(u),e_{1}^{(1)}]=0=[h_j(u),e_{1}^{(1)}]$ by
Corollary~\ref{cor:commutativity}, so that:
\begin{equation}\label{eq:mat.coef.19}
  [f_{ij}(u)h_j(u)e_{j 2'}(u),e_{1}^{(1)}]=f_{ij}(u)h_j(u)[e_{j 2'}(u),e_{1}^{(1)}]=
  -(-1)^{\ol{1}\cdot \ol{2}}\, f_{ij}(u)h_j(u)e_{j 1'}(u) \,,
\end{equation}
with the last equality due to the induction assumption.

Combining $[h_i(u)e_{i 2'}(u),e^{(1)}_1]=h_i(u)[e_{i 2'}(u),e^{(1)}_1]$ with the
formulas~(\ref{eq:mat.coef.12},~\ref{eq:mat.coef.13},~\ref{eq:mat.coef.15}--\ref{eq:mat.coef.19}),
we immediately obtain the desired equality:
\begin{equation}\label{eq:mat.coef.20}
  e_{i 1'}(u) = -(-1)^{\ol{1}\cdot \ol{2}}\, [e_{i 2'}(u),e_{1}^{(1)}]
  \qquad \mathrm{for}\quad 3\leq i\leq n+m \,.
\end{equation}

(i) is similar to part~(d). Due to Theorem~\ref{thm:embedding}, it suffices to establish this relation
for $n+m=2$. Comparing the $(3',1')$ matrix coefficients of both sides of~\eqref{eq:T-reln-used}, we obtain:
\begin{equation}\label{eq:mat.coef.21}
  (-1)^{\ol{1}+\ol{1}\cdot \ol{3}}\theta_{1'}\theta_{3'}\, t_{1 3}(u+\kappa) = (T(u)^{-1})_{24}\cdot c_V(u+\kappa) \,.
\end{equation}
Note that $(T(u)^{-1})_{24}=(E(u)^{-1})_{24}h_{1'}(u)^{-1}=-e_{24}(u)h_{1'}(u)^{-1}$, where we use $e_{23}(u)=0$
due to part~(a). Evoking~\eqref{eq:mat.coef.4}, we can thus bring~\eqref{eq:mat.coef.21} to the following form:
\begin{equation}\label{eq:mat.coef.22}
  h_1(u+\kappa)e_{13}(u+\kappa) = - e_{24}(u)h_1(u+\kappa) \,.
\end{equation}
Multiplying both sides of the defining relation
  $[t_{11}(u),t_{13}(v)]=\frac{(-1)^{\ol{1}}}{u-v} (t_{11}(u)t_{13}(v)-t_{11}(v)t_{13}(u))$
by $(u-v)h_1(v)^{-1}$ on the left and plugging $v=u-(-1)^{\ol{1}}$, one gets (cf.~\eqref{eq:h-to-e-1}):
\begin{equation}\label{eq:mat.coef.23}
  h_1(u)e_{13}(u)=e_{13}(u-(-1)^{\ol{1}})h_1(u) \,.
\end{equation}
Thus, the relation~\eqref{eq:mat.coef.22} implies
  $e_{13}(u+\kappa-(-1)^{\ol{1}})h_1(u+\kappa) = - e_{24}(u)h_1(u+\kappa)$.
It remains to note that $\kappa-(-1)^{\ol{1}}=0$ as $\ol{2}=\bar{0}$. Therefore, we obtain the desired equality:
\begin{equation}\label{eq:mat.coef.24}
  e_{24}(u)=-e_{13}(u) \,.
\end{equation}

\medskip
This completes our proof of Lemma~\ref{lem:E-entries-evenN}.
\end{proof}

\noindent
$\bullet$ $N=2n$ and $|v_{n+m}|=\bar{1}$.

This case generalizes (from $n=0$ case) the $C_{m}$-type formulas of~\cite[Lemmas~3.11,~3.12]{ft}:

\begin{Lem}\label{lem:E-entries-evenN-2}
The following relations hold in $X^\rtt(\fosp(V))$:

\medskip
\noindent
(a) $e_{i,j+1}(u)=(-1)^{\ol{j}}\, [e_{ij}(u),e_{j,j+1}^{(1)}]$ for $i<j<i'-1$ and $j\ne n+m$.

\medskip
\noindent
(b) $e_{i,n+m+1}(u)=-\sfrac{1}{2} [e_{i,n+m}(u),e_{n+m}^{(1)}]$ for $1\leq i\leq n+m-1$.

\medskip
\noindent
(c) $e_{(i+1)',i'}(u)=-(-1)^{\ol{i+1}+\ol{i}\cdot \ol{i+1}}\, e_i\big(u+\kappa-\sum_{k=1}^i(-1)^{\ol{k}}\big)$
for $1\leq i\leq n+m-1$.

\medskip
\noindent
(d) $e_{(i+1)',j'}(u) = -(-1)^{\ol{j}\cdot \ol{j+1}}\, [e_{(i+1)',(j+1)'}(u),e_{j}^{(1)}]$
for $1\leq j<i\leq n+m-1$.

\medskip
\noindent
(e) $e_{i i'}(u) = -(-1)^{\ol{i+1}+\ol{i}\cdot \ol{i+1}}\, e_{i}(u)e_{i,(i+1)'}(u) -
     (-1)^{\ol{i}\cdot \ol{i+1}}\, [e_{i,(i+1)'}(u),e_{i}^{(1)}]$
for $1\leq i\leq n+m-1$.

\medskip
\noindent
(f) $e_{i+1,i'}(u) = (-1)^{\ol{i+1}+\ol{i}\cdot \ol{i+1}}\, e_{i}(u)e_{i+1,(i+1)'}(u) -
     (-1)^{\ol{i+1}+\ol{i}\cdot \ol{i+1}}\, e_{i,(i+1)'}(u) - \\
     (-1)^{\ol{i}\cdot \ol{i+1}}\, [e_{i+1,(i+1)'}(u),e_{i}^{(1)}]$
for $1\leq i\leq n+m-1$.

\medskip
\noindent
(g) $e_{i j'}(u)=-(-1)^{\ol{j}\cdot \ol{j+1}}\, [e_{i,(j+1)'}(u),e_j^{(1)}]$ for $1\leq j\leq i-2\leq n+m-2$.
\end{Lem}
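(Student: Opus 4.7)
The plan is to mimic the proof of Lemma~\ref{lem:E-entries-evenN}, adapting where the parity of $v_{n+m}$ changes the structure. By Theorem~\ref{thm:embedding}, each relation reduces to an instance with $i=1$ (or lowest admissible index), so the work is to extract the stated identity from the defining RTT-relations~\eqref{eq:RTT-termwise}, combined with the $A$-type Drinfeld relations inherited from Corollary~\ref{cor:A-type relations}. Parts~(a), (c)--(g) are essentially cosmetic variants of parts~(b), (d)--(h) of Lemma~\ref{lem:E-entries-evenN}: in each case, extract a $v^{-1}$-coefficient from the appropriate bracket $[t_{1j}(u),t_{jk}(v)]$ or $[t_{i,2'}(u),t_{2',1'}(v)]$, then clear the Gauss factor $h_1(u)$ using the commutativity from Corollary~\ref{cor:commutativity} and the resonance relations~\eqref{eq:h-to-e-1}--\eqref{eq:h-to-e-2}.

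The one genuinely new point is part~(b), which gives the coefficient $-\tfrac12$. Here the key change from Lemma~\ref{lem:E-entries-evenN}(c) is that $v_{n+m}$ is odd, so $(n+m)' = n+m+1$ and the indices $j=n+m$, $k=n+m+1$ satisfy $k=j'$. Consequently, in the RTT-relation for $[t_{i,n+m}(u),t_{n+m,n+m+1}(v)]$ the $Q$-contribution controlled by $\delta_{\ell j'}$ (the second line of~\eqref{eq:RTT-termwise}) \emph{does} fire, unlike in the previous case. My plan is to reduce to $i=1$, compare the $v^{-1}$-coefficient in this RTT-relation, and observe that the $Q$-term adds a second copy of $t_{1,n+m+1}(u)$ (via $p=n+m$) with the right sign, producing $t_{1,n+m+1}(u)$ with total coefficient $2$; dividing yields the $-\tfrac12$ prefactor after accounting for the sign from $(-1)^{\ol{n+m}}=-1$. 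One then strips off $h_1(u)$ as before.

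I also plan to double-check part~(c) with the formula for $\kappa^{[i-1]}-(-1)^{\ol{i}}=\kappa-\sum_{k=1}^i(-1)^{\ol{k}}$ from~\eqref{eq:kappa truncated} at $i=1$, and then invoke~\eqref{eq:T-reln-used}--\eqref{eq:mat.coef.6} verbatim: the derivation uses only $N=2n$ and is insensitive to the parity of $v_{n+m}$. For parts~(e)--(g), the argument proceeds exactly as in Lemma~\ref{lem:E-entries-evenN}(f)--(h) by computing $[t_{2,2'}(u),e_1^{(1)}]$ and $[t_{i,2'}(u),e_1^{(1)}]$, expanding via the Gauss decomposition, and repeatedly using $[h_j(u),e_1^{(1)}]$ and $[f_{ij}(u)h_j(u),e_1^{(1)}]$ that were computed there; none of those computations used $\ol{n+m}=\bar{0}$.

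The main obstacle is part~(b), both bookkeeping the $Q$-term correctly (signs $\theta_i\theta_j$, parity factors, and the prefactor $\frac{1}{u-v-\kappa}$) and verifying that no analogue of the vanishing $e_{n+m,n+m+1}(u)=0$ (Lemma~\ref{lem:E-entries-evenN}(a)) occurs here --- indeed, for odd $v_{n+m}$ one should expect a nonzero simple root current $e_{n+m}(u)=e_{n+m,n+m+1}(u)$, consistent with the Dynkin diagrams of Cases~1,2 for $N=2n$ with trailing pattern $\bar{1}\bar{1}$ from Subsection~\ref{ssec:Dynkin diagrams}. I would not expect any part to require relations beyond those available from Corollaries~\ref{cor:commutativity} and~\ref{cor:A-type relations}.
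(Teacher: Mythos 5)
Your proposal is correct and matches the paper's treatment: the paper's proof of this lemma is literally ``completely analogous to that of Lemma~\ref{lem:E-entries-evenN}'', and you have identified precisely the one computation that changes, namely part~(b), where $\ell=n+m+1=(n+m)'=j'$ makes the $\delta_{\ell j'}$ term of~\eqref{eq:RTT-termwise} fire and doubles the coefficient of $t_{1,n+m+1}(u)$ (exactly as in the paper's derivation of~\eqref{eq:ser7-4}, which yields $-2t_{35}(u)=[t_{34}(u),e_{45}^{(1)}]$), producing the $-\sfrac{1}{2}$ prefactor. The only nitpick is that the surviving summand in the $Q$-term comes from $p=k'=n+m+1$ (so that $\delta_{kp'}=1$), not $p=n+m$; this does not affect the argument.
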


\begin{proof}
The proof is completely analogous to that of Lemma~\ref{lem:E-entries-evenN}.
\end{proof}

\noindent
$\bullet$ $N=2n+1$.

This case generalizes (from $m=0$ case) the $B_{n}$-type formulas of~\cite[Lemmas~4.10,~4.11]{ft}:

\begin{Lem}\label{lem:E-entries-oddN}
The following relations hold in $X^\rtt(\fosp(V))$:

\medskip
\noindent
(a) $e_{i,j+1}(u)=(-1)^{\ol{j}}\, [e_{ij}(u),e_{j,j+1}^{(1)}]$ for $i<j<i'-1$.

\medskip
\noindent
(b) $e_{(i+1)',i'}(u)=-(-1)^{\ol{i+1}+\ol{i}\cdot \ol{i+1}}\, e_i\big(u+\kappa-\sum_{k=1}^i(-1)^{\ol{k}}\big)$
for $1\leq i\leq n+m$.

\medskip
\noindent
(c) $e_{(i+1)',j'}(u) = -(-1)^{\ol{j}\cdot \ol{j+1}}\, [e_{(i+1)',(j+1)'}(u),e_{j}^{(1)}]$
for $1\leq j<i\leq n+m-1$.

\medskip
\noindent
(d) $e_{i i'}(u) = -(-1)^{\ol{i+1}+\ol{i}\cdot \ol{i+1}}\, e_{i}(u)e_{i,(i+1)'}(u) -
     (-1)^{\ol{i}\cdot \ol{i+1}}\, [e_{i,(i+1)'}(u),e_{i}^{(1)}]$
for $1\leq i\leq n+m$.

\medskip
\noindent
(e) $e_{i+1,i'}(u) = (-1)^{\ol{i+1}+\ol{i}\cdot \ol{i+1}}\, e_{i}(u)e_{i+1,(i+1)'}(u) -
     (-1)^{\ol{i+1}+\ol{i}\cdot \ol{i+1}}\, e_{i,(i+1)'}(u) - \\
     (-1)^{\ol{i}\cdot \ol{i+1}}\, [e_{i+1,(i+1)'}(u),e_{i}^{(1)}]$
for $1\leq i\leq n+m-1$.

\medskip
\noindent
(f) $e_{i j'}(u)=-(-1)^{\ol{j}\cdot \ol{j+1}}\, [e_{i,(j+1)'}(u),e_j^{(1)}]$ for $1\leq j\leq i-2\leq n+m-1$.
\end{Lem}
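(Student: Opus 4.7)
The plan is to establish each part of Lemma~\ref{lem:E-entries-oddN} by adapting the corresponding step in the proof of Lemma~\ref{lem:E-entries-evenN}, exploiting the fact that the odd-$N$ case is uniformly simpler: none of the exceptional phenomena of Lemma~\ref{lem:E-entries-evenN}(a,c,i) occurs, because the ``middle'' index $\lceil N/2 \rceil + m$ behaves like an ordinary index when $N$ is odd (there is a single middle vector $v_{(N+1)/2+m}$, which is always even, and the root $\alpha_{n+m}=e^*_{n+m}$ is non-isotropic but not decorated by any extra vanishing of Gauss entries). First, I would apply Theorem~\ref{thm:embedding} to reduce each identity to the case of the smallest available indices (typically $i=1$, or $i,j\in\{1,2\}$), thereby cutting down to a low-rank orthosymplectic Yangian in which the identity involves only a handful of generating series.

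For parts~(a), (c), (f), I would run the argument of Lemma~\ref{lem:E-entries-evenN}(b,e,h): compare the $v^{-1}$-coefficient in the appropriate instance of the RTT relation~\eqref{eq:RTT-termwise} (namely, the ``$A$-type" relations $[t_{1j}(u),t_{j,j+1}(v)]$ with $i\ne 1'$, $k\ne j'$, for which the $Q$-contribution vanishes), use $t_{1j}(u)=h_1(u)e_{1j}(u)$, and divide by $h_1(u)$ after invoking Corollary~\ref{cor:commutativity} to move $h_1(u)$ past $e_j^{(1)}$. For part~(c), I would additionally use the ``reverse'' $A$-type reduction obtained by considering the bottom-right $(n+m+1)\times(n+m+1)$ block of $F(u)H(u)E(u)$, which satisfies an $A$-type RTT relation with the reversed parity sequence, combined with the already-established part~(b).

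Part~(b) would follow the template of Lemma~\ref{lem:E-entries-evenN}(d): rewrite~\eqref{eq:central-c} as $T^t(u+\kappa)=T(u)^{-1}c_V(u+\kappa)$, compare the $(1',1')$ and $(2',1')$ matrix entries of the two sides to get $h_1(u+\kappa)=h_{1'}(u)^{-1}c_V(u+\kappa)$ and a relation involving $e_{2'1'}(u)$, and then apply the resonance identity $h_1(u)e_1(u)=e_1(u-(-1)^{\ol{1}})h_1(u)$ coming from~\eqref{eq:h-to-e-1} via Corollary~\ref{cor:A-type relations}. The shift $u\mapsto u+\kappa-\sum_{k=1}^i(-1)^{\ol{k}}$ in the statement then matches the shift $\kappa^{[i-1]}-(-1)^{\ol{i}}$ produced by iterating through Theorem~\ref{thm:embedding}. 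Parts~(d) and~(e) would be obtained exactly as in Lemma~\ref{lem:E-entries-evenN}(f,g): expand $t_{i,i'}(u)$, resp.\ $t_{i+1,i'}(u)$, via the Gauss decomposition~\eqref{eq:gauss-osp},~\eqref{eq:HFE-matrices}, commute each summand with $e_i^{(1)}$ using either Corollary~\ref{cor:commutativity} or the $A$-type commutation $[h_i(u),e_i^{(1)}]=(-1)^{\ol{i}}h_i(u)e_i(u)$ (derived from the $v^{-1}$-coefficient of~\eqref{eq:Atype-eh} via Corollary~\ref{cor:A-type relations}), and rearrange.

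The main obstacle I expect is purely bookkeeping: tracking the $\BZ_2$-signs $(-1)^{\ol{i}\cdot \ol{j}+\cdots}$ and the $\theta_i\theta_j$ factors arising from the supertransposition $T^t$ in part~(b), and ensuring the signs align with those in the analogous parts of Lemma~\ref{lem:E-entries-evenN}. The structural steps are transparent because they mirror the even-$N$ proof and the arguments of~\cite{ft} for the $B$-type non-super case, but verifying that no sign subtly changes under the replacements $\ol{k}\in\{\bar 0,\bar 1\}$ requires careful use of the symmetry $\ol{i}=\ol{i'}$ from~\eqref{eq:parity-sym} and the definition~\eqref{eq:theta} of $\theta_i$.
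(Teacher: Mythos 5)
Your proposal is correct and follows exactly the route the paper takes: the paper's proof of this lemma is the single remark that it is completely analogous to Lemma~\ref{lem:E-entries-evenN}, and your part-by-part adaptation (with the exceptional cases (a), (c), (i) of the even-$N$ lemma absent, and (a)$\leftrightarrow$(b), (b)$\leftrightarrow$(d), (c)$\leftrightarrow$(e), (d)$\leftrightarrow$(f), (e)$\leftrightarrow$(g), (f)$\leftrightarrow$(h) matching up) is precisely that analogy spelled out. One small correction: for the reverse $A$-type reduction in part~(c) you should take the bottom-right $(n+m)\times(n+m)$ block with indices $(n+m)',\ldots,1'$ rather than an $(n+m+1)\times(n+m+1)$ block, since RTT relations involving the middle index $n+m+1=(n+m+1)'$ pick up $Q$-contributions; all entries appearing in part~(c) lie in the smaller block, so nothing else in your argument changes.
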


\begin{proof}
The proof is completely analogous to that of Lemma~\ref{lem:E-entries-evenN}.
\end{proof}


\subsection{Lower triangular matrix explicitly}
\label{ssec:f-currents}
\

Similarly to the subsection above, we derive explicit formulas for all entries of the matrix $F(u)$
in terms of the generators $f_i^{(r)}$, treating three cases that resemble $BCD$-type formulas of~\cite{ft}.
The following lemmas can be deduced by applying the anti-automorphism $\tau$ of $X^\rtt(\fosp(V))$ given
by~\eqref{eq:tau-t} to the relations in Lemmas~\ref{lem:E-entries-evenN},~\ref{lem:E-entries-evenN-2},~\ref{lem:E-entries-oddN},
respectively, and using the formulas~\eqref{eq:tau-efh}.

\medskip
\noindent
$\bullet$ $N=2n$ and $|v_{n+m}|=\bar{0}$.

This case generalizes (from $m=0$ case) the $D_{n}$-type formulas of~\cite[Lemmas~2.96,~2.97]{ft}:

\begin{Lem}\label{lem:F-entries-evenN}
The following relations hold in $X^\rtt(\fosp(V))$:

\medskip
\noindent
(a) $f_{n+m+1,n+m}(u)=0$.

\medskip
\noindent
(b) $f_{j+1,i}(u)=(-1)^{\ol{j}}\, [f_{j+1,j}^{(1)},f_{ji}(u)]$ for $i<j<i'-1$ and $j\ne n+m$.

\medskip
\noindent
(c) $f_{n+m+1,i}(u)=(-1)^{\ol{n+m-1}}\, [f_{n+m}^{(1)},f_{n+m-1,i}(u)]$ for $1\leq i\leq n+m-2$.

\medskip
\noindent
(d) $f_{i',(i+1)'}(u)=-(-1)^{\ol{i}+\ol{i}\cdot \ol{i+1}}\, f_{i}\big(u+\kappa-\sum_{k=1}^i (-1)^{\ol{k}}\big)$
for $1\leq i\leq n+m-1$.

\medskip
\noindent
(e) $f_{j',(i+1)'}(u) = -(-1)^{\ol{j}+\ol{j+1}+\ol{j}\cdot \ol{j+1}}\, [f_j^{(1)},f_{(j+1)',(i+1)'}(u)]$
for $1\leq j<i\leq n+m-1$.

\medskip
\noindent
(f) $f_{i' i}(u) = -(-1)^{\ol{i}+\ol{i}\cdot \ol{i+1}}\, f_{(i+1)',i}(u)f_{i}(u) -
     (-1)^{\ol{i}+\ol{i+1}+\ol{i}\cdot \ol{i+1}}\, [f_{i}^{(1)},f_{(i+1)',i}(u)]$
for $1\leq i\leq n+m-1$.

\medskip
\noindent
(g) $f_{i',i+1}(u) = (-1)^{\ol{i}+\ol{i}\cdot \ol{i+1}}\, f_{(i+1)',i+1}(u)f_i(u)-
     (-1)^{\ol{i}+\ol{i}\cdot \ol{i+1}}\, f_{(i+1)',i}(u) - \\
     (-1)^{\ol{i}+\ol{i+1}+\ol{i}\cdot \ol{i+1}}\, [f_{i}^{(1)},f_{(i+1)',i+1}(u)]$
for $1\leq i\leq n+m-2$.

\medskip
\noindent
(h) $f_{j' i}(u) = - (-1)^{\ol{j}+\ol{j+1}+\ol{j}\cdot \ol{j+1}}\, [f_{j}^{(1)},f_{(j+1)',i}(u)]$
for $1\leq j\leq i-2 \leq n+m-2$.

\medskip
\noindent
(i) $f_{n+m+2,n+m}(u) = - (-1)^{\ol{n+m-1}}f_{n+m}(u)$.
\end{Lem}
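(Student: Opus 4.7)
The approach is to derive all formulas (a)--(i) by applying the anti-automorphism $\tau$ of $X^\rtt(\fosp(V))$ from Remark~\ref{rem:tau-antiautom} to the corresponding formulas in Lemma~\ref{lem:E-entries-evenN}. Recall from Remark~\ref{rem:tau-efh} that $\tau\colon e_{ij}(u)\mapsto (-1)^{\ol{i}\cdot\ol{j}+\ol{j}}\, f_{ji}(u)$, $\tau\colon f_{ji}(u)\mapsto (-1)^{\ol{i}\cdot\ol{j}+\ol{i}}\, e_{ij}(u)$, and $\tau$ fixes each $h_\imath(u)$. Since $\tau$ is an anti-automorphism satisfying $\tau(xy)=(-1)^{|x|\cdot|y|}\tau(y)\tau(x)$, a short direct computation yields $\tau([x,y])=-[\tau(x),\tau(y)]$ for any two homogeneous elements $x,y\in X^\rtt(\fosp(V))$, where we also use that $\tau$ preserves the $\BZ_2$-grading.

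With this dictionary in hand, each part of Lemma~\ref{lem:E-entries-evenN} maps directly to the corresponding part of Lemma~\ref{lem:F-entries-evenN}. Part (a) follows at once, since $\tau\bigl(e_{n+m,n+m+1}(u)\bigr)$ is a nonzero scalar multiple of $f_{n+m+1,n+m}(u)$. For parts (b), (c), (e), (h), one applies $\tau$ to the commutator expressions in Lemma~\ref{lem:E-entries-evenN}, uses $\tau([x,y])=-[\tau(x),\tau(y)]$, and reverses the order inside the commutator (picking up the super-sign $-(-1)^{|x|\cdot|y|}$) to match the lower-triangular form displayed. Part (d) is immediate since $\tau(h_\imath(u))=h_\imath(u)$ and $\tau$ transforms the single $e$-current in Lemma~\ref{lem:E-entries-evenN}(d) into the desired $f$-current up to an explicit sign. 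Part (i) is handled in exactly the same manner as (d), using Lemma~\ref{lem:E-entries-evenN}(i).

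The bulk of the bookkeeping lies in the quadratic identities (f) and (g), which involve products of two $e$-currents. For these, one uses anti-multiplicativity $\tau(ab)=(-1)^{|a|\cdot|b|}\tau(b)\tau(a)$ to convert $e_i(u)\, e_{i,(i+1)'}(u)$ into a scalar multiple of $f_{(i+1)',i}(u)\, f_i(u)$, and similarly for $e_i(u)\, e_{i+1,(i+1)'}(u)$; combined with the transformation of the residual commutator $[e_{i,(i+1)'}(u),e_i^{(1)}]$ into a scalar multiple of $[f_i^{(1)},f_{(i+1)',i}(u)]$, one reproduces precisely the sign prefactors stated in (f), (g). The only real obstacle is the careful tracking of the $\BZ_2$-parities, which is facilitated by the identities $\ol{(i+1)'}=\ol{i+1}$ from~\eqref{eq:parity-sym}; no new structural ingredient beyond Lemma~\ref{lem:E-entries-evenN} and Remark~\ref{rem:tau-efh} is required.
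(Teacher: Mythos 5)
Your proposal is correct and follows exactly the paper's own argument: the paper proves this lemma in one line by applying the anti-automorphism $\tau$ of~\eqref{eq:tau-t} to the relations of Lemma~\ref{lem:E-entries-evenN} and using the formulas~\eqref{eq:tau-efh}. Your additional bookkeeping (the identity $\tau([x,y])=-[\tau(x),\tau(y)]$ and the anti-multiplicativity used for the quadratic parts (f), (g)) is a correct elaboration of the same computation.
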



\noindent
$\bullet$ $N=2n$ and $|v_{n+m}|=\bar{1}$.

This case generalizes (from $n=0$ case) the $C_{m}$-type formulas of~\cite[Lemmas~3.11,~3.12]{ft}:

\begin{Lem}\label{lem:F-entries-evenN-2}
The following relations hold in $X^\rtt(\fosp(V))$:

\medskip
\noindent
(a) $f_{j+1,i}(u)=(-1)^{\ol{j}}\, [f_{j+1,j}^{(1)},f_{ji}(u)]$ for $i<j<i'-1$ and $j\ne n+m$.

\medskip
\noindent
(b) $f_{n+m+1,i}(u)=-\sfrac{1}{2}\, [f_{n+m}^{(1)},f_{n+m,i}(u)]$ for $1\leq i\leq n+m-1$.

\medskip
\noindent
(c) $f_{i',(i+1)'}(u)=-(-1)^{\ol{i}+\ol{i}\cdot \ol{i+1}}\, f_{i}\big(u+\kappa-\sum_{k=1}^i (-1)^{\ol{k}}\big)$
for $1\leq i\leq n+m-1$.

\medskip
\noindent
(d) $f_{j',(i+1)'}(u) = -(-1)^{\ol{j}+\ol{j+1}+\ol{j}\cdot \ol{j+1}}\, [f_j^{(1)},f_{(j+1)',(i+1)'}(u)]$
for $1\leq j<i\leq n+m-1$.

\medskip
\noindent
(e) $f_{i' i}(u) = -(-1)^{\ol{i}+\ol{i}\cdot \ol{i+1}}\, f_{(i+1)',i}(u)f_{i}(u) -
     (-1)^{\ol{i}+\ol{i+1}+\ol{i}\cdot \ol{i+1}}\, [f_{i}^{(1)},f_{(i+1)',i}(u)]$
for $1\leq i\leq n+m-1$.

\medskip
\noindent
(f) $f_{i',i+1}(u) = (-1)^{\ol{i}+\ol{i}\cdot \ol{i+1}}\, f_{(i+1)',i+1}(u)f_i(u)-
     (-1)^{\ol{i}+\ol{i}\cdot \ol{i+1}}\, f_{(i+1)',i}(u) - \\
     (-1)^{\ol{i}+\ol{i+1}+\ol{i}\cdot \ol{i+1}}\, [f_{i}^{(1)},f_{(i+1)',i+1}(u)]$
for $1\leq i\leq n+m-1$.

\medskip
\noindent
(g) $f_{j' i}(u) = - (-1)^{\ol{j}+\ol{j+1}+\ol{j}\cdot \ol{j+1}}\, [f_{j}^{(1)},f_{(j+1)',i}(u)]$
for $1\leq j\leq i-2 \leq n+m-2$.
\end{Lem}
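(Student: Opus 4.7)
The plan is to deduce this lemma from Lemma~\ref{lem:E-entries-evenN-2} by applying the anti-automorphism $\tau$ of $X^\rtt(\fosp(V))$ from~\eqref{eq:tau-t}, exactly as Lemma~\ref{lem:F-entries-evenN} was deduced from Lemma~\ref{lem:E-entries-evenN}. Recall from Remark~\ref{rem:tau-efh} that $\tau$ acts on the Gauss coordinates by the formulas~\eqref{eq:tau-efh}, namely it swaps $e_{ij}(u)$ and $f_{ji}(u)$ up to explicit signs and fixes $h_\imath(u)$. Since $\tau$ is an anti-automorphism in the graded sense, it satisfies $\tau([x,y])=-[\tau(x),\tau(y)]$ for any homogeneous $x,y$ (using that $\tau$ preserves parity), so each bracket identity in Lemma~\ref{lem:E-entries-evenN-2} transports to a bracket identity among $f$-currents. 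Each statement (a)--(g) of the present lemma is the $\tau$-image of the correspondingly lettered statement of Lemma~\ref{lem:E-entries-evenN-2}, so it suffices to apply $\tau$ term-by-term and simplify the sign prefactors.

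Concretely, I would first verify the sign bookkeeping on one representative relation, say part~(a). Starting from
$e_{i,j+1}(u)=(-1)^{\ol{j}}\,[e_{ij}(u),e^{(1)}_{j,j+1}]$
in Lemma~\ref{lem:E-entries-evenN-2}(a), applying $\tau$ and using~\eqref{eq:tau-efh} together with $\tau([x,y])=-[\tau(x),\tau(y)]$ produces, after collecting $(-1)^{\ol{i}\cdot\ol{j+1}+\ol{j+1}}$, $(-1)^{\ol{i}\cdot\ol{j}+\ol{j}}$, and $(-1)^{\ol{j}\cdot\ol{j+1}+\ol{j+1}}$ from the three Gauss generators, exactly the factor $(-1)^{\ol{j}}$ sitting in front of the bracket $[f^{(1)}_{j+1,j},f_{ji}(u)]$ of part~(a) here. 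The same mechanism produces the asserted signs in parts (c)--(g); for part~(b), one uses that $|v_{n+m}|=\bar{1}$ so that the factor $-\tfrac12$ appearing in Lemma~\ref{lem:E-entries-evenN-2}(b) (which itself originates from the $Q$-contribution in~\eqref{eq:RTT-termwise}) is preserved by $\tau$ up to the appropriate overall sign, recovering the $-\tfrac12$ in~(b) here.

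The main (and essentially only) obstacle is the careful verification that every sign matches. The prefactors in Lemma~\ref{lem:E-entries-evenN-2} involve combinations like $(-1)^{\ol{i+1}+\ol{i}\cdot\ol{i+1}}$ and $(-1)^{\ol{i}\cdot\ol{j+1}}$, while the corresponding prefactors here involve $(-1)^{\ol{i}+\ol{i}\cdot\ol{i+1}}$ and $(-1)^{\ol{j}+\ol{j+1}+\ol{j}\cdot\ol{j+1}}$; the extra $\ol{i}$ or $\ol{j}+\ol{j+1}$ in each case is precisely what the anti-automorphism produces, since~\eqref{eq:tau-efh} attaches $(-1)^{\ol{i}\cdot\ol{j}+\ol{j}}$ to $e_{ij}(u)$ and $(-1)^{\ol{i}\cdot\ol{j}+\ol{i}}$ to $f_{ji}(u)$, giving a net $(-1)^{\ol{i}+\ol{j}}$ shift when a bracket is converted. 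Since this is routine sign chasing with no further conceptual input, the argument goes through uniformly for all of (a)--(g).
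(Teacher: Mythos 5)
Your proposal follows exactly the paper's own argument: the paper proves this lemma in one line by applying the anti-automorphism $\tau$ of~\eqref{eq:tau-t} to the relations of Lemma~\ref{lem:E-entries-evenN-2} and invoking the formulas~\eqref{eq:tau-efh}. Your additional sign bookkeeping (including the correct identity $\tau([x,y])=-[\tau(x),\tau(y)]$ for the graded anti-automorphism) is sound and just makes explicit what the paper leaves implicit.
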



\noindent
$\bullet$ $N=2n+1$.

This case generalizes (from $m=0$ case) the $B_{n}$-type formulas of~\cite[Lemmas~4.10,~4.11]{ft}:

\begin{Lem}\label{lem:F-entries-oddN}
The following relations hold in $X^\rtt(\fosp(V))$:

\medskip
\noindent
(a) $f_{j+1,i}(u)=(-1)^{\ol{j}}\, [f_{j+1,j}^{(1)},f_{ji}(u)]$ for $i<j<i'-1$.

\medskip
\noindent
(b) $f_{i',(i+1)'}(u)=-(-1)^{\ol{i}+\ol{i}\cdot \ol{i+1}}\, f_{i}\big(u+\kappa-\sum_{k=1}^i (-1)^{\ol{k}}\big)$
for $1\leq i\leq n+m$.

\medskip
\noindent
(c) $f_{j',(i+1)'}(u) = -(-1)^{\ol{j}+\ol{j+1}+\ol{j}\cdot \ol{j+1}}\, [f_j^{(1)},f_{(j+1)',(i+1)'}(u)]$
for $1\leq j<i\leq n+m-1$.

\medskip
\noindent
(d) $f_{i' i}(u) = -(-1)^{\ol{i}+\ol{i}\cdot \ol{i+1}}\, f_{(i+1)',i}(u)f_{i}(u) -
     (-1)^{\ol{i}+\ol{i+1}+\ol{i}\cdot \ol{i+1}}\, [f_{i}^{(1)},f_{(i+1)',i}(u)]$
for $1\leq i\leq n+m$.

\medskip
\noindent
(e) $f_{i',i+1}(u) = (-1)^{\ol{i}+\ol{i}\cdot \ol{i+1}}\, f_{(i+1)',i+1}(u)f_i(u)-
     (-1)^{\ol{i}+\ol{i}\cdot \ol{i+1}}\, f_{(i+1)',i}(u) - \\
     (-1)^{\ol{i}+\ol{i+1}+\ol{i}\cdot \ol{i+1}}\, [f_{i}^{(1)},f_{(i+1)',i+1}(u)]$
for $1\leq i\leq n+m-1$.

\medskip
\noindent
(f) $f_{j' i}(u) = - (-1)^{\ol{j}+\ol{j+1}+\ol{j}\cdot \ol{j+1}}\, [f_{j}^{(1)},f_{(j+1)',i}(u)]$
for $1\leq j\leq i-2 \leq n+m-1$.
\end{Lem}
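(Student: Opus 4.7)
The plan is to mimic the proofs of Lemmas~\ref{lem:F-entries-evenN} and~\ref{lem:F-entries-evenN-2}: all relations are obtained by applying the anti-automorphism $\tau$ of $X^\rtt(\fosp(V))$ from Remark~\ref{rem:tau-antiautom} to the corresponding formulas for the $E(u)$-entries in Lemma~\ref{lem:E-entries-oddN}, and then using the identities~\eqref{eq:tau-efh}, namely
\begin{equation*}
  \tau\colon
  e_{ij}(u) \mapsto (-1)^{\ol{i}\cdot \ol{j} + \ol{j}}\, f_{ji}(u) \,,\quad
  f_{ji}(u) \mapsto (-1)^{\ol{i}\cdot \ol{j} + \ol{i}}\, e_{ij}(u) \,,\quad
  h_{\imath}(u) \mapsto h_\imath(u) \,.
\end{equation*}

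More concretely, I would proceed term-by-term through parts (a)--(f) of Lemma~\ref{lem:E-entries-oddN}. For each identity, I would apply $\tau$ to both sides, noting that $\tau$ is an anti-homomorphism with the sign rule $\tau(xy)=(-1)^{|x|\cdot |y|}\tau(y)\tau(x)$, so in particular $\tau([x,y]) = -(-1)^{|x|\cdot |y|}[\tau(x),\tau(y)]$ for homogeneous elements (recall $[x,y]=xy-(-1)^{|x|\cdot |y|}yx$). Since each $e_i^{(r)}$, $f_i^{(r)}$ is homogeneous of parity $\ol{i}+\ol{i+1}$ (or the appropriate $\ol{i}+\ol{j}$ for longer-root generators), the resulting signs combine with the $(-1)^{\ol{i}\cdot \ol{j}+\ol{j}}$ and $(-1)^{\ol{i}\cdot \ol{j}+\ol{i}}$ factors produced by~\eqref{eq:tau-efh} into precisely the signs stated in~(a)--(f) of the current lemma. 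For example, applying $\tau$ to (a) of Lemma~\ref{lem:E-entries-oddN} yields $f_{j+1,i}(u)$ on the left (up to a known sign), and a nested supercommutator of $f_{j+1,j}^{(1)}$ with $f_{ji}(u)$ on the right, with the signs matching after a straightforward bookkeeping of parities and of the anti-homomorphism rule.

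For parts (b) and (c), there is no sign to accumulate beyond what is already recorded, because the images of~\eqref{eq:tau-efh} for $e_{(i+1)',i'}$ and $f_{i',(i+1)'}$ carry matching parity factors that, combined with~$\ol{i}=\ol{i'}$ from~\eqref{eq:parity-sym} and~$\theta_{i'}=(-1)^{\ol{i}}\theta_i$, reproduce the asserted signs. Parts (d)--(f) proceed identically, noting that $\tau$ fixes the $h$-entries so that the products $e_i(u)e_{i,(i+1)'}(u)$ in (d)--(e) of Lemma~\ref{lem:E-entries-oddN} become $f_{(i+1)',i}(u)f_i(u)$ (the order being reversed by the anti-homomorphism property, which is exactly what is visible in the stated formulas).

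I expect no genuine obstacle here: the only slightly delicate step is the careful sign bookkeeping when tracking $(-1)^{|x|\cdot|y|}$ through each super-commutator, especially in (d) and (e) where two super-commutators appear. As a practical matter I would organize the computation by first verifying the formulas in the simplest parity-homogeneous cases (all indices even, or all odd) where the signs collapse to the classical $B$-type expressions of~\cite[Lemmas~4.10,~4.11]{ft}, and then confirm that the general parity-dependent signs agree by parity bookkeeping. This mirrors exactly the strategy used in the preceding two lemmas and requires no new ingredients.
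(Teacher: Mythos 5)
Your proposal is correct and coincides with the paper's own argument: the paper proves Lemma~\ref{lem:F-entries-oddN} precisely by applying the anti-automorphism $\tau$ of~\eqref{eq:tau-t} to the relations of Lemma~\ref{lem:E-entries-oddN} and invoking the formulas~\eqref{eq:tau-efh}. The extra sign bookkeeping you describe is exactly the routine verification the paper leaves implicit.
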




\subsection{Diagonal matrix and central current explicitly}
\label{ssec:c-current}
\

In this subsection, we derive explicit formulas for all entries of the matrix $H(u)$ in terms of the
generators $h_\imath^{(r)}$ and the factorized formula for the central current $c_V(u)$ of~\eqref{eq:central-c}.
We consider the same three cases for which the formulas resemble the $BCD$-type formulas of~\cite{ft,jlm}
and generalize~\cite[Proposition~5.1, Theorem~5.3]{m} for $N\geq 3$ and the standard parity
sequence~\eqref{eq:distinguished=standard}, though our approach is different from that used in~\cite[\S5]{m}.

\medskip
\noindent
$\bullet$ $N=2n$ and $|v_{n+m}|=\bar{0}$.

The following generalizes (from $m=0$ case) the $D_{n}$-type formula of~\cite[Theorem 5.8]{jlm}:

\begin{Lem}\label{lem:cseries-evenN-even}
The central series $c_V(u)$ from~\eqref{eq:central-c} can be factorized as follows:
\begin{equation}\label{eq:c-evenN-even}
  c_V(u) \, =
  \prod_{i=1}^{n+m-1} \frac{h_i(u-\sum_{k=1}^{i-1} (-1)^{\ol{k}})}{h_i(u-\sum_{k=1}^{i} (-1)^{\ol{k}})}
  \cdot h_{n+m}(u-n+m+1) h_{n+m+1}(u-n+m+1) \,.
\end{equation}
\end{Lem}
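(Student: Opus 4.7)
The plan is to proceed by induction on the rank $r = n+m$, using the rank-reduction embedding $\psi_{V,1}\colon X^\rtt(\fosp(V^{[1]}))\hookrightarrow X^\rtt(\fosp(V))$ of Theorem~\ref{thm:embedding}. The starting point is the base identity
\begin{equation*}
  c_V(u) \,=\, h_1(u)\, h_{1'}(u-\kappa),
\end{equation*}
already derived in~\eqref{eq:mat.coef.4} by comparing the $(1',1')$ entry of~\eqref{eq:T-reln-used} and using $(T(u)^{-1})_{1'1'}=h_{1'}(u)^{-1}$ together with $(T^t(u+\kappa))_{1'1'}=t_{11}(u+\kappa)=h_1(u+\kappa)$. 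For the base case $r=1$, the assumption $\ol{n+m}=\bar{0}$ forces $\kappa=0$, and the empty product reduces the claim to $c_V(u)=h_1(u)h_2(u)$, which is exactly the base identity.

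For the inductive step, the key target is the reduction
\begin{equation*}
  c_V(u) \,=\, \frac{h_1(u)}{h_1(u-a_1)} \cdot \psi_{V,1}\!\left(c_{V^{[1]}}(u-a_1)\right),
  \qquad a_1 := (-1)^{\ol{1}},
\end{equation*}
understood inside $X^\rtt(\fosp(V))$ via $\psi_{V,1}$. Once it is established, substituting the inductive hypothesis for $c_{V^{[1]}}(v)$ (with the shifted parameters $a_i^{[1]}=a_{i+1}-a_1$, $\kappa^{[1]}=\kappa-a_1$, and the index shift $\psi_{V,1}(h^{[1]}_j)=h_{j+1}$), followed by straightforward bookkeeping of the spectral shifts together with the identification $a_{n+m-1}=\kappa$ (which holds precisely because $\ol{n+m}=\bar{0}$, using $\sum_{k=1}^{n+m}(-1)^{\ol{k}}=n-m$), yields the claimed factorization~\eqref{eq:c-evenN-even}.

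The main obstacle lies in proving this reduction; after substituting the base identity on both sides (once for $c_V$ and once for $\psi_{V,1}(c_{V^{[1]}})$) and changing variable $v=u-a_1$, it becomes equivalent to the following \emph{ladder identity} in $X^\rtt(\fosp(V))$:
\begin{equation*}
  h_1(v)\, h_{1'}(v-\kappa^{[1]}) \,=\, h_2(v)\, h_{2'}(v-\kappa^{[1]}).
\end{equation*}
My plan to prove this is to compute the $(2,2)$ entry of~\eqref{eq:T-reln-used}, giving $t_{2'2'}(u+\kappa)=h_2(u)^{-1}\,c_V(u+\kappa)$, then expand $t_{2'2'}(u)$ through the Gauss decomposition as $t^{[1]}_{2'2'}(u)+f_{2'1}(u)\,h_1(u)\,e_{12'}(u)$, and compare with the analogous identity $t^{[1]}_{2'2'}(u+\kappa^{[1]})=h_2(u)^{-1}\,\psi_{V,1}(c_{V^{[1]}}(u+\kappa^{[1]}))$ inherited from $V^{[1]}$. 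The residual cross term $f_{2'1}(u)\,h_1(u)\,e_{12'}(u)$ must then be eliminated: the plan is to exploit Corollary~\ref{cor:commutativity} together with the vanishing of a suitable off-diagonal entry such as $(2,1)$ or $(1,2')$ in $T(u-\kappa)T^t(u)=c_V(u)\cdot\mathrm{Id}$, expanding the latter via Gauss and extracting the piece involving only the four diagonal currents $h_1,h_{1'},h_2,h_{2'}$, which isolates the desired ladder identity.
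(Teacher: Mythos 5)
Your overall skeleton — induct on the rank via $\psi_{V,1}$, reduce everything to the one-step identity $c_V(u)=\frac{h_1(u)}{h_1(u-(-1)^{\ol{1}})}\,\psi_{V,1}\big(c_{V^{[1]}}(u-(-1)^{\ol{1}})\big)$, and then bookkeep the shifts — is exactly the paper's (this is~\eqref{eq:c-derive-7} and~\eqref{eq:c-derive-8}). The gap is in your plan for the one-step identity itself. First, the identity you extract from the $(2,2)$ entry of~\eqref{eq:T-reln-used} is false: $(T(u)^{-1})_{22}$ equals $h_2(u)^{-1}$ plus a whole sum $\sum_{k>2}(E(u)^{-1})_{2k}h_k(u)^{-1}(F(u)^{-1})_{k2}$, so $t_{2'2'}(u+\kappa)\ne h_2(u)^{-1}c_V(u+\kappa)$. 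The workable entry is $(2',2')$, which produces exactly one correction term, $e_{2'1'}(u)h_{1'}(u)^{-1}f_{1'2'}(u)$, as in~\eqref{eq:c-derive-1}.

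Second, and more seriously, your strategy of \emph{eliminating} the cross terms cannot succeed, because they do not vanish and do not cancel. In the paper's computation the two cross terms $f_{21}(u+\kappa)h_1(u+\kappa)e_{12}(u+\kappa)$ and $-e_{2'1'}(u)h_1(u+\kappa)f_{1'2'}(u)$ are first rewritten using the explicit formulas $e_{2'1'}(u)=-(-1)^{\ol{2}+\ol{1}\cdot\ol{2}}e_1(u+\kappa-(-1)^{\ol{1}})$ and its $f$-analogue (Lemmas~\ref{lem:E-entries-evenN}(d),~\ref{lem:F-entries-evenN}(d)) together with the resonance relations of Corollary~\ref{cor:A-resonance}; they then combine into the commutator $[f_{21}(u+\kappa),e_{12}(u+\kappa-(-1)^{\ol{1}})]\,h_1(u+\kappa)$, which by the $A$-type relation~\eqref{eq:Atype-ef} evaluates to $\big(\tfrac{h_2(u+\kappa-(-1)^{\ol{1}})}{h_1(u+\kappa-(-1)^{\ol{1}})}-\tfrac{h_2(u+\kappa)}{h_1(u+\kappa)}\big)h_1(u+\kappa)$. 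This \emph{nonzero} contribution is precisely what produces the factor $h_1(u+\kappa)/h_1(u+\kappa-(-1)^{\ol{1}})$ in~\eqref{eq:c-derive-7}; if the cross terms were killed by commutativity or by vanishing of off-diagonal entries, you would instead derive $h_{2'}(u)^{-1}c_V(u+\kappa)=h_2(u+\kappa)$, which is wrong. So the "ladder identity" you isolate is a correct consequence of the lemma, but the mechanism you propose for proving it misses the essential ingredient — the $[e_1,f_1]$ relation at the resonant shift — and as written the argument would fail.
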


\begin{proof}
Comparing the $(2',2')$ matrix coefficients of both sides of~\eqref{eq:T-reln-used}, we get:
\begin{equation}\label{eq:c-derive-1}
  t_{22}(u+\kappa)=\Big(h_{2'}(u)^{-1}+e_{2' 1'}(u)h_{1'}(u)^{-1}f_{1' 2'}(u)\Big)c_V(u+\kappa) \,.
\end{equation}
Evoking $h_1(u+\kappa)=h_{1'}(u)^{-1}c_V(u+\kappa)$ of~\eqref{eq:mat.coef.4} and
the fact that $c_V(u+\kappa)$ is central, the relation~\eqref{eq:c-derive-1} can be written as:
\begin{equation}\label{eq:c-derive-2}
  h_{2'}(u)^{-1}c_V(u+\kappa) =
  h_2(u+\kappa) + f_{21}(u+\kappa)h_1(u+\kappa)e_{12}(u+\kappa) - e_{2' 1'}(u)h_1(u+\kappa)f_{1' 2'}(u) \,.
\end{equation}
Applying Lemmas~\ref{lem:E-entries-evenN}(d) and~\ref{lem:F-entries-evenN}(d) to the last summand, we obtain:
\begin{multline}\label{eq:c-derive-3}
  h_{2'}(u)^{-1}c_V(u+\kappa) = h_2(u+\kappa) + f_{21}(u+\kappa)h_1(u+\kappa)e_{12}(u+\kappa) \, - \\
  (-1)^{\ol{1}+\ol{2}}\, e_{12}\big(u+\kappa-(-1)^{\ol{1}}\big)h_1(u+\kappa)f_{21}\big(u+\kappa-(-1)^{\ol{1}}\big) \,.
\end{multline}
According to Corollaries~\ref{cor:A-resonance},~\ref{cor:A-type relations}, we have
  $h_1(u+\kappa)e_{12}(u+\kappa)=e_{12}(u+\kappa-(-1)^{\ol{1}})h_1(u+\kappa)$
and
  $h_1(u+\kappa)f_{21}(u+\kappa-(-1)^{\ol{1}})=f_{21}(u+\kappa)h_1(u+\kappa)$.
Plug these formulas into~\eqref{eq:c-derive-3} to get:
\begin{equation}\label{eq:c-derive-4}
  h_{2'}(u)^{-1}c_V(u+\kappa) =
  h_2(u+\kappa) + [f_{21}(u+\kappa),e_{12}(u+\kappa-(-1)^{\ol{1}})] h_1(u+\kappa) \,.
\end{equation}
But $[f_{21}(v),e_{12}(u)]=-\frac{(-1)^{\ol{1}}}{u-v}\left(\frac{h_2(u)}{h_1(u)}-\frac{h_2(v)}{h_1(v)}\right)$,
due to~\eqref{eq:Atype-ef} and Corollary~\ref{cor:A-type relations}, so that:
\begin{equation}\label{eq:ef-A}
  \Big[f_{21}(u+\kappa),e_{12}(u+\kappa-(-1)^{\ol{1}})\Big] =
  \frac{h_2(u+\kappa-(-1)^{\ol{1}})}{h_1(u+\kappa-(-1)^{\ol{1}})} -
  \frac{h_2(u+\kappa)}{h_1(u+\kappa)} \,.
\end{equation}
Plugging~\eqref{eq:ef-A} into the right-hand side of~\eqref{eq:c-derive-4}, we thus get:
\begin{equation}\label{eq:c-derive-5}
  h_{2'}(u)^{-1}c_V(u+\kappa) = \frac{h_1(u+\kappa)}{h_1(u+\kappa-(-1)^{\ol{1}})} h_2(u+\kappa-(-1)^{\ol{1}}) \,,
\end{equation}
which can be rewritten in the form
\begin{equation}\label{eq:c-derive-6}
  c_V(u+\kappa) = \frac{h_1(u+\kappa)}{h_1(u+\kappa-(-1)^{\ol{1}})} \cdot h_{2'}(u)h_2(u+\kappa-(-1)^{\ol{1}}) \,.
\end{equation}

Combining Theorem~\ref{thm:embedding} with~\eqref{eq:mat.coef.4} and
$\kappa-(-1)^{\ol{1}}=\kappa^{[1]}$ of~\eqref{eq:kappa truncated}, we note that
\begin{equation*}
  h_{2'}(u)h_2(u+\kappa-(-1)^{\ol{1}})=\psi_{V,1}(c_{V^{[1]}}(u+\kappa^{[1]})) \,.
\end{equation*}
Therefore, the equality~\eqref{eq:c-derive-6} can be expressed as follows:
\begin{equation}\label{eq:c-derive-7}
  c^{[0]}_{V}(u) = \frac{h_1(u)}{h_1(u-(-1)^{\ol{1}})} \cdot c^{[1]}_V(u-(-1)^{\ol{1}}) \,,
\end{equation}
where we introduce $c^{[k]}_{V}(u)$ for $0\leq k< n+m$ via
\begin{equation}\label{eq:c-truncated}
  c^{[k]}_{V}(u):=\psi_{V,k}(c_{V^{[k]}}(u)) \,.
\end{equation}
Applying the formula~\eqref{eq:c-derive-7} iteratively and using~\eqref{eq:psi-tower}, we obtain:
\begin{equation}\label{eq:c-derive-8}
  c_V(u) \, =
  \prod_{i=1}^{n+m-1} \frac{h_i(u-\sum_{k=1}^{i-1} (-1)^{\ol{k}})}{h_i(u-\sum_{k=1}^{i} (-1)^{\ol{k}})}
  \cdot c^{[n+m-1]}_{V}\left(u \, - \sum_{k=1}^{n+m-1} (-1)^{\ol{k}}\right) \,.
\end{equation}
According to~\eqref{eq:mat.coef.4} and the equality $\kappa^{[n+m-1]}=0$, we have
$c^{[n+m-1]}_{V}(u)=h_{n+m}(u)h_{n+m+1}(u)$. Plugging this equality into~\eqref{eq:c-derive-8}
recovers precisely the desired formula~\eqref{eq:c-evenN-even}.
\end{proof}

The following result generalizes (from $m=0$ case) the $D_{n}$-type formula of~\cite[Lemma 2.77]{ft}:

\begin{Lem}\label{lem:h-recover-1}
For $1\leq i<n+m$, we have
\begin{equation}\label{eq:h-leftover-1}
  h_{i'}(u) = \frac{1}{h_i(u+\kappa-\sum_{k=1}^i (-1)^{\ol{k}})} \,
  \prod_{j=i+1}^{n+m-1} \frac{h_j(u+\kappa-\sum_{k=1}^{j-1} (-1)^{\ol{k}})}{h_j(u+\kappa-\sum_{k=1}^{j} (-1)^{\ol{k}})}
  \cdot h_{n+m}(u) h_{n+m+1}(u) \,.
\end{equation}
\end{Lem}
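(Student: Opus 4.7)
The plan is to combine the factorization of $c_V(u)$ from Lemma~\ref{lem:cseries-evenN-even} with the rank-reduction embedding $\psi_{V,i-1}$ of Theorem~\ref{thm:embedding} applied to the identity~\eqref{eq:mat.coef.4}. The key starting observation is that~\eqref{eq:mat.coef.4}, which reads $h_{1'}(u)\,h_1(u+\kappa)=c_V(u+\kappa)$ in the full algebra, also holds in the truncated algebra $X^\rtt(\fosp(V^{[i-1]}))$, where it takes the form $h^{V^{[i-1]}}_{1'}(u)\,h^{V^{[i-1]}}_1(u+\kappa^{[i-1]})=c_{V^{[i-1]}}(u+\kappa^{[i-1]})$. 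Applying $\psi_{V,i-1}$, which by construction identifies the Gauss factors $H^{V^{[i-1]}}(u)$ with the submatrix $H^{[i-1]}(u)$ of~\eqref{eq:diagonal truncated}, sends $h^{V^{[i-1]}}_1(u)\mapsto h_i(u)$ and $h^{V^{[i-1]}}_{1'}(u)\mapsto h_{i'}(u)$, producing
\[
  h_{i'}(u)\,h_i\!\left(u+\kappa^{[i-1]}\right) = c^{[i-1]}_V\!\left(u+\kappa^{[i-1]}\right) ,
\]
with $c^{[i-1]}_V$ as in~\eqref{eq:c-truncated} and $\kappa^{[i-1]}=\kappa-\sum_{k=1}^{i-1}(-1)^{\ol{k}}$ from~\eqref{eq:kappa truncated}.

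Next, I would apply Lemma~\ref{lem:cseries-evenN-even} inside $X^\rtt(\fosp(V^{[i-1]}))$, whose parity sequence is $(\ol{i},\ol{i+1},\ldots,\ol{n+m})$ and which still has $N^{[i-1]}$ even (so the lemma applies) and $\ol{n+m}=\bar{0}$. Pushing the resulting factorization forward by $\psi_{V,i-1}$ and using the telescoping identity $\kappa^{[i-1]}-\sum_{k=i}^{j}(-1)^{\ol{k}} = \kappa-\sum_{k=1}^{j}(-1)^{\ol{k}}$, and noting that the tail $c^{[n+m-1]}_V$ of that factorization contributes $h_{n+m}(u)\,h_{n+m+1}(u)$ at the shifted argument (since $\kappa^{[n+m-1]}=0$), one obtains
\[
  c^{[i-1]}_V\!\left(u+\kappa^{[i-1]}\right) =
  \prod_{j=i}^{n+m-1}
    \frac{h_j\!\left(u+\kappa-\sum_{k=1}^{j-1}(-1)^{\ol{k}}\right)}
         {h_j\!\left(u+\kappa-\sum_{k=1}^{j}(-1)^{\ol{k}}\right)}
  \cdot h_{n+m}(u)\,h_{n+m+1}(u) .
\]

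Finally, divide by $h_i(u+\kappa^{[i-1]}) = h_i(u+\kappa-\sum_{k=1}^{i-1}(-1)^{\ol{k}})$ and peel off the $j=i$ factor from the product: the numerator of that factor is exactly $h_i(u+\kappa^{[i-1]})$, which cancels, while its denominator becomes $h_i(u+\kappa-\sum_{k=1}^{i}(-1)^{\ol{k}})$ and is precisely the denominator appearing in~\eqref{eq:h-leftover-1}. The remaining product over $j=i+1,\ldots,n+m-1$, together with the two tail factors $h_{n+m}(u)h_{n+m+1}(u)$, matches the claim.

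There is no genuine obstacle here beyond careful bookkeeping of the spectral shifts through $\psi_{V,i-1}$: the statement is a purely formal consequence of Lemma~\ref{lem:cseries-evenN-even} and the truncated version of~\eqref{eq:mat.coef.4}. The only thing one might worry about is whether $h_i$ and $h_j$ for $j>i$ commute sufficiently to allow the above rearrangements freely — but this is immediate from Corollary~\ref{cor:commutativity} together with the commutativity among the $h$'s implicit in~\eqref{eq:Atype-hh} via Corollary~\ref{cor:A-type relations}.
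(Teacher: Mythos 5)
Your proposal is correct and follows essentially the same route as the paper: combining the identity $h_{1'}(u)=h_1(u+\kappa)^{-1}c_V(u+\kappa)$ of~\eqref{eq:mat.coef.4} with the factorization of Lemma~\ref{lem:cseries-evenN-even} and the rank-reduction embedding of Theorem~\ref{thm:embedding} together with~\eqref{eq:kappa truncated}. The only cosmetic difference is that the paper first verifies the $i=1$ case and then invokes the embedding, whereas you push the whole identity through $\psi_{V,i-1}$ and cancel afterwards; the bookkeeping of shifts via $\kappa^{[i-1]}-\sum_{k=i}^{j}(-1)^{\ol{k}}=\kappa-\sum_{k=1}^{j}(-1)^{\ol{k}}$ is exactly right.
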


\begin{proof}
For $i=1$, this formula follows immediately from the equality $h_{1'}(u)=h_1(u+\kappa)^{-1} c_V(u+\kappa)$
of~\eqref{eq:mat.coef.4} combined with the explicit formula~\eqref{eq:c-evenN-even} for $c_V(u)$ as
$\kappa-n+m+1=0$. The case $1<i<n+m$ follows now by applying Theorem~\ref{thm:embedding} and evoking
the formula~\eqref{eq:kappa truncated}.
\end{proof}

\noindent
$\bullet$ $N=2n$ and $|v_{n+m}|=\bar{1}$.

This case generalizes (from $n=0$ case) the $C_{m}$-type formula of~\cite[Theorem 5.8]{jlm}:

\begin{Lem}\label{lem:cseries-evenN-odd}
The central series $c_V(u)$ from~\eqref{eq:central-c} can be factorized as follows:
\begin{equation}\label{eq:c-evenN-odd}
  c_V(u) \, =
  \prod_{i=1}^{n+m-1} \frac{h_i(u-\sum_{k=1}^{i-1} (-1)^{\ol{k}})}{h_i(u-\sum_{k=1}^{i} (-1)^{\ol{k}})}
  \cdot h_{n+m}(u-n+m-1) h_{n+m+1}(u-n+m+1) \,.
\end{equation}
\end{Lem}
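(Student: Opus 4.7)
The plan is to follow the same scheme as in the proof of Lemma~\ref{lem:cseries-evenN-even}: establish the recursive rank-reduction identity
\begin{equation*}
  c^{[0]}_V(u) \, = \, \frac{h_1(u)}{h_1(u-(-1)^{\ol{1}})} \cdot c^{[1]}_V\!\left(u-(-1)^{\ol{1}}\right) \,,
\end{equation*}
iterate it $n+m-1$ times via~\eqref{eq:psi-tower}, and then evaluate the base case $c^{[n+m-1]}_V(u)$ by hand. The derivation of the recursion itself proceeds word-for-word as before: compare the $(2',2')$ matrix coefficients of $T^t(u+\kappa) = T(u)^{-1} c_V(u+\kappa)$, substitute the expressions for $e_{2'1'}(u)$ and $f_{1'2'}(u)$ from Lemmas~\ref{lem:E-entries-evenN-2}(c) and~\ref{lem:F-entries-evenN-2}(c), commute the relevant $h_1$'s past the $A$-type generators using Corollary~\ref{cor:A-resonance} together with Corollary~\ref{cor:A-type relations}, and finish by applying~\eqref{eq:ef-A}. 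This step depends only on $|v_1|$ and is insensitive to the parity of $v_{n+m}$, so it requires no modification.

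After $n+m-1$ iterations one arrives at
\begin{equation*}
  c_V(u) \, = \prod_{i=1}^{n+m-1} \frac{h_i(u-\sum_{k=1}^{i-1} (-1)^{\ol{k}})}{h_i(u-\sum_{k=1}^{i} (-1)^{\ol{k}})} \cdot
  c^{[n+m-1]}_V\!\left(u \, - \sum_{k=1}^{n+m-1} (-1)^{\ol{k}}\right) \,,
\end{equation*}
reducing the task to the two-dimensional base case. Here the key difference with Lemma~\ref{lem:cseries-evenN-even} arises: the truncated space $V^{[n+m-1]}=\mathrm{span}\{v_{n+m},v_{n+m+1}\}$ now consists of two \emph{odd} vectors, so by~\eqref{eq:kappa truncated} one has $\kappa^{[n+m-1]}=-2$ rather than $0$. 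Applying~\eqref{eq:mat.coef.4} to this rank-$1$ base algebra therefore yields
\begin{equation*}
  c^{[n+m-1]}_V(u) \, = \, h_{n+m+1}(u+2) \cdot h_{n+m}(u) \,.
\end{equation*}

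It remains to track the overall shift in the base-case argument. Since $|v_{n+m}|=\bar{1}$, the first $n+m-1$ basis vectors contain $n$ even and $m-1$ odd entries, giving $\sum_{k=1}^{n+m-1}(-1)^{\ol{k}} = n-m+1 = \kappa+2$. Substituting this shift into the base-case formula produces the two factors $h_{n+m+1}(u-\kappa) \cdot h_{n+m}(u-\kappa-2) = h_{n+m+1}(u-n+m+1) \cdot h_{n+m}(u-n+m-1)$, matching the claim~\eqref{eq:c-evenN-odd}. There is no real obstacle: the entire argument is mechanical given the previous lemma, and the only point requiring care is the parity bookkeeping that makes the final $h_{n+m}$ and $h_{n+m+1}$ arguments differ by $2$, in contrast to the equal shifts appearing in the $|v_{n+m}|=\bar{0}$ case.
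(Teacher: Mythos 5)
Your proposal is correct and follows essentially the same route as the paper: the paper's proof of this lemma simply declares it identical to that of Lemma~\ref{lem:cseries-evenN-even} except for the base case, where $\kappa^{[n+m-1]}=-2$ forces $c^{[n+m-1]}_V(u)=h_{n+m}(u)h_{n+m+1}(u+2)$, which is exactly what you derive. Your parity bookkeeping ($\sum_{k=1}^{n+m-1}(-1)^{\ol{k}}=n-m+1=\kappa+2$) and the resulting arguments $u-n+m-1$ and $u-n+m+1$ check out.
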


\begin{proof}
The proof is precisely the same as that of Lemma~\ref{lem:cseries-evenN-even} except that now
$\kappa^{[n+m-1]}=-2$ and so one rather plugs $c^{[n+m-1]}_{V}(u)=h_{n+m}(u)h_{n+m+1}(u+2)$ into
the formula~\eqref{eq:c-derive-8}.
\end{proof}

Analogously to Lemma~\ref{lem:h-recover-1}, we also obtain the following generalization
(from $n=0$ case) of~\cite[Lemma 3.11(a)]{ft}:

\begin{Lem}\label{lem:h-recover-2}
For $1\leq i<n+m$, we have
\begin{equation}\label{eq:h-leftover-2}
  h_{i'}(u) = \frac{1}{h_i(u+\kappa-\sum_{k=1}^i (-1)^{\ol{k}})} \,
  \prod_{j=i+1}^{n+m-1} \frac{h_j(u+\kappa-\sum_{k=1}^{j-1} (-1)^{\ol{k}})}{h_j(u+\kappa-\sum_{k=1}^{j} (-1)^{\ol{k}})}
  \cdot h_{n+m}(u-2) h_{n+m+1}(u) \,.
\end{equation}
\end{Lem}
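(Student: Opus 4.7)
The plan is to mimic the proof of Lemma~\ref{lem:h-recover-1} verbatim, with the only substantive change being that we feed in the factorized formula~\eqref{eq:c-evenN-odd} of Lemma~\ref{lem:cseries-evenN-odd} instead of~\eqref{eq:c-evenN-even}.

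First, I would handle $i=1$. Rewriting~\eqref{eq:mat.coef.4} in the form
$h_{1'}(u)=h_1(u+\kappa)^{-1}\, c_V(u+\kappa)$ and substituting~\eqref{eq:c-evenN-odd} with $u\mapsto u+\kappa$, the $i=1$ factor in the product becomes $\frac{h_1(u+\kappa)}{h_1(u+\kappa-(-1)^{\ol{1}})}$, whose numerator cancels against the prefactor $h_1(u+\kappa)^{-1}$. It remains to check the two arithmetic identities
$\kappa + (-n+m-1) = -2$ and $\kappa + (-n+m+1) = 0$ (both immediate from $\kappa = n-m-1$), so that the two ``middle'' factors appear with the shifts $h_{n+m}(u-2)$ and $h_{n+m+1}(u)$ prescribed by~\eqref{eq:h-leftover-2}. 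This establishes the $i=1$ case.

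For $1<i<n+m$, I would apply the rank-reduction embedding $\psi_{V,i-1}$ of Theorem~\ref{thm:embedding} to the $i=1$ case already proved for the superspace $V^{[i-1]}$, noting that $|v_{n+m}|=\bar{1}$ is unchanged upon truncation (the involution $'$ is preserved and the ``middle'' index $n+m$ remains middle). Under $\psi_{V,i-1}$ one has $h_j^{V^{[i-1]}}(u)\mapsto h_{j+i-1}^V(u)$, and by~\eqref{eq:kappa truncated}
$$\kappa^{[i-1]} - \sum_{k=1}^{j-1}(-1)^{\ol{k+i-1}} \;=\; \kappa - \sum_{k=1}^{j+i-2}(-1)^{\ol{k}} \,,$$
so after the reindexing $j\mapsto j+i-1$ the product over $j=2,\dots,n+m-i$ in the $V^{[i-1]}$-version of~\eqref{eq:h-leftover-2} becomes precisely the product over $j=i+1,\dots,n+m-1$ in the claimed formula, while the leading factor $h_1^{V^{[i-1]}}(u+\kappa^{[i-1]}-(-1)^{\ol{1}_{V^{[i-1]}}})^{-1}$ turns into $h_i^V(u+\kappa-\sum_{k=1}^i(-1)^{\ol{k}})^{-1}$ as required.

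There is no real obstacle: the content is essentially bookkeeping of the constant shifts and the re-indexing under $\psi_{V,i-1}$. The only point that warrants a moment of care is verifying that truncating by $V^{[i-1]}$ preserves the hypothesis $|v_{n+m}|=\bar{1}$, so that Lemma~\ref{lem:cseries-evenN-odd} (and hence the $i=1$ case proved first) is indeed applicable to $V^{[i-1]}$; this follows from the fact that the ``middle'' indices $n+m,n+m+1$ of $V$ remain the middle indices of $V^{[i-1]}$.
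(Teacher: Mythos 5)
Your proposal is correct and follows exactly the route the paper takes: the $i=1$ case from $h_{1'}(u)=h_1(u+\kappa)^{-1}c_V(u+\kappa)$ of~\eqref{eq:mat.coef.4} combined with the factorization~\eqref{eq:c-evenN-odd}, and the general case via the rank-reduction embedding of Theorem~\ref{thm:embedding} together with~\eqref{eq:kappa truncated}. The shift arithmetic ($\kappa-n+m-1=-2$, $\kappa-n+m+1=0$) and the reindexing under $\psi_{V,i-1}$ are all checked correctly, so there is nothing to add.
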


\noindent
$\bullet$ $N=2n+1$.

This case generalizes (from $m=0$ case) the $B_{n}$-type formulas of~\cite[Theorem 5.8]{jlm}:

\begin{Lem}\label{lem:cseries-oddN}
The central series $c_V(u)$ from~\eqref{eq:central-c} can be factorized as follows:
\begin{equation}\label{eq:c-oddN}
  c_V(u) =
  \prod_{i=1}^{n+m} \frac{h_i(u-\sum_{k=1}^{i-1} (-1)^{\ol{k}})}{h_i(u-\sum_{k=1}^{i} (-1)^{\ol{k}})}
  \cdot h_{n+m+1}\left(u-n+m+\sfrac{1}{2}\right) h_{n+m+1}\left(u-n+m\right) \,.
\end{equation}
\end{Lem}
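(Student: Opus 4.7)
The plan is to run the same argument as in Lemma~\ref{lem:cseries-evenN-even}, iterating the rank reduction $V \rightsquigarrow V^{[1]} \rightsquigarrow V^{[2]} \rightsquigarrow \cdots$, but now carrying it one step further, down to the one-dimensional superspace $V^{[n+m]} = \mathrm{span}\{v_{n+m+1}\}$ (since $N=2n+1$ is odd, the middle index $n+m+1$ is a fixed point of the involution~\eqref{eq:index-inv}).

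First I would start from the identity $T^t(u+\kappa) = T(u)^{-1} c_V(u+\kappa)$ rewritten from~\eqref{eq:central-c}, and compare the $(2',2')$ matrix coefficients on both sides. Exactly as in the proof of Lemma~\ref{lem:cseries-evenN-even}, using Lemmas~\ref{lem:E-entries-oddN}(b),~\ref{lem:F-entries-oddN}(b) to replace $e_{2'1'}(u)$ and $f_{1'2'}(u)$ by shifted copies of $e_1$ and $f_1$, then applying the resonance formulas from Corollary~\ref{cor:A-resonance} (valid via Corollary~\ref{cor:A-type relations}) together with the $A$-type commutator~\eqref{eq:ef-A}, one arrives at the recursion
\begin{equation*}
  c^{[0]}_V(u) \,=\, \frac{h_1(u)}{h_1\!\left(u-(-1)^{\ol{1}}\right)} \cdot c^{[1]}_V\!\left(u-(-1)^{\ol{1}}\right),
\end{equation*}
where $c^{[k]}_V(u) := \psi_{V,k}\bigl(c_{V^{[k]}}(u)\bigr)$, cf.~\eqref{eq:c-truncated}. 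Iterating this identity $n+m$ times via Theorem~\ref{thm:embedding} and the compatibility~\eqref{eq:psi-tower} yields
\begin{equation*}
  c_V(u) \,=\, \prod_{i=1}^{n+m} \frac{h_i\!\left(u-\sum_{k=1}^{i-1}(-1)^{\ol{k}}\right)}{h_i\!\left(u-\sum_{k=1}^{i}(-1)^{\ol{k}}\right)} \cdot c^{[n+m]}_V\!\left(u-\sum_{k=1}^{n+m}(-1)^{\ol{k}}\right).
\end{equation*}

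For the base case, $V^{[n+m]}$ is one-dimensional and even, with $\kappa^{[n+m]} = \kappa - (n-m) = -\tfrac{1}{2}$ by~\eqref{eq:kappa truncated}. The defining relation~\eqref{eq:central-c} for the corresponding $1\times 1$ matrix $\bigl(h_{n+m+1}(u)\bigr)$ collapses to $c^{[n+m]}_V(u) = h_{n+m+1}\!\left(u+\tfrac{1}{2}\right) h_{n+m+1}(u)$. Substituting this together with $\sum_{k=1}^{n+m}(-1)^{\ol{k}} = n-m$ into the displayed formula recovers exactly~\eqref{eq:c-oddN}.

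The main obstacle is essentially bookkeeping rather than conceptual: one must verify that the cancellations in the first display go through unchanged for odd $N$ (where~\eqref{eq:ef-vanishing} is absent and so $e_{n+m,n+m+1}$, $f_{n+m+1,n+m}$ do not vanish), and one must correctly track the shifts. Since the argument deriving the recursion only involves the $(2',2')$ matrix entries and the rank-one reduction $V\to V^{[1]}$, the parity of $N$ is irrelevant at that stage; the only change compared to Lemmas~\ref{lem:cseries-evenN-even},~\ref{lem:cseries-evenN-odd} is that the iteration terminates with a one-dimensional (rather than two-dimensional) base $V^{[n+m]}$, which accounts for the shifted pair of $h_{n+m+1}$ factors in~\eqref{eq:c-oddN}.
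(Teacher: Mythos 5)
Your proposal is correct and follows essentially the same route as the paper: the paper's proof simply states that the argument of Lemma~\ref{lem:cseries-evenN-even} carries over verbatim, yielding the recursion~\eqref{eq:c-derive-9} with the product now running up to $i=n+m$, and then uses that $T^{[n+m]}_V(u)$ is the $1\times 1$ matrix $(h_{n+m+1}(u))$ with $\kappa^{[n+m]}=-\sfrac{1}{2}$, so $c^{[n+m]}_V(u)=h_{n+m+1}(u)h_{n+m+1}(u+\sfrac{1}{2})$. Your bookkeeping of the shifts (including $\sum_{k=1}^{n+m}(-1)^{\ol{k}}=n-m$) and your observation that the recursion step only involves the rank-one reduction and hence is insensitive to the parity of $N$ are both accurate.
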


\begin{proof}
The proof is precisely the same as that of Lemma~\ref{lem:cseries-evenN-even}.
Specifically, the formula~\eqref{eq:c-derive-8} is now replaced by
\begin{equation}\label{eq:c-derive-9}
  c_V(u) =
  \prod_{i=1}^{n+m} \frac{h_i(u-\sum_{k=1}^{i-1} (-1)^{\ol{k}})}{h_i(u-\sum_{k=1}^{i} (-1)^{\ol{k}})}
  \cdot c^{[n+m]}_{V}\left(u-\sum_{k=1}^{n+m} (-1)^{\ol{k}}\right) \,.
\end{equation}
But $T^{[n+m]}_V(u)$ is a $1\times 1$ matrix $(h_{n+m+1}(u))$, so that
$c^{[n+m]}_{V}(u)=h_{n+m+1}(u)h_{n+m+1}(u+\frac{1}{2})$. Plugging this equality
into~\eqref{eq:c-derive-9} recovers the desired formula~\eqref{eq:c-oddN}.
\end{proof}

Analogously to Lemma~\ref{lem:h-recover-1}, we also obtain the following generalization
(from $m=0$ case) of~\cite[Lemma 4.10(a)]{ft}:

\begin{Lem}\label{lem:h-recover-3}
For $1\leq i\leq n+m$, we have
\begin{equation}\label{eq:h-leftover-3}
  h_{i'}(u) =  \frac{1}{h_i(u+\kappa-\sum_{k=1}^i (-1)^{\ol{k}})} \,
  \prod_{j=i+1}^{n+m} \frac{h_j(u+\kappa-\sum_{k=1}^{j-1} (-1)^{\ol{k}})}{h_j(u+\kappa-\sum_{k=1}^{j} (-1)^{\ol{k}})}
  \cdot h_{n+m+1}(u) h_{n+m+1}(u-\sfrac{1}{2}) \,.
\end{equation}
\end{Lem}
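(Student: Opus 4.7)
The strategy is to mimic the proof of Lemma~\ref{lem:h-recover-1}, adapted to the odd-$N$ case. First I would handle $i=1$ by combining the central-current identity~\eqref{eq:mat.coef.4}, namely $h_{1'}(u)=h_1(u+\kappa)^{-1}c_V(u+\kappa)$, with the factorized formula~\eqref{eq:c-oddN} for $c_V(u)$ from Lemma~\ref{lem:cseries-oddN}. Explicitly, substituting $u\mapsto u+\kappa$ into~\eqref{eq:c-oddN} gives
\begin{equation*}
c_V(u+\kappa) = \prod_{i=1}^{n+m}\frac{h_i(u+\kappa-\sum_{k=1}^{i-1}(-1)^{\ol{k}})}{h_i(u+\kappa-\sum_{k=1}^{i}(-1)^{\ol{k}})}\cdot h_{n+m+1}(u+\kappa-n+m+\tfrac{1}{2})\,h_{n+m+1}(u+\kappa-n+m) \,,
\end{equation*}
and the identity $\kappa-n+m=-\tfrac12$ (since $\kappa=\tfrac{N}{2}-m-1=n-m-\tfrac12$) collapses the last two factors to $h_{n+m+1}(u)h_{n+m+1}(u-\tfrac12)$. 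Dividing by $h_1(u+\kappa)$ (the $i=1$ factor in the product) then reproduces exactly~\eqref{eq:h-leftover-3} with $i=1$.

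For $2\leq i\leq n+m$, I would apply the rank reduction embedding $\psi_{V,i-1}\colon X^{\rtt}(\fosp(V^{[i-1]}))\hookrightarrow X^{\rtt}(\fosp(V))$ from Theorem~\ref{thm:embedding} to the $i=1$ formula established above, but written for the truncated superspace $V^{[i-1]}$. Since $\psi_{V,i-1}$ sends each Gauss-decomposition series $h^{[i-1]}_\jmath(u)$ of $X^{\rtt}(\fosp(V^{[i-1]}))$ to the corresponding $h_\jmath(u)$ of $X^{\rtt}(\fosp(V))$ (by Corollary~\ref{cor:commutativity} and the definition~\eqref{eq:T truncated} of $T^{[i-1]}(u)$), the relation for $h_{i'}(u)$ in $V$ is obtained from the $i=1$ relation in $V^{[i-1]}$ simply by replacing $\kappa$ with $\kappa^{[i-1]}$. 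Using~\eqref{eq:kappa truncated}, $\kappa^{[i-1]}=\kappa-\sum_{k=1}^{i-1}(-1)^{\ol{k}}$, and shifting the product index accordingly yields precisely~\eqref{eq:h-leftover-3}.

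There is no substantive obstacle beyond bookkeeping: the $i=1$ case is a one-line manipulation of the factorization from Lemma~\ref{lem:cseries-oddN}, and the induction on $i$ is entirely mechanical via~\eqref{eq:psi-tower} and~\eqref{eq:kappa truncated}. The only thing to check carefully is that $\psi_{V,i-1}$ preserves the diagonal Gauss factors (which is immediate from Corollary~\ref{cor:commutativity} together with the matching of $T^{[i-1]}_V(u)$ with the image of $T_{V^{[i-1]}}(u)$), and that the collapse $\kappa-n+m=-\tfrac12$ works identically after the shift to $\kappa^{[i-1]}$; indeed, $\kappa^{[i-1]}-(n+m-(i-1))=\kappa-n+m=-\tfrac12$ because each additional truncation shifts both $\kappa$ and $n+m$ by the same $(-1)^{\ol{k}}$.
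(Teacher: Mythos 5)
Your proof is correct and follows exactly the route the paper takes (the paper only says "analogously to Lemma~\ref{lem:h-recover-1}", whose proof is precisely your two steps: the $i=1$ case from $h_{1'}(u)=h_1(u+\kappa)^{-1}c_V(u+\kappa)$ together with the factorization of $c_V(u)$ from Lemma~\ref{lem:cseries-oddN}, and then Theorem~\ref{thm:embedding} with~\eqref{eq:kappa truncated} for general $i$). The only nit is the phrase "shifts both $\kappa$ and $n+m$ by the same $(-1)^{\ol{k}}$" — it is $n-m$, not $n+m$, that shifts in step with $\kappa$, which is what makes $\kappa-(n-m)=-\tfrac{1}{2}$ invariant under truncation; your conclusion is nonetheless right.
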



\subsection{Higher order relations for orthosymplectic super Yangians}
\label{ssec:Serre-Yangian}
\

The aim of this subsection is to detect degree $3$, $4$, $6$, $7$ relations in $X^\rtt(\fosp(V))$ that
quantize the loop version of the corresponding Serre relations from Subsection~\ref{ssec:Serre-classical}.
Due to Theorem~\ref{thm:embedding}, it suffices to establish these relations at the smallest possible ranks
$3$, $3$, $3$, $4$, respectively. Here, we note that sub-diagrams~\eqref{eq:pic-Serre-new4a} always arise
through a super $A$-type sub-diagram, and therefore the corresponding degree $4$ Serre relations follow
from~\eqref{eq:Atype-superserre}, due to Corollaries~\ref{cor:A-type relations} and~\ref{cor:other-A-type relations}.

\medskip
\noindent
$\bullet$ $\dim(V)=6$ and $\Parity=(\ast,\bar{1},\bar{0})$ with $\ast\in\{\bar{0},\bar{1}\}$.
Thus the Dynkin diagram is as in~\eqref{eq:pic-Serre-new3}.

\begin{Lem}\label{lem:Serre-deg3}
Under the above assumptions, the following relations hold in $X^\rtt(\fosp(V))$:
\begin{equation}\label{eq:Serre-Y-new3}
\begin{split}
  & \big[e_{3}^{(1)},[e_{2}^{(1)},e_{1}(u)]\big] -
    \big[e_{2}^{(1)},[e_{3}^{(1)},e_{1}(u)]\big] = 0 \,, \\
  & \big[f_{3}^{(1)},[f_{2}^{(1)},f_{1}(u)]\big] -
    \big[f_{2}^{(1)},[f_{3}^{(1)},f_{1}(u)]\big] = 0 \,.
\end{split}
\end{equation}
\end{Lem}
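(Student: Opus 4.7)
By Theorem~\ref{thm:embedding} it suffices to prove~\eqref{eq:Serre-Y-new3} in the rank-$3$ case $\dim V = 6$, where $\Upsilon_V = (\ast, \bar{1}, \bar{0})$ with $\ast\in\{\bar{0},\bar{1}\}$; the $f$-version then follows from the $e$-version by applying the anti-automorphism $\tau$ of $X^\rtt(\fosp(V))$ from~\eqref{eq:tau-t} together with~\eqref{eq:tau-efh}. We focus on the $e$-version.

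Invoking Lemma~\ref{lem:E-entries-evenN}(b) with $i=1, j=2$ (permissible since $j = 2 \ne n+m = 3$) and Lemma~\ref{lem:E-entries-evenN}(c) with $i=1$, we identify
\begin{equation*}
  e_{13}(u) = \pm[e_2^{(1)}, e_1(u)] \,, \qquad e_{14}(u) = \pm[e_3^{(1)}, e_1(u)] \,,
\end{equation*}
with explicit signs dictated by parities. The target relation is thus equivalent to the single identity $[e_3^{(1)}, e_{13}(u)] = [e_2^{(1)}, e_{14}(u)]$, up to signs that I expect to match on the two sides. The strategy is to recognize both commutators as two distinct expressions for the same matrix entry $e_{15}(u) = e_{1,2'}(u)$.

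For the left-hand side, I apply the RTT relation~\eqref{eq:RTT-termwise} to $[t_{13}(u), t_{35}(v)]$: for the indices $(i,j,k,\ell) = (1,3,3,5)$ we have $\delta_{ki'} = 0 = \delta_{\ell j'}$, so no $Q$-term appears and the commutator reduces to the transparent $A$-type shape. Extracting the $v^{-1}$-coefficient, using $t_{35}^{(1)} = e_{35}^{(1)} = -e_3^{(1)}$ (from the Gauss decomposition together with Lemma~\ref{lem:E-entries-evenN}(i): $e_{35}(u) = -e_3(u)$) and $t_{1j}(u) = h_1(u) e_{1j}(u)$, and invoking Corollary~\ref{cor:commutativity} to slide $h_1(u)$ past $e_3^{(1)}$, I obtain $[e_3^{(1)}, e_{13}(u)] = \pm\, e_{15}(u)$. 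For the right-hand side, I instead apply Lemma~\ref{lem:E-entries-evenN}(b) with $i = 1, j = 4$ (valid since $j = 4 \ne n+m = 3$), together with Lemma~\ref{lem:E-entries-evenN}(d) for $i = 2$, which yields $e_{45}^{(1)} = -e_2^{(1)}$ because the shift $\kappa - \sum_{k=1}^{2} (-1)^{\ol{k}}$ vanishes for our parity sequence in both cases $\ast \in \{\bar{0},\bar{1}\}$. This gives $[e_2^{(1)}, e_{14}(u)] = \pm\, e_{15}(u)$.

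The hard part will be bookkeeping of the super commutator signs: the parity of $e_1(u), e_{13}(u), e_{14}(u)$, and in particular the resulting super Jacobi signs, all depend on $\ol{1}$, so one must verify in each of the two cases $\ast = \bar{0}$ and $\ast = \bar{1}$ that the $\pm$ appearing in the two derivations agree, so that combining them yields the desired equality. Conceptually, however, the content is transparent: both sides of the Serre relation are manifestations of the same element $e_{1,2'}(u)$, produced at the \emph{tip} of the Dynkin fork~\eqref{eq:pic-Serre-new3} by the two distinct traversals through its grey branches.
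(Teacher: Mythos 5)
Your proposal is correct and takes essentially the same route as the paper: both iterated commutators are recognized as the single Gauss entry $e_{15}(u)=e_{1,2'}(u)$ reached through the two tines of the fork, and the $f$-relations follow by applying $\tau$. (The paper realizes this with four $t$-level RTT computations, two of which carry $Q$-terms, whereas you reuse Lemma~\ref{lem:E-entries-evenN} together with the $Q$-free relation $[t_{13}(u),t_{35}(v)]$; the sign check you deferred does close, with both $[e_{3}^{(1)},e_{13}(u)]$ and $[e_{2}^{(1)},e_{14}(u)]$ coming out to $(-1)^{\ol{1}}\,e_{15}(u)$.)
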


\begin{proof}
Evaluating the $v^{-1}$-coefficients in the defining relation
\begin{equation*}
  [t_{12}(u),t_{23}(v)]=\frac{(-1)^{\ol{2}}}{u-v} \Big(t_{22}(u)t_{13}(v)-t_{22}(v)t_{13}(u)\Big) \,,
\end{equation*}
we get:
\begin{equation}\label{eq:ser3-1}
  t_{13}(u)=-[t_{12}(u),t_{23}^{(1)}] \,.
\end{equation}
On the other hand, comparing the $v^{-1}$-coefficients of both sides of the defining relation
\begin{equation*}
  [t_{13}(u),t_{24}(v)]=\frac{(-1)^{\sharp}(t_{23}(u)t_{14}(v)-t_{23}(v)t_{14}(u))}{u-v} +
  \frac{\sum_{p=1}^6 t_{2p'}(v)t_{1p}(u)(-1)^{\ol{1}\cdot \ol{2}+\ol{3}\cdot \ol{2}+\ol{1}\cdot \ol{p}}\theta_4\theta_{p'}}
       {u-v-\kappa} \,,
\end{equation*}
where we use $\sharp$ whenever the exact value is irrelevant, we obtain:
\begin{equation}\label{eq:ser3-2}
  t_{15}(u)=-[t_{13}(u),t_{24}^{(1)}] \,.
\end{equation}
Combining~\eqref{eq:ser3-1} and~\eqref{eq:ser3-2}, we thus get:
\begin{equation}\label{eq:ser3-3}
  t_{15}(u) = \big[[t_{12}(u),t_{23}^{(1)}],t_{24}^{(1)}\big] = \big[[t_{12}(u),e_{23}^{(1)}],e_{24}^{(1)}\big] \,.
\end{equation}

Likewise, comparing the $v^{-1}$-coefficients of both sides of the defining relation~\eqref{eq:RTT-termwise}
applied to the commutators $[t_{12}(u),t_{24}(v)]$ and $[t_{14}(u),t_{23}(v)]$, we obtain:
\begin{align}
  t_{14}(u) & = -[t_{12}(u),t_{24}^{(1)}] \,,
    \label{eq:ser3-4} \\
  t_{15}(u) & = -[t_{14}(u),t_{23}^{(1)}] \,.
    \label{eq:ser3-5}
\end{align}
Combining~\eqref{eq:ser3-4} and~\eqref{eq:ser3-5}, we thus get:
\begin{equation}\label{eq:ser3-6}
  t_{15}(u) = \big[[t_{12}(u),t_{24}^{(1)}],t_{23}^{(1)}\big] = \big[[t_{12}(u),e_{24}^{(1)}],e_{23}^{(1)}\big] \,.
\end{equation}

Comparing the above equalities~\eqref{eq:ser3-3} and~\eqref{eq:ser3-6}, we conclude that
\begin{equation}\label{eq:ser3-7}
  \big[[t_{12}(u),e_{23}^{(1)}],e_{24}^{(1)}\big] = \big[[t_{12}(u),e_{24}^{(1)}],e_{23}^{(1)}\big] \,.
\end{equation}
As $t_{12}(u)=h_1(u)e_{12}(u)$ and $h_1(u)$ commutes with $e_{23}^{(1)},e_{24}^{(1)}$ by
Corollary~\ref{cor:commutativity}, we get:
\begin{equation}\label{eq:ser3-8}
  h_1(u)\big[[e_{12}(u),e_{23}^{(1)}],e_{24}^{(1)}\big] =
  h_1(u)\big[[e_{12}(u),e_{24}^{(1)}],e_{23}^{(1)}\big] \,.
\end{equation}
Multiplying both sides of~\eqref{eq:ser3-8} by $h_1(u)^{-1}$ on the left, we obtain the first relation
of~\eqref{eq:Serre-Y-new3}.

Applying the anti-automorphism $\tau$ of $X^\rtt(\fosp(V))$ given by~\eqref{eq:tau-t} to the first
relation of~\eqref{eq:Serre-Y-new3} and using the formulas~\eqref{eq:tau-efh} establishes the second
relation of~\eqref{eq:Serre-Y-new3}.
\end{proof}

\begin{Rem}
(a) The relations~\eqref{eq:Serre-Y-new3} still hold when $\Parity=(\ast,\bar{0},\bar{0})$
with $\ast\in \{\bar{0},\bar{1}\}$, due to the super Jacobi identity and Serre relations
$[e_{2}^{(1)},e_{3}^{(1)}]=0=[f_{2}^{(1)},f_{3}^{(1)}]$, cf.~Remark~\ref{rem:Serre-parities}(b).

\medskip
\noindent
(b) Evaluating the $u^{-1}$-coefficients in~\eqref{eq:Serre-Y-new3}, we recover precisely
the cubic Serre relations~\eqref{eq:Lie-Serre-new3}.
\end{Rem}

\medskip
\noindent
$\bullet$ $\dim(V)=7$ and $\Parity=(\ol{1},\ol{2},\ol{3})$ with $\ol{2}\ne \ol{3}$.
Thus, the Dynkin diagram is as in~\eqref{eq:pic-Serre-new4b}.

\begin{Lem}\label{lem:Serre-deg4b}
Under the above assumptions, the following relations hold in $X^\rtt(\fosp(V))$:
\begin{equation}\label{eq:Serre-Y-new4b}
\begin{split}
  & \big[[e_1(u),e^{(1)}_2],[e^{(1)}_2,e^{(1)}_3]\big]=0 \,, \\
  & \big[[f_1(u),f^{(1)}_2],[f^{(1)}_2,f^{(1)}_3]\big]=0 \,.
\end{split}
\end{equation}
\end{Lem}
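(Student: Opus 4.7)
The strategy is to adapt the ladder-of-commutators technique used in the proof of Lemma~\ref{lem:Serre-deg3}, going one degree deeper. Specifically, by iterating~\eqref{eq:RTT-termwise} and extracting $v^{-1}$-coefficients from various commutators $[t_{1k}(u),t_{kl}(v)]$, a suitable off-diagonal matrix entry $t_{1,j^\ast}(u)$ (for $j^\ast$ close to $1'=6$) will be expressed as a triple-nested commutator of $t_{12}(u)$ with constants of the form $e^{(1)}_{ij}$; writing the same entry in two distinct orderings will force a nontrivial identity, which after stripping $h_1(u)$ from the left will collapse to the Serre identity.

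Concretely, by Theorem~\ref{thm:embedding} it is enough to verify the identities at the minimal rank, which is $\dim V = 6$ with $\sfr=3$ and parity $(\ol{1},\ol{2},\ol{3})$, $\ol{2}\ne \ol{3}$. The first few extractions yield $t_{13}(u)=-[t_{12}(u),e^{(1)}_{23}]$ and $t_{14}(u) = \pm[t_{13}(u),e^{(1)}_{34}] = \pm[t_{12}(u),e^{(1)}_{24}]$ (directly as in~(\ref{eq:ser3-1}--\ref{eq:ser3-5})), and iterating once more expresses $t_{1,1'}(u) = t_{16}(u)$ as a depth-3 nested commutator of $t_{12}(u)$ against three of the constants $e^{(1)}_{23}, e^{(1)}_{34}, e^{(1)}_{p4}$. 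Comparing two different orderings of this triple commutator, invoking Corollary~\ref{cor:commutativity} to move $h_1(u)^{-1}$ to the left, and simplifying with the super-Jacobi identity together with the already established relation $[e_2^{(1)},e_2^{(1)}]=0$ (valid since $\alpha_2$ is odd isotropic) and with the $A$-type relations from Corollary~\ref{cor:A-type relations}, will reduce the resulting identity precisely to the first equation of~\eqref{eq:Serre-Y-new4b}. The second (the $f$-version) then follows, as in the proof of Lemma~\ref{lem:Serre-deg3}, by applying the anti-automorphism $\tau$ of~\eqref{eq:tau-t} and using~\eqref{eq:tau-efh}.

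The main obstacle, in contrast to the degree-$3$ case, is that the $5$-step chain from $t_{12}(u)$ to $t_{16}(u)$ must pass through matrix entries with $k=i'$ or $\ell=j'$, so the second (``$Q$-term'') contribution in~\eqref{eq:RTT-termwise} genuinely enters the computation, producing summations $\sum_{p=1}^{6}\theta_p\, t_{p,\cdot}(v)t_{\cdot,p'}(u)$ with a $\frac{1}{u-v-\kappa}$ denominator. Tracking these extra terms, showing they either cancel between the two orderings being compared or collapse into lower-order quadratic relations already covered by Corollaries~\ref{cor:A-type relations} and~\ref{cor:other-A-type relations}, is the technical heart of the argument. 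A useful simplifying tool will be Lemma~\ref{lem:ef-com-tl}, which controls commutators of $e_{\ell k}(u)$ with the entries $t^{[\ell]}_{ij}(v)$ of the truncated matrix and allows one to replace awkward $Q$-contributions with products involving only the $e$-currents already present on the right-hand side of~\eqref{eq:Serre-Y-new4b}.
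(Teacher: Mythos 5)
Your proposal diverges substantially from the paper's argument, and as written it has a gap at its central step. The paper's proof never climbs higher than height three: it extracts $t_{13}(u)=(-1)^{\ol{2}}[t_{12}(u),e^{(1)}_{23}]$ and $e^{(1)}_{24}=(-1)^{\ol{3}}[e^{(1)}_{23},e^{(1)}_{34}]$ from the $v^{-1}$-coefficients of the defining relations for $[t_{12}(u),t_{23}(v)]$ and $[t_{23}(u),t_{34}(v)]$, and then observes that the $v^{-1}$-coefficient of the relation for $[t_{13}(u),t_{24}(v)]$ gives $[t_{13}(u),e^{(1)}_{24}]=0$ outright, because the right-hand side $\frac{(-1)^{\sharp}}{u-v}\big(t_{23}(u)t_{14}(v)-t_{23}(v)t_{14}(u)\big)$ contributes nothing in degree $v^{-1}$ (neither $t_{23}(v)$ nor $t_{14}(v)$ has a constant term). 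Substituting the two bracket expressions into this vanishing commutator and stripping $h_1(u)$ on the left via Corollary~\ref{cor:commutativity} is already the first identity of~\eqref{eq:Serre-Y-new4b}; the second follows by $\tau$. There is no comparison of two orderings anywhere, and no ascent to $t_{1,1'}(u)$.

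Your plan has two concrete problems. First, \eqref{eq:Serre-Y-new4b} is a \emph{vanishing} statement about one specific bracketing, not an identity between two bracketings of the same (generically nonzero) matrix entry: comparing two expressions for a high entry $t_{1,j^\ast}(u)$ yields an equality of nested commutators, and you never explain where the vanishing of $\big[[e_1(u),e^{(1)}_2],[e^{(1)}_2,e^{(1)}_3]\big]$ itself would come from — that vanishing is exactly the content of $[t_{13}(u),e^{(1)}_{24}]=0$, which your outline never isolates. (A weight count also shows $t_{1,1'}(u)$ cannot be reached from $t_{12}(u)$ by three commutators with degree-one generators; the natural height-four target would be $t_{1,2'}(u)$.) Second, the ``technical heart'' you anticipate — tracking the $\frac{1}{u-v-\kappa}$ contributions of~\eqref{eq:RTT-termwise} — does not arise in the setting where this lemma is actually invoked: sub-diagram~\eqref{eq:pic-Serre-new4b} occurs only for odd $N$ by the paper's own classification, so the minimal rank-three instance has $\dim V=7$ rather than $6$, and then all the indices $1,2,3,4$ involved satisfy $k\ne i'$ and $\ell\ne j'$, so every commutator used is a pure $A$-type instance of~\eqref{eq:RTT-termwise} with no $Q$-term. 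That absence is precisely what makes $[t_{13}(u),e^{(1)}_{24}]=0$, and hence the whole lemma, a three-line computation; insisting on the literal $\dim V=6$ (where $4=3'$ and the $Q$-terms genuinely appear, as you correctly note) would not merely complicate the bookkeeping but change the conclusion, since that configuration belongs to the degree-$3$ or degree-$6$ Serre regimes instead.
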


\begin{proof}
Evaluating the $v^{-1}$-coefficients in the defining relation
\begin{equation*}
  [t_{12}(u),t_{23}(v)]=\frac{(-1)^{\ol{2}}}{u-v} \Big(t_{22}(u)t_{13}(v)-t_{22}(v)t_{13}(u)\Big) \,,
\end{equation*}
we get:
\begin{equation}\label{eq:ser4b-1}
  t_{13}(u)=(-1)^{\ol{2}}[t_{12}(u),e_{23}^{(1)}] \,.
\end{equation}
Likewise, evaluating the $v^{-1}$-coefficients in the defining relation
\begin{equation*}
  [t_{23}(u),t_{34}(v)]=\frac{(-1)^{\ol{3}}}{u-v} \Big(t_{33}(u)t_{24}(v)-t_{33}(v)t_{24}(u)\Big) \,,
\end{equation*}
we obtain $t_{24}(u)=(-1)^{\ol{3}}[t_{23}(u),e^{(1)}_{34}]$, so that
\begin{equation}\label{eq:ser4b-2}
  e^{(1)}_{24}=(-1)^{\ol{3}}[e^{(1)}_{23},e^{(1)}_{34}] \,.
\end{equation}
Finally, comparing the $v^{-1}$-coefficients of both sides of the defining relation
\begin{equation*}
  [t_{13}(u),t_{24}(v)]=\frac{(-1)^{\sharp}}{u-v} \Big(t_{23}(u)t_{14}(v)-t_{23}(v)t_{14}(u)\Big) \,,
\end{equation*}
we get
\begin{equation}\label{eq:ser4b-3}
  [t_{13}(u),e^{(1)}_{24}]=0 \,.
\end{equation}

Combining the equalities~(\ref{eq:ser4b-1},~\ref{eq:ser4b-2},~\ref{eq:ser4b-3}), we obtain:
\begin{equation*}
  \big[[t_{12}(u),e^{(1)}_{23}],[e^{(1)}_{23},e^{(1)}_{34}]\big]=0 \,,
\end{equation*}
which implies the first relation of~\eqref{eq:Serre-Y-new4b} as $h_1(u)$ commutes with
both $e^{(1)}_2=e^{(1)}_{23}$ and $e^{(1)}_3=e^{(1)}_{34}$.

Applying the anti-automorphism $\tau$ of $X^\rtt(\fosp(V))$ given by~\eqref{eq:tau-t} to the first
relation of~\eqref{eq:Serre-Y-new4b} and using the formulas~\eqref{eq:tau-efh} establishes the second
relation of~\eqref{eq:Serre-Y-new4b}.
\end{proof}

\begin{Rem}
(a) The relations~\eqref{eq:Serre-Y-new4b} still hold for an arbitrary $\Parity=(\ast,\ast,\ast)$
with $\ast\in \{\bar{0},\bar{1}\}$.

\medskip
\noindent
(b) Evaluating the $u^{-1}$-coefficients in~\eqref{eq:Serre-Y-new4b}, we recover the Serre
relations~\eqref{eq:Lie-Serre-superA}.
\end{Rem}

\medskip
\noindent
$\bullet$ $\dim(V)=8$ and $\Parity=(\ast,\bar{0},\bar{0},\bar{1})$ with $\ast\in \{\bar{0},\bar{1}\}$.
Thus the Dynkin diagram is as in~\eqref{eq:pic-Serre-new7}.

\begin{Lem}\label{lem:Serre-deg7}
Under the above assumptions, the following relations hold in $X^\rtt(\fosp(V))$:
\begin{equation}\label{eq:Serre-Y-new7}
\begin{split}
  & \Big[\big[e_{1}(u), [e_{2}^{(1)},e_{3}^{(1)}]\big],
    \big[[e_{2}^{(1)},e^{(1)}_{3}],[e_{3}^{(1)},e_{4}^{(1)}]\big]\Big]=0 \,, \\
  & \Big[\big[f_{1}(u), [f_{2}^{(1)},f_{3}^{(1)}]\big],
    \big[[f_{2}^{(1)},f^{(1)}_{3}],[f_{3}^{(1)},f_{4}^{(1)}]\big]\Big]=0 \,.
\end{split}
\end{equation}
\end{Lem}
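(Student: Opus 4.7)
The plan is to deduce the two claimed relations as essentially trivial consequences of Lemma~\ref{lem:Serre-deg4b}, invoked through the rank-reduction embedding of Theorem~\ref{thm:embedding}. The key observation I will exploit is that the sub-diagram~\eqref{eq:pic-Serre-new7} underlying our setup contains~\eqref{eq:pic-Serre-new4b} as a sub-sub-diagram on the three rightmost nodes $2,3,4$ (with node $3$ grey and node $4$ white, satisfying $\ol{3}\ne\ol{4}$). Consequently, the Yangian degree-$4$ Serre relation applied to this sub-chain will already force the inner commutator $\big[[e_2^{(1)},e_3^{(1)}],[e_3^{(1)},e_4^{(1)}]\big]$ to vanish, so that the outer bracket in~\eqref{eq:Serre-Y-new7} is zero by default.

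First, I will identify the subspace $V^{[1]}\subset V$ of dimension $6$ with parity sequence $\Upsilon_{V^{[1]}}=(\bar{0},\bar{0},\bar{1})$---the last three entries of $\Upsilon_V=(\ast,\bar{0},\bar{0},\bar{1})$---which satisfies the hypothesis $(\ol{2})_{V^{[1]}}\ne(\ol{3})_{V^{[1]}}$ of Lemma~\ref{lem:Serre-deg4b}. Via the Gauss sub-decomposition~\eqref{eq:upper truncated} and a direct index-shift, the Drinfeld generators $e_i^{V^{[1]}}(u)$ of $V^{[1]}$ are sent, under $\psi_{V,1}\colon X^\rtt(\fosp(V^{[1]}))\hookrightarrow X^\rtt(\fosp(V))$, precisely to $e_{i+1}(u)$ for $i=1,2,3$. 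Then Lemma~\ref{lem:Serre-deg4b} applied inside the subalgebra $\psi_{V,1}(X^\rtt(\fosp(V^{[1]})))$ yields the identity of currents
\begin{equation*}
  \big[[e_2(u),e_3^{(1)}],\,[e_3^{(1)},e_4^{(1)}]\big]=0
\end{equation*}
in $X^\rtt(\fosp(V))$. Extracting the $u^{-1}$-coefficient gives $\big[[e_2^{(1)},e_3^{(1)}],[e_3^{(1)},e_4^{(1)}]\big]=0$, so that the first relation of~\eqref{eq:Serre-Y-new7} becomes the commutator of $[e_1(u),[e_2^{(1)},e_3^{(1)}]]$ with zero, which vanishes trivially.

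The second relation of~\eqref{eq:Serre-Y-new7} (for the $f$'s) will follow by an identical argument from the $f$-version of Lemma~\ref{lem:Serre-deg4b}, or equivalently by applying the anti-automorphism $\tau$ of~\eqref{eq:tau-t} together with the transformation formulas~\eqref{eq:tau-efh} to the first relation. There is no genuine obstacle in this proof: the only real work is the book-keeping of verifying $\Upsilon_{V^{[1]}}$ and tracking the image of the Drinfeld generators under $\psi_{V,1}$, both of which are routine. A notable feature, in contrast to the proofs of Lemmas~\ref{lem:Serre-deg3} and~\ref{lem:Serre-deg4b}, is that no new RTT computation is required: the degree-$7$ Serre relation collapses to the degree-$4$ one as soon as the sub-diagram containment is recognized, exactly mirroring the situation in the underlying Lie superalgebra.
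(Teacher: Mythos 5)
Your proof has a fatal gap: the inner commutator $\big[[e_{2}^{(1)},e^{(1)}_{3}],[e_{3}^{(1)},e_{4}^{(1)}]\big]$ does \emph{not} vanish in this setting, so the relation cannot be reduced to ``a bracket with zero''. In the setup of the lemma we have $N=2n$ and $\ol{4}=\bar{1}$, so $\alpha_4=2e^*_4$ is the \emph{long} root and $\alpha_2+2\alpha_3+\alpha_4=e^*_2+e^*_3$ is a genuine root of $\fosp(V)$; consequently $[[e_2,e_3],[e_3,e_4]]$ is a nonzero root vector (a multiple of $F_{2,3'}$), and by the PBW theorem (Corollary~\ref{cor:pbw-thm}) its lift $\big[[e_{2}^{(1)},e^{(1)}_{3}],[e_{3}^{(1)},e_{4}^{(1)}]\big]$ has nonzero image in $\Gr\,X^\rtt(\fosp(V))$. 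Indeed, if this bracket were zero, the degree~$6$ and degree~$7$ Serre relations of Zhang's presentation would be vacuous, which they are not; the paper's own Remark following the lemma states explicitly that the vanishing you assert holds only for \emph{even} $v_4$, whereas here $v_4$ is odd. The source of the error is a misidentification of the sub-diagram on the nodes $2,3,4$: the configuration~\eqref{eq:pic-Serre-new4b} (to which Lemma~\ref{lem:Serre-deg4b} pertains, and whose proof tacitly requires $4\ne 3'$, i.e.\ the odd-$N$, $B$-type tail with short $\alpha_{n+m}=e^*_{n+m}$, where $\alpha_j+2\alpha_t+\alpha_k$ is \emph{not} a root) is not the configuration occurring here, which is the $C$-type tail with the double edge oriented toward the grey node, exactly as in~\eqref{eq:pic-Serre-new6}--\eqref{eq:pic-Serre-new7}. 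Your rank-reduction step ($\psi_{V,1}$ sending $e^{V^{[1]}}_i(u)$ to $e_{i+1}(u)$) is fine, but the lemma you then invoke does not cover the resulting parity sequence $(\bar{0},\bar{0},\bar{1})$.

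The actual proof cannot avoid new RTT computations. One establishes, from the defining relations~\eqref{eq:RTT-termwise}, the identities $t_{14}(u)=\big[t_{12}(u),[e_{23}^{(1)},e_{34}^{(1)}]\big]$ and $t_{26}(u)=-\sfrac{1}{2}\big[t_{24}(u),[e^{(1)}_{34},e^{(1)}_{45}]\big]$ (the latter uses the $Q$-term of the relation for $[t_{34}(u),t_{45}(v)]$, which produces the factor $-2$), and then extracts $[t_{14}(u),t^{(1)}_{26}]=0$ from the relation for $[t_{14}(u),t_{26}(v)]$. Combining these, writing $t_{12}(u)=h_1(u)e_{12}(u)$, and cancelling $h_1(u)$ by Corollary~\ref{cor:commutativity} yields the first relation of~\eqref{eq:Serre-Y-new7}; the second follows via the anti-automorphism $\tau$. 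You would need to supply this (or an equivalent) computation rather than appeal to a degree-$4$ vanishing that fails here.
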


\begin{proof}
Evaluating the $v^{-1}$-coefficients in the defining relation
\begin{equation*}
  [t_{23}(u),t_{34}(v)]=\frac{(-1)^{\ol{3}}}{u-v} \Big(t_{33}(u)t_{24}(v)-t_{33}(v)t_{24}(u) \Big) \,,
\end{equation*}
we obtain:
\begin{equation}\label{eq:ser7-1}
  t_{24}(u)=[t_{23}(u),t_{34}^{(1)}]=[t_{23}(u),e_{34}^{(1)}] \,.
\end{equation}
Likewise, evaluating the $v^{-1}$-coefficients in the defining relation
\begin{equation*}
  [t_{12}(u),t_{24}(v)]=\frac{(-1)^{\ol{2}}}{u-v} \Big(t_{22}(u)t_{14}(v)-t_{22}(v)t_{14}(u)\Big) \,,
\end{equation*}
we obtain:
\begin{equation}\label{eq:ser7-2}
  t_{14}(u)=[t_{12}(u),t_{24}^{(1)}]=[t_{12}(u),e_{24}^{(1)}] \,.
\end{equation}
Combining the above formulas, we thus get:
\begin{equation}\label{eq:ser7-3}
  t_{14}(u)=\big[t_{12}(u),[e_{23}^{(1)},e_{34}^{(1)}]\big] \,.
\end{equation}

Comparing the $v^{-1}$-coefficients of both sides of the defining relation
\begin{equation*}
  [t_{34}(u),t_{45}(v)]=\frac{(-1)^{\ol{4}}(t_{44}(u)t_{35}(v)-t_{44}(v)t_{35}(u))}{u-v} +
  \frac{\sum_{p=1}^8 t_{4p'}(v)t_{3p}(u)(-1)^{\ol{3}\cdot \ol{4}+\ol{4}+\ol{3}\cdot \ol{p}}\theta_5\theta_{p'}}
       {u-v-\kappa} \,,
\end{equation*}
we obtain:
\begin{equation}\label{eq:ser7-4}
  -2t_{35}(u)=[t_{34}(u),t_{45}^{(1)}]=[t_{34}(u),e_{45}^{(1)}] \,.
\end{equation}
Likewise, comparing the $v^{-1}$-coefficients of both sides of the defining relation
\begin{equation*}
  [t_{24}(u),t_{35}(v)]=\frac{(-1)^{\sharp}(t_{34}(u)t_{25}(v)-t_{34}(v)t_{25}(u))}{u-v} +
  \frac{\sum_{p=1}^8 t_{3p'}(v)t_{2p}(u)(-1)^{\ol{2}\cdot \ol{3}+\ol{3}\cdot \ol{4}+\ol{2}\cdot \ol{p}}\theta_5\theta_{p'}}
       {u-v-\kappa} \,,
\end{equation*}
we obtain:
\begin{equation}\label{eq:ser7-5}
  t_{26}(u)=[t_{24}(u),t_{35}^{(1)}]=[t_{24}(u),e_{35}^{(1)}] \,.
\end{equation}
Combining~\eqref{eq:ser7-4} and~\eqref{eq:ser7-5}, we thus get:
\begin{equation}\label{eq:ser7-6}
  t_{26}(u)=-\sfrac{1}{2} \big[t_{24}(u),[e^{(1)}_{34},e^{(1)}_{45}]\big] \,.
\end{equation}

Finally, evaluating the $v^{-1}$-coefficients in the defining relation
\begin{equation*}
  [t_{14}(u),t_{26}(v)]=\frac{(-1)^{\sharp}}{u-v} \Big(t_{24}(u)t_{16}(v)-t_{24}(v)t_{16}(u)\Big) \,,
\end{equation*}
we obtain:
\begin{equation}\label{eq:ser7-7}
  [t_{14}(u),t^{(1)}_{26}]=0 \,.
\end{equation}
Combining all the formulas above, we get the following equality:
\begin{equation}\label{eq:ser7-8}
  \Big[\big[t_{12}(u), [e_{23}^{(1)},e_{34}^{(1)}]\big],
       \big[[e_{23}^{(1)},e^{(1)}_{34}],[e_{34}^{(1)},e_{45}^{(1)}]\big]\Big]=0 \,.
\end{equation}
As $t_{12}(u)=h_1(u)e_{12}(u)$ and $h_1(u)$ commutes with $e_{23}^{(1)},e_{34}^{(1)},e^{(1)}_{45}$
by Corollary~\ref{cor:commutativity}, we get:
\begin{equation}\label{eq:ser7-9}
  h_1(u)\Big[\big[e_{12}(u), [e_{23}^{(1)},e_{34}^{(1)}]\big],
             \big[[e_{23}^{(1)},e^{(1)}_{34}],[e_{34}^{(1)},e_{45}^{(1)}]\big]\Big]=0 \,.
\end{equation}
Multiplying both sides of~\eqref{eq:ser7-9} by $h_1(u)^{-1}$ on the left, we obtain the first
relation of~\eqref{eq:Serre-Y-new7}.

Applying the anti-automorphism $\tau$ of $X^\rtt(\fosp(V))$ given by~\eqref{eq:tau-t} to the first
relation of~\eqref{eq:Serre-Y-new7} and using the formulas~\eqref{eq:tau-efh} establishes the second
relation of~\eqref{eq:Serre-Y-new7}.
\end{proof}

\begin{Rem}
(a) The relations~\eqref{eq:Serre-Y-new7} hold for all parity sequences $\Parity$:
for even $v_4$ we actually have
  $\big[[e_{2}^{(1)},e^{(1)}_{3}],[e_{3}^{(1)},e_{4}^{(1)}]\big] = 0 =
   \big[[f_{2}^{(1)},f^{(1)}_{3}],[f_{3}^{(1)},f_{4}^{(1)}]\big]$,
while for odd $v_4$ one can apply the same argument as above, cf.~Remark~\ref{rem:Serre-parities}(a).

\medskip
\noindent
(b) Evaluating the $u^{-1}$-coefficients in~\eqref{eq:Serre-Y-new7}, we recover precisely
the Serre relations~\eqref{eq:Lie-Serre-new7}.
\end{Rem}

\medskip
\noindent
$\bullet$ $\dim(V)=6$ and $\Parity=(\bar{1},\bar{0},\bar{1})$.
Thus the Dynkin diagram is as in~\eqref{eq:pic-Serre-new6}.

\begin{Lem}\label{lem:Serre-deg6}
Under the above assumptions, the following relations hold in $X^\rtt(\fosp(V))$:
\begin{equation}\label{eq:Serre-Y-new6}
\begin{split}
  & \Big[[e_{1}(u), e_{2}^{(1)}],
    \big[[e_{1}^{(1)},e^{(1)}_{2}],[e_{2}^{(1)},e_{3}^{(1)}]\big]\Big] =
    \big[[e_{1}(u),e^{(1)}_{2}],[e_{2}^{(1)},e_{3}^{(1)}]\big] \cdot [e_{1}(u), e_{2}^{(1)}] \,, \\
  & \Big[[f_{1}(u), f_{2}^{(1)}],
    \big[[f_{1}^{(1)},f^{(1)}_{2}],[f_{2}^{(1)},f_{3}^{(1)}]\big]\Big] =
    [f_{1}(u), f_{2}^{(1)}] \cdot \big[[f_{1}(u),f^{(1)}_{2}],[f_{2}^{(1)},f_{3}^{(1)}]\big] \,.
\end{split}
\end{equation}
\end{Lem}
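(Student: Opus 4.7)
The plan is to follow the strategy used in Lemmas~\ref{lem:Serre-deg3},~\ref{lem:Serre-deg4b}, and~\ref{lem:Serre-deg7}: derive the identities of~\eqref{eq:Serre-Y-new6} by expanding appropriate defining RTT relations~\eqref{eq:RTT-termwise} at the $v^{-1}$-coefficient, chain together the resulting commutator formulas, and finally strip off the overall factor $h_1(u)$ using the commutativity established in Corollary~\ref{cor:commutativity}. The decisive new feature here is that $\dim V=6$ with $\Parity=(\bar{1},\bar{0},\bar{1})$ forces $\kappa=-2$, and the $Q$-term of the $R$-matrix~\eqref{eq:osp-Rmatrix} contributes genuinely nonzero quadratic tails in precisely those RTT relations $[t_{ij}(u),t_{k\ell}(v)]$ with $\ell=j'$ (or $k=i'$); this quadratic contribution is what supplies the anomalous product term $\bigl[[e_1(u),e^{(1)}_2],[e^{(1)}_2,e^{(1)}_3]\bigr]\cdot[e_1(u),e^{(1)}_2]$ on the right-hand side of~\eqref{eq:Serre-Y-new6}, distinguishing this case from the ``$=0$'' relations of the preceding lemmas.

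First, I would extract the $v^{-1}$-coefficients of the benign RTT relations $[t_{12}(u),t_{23}(v)]$, $[t_{23}(u),t_{34}(v)]$, $[t_{12}(u),t_{24}(v)]$, whose $Q$-terms vanish for index reasons, to obtain building blocks of the form $t_{13}(u)=\pm[t_{12}(u),e^{(1)}_{23}]$ and $t_{14}(u)=\pm[t_{12}(u),e^{(1)}_{24}]$, together with the identifications $e^{(1)}_{24}=-\tfrac{1}{2}[e^{(1)}_2,e^{(1)}_3]$ and $e^{(1)}_{13}=[e^{(1)}_1,e^{(1)}_2]$ coming from Lemma~\ref{lem:E-entries-evenN-2} (noting $n+m=3$ and $\ol{n+m}=\bar{1}$). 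The crucial step is then the $v^{-1}$-expansion of $[t_{13}(u),t_{24}(v)]$: here $\ell=4=3'=j'$, so the second summand of~\eqref{eq:RTT-termwise} survives and produces a genuine quadratic tail $\tfrac{1}{u-v-\kappa}\sum_p(\cdots)\theta_4\theta_{p'}\,t_{2p'}(v)t_{1p}(u)$. Only a few indices $p$ contribute at order $v^{-1}$, and after substituting the preparatory formulas one obtains an identity for $[t_{13}(u),e^{(1)}_{24}]$ consisting of a linear commutator piece plus the predicted quadratic piece. A final commutation with $e^{(1)}_{23}$, read off from the $v^{-1}$-coefficient of the RTT relation for $[t_{13}(u),t_{23}(v)]$, assembles the outer bracket $\bigl[[t_{12}(u),e^{(1)}_2],\cdots\bigr]$ on the left.

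The main obstacle I anticipate is the careful bookkeeping of the signs $(-1)^{\ol{i}\cdot\ol{j}+\cdots}$ and the factors $\theta_i$ throughout this chain, and in particular verifying that the several quadratic terms arising from successive $Q$-contributions collapse precisely to the factorized product $\bigl[[e_1(u),e^{(1)}_2],[e^{(1)}_2,e^{(1)}_3]\bigr]\cdot[e_1(u),e^{(1)}_2]$ in~\eqref{eq:Serre-Y-new6}, rather than to a more complicated polynomial in the $e^{(1)}_j$'s. Once the $e$-relation is established, Corollary~\ref{cor:commutativity} lets me factor $h_1(u)$ out of both sides and cancel it, since $h_1(u)$ commutes with each $e^{(1)}_j$ for $j=1,2,3$; applying the anti-automorphism $\tau$ of~\eqref{eq:tau-t} together with~\eqref{eq:tau-efh} then yields the companion $f$-relation of~\eqref{eq:Serre-Y-new6}.
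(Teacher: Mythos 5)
There is a genuine gap, and it sits exactly at the point that makes the degree-$6$ case harder than the degree-$3$ and degree-$7$ ones. Your closing step asserts that Corollary~\ref{cor:commutativity} lets you factor $h_1(u)$ out of both sides ``since $h_1(u)$ commutes with each $e^{(1)}_j$ for $j=1,2,3$''. This is false for $j=1$: with $\ell=1$, Corollary~\ref{cor:commutativity} only gives commutativity of $h_1(u)$ with $e^{(1)}_{23}=e^{(1)}_2$ and $e^{(1)}_{34}=e^{(1)}_3$, whereas $[h_1(u),e_1(v)]\ne 0$ by~\eqref{eq:Atype-eh} and Corollary~\ref{cor:A-type relations}. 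Since the inner bracket $\big[[e^{(1)}_1,e^{(1)}_2],[e^{(1)}_2,e^{(1)}_3]\big]$ (which is $-2e^{(1)}_{15}$) contains $e^{(1)}_1$, the naive $v^{-1}$-extraction only yields $[h_1(u)e_{13}(u),e^{(1)}_{15}]=0$, and $h_1(u)$ cannot be stripped off by commutativity. This obstruction is not a bookkeeping nuisance: it is precisely what generates the product term on the right-hand side of~\eqref{eq:Serre-Y-new6}, so an argument that ends by ``cancelling $h_1(u)$'' necessarily produces the wrong (vanishing) right-hand side.

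Relatedly, your proposed mechanism for the product term is not correct. In the $v^{-1}$-coefficient of~\eqref{eq:RTT-termwise} for $[t_{13}(u),t_{24}(v)]$, the $Q$-tail $\frac{1}{u-v-\kappa}\sum_p(\cdots)\,t_{2p'}(v)t_{1p}(u)$ contributes only through the constant term of $t_{2p'}(v)$, so only $p=5$ survives and the contribution is the single \emph{linear} term $\pm t_{15}(u)$ --- this is how $e_{15}$ enters the picture, not where a quadratic product comes from. The actual source of the quadratic term is the $P$-only relation $[t_{13}(u),t_{15}(v)]=\frac{(-1)^{\ol{1}}}{u-v}\big(t_{13}(u)t_{15}(v)-t_{13}(v)t_{15}(u)\big)$ of~\eqref{eq:ser6-4}, which your chain never touches: one must clear denominators, move $h_1(u),h_1(v)$ to the left using the commutation rules extracted from $[t_{11}(u),t_{13}(v)]$ and $[t_{11}(u),t_{15}(v)]$, multiply by $(u-v)h_1(u)^{-1}h_1(v)^{-1}$, and read off the $v^{1}$-coefficient of the resulting identity~\eqref{eq:ser6-9} to obtain $[e_{13}(u),e^{(1)}_{15}]=e_{15}(u)e_{13}(u)$ as in~\eqref{eq:ser6-10}; substituting $e_{13}(u)=[e_1(u),e^{(1)}_2]$ and $e_{15}(u)=-\sfrac{1}{2}\big[[e_1(u),e^{(1)}_2],[e^{(1)}_2,e^{(1)}_3]\big]$ then gives~\eqref{eq:Serre-Y-new6}. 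Your preparatory identities and the use of $\tau$ for the $f$-relation are fine, but without the $[t_{13}(u),t_{15}(v)]$ relation and the careful $h_1$-bookkeeping the argument does not close.
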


\begin{Rem}
Evaluating the $u^{-1}$-coefficients in~\eqref{eq:Serre-Y-new6}, we recover the Serre
relations~\eqref{eq:Lie-Serre-new6}.
\end{Rem}

\begin{proof}
Evaluating the $v^{-1}$-coefficients in the defining relation~\eqref{eq:RTT-termwise} for
$[t_{12}(u),t_{23}(v)]$ and using $[h_1(u),e^{(1)}_{23}]=0$ from Corollary~\ref{cor:commutativity},
we obtain:
\begin{equation}\label{eq:ser6-1}
  t_{13}(u)=[t_{12}(u),e_{23}^{(1)}] \,, \qquad
  e_{13}(u)=[e_{12}(u),e_{23}^{(1)}] \,,
\end{equation}
cf.~\eqref{eq:ser3-1}. Comparing the $v^{-1}$-coefficients of both sides of the
defining relation~\eqref{eq:RTT-termwise} for $[t_{23}(u),t_{34}(v)]$, we get:
\begin{equation}\label{eq:ser6-2}
  -2t_{24}(u)=[t_{23}(u),t_{34}^{(1)}]=[t_{23}(u),e_{34}^{(1)}] \,,
\end{equation}
cf.~\eqref{eq:ser7-4}. Likewise, comparing the $v^{-1}$-coefficients of both sides of the
defining relation~\eqref{eq:RTT-termwise} for $[t_{13}(u),t_{24}(v)]$, we also obtain:
\begin{equation}\label{eq:ser6-3}
  t_{15}(u)=[t_{13}(u),t_{24}^{(1)}]=[t_{13}(u),e_{24}^{(1)}] \,,
\end{equation}
cf.~\eqref{eq:ser3-2}.

Let us now consider the defining relation
\begin{equation}\label{eq:ser6-4}
  [t_{13}(u),t_{15}(v)]=\frac{(-1)^{\ol{1}}}{u-v} \Big(t_{13}(u)t_{15}(v)-t_{13}(v)t_{15}(u)\Big) \,.
\end{equation}
Evaluating the $v^{-1}$-coefficients in~\eqref{eq:ser6-4} and using the formulas above, we obtain:
\begin{equation*}
  \Big[[h_{1}(u)e_1(u), e_{2}^{(1)}],
       \big[[e_{1}^{(1)},e^{(1)}_{2}],[e_{2}^{(1)},e_{3}^{(1)}]\big]\Big]=0 \,.
\end{equation*}
However, we \underline{can not} pull $h_1(u)$ to the left of the brackets, as we did in the cases of
degree $3$ and $7$ relations above, due to the presence of non-commuting $e_{1}^{(1)}$.
Instead, let us rewrite~\eqref{eq:ser6-4} as
\begin{equation}\label{eq:ser6-6}
  (u-v+1)h_1(u)e_{13}(u)h_1(v)e_{15}(v) = h_1(v)e_{13}(v)h_1(u)e_{15}(u) + (u-v) h_1(v)e_{15}(v)h_1(u)e_{13}(u) \,.
\end{equation}
We shall next pull all $h_1$-currents to the left. To this end, multiplying both sides of the relation
\begin{equation*}
  [t_{11}(u),t_{13}(v)]=\frac{(-1)^{\ol{1}}}{u-v} \Big(t_{11}(u)t_{13}(v)-t_{11}(v)t_{13}(u)\Big)
\end{equation*}
by $h_1(v)^{-1}$ on the left, we obtain:
\begin{equation}\label{eq:ser6-7}
  e_{13}(v)h_1(u)=h_1(u)\left(\frac{u-v+1}{u-v}e_{13}(v)-\frac{1}{u-v}e_{13}(u)\right) \,.
\end{equation}
Completely analogously, we also get:
\begin{equation}\label{eq:ser6-8}
  e_{15}(v)h_1(u)=h_1(u)\left(\frac{u-v+1}{u-v}e_{15}(v)-\frac{1}{u-v}e_{15}(u)\right) \,.
\end{equation}
Plugging~(\ref{eq:ser6-7},~\ref{eq:ser6-8}) into~\eqref{eq:ser6-6} and multiplying both sides by
$(u-v)h_1(u)^{-1}h_1(v)^{-1}$ on the left, we obtain:
\begin{multline}\label{eq:ser6-9}
  \left((u-v)^2-1\right)e_{13}(u)e_{15}(v) + (u-v+1)e_{13}(v)e_{15}(v) = -(u-v)e_{15}(u)e_{13}(u) \, + \\
  \left((u-v)^2+(u-v)\right)e_{15}(v)e_{13}(u)+(u-v+1)e_{13}(v)e_{15}(u)-e_{13}(u)e_{15}(u) \,.
\end{multline}
Evaluating the $v^1$-coefficients in this relation, we get:
\begin{equation}\label{eq:ser6-10}
  [e_{13}(u),e^{(1)}_{15}] = e_{15}(u)e_{13}(u) \,.
\end{equation}
Here, $e_{13}(u)$ and $e_{15}(u)$ can be expressed via~\eqref{eq:ser6-1}--\eqref{eq:ser6-3} as follows:
\begin{equation}\label{eq:ser6-11}
  e_{13}(u) = [e_{1}(u),e_{2}^{(1)}] \,, \qquad
  e_{15}(u) = -\sfrac{1}{2} \big[ [e_{1}(u),e_{2}^{(1)}],[e_{2}^{(1)},e_{3}^{(1)}] \big] \,.
\end{equation}
Plugging~\eqref{eq:ser6-11} into the equality~\eqref{eq:ser6-10} recovers precisely the first
degree $6$ relation of~\eqref{eq:Serre-Y-new6}.

Applying the anti-automorphism $\tau$ of $X^\rtt(\fosp(V))$ given by~\eqref{eq:tau-t} to the first
relation of~\eqref{eq:Serre-Y-new6} and using the formulas~\eqref{eq:tau-efh} establishes the second
relation of~\eqref{eq:Serre-Y-new6}.
\end{proof}

\begin{Rem}\label{rem:Serre-deg6-general}
As follows from the above proof, the relations~\eqref{eq:Serre-Y-new6} admit more general versions.
To this end, we note that~\eqref{eq:ser6-9} can be equivalently written as:
\begin{multline*}
  [e_{13}(u), e_{15}(v)]=
  \frac{1}{(u-v)^2}e_{13}(u)e_{15}(v) + \frac{1}{u-v}e_{15}(v)e_{13}(u)-\frac{1}{u-v}e_{15}(u)e_{13}(u) \, - \\
  \left(\frac{1}{(u-v)^2} + \frac{1}{u-v}\right)e_{13}(v)e_{15}(v) +
  \left(\frac{1}{(u-v)^2} + \frac{1}{u-v}\right)e_{13}(v)e_{15}(u) -
  \frac{1}{(u-v)^2} e_{13}(u)e_{15}(u) \,,
\end{multline*}
with $e_{13}(u)$ and $e_{15}(u)$ expressed via~\eqref{eq:ser6-11}. Applying the anti-automorphism $\tau$ of
$X^\rtt(\fosp(V))$ given by~\eqref{eq:tau-t} to the relation above and using the formulas~\eqref{eq:tau-efh},
we also obtain:
\begin{multline*}
  [f_{31}(u), f_{51}(v)]=
  \frac{1}{(u-v)^2}f_{31}(u)f_{51}(v) - \frac{1}{u-v}f_{51}(v)f_{31}(u)+\frac{1}{u-v}f_{51}(v)f_{31}(v) \, - \\
  \left(\frac{1}{(u-v)^2} - \frac{1}{u-v}\right)f_{31}(u)f_{51}(u) +
  \left(\frac{1}{(u-v)^2} - \frac{1}{u-v}\right)f_{31}(v)f_{51}(u) -
  \frac{1}{(u-v)^2} f_{31}(v)f_{51}(v) \,.
\end{multline*}
\end{Rem}

\begin{Rem}
The analogues of degree $6$ relations~\eqref{eq:Serre-Y-new6}, with both right-hand sides been multiplied by
$-(-1)^{\ol{1}}$, hold for all parity sequences $\Parity$. Indeed, for even $v_3$ both sides vanish as we have
  $$\big[[e_{1}(u),e^{(1)}_{2}],[e_{2}^{(1)},e_{3}^{(1)}]\big] = 0 =
    \big[[f_{1}(u),f^{(1)}_{2}],[f_{2}^{(1)},f_{3}^{(1)}]\big] \,,$$
while for odd $v_3$ one can apply the same argument as above, cf.~Remark~\ref{rem:Serre-parities}(a).
\end{Rem}


\section{Rank 1 and 2 relations}
\label{sec:rank 1-2}

In this section, we establish quadratic relations between the generating currents $e_i(u),f_i(u),h_\imath(u)$ of
$X^\rtt(\fosp(V))$ in rank $\leq 2$ cases (corresponding to $N+2m\leq 5$). The arguments are straightforward
though a bit tedious. While our treatment is case-by-case, we try to present them in a rather uniform way
(in particular, eliminating the smaller rank reduction of~\cite{jlm} for non-super types).


\subsection{Rank 1 cases}
\label{ssec:rank-1}
\

In this subsection, we establish quadratic relations for rank $1$ orthosymplectic Yangians which do not
follow from Corollary~\ref{cor:A-type relations}. There are four cases that we consider separately:
$(N=2,m=0)$, $(N=0,m=1)$, $(N=3,m=0)$, and $(N=1,m=1)$. The first three were treated in~\cite{jlm}.


\subsubsection{Relations for $\fosp(2|0)$ case}
\label{ssec:osp-20}
\

We note that $X^\rtt(\fosp(2|0))\simeq X^\rtt(\fso_2)$ by Remark~\ref{rem:super-to-nonsuper}.

\begin{Prop}\label{prop:osp20}
The following relations hold in $X^\rtt(\fosp(2|0))$:
\begin{equation}\label{eq:ef-vanishing-rk1}
  e_{12}(u)=0=f_{21}(u) \,.
\end{equation}
\end{Prop}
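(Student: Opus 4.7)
The Gauss decomposition~\eqref{eq:gauss-osp} of the $2\times 2$ matrix $T(u)$ gives $t_{12}(u) = h_1(u)\, e_{12}(u)$ and $t_{21}(u) = f_{21}(u)\, h_1(u)$. Since $h_1(u) = t_{11}(u)$ has constant term $1$ and is therefore invertible as a formal power series in $u^{-1}$, it suffices to prove the stronger statement $t_{12}(u) = 0 = t_{21}(u)$.

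The approach is to specialize the RTT-relation at a value of $u-v$ at which the $R$-matrix partially degenerates. For $\fosp(2|0)$ we have $N = 2$, $m = 0$, $\kappa = 0$, both basis vectors even, and $\theta_1 = \theta_2 = 1$, so a direct computation from~\eqref{eq:P} and~\eqref{eq:Q} yields
\[
  P - Q \,=\, (e_{11}-e_{22}) \otimes (e_{11}-e_{22}) \,=:\, J \otimes J \,,
\]
whence $R(u) = I - u^{-1}\, J \otimes J$ by~\eqref{eq:osp-Rmatrix}. The key observation is that $R(1) = I - J\otimes J$ annihilates $v_c \otimes v_c$ for $c = 1, 2$ (these being the $+1$-eigenvectors of $J \otimes J$), while acting as $2 \cdot \mathrm{id}$ on $\mathrm{span}\{v_1\otimes v_2,\, v_2\otimes v_1\}$.

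I will then substitute $u = v+1$ in the RTT-relation~\eqref{eq:RTT relation}. This is legitimate because, after multiplying through by $u-v$ to clear the simple pole of $R(u-v)$ at $u=v$, the relation becomes a polynomial identity in which the specialization $u = v+1$ is unambiguous. Applying both sides of the resulting identity $R(1) T_1(v+1) T_2(v) = T_2(v) T_1(v+1) R(1)$ to $v_c \otimes v_c$, the right-hand side vanishes, and extracting the $v_1 \otimes v_2$ matrix coefficient of the left-hand side gives
\[
  t_{1c}(v+1)\, t_{2c}(v) \,=\, 0 \qquad \mathrm{for} \quad c = 1, 2 \,.
\]
Taking $c = 1$ and using the invertibility of $t_{11}(v+1)$ forces $t_{21}(v) = 0$; taking $c = 2$ and using the invertibility of $t_{22}(v)$ forces $t_{12}(v+1) = 0$, i.e.\ $t_{12}(u) = 0$. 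The argument is technically straightforward; the only subtlety worth flagging is the validity of the specialization $u = v+1$ in a formal power series identity, which is unproblematic because $R(u-v)$ has no singularity other than at $u = v$.
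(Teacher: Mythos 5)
Your proof is correct, but it takes a somewhat different route from the paper's. The paper isolates the single termwise relation \eqref{eq:RTT-termwise} for $[t_{11}(u),t_{12}(v)]$, rewrites it in Gauss coordinates as $(u-v-1)h_1(u)e_{12}(v)=(u-v+1)e_{12}(v)h_1(u)$, specializes $u=v-1$ to get $e_{12}(v)=0$, and then obtains $f_{21}(v)=0$ by applying the anti-automorphism $\tau$ of \eqref{eq:tau-t}. You instead work with the full operator identity: for $\fso_2$ one indeed has $R(u)=\ID-(J\otimes J)/u$ with $J=e_{11}-e_{22}$, so $R(1)$ is twice the projection onto $\mathrm{span}\{v_1\otimes v_2,\,v_2\otimes v_1\}$ and kills $v_c\otimes v_c$; evaluating the specialized RTT-relation on these kernel vectors yields $t_{1c}(v+1)\,t_{2c}(v)=0$, whence $t_{12}(u)=0=t_{21}(u)$ by invertibility of the diagonal entries. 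The underlying mechanism is the same in both arguments --- the degeneration of $R(u-v)$ at $u-v=\pm1$ --- but your fusion-style packaging (closer in spirit to Appendix~\ref{sec:6-fold fusion}) produces both vanishings at once without invoking $\tau$, and makes transparent that the content of the proposition is really $t_{12}(u)=0=t_{21}(u)$. Your side remarks are also fine: the specialization $u=v+1$ after clearing the pole is exactly the maneuver the paper uses throughout, and the reduction via $t_{12}(u)=h_1(u)e_{12}(u)$, $t_{21}(u)=f_{21}(u)h_1(u)$ with $h_1(u)=t_{11}(u)$ invertible is valid; note only that $t_{12}(u)=0=t_{21}(u)$ is equivalent to, not strictly stronger than, \eqref{eq:ef-vanishing-rk1}.
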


\begin{Rem}
This result follows from the relations~\eqref{eq:ef-vanishing-1} established in~\cite[Lemma 5.3]{jlm} using
the low rank isomorphism of~\cite{amr} by evoking the embedding $X^\rtt(\fso_2)\hookrightarrow X^\rtt(\fso_4)$ of
Theorem~\ref{thm:embedding} which maps $e_{12}(u)\mapsto e_{23}(u)$ and $f_{21}(u)\mapsto f_{32}(u)$. However, for
the rest of this section, it is instructive to present a direct self-contained proof of~\eqref{eq:ef-vanishing-rk1}.
\end{Rem}

\begin{proof}
Consider the defining relation~\eqref{eq:RTT-termwise} for $[t_{11}(u),t_{12}(v)]$ (note that $\kappa=0$):
\begin{equation*}
  [t_{11}(u),t_{12}(v)]=\frac{1}{u-v}t_{11}(u)t_{12}(v)+\frac{1}{u-v}t_{12}(v)t_{11}(u) \,,
\end{equation*}
where we readily cancelled two terms containing $t_{11}(v)t_{12}(u)$ in the right-hand side.
Multiplying both sides by $(u-v)h_1(v)^{-1}$ on the left, we get:
\begin{equation*}
  (u-v-1)h_1(u)e_{12}(v)=(u-v+1)e_{12}(v)h_1(u) \,.
\end{equation*}
Plugging $u=v-1$ above, we obtain $h_1(v-1)e_{12}(v)=0$. Multiplying further by $h_1(v-1)^{-1}$ on the left,
we get the desired relation $e_{12}(v)=0$. Applying the anti-automorphism $\tau$ of $X^\rtt(\fso_2)$ given
by~\eqref{eq:tau-t} to $e_{12}(v)=0$, we obtain $f_{21}(v)=0$, due to Remark~\ref{rem:tau-efh}.
\end{proof}


\subsubsection{Relations for $\fosp(0|2)$ case}
\label{ssec:osp-02}
\

We note that $X^\rtt(\fosp(0|2))\simeq X^\rtt(\fsp_2)$ by Remark~\ref{rem:super-to-nonsuper}.

\begin{Prop}\label{prop:osp02}
The currents $h_1(-2u)$, $h_2(-2u)$, $e_{1}(-2u)$, $f_{1}(-2u)$ satisfy the relations
of Theorem~\ref{thm:Drinfeld-A} for the parity sequence $\parity=(\bar{0},\bar{0})$.
\end{Prop}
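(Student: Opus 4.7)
The plan is to derive every quadratic relation among the Gauss currents $h_1,h_2,e_1,f_1$ of $X^\rtt(\fosp(0|2))$ directly from the RTT identities~\eqref{eq:RTT-termwise} (with $N=0$, $m=1$, so $\kappa=-2$, both basis vectors odd, and $1'=2$), and then check, relation by relation, that after the substitution $u\mapsto -2u$, $v\mapsto -2v$ they reproduce the $Y^\rtt(\gl(\VV))$-relations of Theorem~\ref{thm:Drinfeld-A} for the even parity sequence $\parity=(\bar{0},\bar{0})$.

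First I would enumerate the RTT identities for the pairs $(t_{ij}(u),t_{k\ell}(v))$ with $i,j,k,\ell\in\{1,2\}$. Each splits into a $P$-contribution proportional to $\frac{1}{u-v}$ and a $Q$-contribution proportional to $\frac{1}{u-v+2}$ that is present only when $\delta_{ki'}=1$ or $\delta_{\ell j'}=1$. The pairs $[t_{11}(u),t_{11}(v)]$, $[t_{22}(u),t_{22}(v)]$, $[t_{12}(u),t_{12}(v)]$, $[t_{21}(u),t_{21}(v)]$ all have vanishing $Q$-contribution, so after cancelling the common factor $(u-v+1)$ one reads off $[h_1(u),h_1(v)]=0$ together with $[t_{12}(u),t_{12}(v)]=0=[t_{21}(u),t_{21}(v)]$. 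The pair $[t_{11}(u),t_{12}(v)]$ does carry a genuine $Q$-term, and after factoring $h_1(v)$ on the left and clearing the common $(u-v+1)$ between the two sides, it collapses to a one-pole identity of the form $(u-v+2)\,h_1(u)e_1(v)-(u-v)\,e_1(v)h_1(u)=2\,h_1(u)e_1(u)$; its $\tau$-image via Remark~\ref{rem:tau-efh} supplies the mirror $h_1$--$f_1$ relation.

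Next I would translate the remaining identities into Gauss-current form via $t_{11}=h_1$, $t_{12}=h_1e_1$, $t_{21}=f_1h_1$, $t_{22}=h_2+f_1h_1e_1$. Pairs involving $t_{22}$ give $[h_1(u),h_2(v)]=0=[h_2(u),h_2(v)]$ along with the $h_2$--$e_1$ and $h_2$--$f_1$ relations, while the mixed pair $[t_{12}(u),t_{21}(v)]$ produces the $[e_1,f_1]$ commutator in the expected $h_1^{-1}h_2$-difference form. The final step is the rescaling: under $u\mapsto -2u$, $v\mapsto -2v$ the factor $(u-v)^{-1}$ becomes $-\tfrac{1}{2}(u-v)^{-1}$ and $(u-v+2)^{-1}$ becomes $-\tfrac{1}{2}(u-v-1)^{-1}$, so the displayed $h_1$--$e_1$ identity transforms into $(u-v-1)\widetilde h_1(u)\widetilde e_1(v)-(u-v)\widetilde e_1(v)\widetilde h_1(u)=-\widetilde h_1(u)\widetilde e_1(u)$ with $\widetilde h_1(u):=h_1(-2u)$ etc., which is the cleared form of~\eqref{eq:Atype-eh} for $\parity=(\bar{0},\bar{0})$; the remaining rescaled identities match the other relations of Theorem~\ref{thm:Drinfeld-A} by the same mechanism. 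The non-trivial quadratic relations $[\widetilde e_1(u),\widetilde e_1(v)]=(\widetilde e_1(u)-\widetilde e_1(v))^2/(u-v)$ and its $f$-counterpart, which vanish in $\gl(1|1)$-type but are non-zero here, follow by combining $[t_{12}(u),t_{12}(v)]=0$ with the rescaled $h_1$--$e_1$ relation, analogously to the standard extraction of~\eqref{eq:Atype-ee-1} from $Y^\rtt(\gl_2)$ after Gauss decomposition.

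The main obstacle is that the two-pole structure of the $\fosp(0|2)$ RTT relations (with poles at $u-v=0$ and $u-v=-2$) looks very different from the single-pole structure of $Y^\rtt(\gl_2)$; one must observe that the two poles coalesce into a single shifted pole after clearing a universal factor of $(u-v+1)$ and applying the rescaling $u\mapsto -2u$, the factor $-2$ being dictated by the ratio of the symmetrized Cartan pairing $(\alpha_1,\alpha_1)=-4$ for $\fosp(0|2)$ from~\eqref{eq:form-dual-Cartan} to the $\gl_2$-value $2$.
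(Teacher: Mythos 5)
Your proposal is correct, but it takes a genuinely more computational route than the paper. The paper's proof of Proposition~\ref{prop:osp02} is a two-line structural argument: it composes the isomorphism $X^\rtt(\fosp(0|2))\iso X^\rtt(\fsp_2)$, $T(u)\mapsto T(-u)$, with the isomorphism $X^\rtt(\fsp_2)\iso Y^\rtt(\gl_2)$, $T(u)\mapsto \sfT(u/2)$, of~\cite{amr}; the latter rests on the single observation that $P+Q$ is a scalar on $V\otimes V$ in rank one, so that the two-pole $R$-matrix~\eqref{eq:osp-Rmatrix} collapses to a scalar multiple of a rescaled Yang $R$-matrix. Since the resulting map $T(u)\mapsto\sfT(-u/2)$ acts entrywise on the generator matrix, it is automatically compatible with the Gauss decomposition, and every relation of Theorem~\ref{thm:Drinfeld-A} for $h_1(-2u),h_2(-2u),e_1(-2u),f_1(-2u)$ follows at once with no case analysis. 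You instead verify the relations one at a time from~\eqref{eq:RTT-termwise}. Your individual computations check out: the cleared identity $(u-v+2)h_1(u)e_1(v)-(u-v)e_1(v)h_1(u)=2h_1(u)e_1(u)$ is what~\eqref{eq:RTT-termwise} gives for $[t_{11}(u),t_{12}(v)]$ with $\kappa=-2$, it rescales to the cleared form of~\eqref{eq:Atype-eh}, and combining it with $[t_{12}(u),t_{12}(v)]=0$ does yield $[e_1(u),e_1(v)]=-2\,(e_1(u)-e_1(v))^2/(u-v)$, which rescales to~\eqref{eq:Atype-ee-1} for $\ol{i}=\ol{i+1}=\bar{0}$. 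Your closing remark — that both poles coalesce after clearing a universal factor of $(u-v+1)$ — is exactly the paper's $R$-matrix factorization seen relation by relation; lifting that one observation from the level of individual commutators to the level of $R(u)$ and $T(u)$ would let you dispense with the remaining verifications, in particular the pairs involving $t_{22}$ and the pair $[t_{12}(u),t_{21}(v)]$, which carries two $Q$-contributions and is by far the most tedious to work out by hand.
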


\begin{proof}
This result follows from the fact that the assignment $T(u)\mapsto \sfT(-u/2)$ gives rise to the superalgebra
isomorphism $X^\rtt(\fosp(0|2)) \iso Y^\rtt(\gl_2)$. This map can be viewed as a composition of the aforementioned
isomorphism $X^\rtt(\fosp(0|2)) \iso X^\rtt(\fsp_2)$, given by $T(u)\mapsto T(-u)$, and the isomorphism
$X^\rtt(\fsp_2) \iso Y^\rtt(\gl_2)$ of~\cite[Proposition~4.1]{amr}, given by $T(u)\mapsto \sfT(u/2)$.
The latter follows from the observation that $P+Q=\ID$ for $\fsp_2$-case, which allows to relate the
corresponding $R$-matrices of $\fsp_2$ and $\gl_2$ types via $R(u)=\frac{u-1}{u-2}\sfR(u/2)$.
\end{proof}


\subsubsection{Relations for $\fosp(3|0)$ case}
\label{ssec:osp-30}
\

We note that $X^\rtt(\fosp(3|0))\simeq X^\rtt(\fso_3)$ by Remark~\ref{rem:super-to-nonsuper}.
In this case, the only relation directly implied by Corollary~\ref{cor:A-type relations} is
the obvious commutativity $[h_1(u),h_1(v)]=0$.

\begin{Prop}\label{prop:osp30}
The following relations hold in $X^\rtt(\fso_3)$:
\begin{equation}\label{eq:osp30-1}
  [h_i(u),h_j(v)]=0 \quad \mathrm{for\ all} \quad 1\leq i,j\leq 2 \,,
\end{equation}
\begin{equation}\label{eq:osp30-2}
  [h_1(u),e_{12}(v)]=\frac{h_1(u)\big(e_{12}(v)-e_{12}(u)\big)}{u-v} \,,\qquad
  [h_1(u),f_{21}(v)]=\frac{\big(f_{21}(u)-f_{21}(v)\big)h_1(u)}{u-v} \,,
\end{equation}
\begin{equation}\label{eq:osp30-3}
  [h_2(u),e_{12}(v)]=
  \frac{h_2(u)\big(e_{12}(u)-e_{12}(v)\big)}{2(u-v)} - \frac{\big(e_{12}(u-1)-e_{12}(v)\big)h_2(u)}{2(u-v-1)} \,,
\end{equation}
\begin{equation}\label{eq:osp30-4}
  [h_2(u),f_{21}(v)]=
  \frac{\big(f_{21}(v)-f_{21}(u)\big)h_2(u)}{2(u-v)} - \frac{h_2(u)\big(f_{21}(v)-f_{21}(u-1)\big)}{2(u-v-1)} \,,
\end{equation}
\begin{equation}\label{eq:osp30-5}
  [e_{12}(u),f_{21}(v)]=\frac{1}{u-v}\left(h_1(u)^{-1}h_2(u)-h_1(v)^{-1}h_2(v)\right) \,,
\end{equation}
\begin{equation}\label{eq:osp30-6}
  [e_{12}(u),e_{12}(v)]=\frac{\big(e_{12}(u)-e_{12}(v)\big)^2}{u-v} \,,
\end{equation}
\begin{equation}\label{eq:osp30-7}
  [f_{21}(u),f_{21}(v)]=-\frac{\big(f_{21}(u)-f_{21}(v)\big)^2}{u-v} \,.
\end{equation}
\end{Prop}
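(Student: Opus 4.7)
The plan is to derive all seven relations directly from the defining RTT-relations~\eqref{eq:RTT-termwise} applied to judiciously chosen pairs $(t_{ij}(u),t_{k\ell}(v))$, combined with the Gauss decomposition $T(u)=F(u)H(u)E(u)$ (which for $N+2m=3$ gives $t_{11}=h_1$, $t_{12}=h_1 e_{12}$, $t_{21}=f_{21}h_1$, $t_{22}=f_{21}h_1 e_{12}+h_2$, and similar expressions for the remaining entries), together with the factorization $h_{3}(u)h_1(u+\tfrac12)=c_V(u+\tfrac12)$ of~\eqref{eq:mat.coef.4} (since $\kappa=\tfrac12$ here), which together with the centrality of $c_V(u)$ eliminates $h_3$ whenever it arises. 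Note that all spectral shifts by $1$ appearing in~(\ref{eq:osp30-3},~\ref{eq:osp30-4}) equal $2\kappa$, foreshadowing the doubling implicit in $\fso_3\simeq\ssl_2$. The relations will be proved in the order~\eqref{eq:osp30-1},~\eqref{eq:osp30-2},~\eqref{eq:osp30-5},~\eqref{eq:osp30-6},~\eqref{eq:osp30-7},~\eqref{eq:osp30-3},~\eqref{eq:osp30-4}, so that each step may invoke the previously established identities.

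The commutativity $[h_1(u),h_1(v)]=0$ is immediate from RTT applied to $[t_{11}(u),t_{11}(v)]$, whose $P$- and $Q$-contributions cancel by symmetry; the remaining cases of~\eqref{eq:osp30-1} follow by unfolding $h_2=t_{22}-t_{21}t_{11}^{-1}t_{12}$ in $[t_{11}(u),t_{22}(v)]$ and $[t_{22}(u),t_{22}(v)]$, using~\eqref{eq:osp30-2} once available. Relations~\eqref{eq:osp30-2} are read off from $[t_{11}(u),t_{12}(v)]$ and $[t_{11}(u),t_{21}(v)]$ after substituting $t_{12}=h_1 e_{12}$, cancelling $h_1(v)$ on the left via $[h_1(u),h_1(v)]=0$, and multiplying by $h_1(u)^{-1}$; the $f$-side also follows from the anti-automorphism~$\tau$ of Remark~\ref{rem:tau-efh}. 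Relation~\eqref{eq:osp30-5} is the Gauss-theoretic rearrangement of $[t_{12}(u),t_{21}(v)]$, whose $\kappa$-correction involves the products $t_{1p}(u)t_{2p'}(v)$ that vanish or collapse into $h_1^{-1}h_2$ expressions via the $h$-commutators already in hand. Relations~\eqref{eq:osp30-6} and~\eqref{eq:osp30-7} come analogously from $[t_{12}(u),t_{12}(v)]$ and $[t_{21}(u),t_{21}(v)]$ using the $\gl$-type portion of~\eqref{eq:RTT-termwise}.

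The main obstacle will be the mixed relations~\eqref{eq:osp30-3} and~\eqref{eq:osp30-4}, which exhibit both the usual pole at $u=v$ and a second pole at $u-v=1$ accompanied by a spectral shift $e_{12}(u-1)$. These arise from RTT applied to $[t_{22}(u),t_{12}(v)]$, whose $\kappa$-correction term is $-\tfrac{1}{u-v-1/2}\sum_{p=1}^{3}\theta_p\,t_{p2}(v)t_{2'p'}(u)$. Expanding this sum through the Gauss decomposition and the explicit formulas of Lemma~\ref{lem:E-entries-oddN}(b,d) and Lemma~\ref{lem:F-entries-oddN}(b,d) (which express $e_{2'1'}(u),f_{1'2'}(u)$ in terms of $e_{12}(u-\tfrac12),f_{21}(u-\tfrac12)$), and then eliminating $h_3$ via~\eqref{eq:mat.coef.4}, produces terms of the form $h_1(u-\tfrac12)^{-1}h_2(u)e_{12}(\cdot)$. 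The plan is to recombine them by pulling $h_1^{\pm 1}$ through $e_{12}$ using~\eqref{eq:osp30-2}, which shifts the spectral parameter in $e_{12}$ by $\pm\tfrac12$; applying this twice accounts for the integer shift $e_{12}(u-1)$ and for the promotion of the pole at $u-v=\tfrac12$ into a pole at $u-v=1$. As a cross-check (and as a potential fallback if the bookkeeping becomes unwieldy), one can invoke the isomorphism $X^\rtt(\fso_3)\iso Y^\rtt(\gl_2)$ of Proposition~\ref{prop:so3=gl2}, under which $T(u)\mapsto \sfT(u/2)$ (up to the $6$-fold $R$-matrix fusion of Lemma~\ref{lem:amr-fusion}), and transport the $A$-type relations of Theorem~\ref{thm:Drinfeld-A}; the factor of $2$ in the denominators and the unit shifts in~(\ref{eq:osp30-3},~\ref{eq:osp30-4}) match precisely with this spectral doubling, consistently with Proposition~\ref{prop:osp02}.
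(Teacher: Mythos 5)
Your proposal is correct in outline, and for the easy relations (\ref{eq:osp30-1}, \ref{eq:osp30-2}, \ref{eq:osp30-5}--\ref{eq:osp30-7}) it matches what the paper does (the paper simply defers these to the analogous computations in Proposition~\ref{prop:osp12}). The genuine divergence is in the hard relations~(\ref{eq:osp30-3},~\ref{eq:osp30-4}). You attack them head-on via the RTT relation for $[t_{22}(u),t_{12}(v)]$, expanding the $Q$-correction term through the Gauss decomposition, using $e_{23}(u)=-e_{12}(u-\tfrac12)$ and eliminating $h_3$ via~\eqref{eq:mat.coef.4}; this is the style of the \emph{first} proof of~\eqref{eq:osp12-3} given in the paper, and it does close, but it forces you to handle $t_{13}(v)$-terms (hence quadratic expressions in $e_{12}$) that must cancel at the end. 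The paper instead avoids the $Q$-term entirely: it writes $h_2(u)=\sz_V(u+1/2)\,h_1(u)\,h_1(u+1/2)^{-1}$ using the factorization $c_V(u)=\frac{h_1(u)h_2(u-1/2)h_2(u-1)}{h_1(u-1)}$ of Lemma~\ref{lem:cseries-oddN} and the central square root $\sz_V$ of Remark~\ref{rem:tau-generators}, then commutes $f_{21}(v)$ through this product using only the already-known $h_1$--$f_{21}$ exchange relations; the resulting pole at $u-v-\tfrac12$ is converted into the pole at $u-v-1$ by the specialization $v=u-1$ (see \eqref{eq:h2f21-so3}--\eqref{eq:h2f21-so3-spec}). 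The factorization route is shorter and also gives $[h_2(u),h_2(v)]=0$ for free, whereas in your ordering that case of~\eqref{eq:osp30-1} via $[t_{22}(u),t_{22}(v)]$ would need most of the other relations first. Your fallback through $X^\rtt(\fso_3)\simeq Y^\rtt(\gl_2)$ is exactly the argument of~\cite{jlm} that the paper explicitly chooses to avoid, but it is valid. One bookkeeping point to fix: conjugation of $e_{12}$ by $h_1(u)$ via~\eqref{eq:osp30-2} shifts the spectral parameter by integers ($h_1(u)e_{12}(u)=e_{12}(u-1)h_1(u)$), not by $\pm\tfrac12$; the half-integer shift enters only through $e_{23}(u)=-e_{12}(u-\tfrac12)$ and $\kappa=\tfrac12$, so the mechanism producing the pole at $u-v=1$ is not quite as you describe, though the ingredients you list do suffice.
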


\begin{Rem}\label{rem:so3-remark}
(a) The relation~\eqref{eq:osp30-4} corrects a typo in~\cite[(5.4)]{jlm}.

\medskip
\noindent
(b) We note that these relations were established in~\cite[Proposition 5.4]{jlm} using the low rank isomorphism
$X^\rtt(\fso_3)\simeq Y^\rtt(\gl_2)$ of~\cite[Proposition~4.4]{amr}, see Proposition~\ref{prop:so3=gl2}(a) below.
However, for the rest of this section, it is instructive to establish all these relations directly.
\end{Rem}

\begin{proof}
In view of Remark~\ref{rem:so3-remark}, we shall only present a direct proof of~\eqref{eq:osp30-4},
though it can be also derived from~\eqref{eq:osp30-3} by applying the anti-automorphism $\tau$ of
$X^\rtt(\fso_3)$. The relations (\ref{eq:osp30-1}--\ref{eq:osp30-2},~\ref{eq:osp30-5}--\ref{eq:osp30-7})
can be proved similarly to analogous relations from Proposition~\ref{prop:osp12} below.

Our proof of~\eqref{eq:osp30-4} shall closely follow that of~\eqref{eq:osp12-3} presented below.
First, let us express $h_2(u)$ via the $h_1$-current and the central current $\sz_V(u)$ from
Remark~\ref{rem:tau-generators} defined through the difference equation $c_V(u)=\sz_V(u-1/2)\sz_V(u)$,
see~\eqref{eq:c-squareroot}. Evoking $c_V(u)=\frac{h_1(u)h_2(u-1/2)h_2(u-1)}{h_1(u-1)}$, due to
Lemma~\ref{lem:cseries-oddN}, we get $\sz_V(u-1/2)=\frac{h_1(u-1/2)h_2(u-1)}{h_1(u-1)}$, so that
\begin{equation}\label{eq:h2-via-h1c-so3}
  h_2(u)=\sz_V(u+\sfrac{1}{2})h_1(u)h_1(u+\sfrac{1}{2})^{-1} \,.
\end{equation}
Combining~\eqref{eq:h2-via-h1c-so3} with the following commutation rules between $h_1(u)$ and $f_{21}(v)$,
recovered from the defining relation~\eqref{eq:RTT-termwise} applied to $[t_{11}(u),t_{21}(v)]$:
\begin{equation*}
\begin{split}
  & h_1(u)f_{21}(v)=\left(\frac{u-v-1}{u-v}f_{21}(v)+\frac{1}{u-v}f_{21}(u)\right)h_1(u) \,, \\
  & h_1(u)^{-1}f_{21}(v)=\left(\frac{u-v}{u-v-1}f_{21}(v)-\frac{1}{u-v-1}f_{21}(u-1)\right) h_1(u)^{-1} \,,
\end{split}
\end{equation*}
we obtain:
\begin{multline}\label{eq:h2f21-so3}
  h_2(u)f_{21}(v)=
  h_1(u+\sfrac{1}{2})^{-1} \left(\frac{u-v-1}{u-v}f_{21}(v)+\frac{1}{u-v}f_{21}(u)\right) h_1(u)\sz_V(u+\sfrac{1}{2}) = \\
  \left(\frac{(u-v+1/2)(u-v-1)}{(u-v)(u-v-1/2)}f_{21}(v)+\frac{1}{u-v-1/2}f_{21}(u-\sfrac{1}{2})-\frac{1}{u-v}f_{21}(u)\right)
  h_2(u) \,.
\end{multline}
In particular, plugging $v=u-1$ into~\eqref{eq:h2f21-so3}, we find:
\begin{equation}\label{eq:h2f21-so3-spec}
  f_{21}(u-\sfrac{1}{2})h_2(u)=\frac{h_2(u)f_{21}(u-1)+f_{21}(u)h_2(u)}{2} \,.
\end{equation}
Plugging the formula~\eqref{eq:h2f21-so3-spec} into the equality~\eqref{eq:h2f21-so3}, multiplying by
$\frac{2u-2v-1}{2u-2v-2}$, and rearranging the terms, we obtain the desired relation~\eqref{eq:osp30-4}.
\end{proof}


\subsubsection{Relations for $\fosp(1|2)$ case}
\label{ssec:osp-12}
\

Finally, let us treat the remaining rank $1$ case of $X^\rtt(\fosp(V))=X^\rtt(\fosp(1|2))$ which can not
be reduced to non-super setup unlike the previous three cases. The corresponding relations also appeared
very recently in~\cite{mr}.

\begin{Prop}\label{prop:osp12}
The following relations hold in $X^\rtt(\fosp(1|2))$:
\begin{equation}\label{eq:osp12-1}
  [h_i(u),h_j(v)]=0 \quad \mathrm{for\ all} \quad 1\leq i,j\leq 2 \,,
\end{equation}
\begin{equation}\label{eq:osp12-2}
  [h_1(u),e_{12}(v)]=\frac{h_1(u)\big(e_{12}(u)-e_{12}(v)\big)}{u-v} \,,\qquad
  [h_1(u),f_{21}(v)]=\frac{\big(f_{21}(v)-f_{21}(u)\big)h_1(u)}{u-v} \,,
\end{equation}
\begin{equation}\label{eq:osp12-3}
  [h_2(u),e_{12}(v)]=h_2(u)\left(\frac{e_{12}(u)-e_{12}(v)}{u-v}+\frac{e_{12}(v)-e_{12}(u-1/2)}{u-v-1/2}\right) \,,
\end{equation}
\begin{equation}\label{eq:osp12-4}
  [h_2(u),f_{21}(v)]=\left(\frac{f_{21}(v)-f_{21}(u)}{u-v}+\frac{f_{21}(u-1/2)-f_{21}(v)}{u-v-1/2}\right)h_2(u) \,,
\end{equation}
\begin{equation}\label{eq:osp12-5}
  [e_{12}(u),f_{21}(v)]=\frac{1}{u-v}\left(h_1(u)^{-1}h_2(u)-h_1(v)^{-1}h_2(v)\right)
\end{equation}
as well as
\begin{multline}\label{eq:osp12-6}
  [e_{12}(u),e_{12}(v)]=\frac{e_{13}(u)-e_{13}(v)}{u-v} + \frac{e_{12}(u)^2-e_{12}(v)^2}{u-v} \, + \\
  \frac{e_{12}(u)e_{12}(v)-e_{12}(v)e_{12}(u)}{2(u-v)} - \frac{\big(e_{12}(u)-e_{12}(v)\big)^2}{2(u-v)^2} \,,
\end{multline}
\begin{multline}\label{eq:osp12-7}
  [f_{21}(v),f_{21}(u)]=\frac{f_{31}(v)-f_{31}(u)}{u-v} + \frac{f_{21}(u)^2-f_{21}(v)^2}{u-v} \, + \\
  \frac{f_{21}(v)f_{21}(u)-f_{21}(u)f_{21}(v)}{2(u-v)} - \frac{\big(f_{21}(v)-f_{21}(u)\big)^2}{2(u-v)^2} \,,
\end{multline}
\begin{multline}\label{eq:Serre-osp12-e}
  (u-v-1)(u-v+1/2)e_{12}(u)e_{13}(v) + (u-v+1/2)e_{12}(v)e_{13}(v) - (u-v+1/2)e_{12}(v)e_{13}(u) - \\
  (u-v)(u-v+3/2)e_{13}(v)e_{12}(u) + (2u-2v+1/2)e_{13}(u)e_{12}(u) - (u-v)e_{12}(v)e_{12}(u)^2 - e_{12}(u)^3=0 \,,
\end{multline}
\begin{multline}\label{eq:Serre-osp12-f}
  (u-v-1)(u-v+1/2)f_{31}(v)f_{21}(u) + (u-v+1/2)f_{31}(v)f_{21}(v) - (u-v+1/2)f_{31}(u)f_{21}(v) - \\
  (u-v)(u-v+3/2)f_{21}(u)f_{31}(v) + (2u-2v+1/2)f_{21}(u)f_{31}(u) + (u-v)f_{21}(u)^2f_{21}(v) + f_{21}(u)^3=0 \,,
\end{multline}
where $e_{13}(u)$ and $f_{31}(u)$ can be further expressed via
\begin{equation}\label{eq:osp12-8}
  e_{13}(u)=-e_{12}(u)^2-[e_{12}(u),e^{(1)}_{12}] \,, \qquad
  f_{31}(u)=f_{21}(u)^2+[f_{21}(u),f^{(1)}_{21}] \,.
\end{equation}

Furthermore, the remaining entries of the matrices $E(u),F(u),H(u)$ are given by:
\begin{equation}\label{eq:osp12-9}
  e_{23}(u)=-e_{12}(u-\sfrac{1}{2}) \,,\
  f_{32}(u)=f_{21}(u-\sfrac{1}{2}) \,,\
  h_3(u)= h_1(u-\sfrac{1}{2})^{-1}h_2(u-\sfrac{1}{2})h_2(u) \,.
\end{equation}
\end{Prop}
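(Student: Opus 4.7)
The proof is a systematic computation from the defining RTT relations~\eqref{eq:RTT-termwise} specialized to $\mathbb{I}=\{1,2,3\}$, $\kappa=-3/2$, with parities $|v_1|=|v_3|=\bar{1}$, $|v_2|=\bar{0}$ and $\theta_1=\theta_2=1$, $\theta_3=-1$. The first step is to dispatch the relations whose central correction in~\eqref{eq:RTT-termwise} vanishes identically. Since $1'=3$ and $2'=2$, the brackets $[t_{11}(u),t_{k\ell}(v)]$ for $(k,\ell)\in\{(1,1),(1,2),(2,1)\}$ and the bracket $[t_{12}(u),t_{21}(v)]$ carry no central term ($\delta_{k,1'}=\delta_{\ell,1'}=0$ throughout). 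Substituting the Gauss decomposition $t_{11}=h_1$, $t_{12}=h_1 e_{12}$, $t_{21}=f_{21}h_1$ and rearranging yields $[h_1(u),h_1(v)]=0$, the two relations in~\eqref{eq:osp12-2}, and~\eqref{eq:osp12-5}, exactly as in $A$-type. The remaining commutativities $[h_1(u),h_2(v)]=0=[h_2(u),h_2(v)]$ come from rank reduction: Corollary~\ref{cor:commutativity} at $\ell=1$ yields the former directly, while the latter follows from Theorem~\ref{thm:embedding} applied to $V^{[1]}=\mathrm{span}(v_2)$, whose associated $R^{[1]}$-matrix is a scalar and forces the bracket to vanish. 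The diagonal-type formulas~\eqref{eq:osp12-9} then follow from Lemma~\ref{lem:h-recover-3} at $i=1$ (for $h_3$) and from Lemmas~\ref{lem:E-entries-oddN}(b),~\ref{lem:F-entries-oddN}(b) at $i=1$ (for $e_{23}, f_{32}$).

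Second, I would derive the $h_2$-commutation~\eqref{eq:osp12-3} by following the strategy of the proof of Proposition~\ref{prop:osp30}: combining~\eqref{eq:mat.coef.4} with Lemma~\ref{lem:cseries-oddN} expresses the product $h_2(u-1/2)h_2(u)$ as $h_1(u-1/2)h_1(u-3/2)^{-1}c_V(u-3/2)$; commuting this product through $e_{12}(v)$ using the $h_1$-relation in~\eqref{eq:osp12-2} twice together with the centrality of $c_V$ produces a commutation formula for $h_2(u-1/2)h_2(u)$ with $e_{12}(v)$, and a suitable specialization (analogous to~\eqref{eq:h2f21-so3-spec}) combined with $[h_2(u),h_2(u-1/2)]=0$ extracts~\eqref{eq:osp12-3}. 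The relation~\eqref{eq:osp12-4} then follows by applying the anti-automorphism $\tau$ of~\eqref{eq:tau-t} and using~\eqref{eq:tau-efh}.

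The hardest part is the quadratic and Serre-type relations~\eqref{eq:osp12-6}--\eqref{eq:Serre-osp12-f}. For~\eqref{eq:osp12-6} I would compute $[t_{12}(u),t_{12}(v)]$ from~\eqref{eq:RTT-termwise}: the correction $\delta_{\ell,j'}=\delta_{2,2}=1$ produces a nontrivial central term linear in $t_{11}t_{13}$, $t_{13}t_{11}$, and $t_{12}t_{12}$. Substituting the Gauss decomposition, sliding the $h_1$-factors to the left through~\eqref{eq:osp12-2}, and extracting the $v^{-1}$-coefficient yields the identity~\eqref{eq:osp12-8} for $e_{13}$ (alternatively, this follows from Lemma~\ref{lem:E-entries-oddN}(d) at $i=1$); solving for $[e_{12}(u),e_{12}(v)]$ in the resulting equation produces~\eqref{eq:osp12-6}. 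For the Serre relation~\eqref{eq:Serre-osp12-e} I would compute $[t_{12}(u),t_{13}(v)]$, whose central correction produces cubic-in-$e_{12}$ combinations; after Gauss-substitution, elimination of $h_1$-factors, and the $e_{13}$-substitution~\eqref{eq:osp12-8}, one obtains~\eqref{eq:Serre-osp12-e}. The $f$-analogues~\eqref{eq:osp12-7},~\eqref{eq:Serre-osp12-f} follow by applying $\tau$. The main obstacle in this cluster is that the Serre identity~\eqref{eq:Serre-osp12-e} has no purely $A$-type reduction: its derivation requires careful bookkeeping of rational $(u-v)$-factors and of signs arising from the odd parity of $e_{12}$, together with the a priori identification of $e_{13}$ via~\eqref{eq:osp12-8} before the cubic identity closes into the asserted form.
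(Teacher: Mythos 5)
Most of your outline tracks the paper's proof: the pole-free brackets $[t_{11},t_{11}]$, $[t_{11},t_{12}]$, $[t_{11},t_{21}]$, $[t_{12},t_{21}]$ do give~\eqref{eq:osp12-1} (for $i=j=1$), \eqref{eq:osp12-2} and~\eqref{eq:osp12-5}; $[h_1(u),h_2(v)]=0$ is Corollary~\ref{cor:commutativity}; \eqref{eq:osp12-8} and~\eqref{eq:osp12-9} come from Lemmas~\ref{lem:E-entries-oddN}, \ref{lem:F-entries-oddN}, \ref{lem:h-recover-3}; the $e$-relations \eqref{eq:osp12-6} and~\eqref{eq:Serre-osp12-e} come from $[t_{12}(u),t_{12}(v)]$ and $[t_{12}(u),t_{13}(v)]$ respectively; and all $f$-statements follow by applying $\tau$. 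Your derivation of $[h_2(u),h_2(v)]=0$ from the rank-reduction embedding into the one-dimensional $V^{[1]}$ (scalar $R^{[1]}$) is a valid alternative to the paper's route via the factorization of $c_V(u)$. Two corrections, one of them substantive.

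First, the genuine gap is in your derivation of~\eqref{eq:osp12-3}. Knowing the commutator of the \emph{product} $h_2(u-\sfrac{1}{2})h_2(u)=h_1(u-\sfrac{1}{2})h_1(u-\sfrac{3}{2})^{-1}c_V(u-\sfrac{3}{2})$ with $e_{12}(v)$ does not yield $[h_2(u),e_{12}(v)]$ by any specialization of $u,v$: if $C_u$ denotes conjugation by $h_2(u)$, you would know $C_{u-1/2}\circ C_u$ and need $C_u$, which is a square-root problem, not a substitution (your cited~\eqref{eq:h2f21-so3-spec} is used in the paper \emph{after} a single-$h_2$ formula is already in hand, to eliminate one unwanted term). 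Moreover, your chosen product is resonant: pushing $e_{12}(u-\sfrac{1}{2})$ past $h_1(u-\sfrac{3}{2})^{-1}$ via~\eqref{eq:h-past-ef-osp12-3} hits the pole at argument difference $1$, producing $0/0$ expressions. The paper resolves both issues at once by introducing the central series $\wt{\sz}_V(u)$ with $c_V(u)=\wt{\sz}_V(u+\sfrac{1}{2})\wt{\sz}_V(u)$, so that the \emph{single} current $h_2(u)=\wt{\sz}_V(u-1)h_1(u-\sfrac{1}{2})h_1(u-1)^{-1}$ is expressed through $h_1$'s whose arguments differ by $\sfrac{1}{2}$, avoiding the resonance; alternatively it derives~\eqref{eq:osp12-3} directly from the defining relation for $[t_{12}(u),t_{22}(v)]$. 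You should adopt one of these fixes. Second, a smaller slip: $[t_{12}(u),t_{13}(v)]$ carries \emph{no} central correction ($\delta_{k,i'}=\delta_{1,3}=0$ and $\delta_{\ell,j'}=\delta_{3,2}=0$); the cubic terms in~\eqref{eq:Serre-osp12-e} arise instead when reordering $t_{13}(v)t_{11}(u)=h_1(v)e_{13}(v)h_1(u)$, which requires the relation for $[t_{11}(u),t_{13}(v)]$ — that bracket does have $\delta_{\ell,j'}=\delta_{3,3}=1$ and is the true source of the $t_{12}(v)t_{12}(u)$ term (see~\eqref{eq:t13t11-osp12}); the same reordering is also needed in your derivation of~\eqref{eq:osp12-6}, where ``sliding $h_1$-factors using~\eqref{eq:osp12-2}'' does not suffice since~\eqref{eq:osp12-2} governs $e_{12}$, not $e_{13}$.
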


\begin{proof}
The defining relation~\eqref{eq:RTT-termwise} applied to $[t_{11}(u),t_{11}(v)]$ implies
$(u-v+1)h_1(u)h_1(v)=(u-v+1)h_1(v)h_1(u)$, hence, $[h_1(u),h_1(v)]=0$. Likewise, both relations
of~\eqref{eq:osp12-2} follow directly by applying the defining relation~\eqref{eq:RTT-termwise}
to the commutators $[t_{11}(u),t_{12}(v)]$ and $[t_{11}(u),t_{21}(v)]$.

We note that the relations~\eqref{eq:osp12-2} allow one to pull $h_1(u)$ past $e_{12}(v)$ and $f_{21}(v)$
either to the left or to the right. To this end, let us first rewrite~\eqref{eq:osp12-2} as follows:
\begin{equation}\label{eq:h-past-ef-osp12-1}
\begin{split}
  & (u-v)e_{12}(v)h_1(u)=h_1(u)\big((u-v+1)e_{12}(v)-e_{12}(u)\big) \,, \\
  & (u-v)h_1(u)f_{21}(v)=\big((u-v+1)f_{21}(v)-f_{21}(u)\big)h_1(u) \,.
\end{split}
\end{equation}
Plugging $v=u+1$ into these relations, we obtain, cf.~(\ref{eq:h-to-e-1},~\ref{eq:h-to-f-1}):
\begin{equation}\label{eq:h-past-ef-osp12-2}
  h_1(u)e_{12}(u)=e_{12}(u+1)h_1(u) \,, \qquad f_{21}(u)h_1(u)=h_1(u)f_{21}(u+1) \,.
\end{equation}
Finally, plugging~\eqref{eq:h-past-ef-osp12-2} back into the equalities~\eqref{eq:h-past-ef-osp12-1},
we also obtain:
\begin{equation}\label{eq:h-past-ef-osp12-3}
\begin{split}
  & (u-v+1)h_1(u)e_{12}(v)=\big((u-v)e_{12}(v)+e_{12}(u+1)\big)h_1(u) \,, \\
  & (u-v+1)f_{21}(v)h_1(u)=h_1(u)\big((u-v)f_{21}(v)+f_{21}(u+1)\big) \,.
\end{split}
\end{equation}

The commutativity $[h_1(u),h_2(v)]=0$ is a direct consequence of Corollary~\ref{cor:commutativity}.
For an alternative direct proof, let us apply the defining relation~\eqref{eq:RTT-termwise} to
$[t_{11}(u),t_{22}(v)]$:
\begin{equation*}
  (u-v)[h_1(u),h_2(v)+f_{21}(v)h_1(v)e_{12}(v)]=f_{21}(v)h_1(v)h_1(u)e_{12}(u)-f_{21}(u)h_1(u)h_1(v)e_{12}(v) \,.
\end{equation*}
Using the equalities~\eqref{eq:h-past-ef-osp12-1} to pull $h_1(u)$ and $h_1(v)$ to the middle
in the left-hand side, we get:
\begin{equation*}
  (u-v)[h_1(u),h_2(v)]+(u-v+1)f_{21}(v)[h_1(u),h_1(v)]e_{12}(v)=0 \,,
\end{equation*}
so that $[h_1(u),h_2(v)]=0$ as claimed.

Finally, the commutativity $[h_2(u),h_2(v)]=0$ of~\eqref{eq:osp12-1} follows from the formula
$c_V(u)=\frac{h_1(u)}{h_1(u+1)}h_2(u+1)h_2(u+3/2)$ for the central current $c_V(u)$ of~\eqref{eq:central-c},
due to Lemma~\ref{lem:cseries-oddN}.

According to Lemma~\ref{lem:E-entries-oddN}(b,d), we have $e_{13}(u)=-e_{12}(u)^2-[e_{12}(u),e^{(1)}_{12}]$,
$e_{23}(u)=-e_{12}(u-1/2)$, thus recovering the first formulas of~(\ref{eq:osp12-8},~\ref{eq:osp12-9}).
The latter implies $e_{23}^{(1)}=-e^{(1)}_{12}$. Likewise, due to Lemma~\ref{lem:F-entries-oddN}(b,d),
we have $f_{31}(u)=f_{21}(u)^2+[f_{21}(u),f^{(1)}_{21}]$, $f_{32}(u)=f_{21}(u-1/2)$, thus recovering the
second formulas of~(\ref{eq:osp12-8},~\ref{eq:osp12-9}). The latter implies $f_{32}^{(1)}=f^{(1)}_{21}$.
Finally, we have $h_3(u)=h_1(u-1/2)^{-1}h_2(u-1/2)h_2(u)$ due to Lemma~\ref{lem:h-recover-3}, recovering
the last formula of~\eqref{eq:osp12-9}.

\medskip
Let us prove~\eqref{eq:osp12-5}.
Applying the defining relation~\eqref{eq:RTT-termwise} to $[t_{21}(u),t_{12}(v)]$, we get:
\begin{multline}\label{eq:t21t12-osp12}
  (u-v)f_{21}(u)h_1(u)h_1(v)e_{12}(v)+ (u-v)h_1(v)e_{12}(v)f_{21}(u)h_1(u) = \\
  h_1(v)h_2(u)-h_1(u)h_2(v)+h_1(v)f_{21}(u)h_1(u)e_{12}(u)-h_1(u)f_{21}(v)h_1(v)e_{12}(v) \,.
\end{multline}
Using the equalities~(\ref{eq:h-past-ef-osp12-1})--(\ref{eq:h-past-ef-osp12-3}) we can pull both
$h_1(u),h_1(v)$ to the leftmost part in all summands of~\eqref{eq:t21t12-osp12}, and multiplying
further both sides by $h_1(u)^{-1}h_1(v)^{-1}$ on the left, we obtain:
\begin{equation}\label{eq:ef-commutator-osp12-1}
  (u-v+1)[e_{12}(v),f_{21}(u+1)]-[e_{12}(u),f_{21}(u+1)]=h_1(u)^{-1}h_2(u)-h_1(v)^{-1}h_2(v) \,.
\end{equation}
Plugging $v=u+1$ into~\eqref{eq:ef-commutator-osp12-1}, we get:
\begin{equation}\label{eq:ef-commutator-osp12-2}
  -[e_{12}(u),f_{21}(u+1)]=h_1(u)^{-1}h_2(u)-h_1(u+1)^{-1}h_2(u+1) \,.
\end{equation}
Subtracting~\eqref{eq:ef-commutator-osp12-2} from~\eqref{eq:ef-commutator-osp12-1} and
renaming $v\rightsquigarrow u, u+1\rightsquigarrow v$, we obtain the relation~\eqref{eq:osp12-5}.

\medskip
Let us prove~\eqref{eq:osp12-3}. One way to establish it is to consider the defining relation
\begin{equation}\label{eq:t12t22-osp12}
  [t_{12}(u),t_{22}(v)]=\frac{t_{22}(u)t_{12}(v)-t_{22}(v)t_{12}(u)}{u-v} +
  \frac{t_{23}(v)t_{11}(u)+t_{22}(v)t_{12}(u)-t_{21}(v)t_{13}(u)}{u-v+3/2} \,.
\end{equation}
Here, the left-hand side may be written as follows:
\begin{multline}\label{eq:t12t22-osp12-left}
  [t_{12}(u),t_{22}(v)]=h_1(u)[e_{12}(u),h_2(v)]+[t_{12}(u),t_{21}(v)]e_{12}(v)-t_{21}(v)[t_{12}(u),e_{12}(v)] = \\
  h_1(u)[e_{12}(u),h_2(v)]+\frac{t_{22}(u)t_{11}(v)-t_{22}(v)t_{11}(u)}{u-v}e_{12}(v)-t_{21}(v)[t_{12}(u),e_{12}(v)] \,.
\end{multline}
Plugging~\eqref{eq:t12t22-osp12-left} into the left-hand side of~\eqref{eq:t12t22-osp12}, rearranging the terms,
and using the defining relation~\eqref{eq:RTT-termwise} for $[t_{12}(u),t_{12}(v)]$ and $[t_{12}(u),t_{11}(v)]$,
we eventually obtain:
\begin{equation}\label{eq:t12t22-osp12-2}
  h_1(u)[e_{12}(u),h_2(v)] =
  \frac{h_1(u)h_2(v)\big(e_{12}(v)-e_{12}(u)\big)}{u-v} +
  \frac{h_2(v)\big(e_{23}(v)h_1(u)+h_1(u)e_{12}(u)\big)}{u-v+3/2} \,.
\end{equation}
Evoking the first equalities of~\eqref{eq:osp12-9} and~\eqref{eq:h-past-ef-osp12-1}, we get:
\begin{equation*}
  e_{23}(v)h_1(u)=-e_{12}(v-\sfrac{1}{2})h_1(u)=
  -h_1(u)\left( \frac{u-v+3/2}{u-v+1/2}e_{12}(v-\sfrac{1}{2})-\frac{1}{u-v+1/2}e_{12}(u) \right) \,,
\end{equation*}
so that
\begin{equation*}
  \frac{h_2(v)\big(e_{23}(v)h_1(u)+h_1(u)e_{12}(u)\big)}{u-v+3/2}=
  \frac{h_1(u)h_2(v)\big(e_{12}(u)-e_{12}(v-1/2)\big)}{u-v+1/2} \,.
\end{equation*}
Plugging this into~\eqref{eq:t12t22-osp12-2}, multiplying by $h_1(u)^{-1}$ on the left,
and renaming $u \leftrightsquigarrow v$, we get~\eqref{eq:osp12-3}.

Another proof of~\eqref{eq:osp12-3} is based on the expression of $h_2(u)$ via the $h_1$-current and a
central current $\wt{\sz}_V(u)$ defined via the following difference equation (cf.~\eqref{eq:c-squareroot}):
\begin{equation*}
  c_V(u)=\wt{\sz}_V(u+\sfrac{1}{2})\wt{\sz}_V(u) \,.
\end{equation*}
Evoking $c_V(u)=\frac{h_1(u)h_2(u+1)h_2(u+3/2)}{h_1(u+1)}$, we get\footnote{In fact, the difference
equation defining $\wt{\sz}_V(u)$ is specifically engineered to allow for such an expression.}
$\wt{\sz}_V(u)=\frac{h_1(u)h_2(u+1)}{h_1(u+1/2)}$, so that
\begin{equation}\label{eq:h2-via-h1c}
  h_2(u)=\wt{\sz}_V(u-1)h_1(u-\sfrac{1}{2})h_1(u-1)^{-1} \,.
\end{equation}
Combining~\eqref{eq:h2-via-h1c} with the relation
  $e_{12}(v)h_1(u)^{-1}=h_1(u)^{-1}\left(\frac{u-v}{u-v+1}e_{12}(v)+\frac{1}{u-v+1}e_{12}(u+1)\right)$
which follows from~\eqref{eq:h-past-ef-osp12-3}, and evoking~\eqref{eq:h-past-ef-osp12-1}, we obtain:
\begin{multline}\label{eq:e12h2-osp12}
  e_{12}(v)h_2(u) =
  \wt{\sz}_V(u-1)h_1(u-\sfrac{1}{2})
  \left(\frac{u-v+1/2}{u-v-1/2}e_{12}(v)-\frac{1}{u-v-1/2}e_{12}(u-\sfrac{1}{2})\right) h_1(u-1)^{-1}=\\
  h_2(u)\left(\frac{(u-v+1/2)(u-v-1)}{(u-v)(u-v-1/2)}e_{12}(v)+
              \frac{1}{u-v-1/2}e_{12}(u-\sfrac{1}{2})-\frac{1}{u-v}e_{12}(u)\right) \,.
\end{multline}
Subtracting $h_2(u)e_{12}(v)$ from both sides of~\eqref{eq:e12h2-osp12}, we obtain the desired
relation~\eqref{eq:osp12-3}, due to the equality
  $\frac{(u-v+1/2)(u-v-1)}{(u-v)(u-v-1/2)}-1=\frac{1}{u-v}-\frac{1}{u-v-1/2}$.


\medskip
Let us prove~\eqref{eq:osp12-6}. Applying the defining relation~\eqref{eq:RTT-termwise} to
$[t_{12}(u),t_{12}(v)]$, we get:
\begin{multline}\label{eq:t12t12-osp12}
  t_{12}(u)t_{12}(v)+t_{12}(v)t_{12}(u) + \frac{1}{u-v}\Big(t_{12}(u)t_{12}(v)-t_{12}(v)t_{12}(u)\Big) \, - \\
  \frac{1}{u-v+3/2} \Big(t_{11}(v)t_{13}(u)-t_{12}(v)t_{12}(u)-t_{13}(v)t_{11}(u)\Big) = 0 \,.
\end{multline}
Using~\eqref{eq:h-past-ef-osp12-1} let us pull both $h_1(u)$ and $h_1(v)$ to the leftmost part
in all terms but $t_{13}(v)t_{11}(u)$:
\begin{equation*}
\begin{split}
  & t_{12}(u)t_{12}(v)=h_1(u)h_1(v)\left(\frac{u-v-1}{u-v}e_{12}(u)e_{12}(v)+\frac{1}{u-v}e_{12}(v)^2\right) \,, \\
  & t_{12}(v)t_{12}(u)=h_1(u)h_1(v)\left(\frac{u-v+1}{u-v}e_{12}(v)e_{12}(u)-\frac{1}{u-v}e_{12}(u)^2\right) \,, \\
  & t_{11}(v)t_{13}(u)=h_1(u)h_1(v)e_{13}(u) \,,\quad
    t_{11}(u)t_{13}(v)=h_1(u)h_1(v)e_{13}(v) \,.
\end{split}
\end{equation*}
To treat the remaining summand $t_{13}(v)t_{11}(u)$ in~\eqref{eq:t12t12-osp12}, we recall the defining relation
\begin{multline}\label{eq:t11t13-osp12}
  [t_{11}(u),t_{13}(v)] = \frac{-1}{u-v}\Big(t_{11}(u)t_{13}(v)-t_{11}(v)t_{13}(u)\Big) \, + \\
  \frac{1}{u-v+3/2}\Big(t_{11}(v)t_{13}(u)-t_{12}(v)t_{12}(u)-t_{13}(v)t_{11}(u)\Big) \,.
\end{multline}
Rearranging the terms in~\eqref{eq:t11t13-osp12}, we obtain:
\begin{multline}\label{eq:t13t11-osp12}
  \frac{1}{u-v+3/2}t_{13}(v)t_{11}(u)= - \frac{2u-2v+3/2}{(u-v)(u-v+1/2)(u-v+3/2)}t_{11}(v)t_{13}(u) \, + \\
  \frac{u-v+1}{(u-v)(u-v+1/2)}t_{11}(u)t_{13}(v) + \frac{1}{(u-v+1/2)(u-v+3/2)}t_{12}(v)t_{12}(u) \,.
\end{multline}
Thus, using the equality~\eqref{eq:t13t11-osp12} for the last term in the right-hand side
of~\eqref{eq:t12t12-osp12}, then pulling both $h_1(u)$ and $h_1(v)$ to the leftmost part
as outlined above, and finally multiplying further by $h_1(u)^{-1}h_1(v)^{-1}$ on the left, we get:
\begin{multline}\label{eq:osp12-6harder}
  \frac{u-v+1}{u-v}\left(\frac{u-v-1}{u-v}e_{12}(u)e_{12}(v)+\frac{1}{u-v}e_{12}(v)^2\right) + \\
  \left(\frac{u-v+3/2}{u-v+1/2}-\frac{1}{u-v}\right)\left(\frac{u-v+1}{u-v}e_{12}(v)e_{12}(u)-\frac{1}{u-v}e_{12}(u)^2\right) + \\
  \frac{u-v+1}{(u-v)(u-v+1/2)}e_{13}(v) - \frac{u-v+1}{(u-v)(u-v+1/2)}e_{13}(u) = 0 \,.
\end{multline}
Note that $\frac{u-v+3/2}{u-v+1/2}-\frac{1}{u-v}=\frac{(u-v+1)(u-v-1/2)}{(u-v)(u-v+1/2)}$.
Therefore, multiplying~\eqref{eq:osp12-6harder} by $\frac{(u-v)^2(u-v+1/2)}{u-v+1}$, we obtain
an equivalent relation:
\begin{multline}\label{eq:osp12-6harder2}
  (u-v+1/2)(u-v-1)e_{12}(u)e_{12}(v) + (u-v+1/2)e_{12}(v)^2 + (u-v+1)(u-v-1/2)e_{12}(v)e_{12}(u) - \\
  (u-v-1/2)e_{12}(u)^2 + (u-v)e_{13}(v) - (u-v)e_{13}(u) = 0 \,.
\end{multline}
Rearranging the terms in~\eqref{eq:osp12-6harder2} and multiplying by $\frac{1}{(u-v)^{2}}$,
we recover the desired relation~\eqref{eq:osp12-6}.


\medskip
Let us finally prove~\eqref{eq:Serre-osp12-e}. Applying the defining relation~\eqref{eq:RTT-termwise}
to $[t_{12}(u),t_{13}(v)]$, we get:
\begin{equation}\label{eq:t12t13-osp12}
  (u-v+1)h_1(u)e_{12}(u)h_1(v)e_{13}(v)-h_1(v)e_{12}(v)h_1(u)e_{13}(u)-(u-v)h_1(v)e_{13}(v)h_1(u)e_{12}(u)=0 \,.
\end{equation}
Using~\eqref{eq:h-past-ef-osp12-1}, let us pull both $h_1(u)$ and $h_1(v)$ to the leftmost part
in the first two terms:
\begin{equation*}
\begin{split}
  & e_{12}(u)h_1(v)=h_1(v)\left(\frac{u-v-1}{u-v}e_{12}(u)+\frac{1}{u-v}e_{12}(v)\right) \,, \\
  & e_{12}(v)h_1(u)=h_1(u)\left(\frac{u-v+1}{u-v}e_{12}(v)-\frac{1}{u-v}e_{12}(u)\right) \,.
\end{split}
\end{equation*}
On the other hand, $h_1(v)e_{13}(v)h_1(u)=t_{13}(v)t_{11}(u)$ has been already evaluated
in~\eqref{eq:t13t11-osp12} above. Thus, first using the equality~\eqref{eq:t13t11-osp12} for
the last term in~\eqref{eq:t12t13-osp12}, then pulling both $h_1(u),h_1(v)$ to the leftmost part
as outlined above, and finally multiplying by $h_1(u)^{-1}h_1(v)^{-1}$ on the left, we get:
\begin{multline}\label{eq:e12e13-osp12}
  \frac{(u-v+1)(u-v-1)}{u-v}e_{12}(u)e_{13}(v) + \frac{u-v+1}{u-v}e_{12}(v)e_{13}(v) -
  \frac{u-v+1}{u-v}e_{12}(v)e_{13}(u) \, + \\
  \frac{1}{u-v}e_{12}(u)e_{13}(u)- \frac{(u-v+1)(u-v+3/2)}{u-v+1/2}e_{13}(v)e_{12}(u) +
  \frac{2u-2v+3/2}{u-v+1/2}e_{13}(u)e_{12}(u) \, - \\
  \frac{u-v+1}{u-v+1/2}e_{12}(v)e_{12}(u)^2+\frac{1}{u-v+1/2}e_{12}(u)^3=0 \,.
\end{multline}
Plugging $v=u+1$ into~\eqref{eq:e12e13-osp12}, we obtain:
\begin{equation}\label{eq:e12e13-osp12-2}
  e_{12}(u)e_{13}(u)=e_{13}(u)e_{12}(u)-2e_{12}(u)^3 \,.
\end{equation}
Therefore, replacing $e_{12}(u)e_{13}(u)$ in~\eqref{eq:e12e13-osp12} with the right-hand side
of~\eqref{eq:e12e13-osp12-2} and multiplying further by $\frac{(u-v)(u-v+1/2)}{u-v+1}$, we get
the desired relation~\eqref{eq:Serre-osp12-e}.


We note that relations~(\ref{eq:osp12-4},~\ref{eq:osp12-7},~\ref{eq:Serre-osp12-f}) follow directly by
applying the anti-automorphism $\tau$ of $X^\rtt(\fosp(1|2))$ given by~\eqref{eq:tau-t} to the
relations~(\ref{eq:osp12-3},~\ref{eq:osp12-6},~\ref{eq:Serre-osp12-e}) and using the formulas~\eqref{eq:tau-efh}.

\medskip
This completes our proof of Proposition~\ref{prop:osp12}.
\end{proof}

\begin{Rem}
Evaluating the $u^{1}$-coefficients in the relations~\eqref{eq:Serre-osp12-e}
and~\eqref{eq:Serre-osp12-f}, we obtain:
\begin{equation*}
  [e_{13}(v),e_{12}^{(1)}]-e_{12}(v)e_{13}(v)=0 \,,\qquad
  [f_{31}(v),f_{21}^{(1)}]+f_{31}(v)f_{21}(v)=0 \,.
\end{equation*}
Plugging above the formulas for $e_{13}(v)$ and $f_{31}(v)$ from~\eqref{eq:osp12-8}, we obtain
the following cubic relations for the currents $e_{12}(v)$ and $f_{21}(v)$, cf.~\cite[(3.7, 3.8)]{acfr}:
\begin{equation}\label{eq:acfr-like Serre}
\begin{split}
  & e_{12}(v)^3=[e_{12}(v),(e_{12}^{(1)})^2]-[e_{12}(v),e_{12}^{(1)}]e_{12}(v) \,, \\
  & f_{21}(v)^3=-[f_{21}(v),(f_{21}^{(1)})^2]-f_{21}(v)[f_{21}(v),f_{21}^{(1)}] \,.
\end{split}
\end{equation}
\end{Rem}

\begin{Rem}
We note that the cubic relations~(\ref{eq:Serre-osp12-e},~\ref{eq:Serre-osp12-f}) differ slightly
from~\cite[(4.9,~4.10)]{mr}, which is not surprising as one can add linear multiples of the quadratic
relations~(\ref{eq:osp12-6},~\ref{eq:osp12-7}). However, the key feature of both choices is that at
the associated graded algebra level they yield:
\begin{equation}\label{eq:e12e13-assgr}
  [\wt{e}^{(r)}_{12},\wt{e}^{(s)}_{13}]=0 \,,\qquad [\wt{f}^{(r)}_{21},\wt{f}^{(s)}_{31}]=0
  \qquad \mathrm{for\ any} \quad r,s\geq 1 \,.
\end{equation}
Indeed, evaluating $u^{-k}v^{-\ell}$-coefficients in~\eqref{eq:Serre-osp12-e} and passing to
their associated graded, we get:
\begin{equation}\label{eq:e12e13-assgr-direct}
  [\wt{e}^{(k+2)}_{12},\wt{e}^{(\ell)}_{13}] - 2[\wt{e}^{(k+1)}_{12},\wt{e}^{(\ell+1)}_{13}] +
  [\wt{e}^{(k)}_{12},\wt{e}^{(\ell+2)}_{13}] = 0
    \quad \mathrm{for\ any} \quad k,\ell\in \BZ \,,
\end{equation}
with $\wt{e}^{(\leq 0)}_{\bullet \bullet}=0$. In particular, we get $[\wt{e}^{(k)}_{12},\wt{e}^{(1)}_{13}]=0$
(by plugging $\ell=-1$ into~\eqref{eq:e12e13-assgr-direct}), $[\wt{e}^{(k)}_{12},\wt{e}^{(2)}_{13}]=0$
(by plugging $\ell=0$ into~\eqref{eq:e12e13-assgr-direct}), and then we get the first equality
of~\eqref{eq:e12e13-assgr} by induction on $s$.
\end{Rem}

\begin{Rem}
We note that the $\BZ_2$-grading of $V$ in~\cite{acfr} is $|v_1|=\bar{0}, |v_2|=\bar{1}, |v_3|=\bar{0}$,
which is opposite to ours, and hence the $R$-matrix of~\cite[(2.4)]{acfr} slightly differs from ours
(besides for the common prefactor). The main isomorphism $\phi\colon \mathcal{A}^+\iso \mathcal{Y}(R)$
of~\cite[Theorem~3.1]{acfr} between the (new) Drinfeld and RTT realizations of the super Yangian of $\fosp(1|2)$
is best restated using the opposite Gauss decomposition of the generator matrix $T(u)$
(denoted by $L(u)$ in \emph{loc.~cit.}):
  $$\phi\colon e(u)\mapsto \widehat{e}_{23}(-u) \,,\quad
    f(u) \mapsto \widehat{f}_{32}(-u) \,,\quad
    h(u)\mapsto \widehat{h}_2(-u-1)\widehat{h}_3(-u-1)^{-1} \,.$$
Here, the \emph{opposite} Gauss decomposition of $T(u)$ refers to the unique factorization
\begin{equation}\label{eq:gauss-opposite}
  T(u)=\widehat{E}(u) \cdot \widehat{H}(u) \cdot \widehat{F}(u)
\end{equation}
with
\begin{enumerate}

\item[--]
an upper-triangular matrix $\widehat{E}(u)=(\widehat{e}_{ij}(u))$ with $\widehat{e}_{ii}(u)=1$,

\item[--]
a diagonal matrix $\widehat{H}(u)=\mathrm{diag}(\widehat{h}_1(u),\ldots,\widehat{h}_{1'}(u))$,

\item[--]
a lower-triangular matrix $\widehat{F}(u)=(\widehat{f}_{ji}(u))$ with $\widehat{f}_{ii}(u)=1$.

\end{enumerate}
One may wonder how the two Gauss decompositions are related, and if the defining relations for our
conventions~\eqref{eq:gauss-osp} imply those for the generating series in the opposite Gauss
decomposition~\eqref{eq:gauss-opposite}. In fact, the composition of the anti-automorphism $\tau$ from
Remark~\ref{rem:tau-antiautom} and the antipode anti-automorphism $S$ give by $S(T(u))=T(u)^{-1}$ gives
rise to an algebra automorphism of $X^\rtt(\fosp(V))$ that intertwines our Gauss decomposition and the
opposite one. Thus, it is just a matter of preference which one to use, and we follow the previous
literature~\cite{bk,jlm,m} on the subject.
\end{Rem}


\subsection{Rank 2 cases}
\label{ssec:rank-2}
\

In this subsection, we establish quadratic relations for rank $2$ orthosymplectic Yangians which do not
follow from Corollaries~\ref{cor:A-type relations},~\ref{cor:other-A-type relations} and from rank $1$
cases treated in Subsection~\ref{ssec:rank-1}. There are eight cases that we consider separately:
$(N=4,m=0)$, $(N=0,m=2)$, $(N=2,m=1)$ with the parity sequence $\Parity=(\bar{1},\bar{0})$ or
$\Parity=(\bar{0},\bar{1})$, $(N=5,m=0)$, $(N=1,m=2)$, $(N=3,m=1)$ with the parity sequence
$\Parity=(\bar{1},\bar{0})$ or $\Parity=(\bar{0},\bar{1})$. We note that the first, second, and fifth
cases were already treated in~\cite{jlm}, while the sixth case was treated very recently in~\cite{mr}.


\subsubsection{Relations for $\fosp(4|0)$ case}
\label{ssec:osp-22-3}
\

In this case, we have $X^\rtt(\fosp(V))\simeq X^\rtt(\fso_4)$ by Remark~\ref{rem:super-to-nonsuper}. Some of
the relations among the generating currents $e_{12}(u),e_{13}(u),f_{21}(u),f_{31}(u),h_1(u),h_2(u),h_3(u)$
already follow from those for $Y^\rtt(\gl_2)$, as specified in Corollaries~\ref{cor:A-type relations}
and~\ref{cor:other-A-type relations}. On the other hand, we also have
\begin{equation}\label{eq:ef-vanishing-1}
  e_{23}(u)=0=f_{32}(u) \,,
\end{equation}
due to Theorem~\ref{thm:embedding} and Proposition~\ref{prop:osp20}.

\begin{Prop}\label{prop:osp40}
The following relations hold in $X^\rtt(\fosp(4|0))$:
\begin{equation}\label{eq:osp40-23}
  [h_3(u),e_{12}(v)]=\frac{h_3(u)\big(e_{12}(v)-e_{12}(u)\big)}{u-v} \,,\quad
  [h_3(u),f_{21}(v)]=\frac{\big(f_{21}(u)-f_{21}(v)\big)h_3(u)}{u-v} \,,
\end{equation}
\begin{equation}\label{eq:osp40-25}
  [h_2(u),e_{13}(v)]=\frac{h_2(u)\big(e_{13}(v)-e_{13}(u)\big)}{u-v} \,,\quad
  [h_2(u),f_{31}(v)]=\frac{\big(f_{31}(u)-f_{31}(v)\big)h_2(u)}{u-v} \,,
\end{equation}
\begin{equation}\label{eq:osp40-33}
  [e_{12}(u),f_{31}(v)]=0 \,,\qquad [e_{13}(u),f_{21}(v)]=0 \,,
\end{equation}
\begin{equation}\label{eq:osp40-5}
  [e_{12}(u),e_{13}(v)]=0 \,,\qquad [f_{21}(u),f_{31}(v)]=0 \,.
\end{equation}
\end{Prop}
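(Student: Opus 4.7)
My plan is to organize the proof into three stages corresponding to the three families of relations.

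In the preliminary stage, the relations intrinsic to either of the two rank-one ``subfactors'' are already in hand. Concretely, the subalgebra generated by $\{h_1(u),h_2(u),e_{12}(u),f_{21}(u)\}$ is governed by Corollary~\ref{cor:A-type relations}, while the subalgebra generated by $\{h_1(u),h_3(u),e_{13}(u),f_{31}(u)\}$ is governed by Corollary~\ref{cor:other-A-type relations} (which applies because $N=2n$ with $|v_{n+m}|=\bar{0}$, using the vanishings $e_{23}(u)=f_{32}(u)=0$ coming from Theorem~\ref{thm:embedding} together with Proposition~\ref{prop:osp20}). Thus it remains only to verify the ``cross'' relations~\eqref{eq:osp40-23}--\eqref{eq:osp40-5} between the two subsystems.

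For the Cartan--root commutators~\eqref{eq:osp40-23} I would exploit centrality of $c_V(u)$. By Lemma~\ref{lem:cseries-evenN-even},
\[
  c_V(u)=\frac{h_1(u)}{h_1(u-1)}\,h_2(u-1)\,h_3(u-1),
\]
so $[c_V(u),e_{12}(v)]=0$. Since the commutation of each of $h_1(u)$, $h_1(u-1)^{-1}$, $h_2(u-1)$ with $e_{12}(v)$ is fully determined by Corollary~\ref{cor:A-type relations}, I can push $e_{12}(v)$ through those three factors and read off the commutator with the remaining factor $h_3(u-1)$; the centrality equation then forces it to be precisely the quantity in~\eqref{eq:osp40-23} after the shift $u-1\mapsto u$. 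The companion $f$-relation follows by applying the anti-automorphism $\tau$ of~\eqref{eq:tau-t} together with~\eqref{eq:tau-efh}, and the sibling relations~\eqref{eq:osp40-25} are obtained by the same device, this time using Corollary~\ref{cor:other-A-type relations} to handle the factors touching the second $\gl_2$.

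For the vanishings~\eqref{eq:osp40-33} I would start with~\eqref{eq:RTT-termwise} applied to $[t_{12}(u),t_{31}(v)]$. Since $\delta_{3,1'}=\delta_{1,2'}=0$, no $Q$-terms appear and one gets
\[
  [t_{12}(u),t_{31}(v)]=\tfrac{1}{u-v}\bigl(t_{32}(u)t_{11}(v)-t_{32}(v)t_{11}(u)\bigr).
\]
Unpacking both sides via the Gauss decomposition and using $f_{32}=0$ so that $t_{32}(w)=f_{31}(w)h_1(w)e_{12}(w)$, the equation should reduce, after invoking the commutators already proved, to $[e_{12}(u),f_{31}(v)]=0$; the symmetric vanishing $[e_{13}(u),f_{21}(v)]=0$ is handled analogously starting from $[t_{13}(u),t_{21}(v)]$. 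Finally, the Serre relations~\eqref{eq:osp40-5} are extracted from $[t_{12}(u),t_{13}(v)]$ via~\eqref{eq:RTT-termwise}, and the $f$-version follows by $\tau$. The main obstacle I anticipate is the bookkeeping in the centrality argument for~\eqref{eq:osp40-23}--\eqref{eq:osp40-25}: one must simultaneously track how $e_{12}(v)$ passes through $h_1(u)$, $h_1(u-1)^{-1}$, and $h_2(u-1)$---each via a distinct resonance rule of the kind codified in Corollary~\ref{cor:A-resonance}---and then invert the resulting identity to isolate the unknown commutator with $h_3(u-1)$.
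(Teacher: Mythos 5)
Your argument is correct and is essentially the proof the paper intends: the paper itself only cites~\cite{jlm} (via the embedding $X^\rtt(\fso_4)\hookrightarrow Y^\rtt(\gl_2)\otimes Y^\rtt(\gl_2)$ of~\cite{amr}) and then notes that a direct proof proceeds ``completely analogously'' to Proposition~\ref{prop:osp22-10}, leaving the details to the reader. Your three stages --- the two $A$-type subsystems via Corollaries~\ref{cor:A-type relations} and~\ref{cor:other-A-type relations} (using $e_{23}(u)=0=f_{32}(u)$), the centrality/factorization argument with $c_V(u)=\frac{h_1(u)}{h_1(u-1)}h_2(u-1)h_3(u-1)$ for \eqref{eq:osp40-23}--\eqref{eq:osp40-25}, and the RTT-relation computations plus the anti-automorphism $\tau$ for \eqref{eq:osp40-33} and \eqref{eq:osp40-5} --- are exactly the steps of that analogous proof.
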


This result was established in~\cite{jlm} using the embedding
  $X^\rtt(\fso_4)\hookrightarrow Y^\rtt(\gl_2)\otimes Y^\rtt(\gl_2)$
of~\cite{amr}. However, it is instructive to prove these relations directly, which can
be done completely analogously to our proof of Proposition~\ref{prop:osp22-10} below
(we leave details to the interested reader).


\subsubsection{Relations for $\fosp(0|4)$ case}
\label{ssec:osp-22-4}
\

In this case, we have $X^\rtt(\fosp(V))\simeq X^\rtt(\fsp_4)$ by Remark~\ref{rem:super-to-nonsuper},
with the isomorphism given by $T(u)\mapsto T(-u)$. The relations on the generating currents
$e_{12}(u),f_{21}(u),h_1(u),h_2(u)$ already follow from those for $Y^\rtt(\gl(0|2))\simeq Y^\rtt(\gl_2)$
as specified in Corollary~\ref{cor:A-type relations}. On the other hand, the relations on the generating
currents $e_{23}(u),f_{32}(u),h_2(u),h_3(u)$ readily follow from those for
$X^\rtt(\fosp(V^{[1]}))\simeq X^\rtt(\fosp(0|2))\simeq X^\rtt(\fsp_2)$ as specified
in Proposition~\ref{prop:osp02}.

\begin{Prop}\label{prop:osp04}
The following relations hold in $X^\rtt(\fosp(0|4))$:
\begin{equation}\label{eq:osp04-1}
  [h_1(u),h_3(v)]=0 \,,
\end{equation}
\begin{equation}\label{eq:osp04-23}
  [h_3(u),e_{12}(v)]=\frac{h_3(u)(e_{12}(u-2)-e_{12}(v))}{u-v-2} \,,\qquad
  [h_3(u),f_{21}(v)]=\frac{(f_{21}(v)-f_{21}(u-2))h_3(u)}{u-v-2} \,,
\end{equation}
\begin{equation}\label{eq:osp04-24}
  [h_1(u),e_{23}(v)]=0 \,,\qquad
  [h_1(u),f_{32}(v)]=0 \,,
\end{equation}
\begin{equation}\label{eq:osp04-33}
  [e_{12}(u),f_{32}(v)]=0 \,,\qquad [e_{23}(u),f_{21}(v)]=0 \,,
\end{equation}
\begin{equation}\label{eq:osp04-43}
  [e_{12}(u),e_{23}(v)]=\frac{2}{u-v} \Big(e_{13}(u)-e_{13}(v)-e_{12}(u)e_{23}(v)+e_{12}(v)e_{23}(v)\Big) \,,
\end{equation}
\begin{equation}\label{eq:osp04-44}
  [f_{21}(u),f_{32}(v)]=\frac{2}{u-v} \Big(f_{31}(v)-f_{31}(u)+f_{32}(v)f_{21}(u)-f_{32}(v)f_{21}(v)\Big) \,.
\end{equation}
\end{Prop}

This result goes back to~\cite{jlm}. We note however that~\eqref{eq:osp04-44} corrects
a typo in~\cite[(5.34)]{jlm}.


\subsubsection{Relations for $\fosp(2|2)$ case with the parity sequence $(\bar{1},\bar{0})$}
\label{ssec:osp-22-1}
\

In this case, we have the generating currents $e_{12}(u),e_{13}(u),f_{21}(u),f_{31}(u),h_1(u),h_2(u),h_3(u)$.
Some of the relations among them already follow from those for $Y^\rtt(\gl(1|1))$ with the parity sequence
$\parity=\Parity=(\bar{1},\bar{0})$, as specified in Corollaries~\ref{cor:A-type relations},~\ref{cor:other-A-type relations}.
On the other hand, we also have
\begin{equation}\label{eq:ef-vanishing-2}
  e_{23}(u)=0=f_{32}(u) \,,
\end{equation}
due to Theorem~\ref{thm:embedding} and Proposition~\ref{prop:osp20}.

\begin{Prop}\label{prop:osp22-10}
The following relations hold in the corresponding $X^\rtt(\fosp(V))$:
\begin{equation}\label{eq:osp221-2}
  [h_3(u),e_{12}(v)]=\frac{h_3(u)(e_{12}(v)-e_{12}(u))}{u-v} \,,\quad
  [h_3(u),f_{21}(v)]=\frac{(f_{21}(u)-f_{21}(v))h_3(u)}{u-v} \,,
\end{equation}
\begin{equation}\label{eq:osp221-1}
  [h_2(u),e_{13}(v)]=\frac{h_2(u)(e_{13}(v)-e_{13}(u))}{u-v} \,,\quad
  [h_2(u),f_{31}(v)]=\frac{(f_{31}(u)-f_{31}(v))h_2(u)}{u-v} \,,
\end{equation}
\begin{equation}\label{eq:osp221-3}
  [e_{12}(u),f_{31}(v)]=0 \,,\qquad [e_{13}(u),f_{21}(v)]=0
\end{equation}
as well as
\begin{multline}\label{eq:osp221-4}
  [e_{12}(u),e_{13}(v)]=\frac{1}{u-v}\Big(e_{12}(u)e_{13}(v)-e_{13}(v)e_{12}(u)\Big) \, + \\
  \frac{1}{u-v}\Big(-e_{12}(u)e_{13}(u)+e_{13}(u)e_{12}(u)+[e_{13}(v),e^{(1)}_{12}]-[e_{13}(u),e^{(1)}_{12}]\Big) \,,
\end{multline}
\begin{multline}\label{eq:osp221-5}
  [f_{21}(u),f_{31}(v)]=\frac{1}{u-v}\Big(f_{31}(v)f_{21}(u)-f_{21}(u)f_{31}(v)\Big) \, + \\
  \frac{1}{u-v}\Big(-f_{31}(u)f_{21}(u)+f_{21}(u)f_{31}(u)+[f^{(1)}_{21},f_{31}(v)]-[f^{(1)}_{21},f_{31}(u)]\Big) \,.
\end{multline}
\end{Prop}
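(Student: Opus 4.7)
The plan is to follow the standard RTT-based strategy used for the preceding rank-$1$ and rank-$2$ cases: derive the relations by expanding the defining relation~\eqref{eq:RTT-termwise} on appropriate matrix commutators, substitute the Gauss decomposition, and collapse using the $A$-type subalgebra relations along with the anti-automorphism $\tau$ of~\eqref{eq:tau-t}. Since $\Parity=(\bar{1},\bar{0})$, one has $\kappa=-1$ and the vanishing $e_{23}(u)=0=f_{32}(u)$ from~\eqref{eq:ef-vanishing-2}, which simplifies the Gauss expressions to $t_{22}(u)=h_2(u)+f_{21}(u)h_1(u)e_{12}(u)$ and $t_{33}(u)=h_3(u)+f_{31}(u)h_1(u)e_{13}(u)$. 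All relations among $\{h_1,h_2,e_{12},f_{21}\}$ follow from Corollary~\ref{cor:A-type relations} with $\VV=\mathrm{span}\{v_1,v_2\}$, and those among $\{h_1,h_3,e_{13},f_{31}\}$ from Corollary~\ref{cor:other-A-type relations} applied to the $\{1,3\}$-submatrix; both reproduce $Y^\rtt(\gl(1|1))$ with parity $(\bar{1},\bar{0})$. The commutativity $[h_1(u),h_\imath(v)]=0$ for $\imath=2,3$ is immediate from Corollary~\ref{cor:commutativity} with $\ell=1$, and $[h_2(u),h_3(v)]=0$ then follows from the centrality of $c_V(u)=h_1(u)h_1(u+1)^{-1}h_2(u-1)h_3(u-1)$ provided by Lemma~\ref{lem:cseries-evenN-even}.

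For~\eqref{eq:osp221-2} and~\eqref{eq:osp221-1}, I would expand $[t_{33}(u),t_{12}(v)]$ and $[t_{22}(u),t_{13}(v)]$ via~\eqref{eq:RTT-termwise}, substitute the Gauss forms of $t_{33}$ and $t_{22}$, and isolate the desired commutators by pulling $h_1$-currents to the outside using the already-established $A$-type commutation rules. This mirrors the derivation of~\eqref{eq:osp30-3} in Proposition~\ref{prop:osp30}; the $f$-counterparts follow by applying $\tau$ and~\eqref{eq:tau-efh}. For the vanishing~\eqref{eq:osp221-3}, the natural starting point is the RTT-relation for $[t_{12}(u),t_{41}(v)]$ and $[t_{14}(u),t_{21}(v)]$, where the $\delta_{k,i'}$ correction (since $4=1'$) contributes a sum over $p$. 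Expanding the $t_{ij}$'s in Gauss form, using Corollary~\ref{cor:commutativity} to pull the $h_\imath$-currents outside, and invoking the $A$-type $[e_{12},f_{21}]$-relation to cancel the remaining cross terms yields the vanishing commutators once $e_{14}, f_{41}$ are re-expressed in terms of $e_{13}, f_{31}$ via Lemmas~\ref{lem:E-entries-evenN}(f) and~\ref{lem:F-entries-evenN}(f).

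The main obstacle will be the quadratic relations~\eqref{eq:osp221-4} and~\eqref{eq:osp221-5}, analogous to~\eqref{eq:osp12-6} in the $\fosp(1|2)$ case but complicated by the presence of the higher current $e_{13}$. I would start with the RTT-relation for $[t_{12}(u),t_{13}(v)]$: since $3=2'$, the $\delta_{\ell,j'}$ correction term with denominator $u-v+1$ contributes a sum of products $t_{1p'}(v)t_{1p}(u)$ over $p\in\{1,2,3,4\}$. After substituting the Gauss decomposition, carefully tracking the super signs coming from $|v_1|=\bar{1}$, and pulling all $h_1$-currents to the leftmost position using the $A$-type rules~\eqref{eq:Atype-eh} (as in~\eqref{eq:h-past-ef-osp12-1}), one eliminates $e_{14}(v)$ via Lemma~\ref{lem:E-entries-evenN}(f) to obtain a relation purely in $e_{12}, e_{13}$ and their first modes. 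Rearrangement should yield~\eqref{eq:osp221-4}, after which~\eqref{eq:osp221-5} follows by applying $\tau$. The appearance of the $[e_{13}(v),e^{(1)}_{12}]$ correction on the right-hand side reflects the fact that $e_{12}(u)$ and $e_{13}(v)$ no longer strictly commute (unlike in the non-super $\fosp(4|0)$ case, compare~\eqref{eq:osp40-5}); the principal technical challenge is the careful bookkeeping of these super-sign-induced correction terms, particularly in the $\delta_{\ell,j'}$-sum where contributions from $p=1$ and $p=4$ pair nontrivially through the transposition $p\leftrightarrow p'$ governed by~\eqref{eq:theta}.
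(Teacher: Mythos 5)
Your overall strategy (RTT expansion, Gauss substitution, pulling $h_1$-currents out via the $A$-type rules, then $\tau$ for the $f$-relations) is the paper's strategy, and for the hardest relations \eqref{eq:osp221-4}--\eqref{eq:osp221-5} your plan coincides with the paper's proof: one expands $[t_{12}(u),t_{13}(v)]$ including the $\delta_{\ell j'}$-correction (the paper additionally has to process the summand $t_{14}(v)t_{11}(u)$ through the relation for $[t_{11}(u),t_{14}(v)]$ before all $h_1$'s can be moved left), eliminates $e_{14}$ via Lemma~\ref{lem:E-entries-evenN}(f), and applies $\tau$. For \eqref{eq:osp221-2}--\eqref{eq:osp221-1} you propose the direct expansion of $[t_{33}(u),t_{12}(v)]$ and $[t_{22}(u),t_{13}(v)]$; the paper instead avoids any new RTT computation by writing $h_3(u)=c_V(u-1)h_1(u)h_2(u)^{-1}h_1(u-1)^{-1}$ from Lemma~\ref{lem:cseries-evenN-even} and commuting this through $e_{12}(v)$ using only the already-known $\gl(1|1)$ rules \eqref{eq:he12-osp221}. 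Your route is viable but strictly harder: for $[t_{33}(u),t_{12}(v)]$ one has $\delta_{\ell j'}=\delta_{2,2}=1$, so the $Q$-correction enters, and $t_{33}$ carries the full term $f_{31}h_1e_{13}$. (Also, the shifts in your central-series formula are off: Lemma~\ref{lem:cseries-evenN-even} gives $c_V(u)=h_1(u)h_1(u+1)^{-1}h_2(u+1)h_3(u+1)$ here, not $h_2(u-1)h_3(u-1)$; this only affects your peripheral remark on $[h_2,h_3]$.)

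The one place I see a real gap is \eqref{eq:osp221-3}. The entries you propose to expand, $[t_{12}(u),t_{41}(v)]$ and $[t_{14}(u),t_{21}(v)]$, are poorly matched to the target: besides bringing in the $Q$-correction (since $4=1'$), they force $e_{14}$ and $f_{41}$ into the computation, and these are quadratic in the generating currents by Lemmas~\ref{lem:E-entries-evenN}(f) and~\ref{lem:F-entries-evenN}(f), so what comes out is a relation mixing $[e_{12},f_{21}]$, $[e_{13},f_{21}]$ and products thereof. The claim that invoking the $A$-type $[e_{12},f_{21}]$-relation ``cancels the remaining cross terms'' is not substantiated and is where the argument would most likely stall. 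The paper instead expands $[t_{13}(u),t_{21}(v)]$, which has no $Q$-term at all; since $e_{23}(u)=0$ one has $t_{23}(u)=f_{21}(u)e_{13}(u+1)h_1(u)$, and after pushing $h_1(u),h_1(v)$ to the right one lands on $(u-v+1)[e_{13}(u+1),f_{21}(v)]=[e_{13}(u+1),f_{21}(u)]$, whose right-hand side vanishes upon setting $v=u+1$; the first identity of \eqref{eq:osp221-3} then follows by applying $\tau$. You should replace your starting entries by $[t_{13}(u),t_{21}(v)]$ (or $[t_{12}(u),t_{31}(v)]$) accordingly.
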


\begin{Rem}
As a direct consequence of the relations~(\ref{eq:osp221-4},~\ref{eq:osp221-5}),
we obtain more familiar relations, cf.~(\ref{eq:Atype-ee-2},~\ref{eq:Atype-ff-2}):
\begin{equation}\label{eq:osp221-4simplify}
  u[e^\circ_{12}(u),e_{13}(v)]-v[e_{12}(u),e^\circ_{13}(v)]=e_{12}(u)e_{13}(v)-e_{13}(v)e_{12}(u) \,,
\end{equation}
\begin{equation}\label{eq:osp221-5simplify}
  u[f^\circ_{21}(u),f_{31}(v)]-v[f_{21}(u),f^\circ_{31}(v)]=f_{31}(v)f_{21}(u)-f_{21}(u)f_{31}(v) \,,
\end{equation}
with the currents $e^\circ_{1k}(u)=\sum_{r\geq 2} e^{(r)}_{1k}u^{-r}$ and
$f^\circ_{k1}(u)=\sum_{r\geq 2} f^{(r)}_{k1}u^{-r}$.
\end{Rem}

\begin{proof}
First, as follows from~\eqref{eq:Atype-eh} and Corollaries~\ref{cor:A-resonance},~\ref{cor:A-type relations},
we have the following relations:
\begin{equation}\label{eq:he12-osp221}
\begin{split}
  & e_{12}(v)h_1(u)=h_1(u)\left(\frac{u-v+1}{u-v}e_{12}(v)-\frac{1}{u-v}e_{12}(u)\right) \,, \\
  & e_{12}(v)h_2(u)=h_2(u)\left(\frac{u-v+1}{u-v}e_{12}(v)-\frac{1}{u-v}e_{12}(u)\right) \,, \\
  & h_1(u)e_{12}(v)=\left(\frac{u-v}{u-v+1}e_{12}(v)+\frac{1}{u-v+1}e_{12}(u+1)\right)h_1(u) \,, \\
  & h_2(u)e_{12}(v)=\left(\frac{u-v}{u-v+1}e_{12}(v)+\frac{1}{u-v+1}e_{12}(u+1)\right)h_2(u) \,,
\end{split}
\end{equation}
which allow one to pull $h_1(u)^{\pm 1}$ and $h_2(u)^{\pm 1}$ past $e_{12}(v)$ either to the left or
to the right. According to Corollary~\ref{cor:other-A-type relations}, we get analogous relations with
  $h_1(u)\rightsquigarrow h_1(u), h_2(u)\rightsquigarrow h_3(u), e_{12}(v)\rightsquigarrow e_{13}(v)$.

Let us prove the first relations of~(\ref{eq:osp221-2},~\ref{eq:osp221-1}). As
$h_3(u)=c_V(u-1)h_1(u)h_2(u)^{-1}h_1(u-1)^{-1}$ by Lemma~\ref{lem:cseries-evenN-even}, we have:
\begin{multline}\label{eq:e12h3-osp221}
  e_{12}(v)h_3(u) =
  c_V(u-1)h_1(u)\left(\frac{u-v+1}{u-v}e_{12}(v)-\frac{1}{u-v}e_{12}(u)\right)h_2(u)^{-1}h_1(u-1)^{-1} = \\
  c_V(u-1)h_1(u)h_2(u)^{-1}e_{12}(v)h_1(u-1)^{-1} =
  h_3(u) \left(\frac{u-v-1}{u-v}e_{12}(v)+\frac{1}{u-v}e_{12}(u)\right) \,,
\end{multline}
where we pull all the $h_\bullet$-currents to the left of $e_{12}(v)$ using~\eqref{eq:he12-osp221}.
Subtracting $h_3(u)e_{12}(v)$ from both sides, we get the first relation of~\eqref{eq:osp221-2}.
The proof of the first relation of~\eqref{eq:osp221-1} is analogous with the indices $2\leftrightarrow 3$
swapped, in particular, we use $h_2(u)=c_V(u-1)h_1(u)h_3(u)^{-1}h_1(u-1)^{-1}$.

We note that the second relations of~\eqref{eq:osp221-2} and~\eqref{eq:osp221-1} follow directly by
applying the anti-automorphism $\tau$ given by~\eqref{eq:tau-t} to the corresponding first relations
and using the formulas~\eqref{eq:tau-efh}.

\medskip
Let us prove~\eqref{eq:osp221-3}. Applying the defining relation~\eqref{eq:RTT-termwise} to
$[t_{13}(u),t_{21}(v)]$, we get:
\begin{equation}\label{eq:t13t21-osp221-0}
  [t_{13}(u),t_{21}(v)]=\frac{1}{u-v}\Big( t_{23}(u)t_{11}(v) - t_{23}(v)t_{11}(u) \Big) \,.
\end{equation}
As $e_{23}(u)=0$ by~\eqref{eq:ef-vanishing-2} and $h_1(u)e_{13}(u)=e_{13}(u+1)h_1(u)$
by~\eqref{eq:h-to-e-1} and Corollary~\ref{cor:other-A-type relations}, we actually have
$t_{23}(u)=f_{21}(u)e_{13}(u+1)h_1(u)$. Hence, the relation~\eqref{eq:t13t21-osp221-0}
can be written as:
\begin{multline}\label{eq:t13t21-osp221}
  e_{13}(u+1)h_1(u)f_{21}(v)h_1(v) + f_{21}(v)h_1(v)e_{13}(u+1)h_1(u) = \\
  \frac{1}{u-v} f_{21}(u)e_{13}(u+1)h_1(u)h_1(v) - \frac{1}{u-v} f_{21}(v)e_{13}(v+1)h_1(u)h_1(v) \,.
\end{multline}
Pulling both $h_1(u)$ and $h_1(v)$ to the right in the left-hand side of~\eqref{eq:t13t21-osp221} by using
\begin{equation*}
\begin{split}
  & h_1(u)f_{21}(v)=\left(\frac{u-v+1}{u-v}f_{21}(v)-\frac{1}{u-v}f_{21}(u)\right)h_1(u) \,, \\
  & h_1(v)e_{13}(u+1)=\left(\frac{u-v+1}{u-v}e_{13}(u+1)-\frac{1}{u-v}e_{13}(v+1)\right)h_1(v) \,,
\end{split}
\end{equation*}
and multiplying further by $(u-v)h_1(u)^{-1}h_1(v)^{-1}$ on the right, we obtain
\begin{multline*}
  e_{13}(u+1)\big((u-v+1)f_{21}(v)-f_{21}(u)\big) + f_{21}(v)\big((u-v+1)e_{13}(u+1)-e_{13}(v+1)\big) = \\
  f_{21}(u)e_{13}(u+1)-f_{21}(v)e_{13}(v+1) \,,
\end{multline*}
which can be further simplified to:
\begin{equation}\label{eq:e13f21-osp221}
  (u-v+1)[e_{13}(u+1),f_{21}(v)] = [e_{13}(u+1),f_{21}(u)] \,.
\end{equation}
Plugging $v=u+1$ into~\eqref{eq:e13f21-osp221}, we get $[e_{13}(u+1),f_{21}(u)]=0$ and so
$(u-v+1)[e_{13}(u+1),f_{21}(v)]=0$. This implies the second relation of~\eqref{eq:osp221-3}.
Meanwhile, the first relation of~\eqref{eq:osp221-3} follows directly by applying the
anti-automorphism $\tau$ given by~\eqref{eq:tau-t} to the second relation and using~\eqref{eq:tau-efh}.

\medskip
Let us prove~\eqref{eq:osp221-4}. Applying the defining relation~\eqref{eq:RTT-termwise} to
$[t_{12}(u),t_{13}(v)]$, we get:
\begin{multline*}
  [t_{12}(u),t_{13}(v)]=\frac{-1}{u-v}\Big( t_{12}(u)t_{13}(v) - t_{12}(v)t_{13}(u) \Big) \, + \\
  \frac{1}{u-v+1}\Big(t_{11}(v)t_{14}(u)-t_{12}(v)t_{13}(u)-t_{13}(v)t_{12}(u)-t_{14}(v)t_{11}(u)\Big) \,.
\end{multline*}
The relation above can be rearranged as:
\begin{multline*}
  \frac{u-v+1}{u-v}h_1(u)e_{12}(u)h_1(v)e_{13}(v) - \frac{1}{(u-v)(u-v+1)}h_1(v)e_{12}(v)h_1(u)e_{13}(u) \, + \\
  \frac{u-v+2}{u-v+1}h_1(v)e_{13}(v)h_1(u)e_{12}(u)- \frac{1}{u-v+1}h_1(v)h_1(u)e_{14}(u) +
  \frac{1}{u-v+1}h_1(v)e_{14}(v)h_1(u) = 0 \,.
\end{multline*}
Let us first evaluate the last summand above. To this end, evoking the defining
relation~\eqref{eq:RTT-termwise} applied to $[t_{11}(u),t_{14}(v)]$, we obtain:
\begin{multline}\label{eq:osp221-e14-h1}
  \frac{1}{u-v+1}e_{14}(v)h_1(u)=
  \frac{u-v+1}{(u-v)^2}h_1(u)e_{14}(v)-\frac{1}{u-v}\left(\frac{1}{u-v}+\frac{1}{u-v+1}\right)h_1(u)e_{14}(u) \, + \\
  \frac{1}{(u-v)(u-v+1)}e_{13}(v)h_1(u)e_{12}(u)+\frac{1}{(u-v)(u-v+1)}e_{12}(v)h_1(u)e_{13}(u) \,.
\end{multline}
Plugging~\eqref{eq:osp221-e14-h1} into the formula above, let us now pull both $h_1(u)$ and $h_1(v)$
to the leftmost part using the following equalities, cf.~\eqref{eq:he12-osp221}:
\begin{equation*}
  e_{13}(v)h_1(u)=h_1(u)\frac{(u-v+1)e_{13}(v)-e_{13}(u)}{u-v} \,,\
  e_{12}(v)h_1(u)=h_1(u)\frac{(u-v+1)e_{12}(v)-e_{12}(u)}{u-v} \,.
\end{equation*}
Multiplying further by $h_1(u)^{-1}h_1(v)^{-1}$ on the left and rearranging terms, we obtain:
\begin{multline}\label{eq:osp221-4-harder}
  \frac{(u-v)^2-1}{(u-v)^2}e_{12}(u)e_{13}(v)+\frac{(u-v+1)^2}{(u-v)^2}e_{13}(v)e_{12}(u)+
  \frac{u-v+1}{(u-v)^2}e_{12}(v)e_{13}(v) \, - \\
  \frac{u-v+1}{(u-v)^2}e_{13}(u)e_{12}(u)+\frac{u-v+1}{(u-v)^2}e_{14}(v)-\frac{u-v+1}{(u-v)^2}e_{14}(u)=0 \,.
\end{multline}
Plugging the formula $e_{14}(u)=-e_{12}(u)e_{13}(u)-[e_{13}(u),e_{12}^{(1)}]$ from
Lemma~\ref{lem:E-entries-evenN}(f) into the last two summands of~\eqref{eq:osp221-4-harder},
and multiplying both sides by $\frac{(u-v)^2}{u-v+1}$, we obtain precisely the relation~\eqref{eq:osp221-4}.

We note that the relation~\eqref{eq:osp221-5} follows directly by applying the anti-automorphism $\tau$
given by~\eqref{eq:tau-t} to the relation~\eqref{eq:osp221-4} and using the formulas~\eqref{eq:tau-efh}.

\medskip
This completes our proof of Proposition~\ref{prop:osp22-10}.
\end{proof}


\subsubsection{Relations for $\fosp(2|2)$ case with the parity sequence $(\bar{0},\bar{1})$}
\label{ssec:osp-22-2}
\

The relations on the generating currents $e_{12}(u),f_{21}(u),h_1(u),h_2(u)$ already follow from those of
Theorem~\ref{thm:Drinfeld-A} for $Y^\rtt(\gl(1|1))$ with the parity sequence $\parity=\Parity=(\bar{0},\bar{1})$,
due to Corollary~\ref{cor:A-type relations}. On the other hand, the relations on the generating currents
$e_{23}(u),f_{32}(u),h_2(u),h_3(u)$ readily follow from those for
  $X^\rtt(\fosp(V^{[1]}))\simeq X^\rtt(\fosp(0|2))\simeq X^\rtt(\fsp_2)$
as specified in Proposition~\ref{prop:osp02}.

\begin{Prop}\label{prop:osp22-01}
The following relations hold in the corresponding $X^\rtt(\fosp(V))$:
\begin{equation}\label{eq:osp222-1}
  [h_1(u),h_3(v)]=0 \,,
\end{equation}
\begin{equation}\label{eq:osp222-23}
  [h_3(u),e_{12}(v)]=\frac{h_3(u)(e_{12}(u-2)-e_{12}(v))}{u-v-2} \,,\qquad
  [h_3(u),f_{21}(v)]=\frac{(f_{21}(v)-f_{21}(u-2))h_3(u)}{u-v-2} \,,
\end{equation}
\begin{equation}\label{eq:osp222-24}
  [h_1(u),e_{23}(v)]=0 \,,\qquad
  [h_1(u),f_{32}(v)]=0 \,,
\end{equation}
\begin{equation}\label{eq:osp222-33}
  [e_{12}(u),f_{32}(v)]=0 \,,\qquad
  [e_{23}(u),f_{21}(v)]=0 \,,
\end{equation}
\begin{equation}\label{eq:osp222-43}
  [e_{12}(u),e_{23}(v)]=\frac{2}{u-v} \Big(e_{13}(u)-e_{13}(v)-e_{12}(u)e_{23}(v)+e_{12}(v)e_{23}(v) \Big) \,,
\end{equation}
\begin{equation}\label{eq:osp222-44}
  [f_{21}(u),f_{32}(v)]=\frac{2}{u-v} \Big(f_{31}(v)-f_{31}(u)+f_{32}(v)f_{21}(u)-f_{32}(v)f_{21}(v) \Big) \,.
\end{equation}
\end{Prop}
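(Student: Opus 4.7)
The proof will closely follow the template worked out in detail for Proposition~\ref{prop:osp22-10} (the $\Parity=(\bar{1},\bar{0})$ case), adapted to the reversed parity $\Parity=(\bar{0},\bar{1})$. A substantial portion of the statement is actually free: the commutativity $[h_1(u),h_3(v)]=0$ in~\eqref{eq:osp222-1}, as well as $[h_1(u),e_{23}(v)]=0$ and $[h_1(u),f_{32}(v)]=0$ in~\eqref{eq:osp222-24}, are immediate from Corollary~\ref{cor:commutativity} applied with $\ell=1$. The relations on the subset $\{e_{12}(u),f_{21}(u),h_1(u),h_2(u)\}$ are part of the $A$-type quadratic relations imported by Corollary~\ref{cor:A-type relations} (with parity $\parity=\Parity=(\bar{0},\bar{1})$), while those on $\{e_{23}(u),f_{32}(u),h_2(u),h_3(u)\}$ come from the rank reduction embedding $\psi_{V,1}\colon X^\rtt(\fosp(V^{[1]}))\hookrightarrow X^\rtt(\fosp(V))$ of Theorem~\ref{thm:embedding} composed with the isomorphism $X^\rtt(\fosp(V^{[1]}))\simeq X^\rtt(\fsp_2)$ of Remark~\ref{rem:super-to-nonsuper} and the relations of Proposition~\ref{prop:osp02}.

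For~\eqref{eq:osp222-23} the plan is to eliminate $h_3(u)$ using the central series. Here $N=2n=2$, $m=1$, $n+m=2$, and $|v_{n+m}|=\bar{1}$, so Lemma~\ref{lem:cseries-evenN-odd} applies with $\ol{1}=\bar{0}$ to yield
\[
  c_V(u) \, = \, \frac{h_1(u)}{h_1(u-1)}\,h_2(u-2)\,h_3(u)\,,
\]
hence $h_3(u)=c_V(u)\,h_1(u-1)\,h_2(u-2)^{-1}\,h_1(u)^{-1}$. Since $c_V(u)$ is central, the commutator $[h_3(u),e_{12}(v)]$ can be computed by pulling $h_1(u-1)$, $h_2(u-2)^{-1}$ and $h_1(u)^{-1}$ successively past $e_{12}(v)$ using the $A$-type rules~\eqref{eq:Atype-eh}, in the same manner as~\eqref{eq:e12h3-osp221}; the shifts in the arguments combine to produce the pole at $u-v-2$ appearing in~\eqref{eq:osp222-23}. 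The second identity in~\eqref{eq:osp222-23} then follows by applying the anti-automorphism $\tau$ of Remark~\ref{rem:tau-antiautom} and using~\eqref{eq:tau-efh}.

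The mixed commutators in~\eqref{eq:osp222-33} are to be obtained directly from the RTT-relation~\eqref{eq:RTT-termwise}, applied to $[t_{12}(u),t_{32}(v)]$ and $[t_{23}(u),t_{21}(v)]$; the $Q$-term has denominator $u-v+1$ because $\kappa=-1$. After the Gauss decomposition substitution, one moves the relevant $h_\bullet$-factors to one side via~\eqref{eq:Atype-eh},~\eqref{eq:Atype-fh}, then specializes at $v=u+1$ (or $u=v+1$) to isolate the non-trivial piece and concludes, exactly as in the derivation of~\eqref{eq:e13f21-osp221}. For~\eqref{eq:osp222-43} one analyzes $[t_{12}(u),t_{23}(v)]$: the RTT relation~\eqref{eq:RTT-termwise} has a $Q$-term which combines with $t_{13}(v)t_{11}(u)$ manipulations through~\eqref{eq:t13t11-osp12}-like identities; pulling $h_1(u),h_2(v)$ through and substituting the explicit Gauss entries $e_{13}(u),e_{23}(u)$ from Lemma~\ref{lem:E-entries-evenN-2} will give the claimed form. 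The companion relation~\eqref{eq:osp222-44} is then immediate by applying $\tau$.

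The main obstacle I anticipate is purely book-keeping: after the parity swap $(\bar{1},\bar{0})\leftrightsquigarrow(\bar{0},\bar{1})$, every sign factor $(-1)^{\ol{i}}$ in the $A$-type moves~\eqref{eq:Atype-eh}--\eqref{eq:Atype-fh} and every Koszul sign in~\eqref{eq:RTT-termwise} flips for indices $1$ and $2$, while the parameter $\kappa$ changes from $0$ (for $(\bar{1},\bar{0})$, where $N=2,m=1$ gives $\kappa=-1$; for $(\bar{0},\bar{1})$ also $\kappa=-1$) so the $Q$-term denominator stays $u-v+1$. The task is to verify that these sign and shift changes propagate consistently through every pulling of $h_i^{\pm 1}$ past $e_{\bullet\bullet}$ or $f_{\bullet\bullet}$, and yield exactly the shifted poles $u-v$ and $u-v-2$ displayed in~\eqref{eq:osp222-23}--\eqref{eq:osp222-44}; no genuinely new identity is needed, but the computations are tedious and sign-sensitive.
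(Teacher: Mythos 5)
Your overall strategy coincides with the paper's: the commutativities come from Corollary~\ref{cor:commutativity}, the relation \eqref{eq:osp222-23} is obtained by expressing $h_3(u)$ through the central series and the $h_1,h_2$ currents and pulling these past $e_{12}(v)$ via the $A$-type rules, \eqref{eq:osp222-43} comes from $[t_{12}(u),t_{23}(v)]$, and all $f$-relations follow by applying $\tau$. However, there is a concrete error in your specialization of Lemma~\ref{lem:cseries-evenN-odd}. With $n=m=1$ and $\ol{1}=\bar{0}$ the lemma gives $c_V(u)=\frac{h_1(u)}{h_1(u-1)}\,h_2(u-1)\,h_3(u+1)$, equivalently $h_3(u)=c_V(u-1)\,h_1(u-2)\,h_2(u-2)^{-1}\,h_1(u-1)^{-1}$, not $c_V(u)=\frac{h_1(u)}{h_1(u-1)}h_2(u-2)h_3(u)$ as you wrote (your formula would force the false identity $c_V(u-1)/c_V(u)=h_1(u-1)^2h_1(u-2)^{-1}h_1(u)^{-1}$). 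This is not a cosmetic slip: the pull-through computation closes up precisely because $h_1(u-2)$ and $h_2(u-2)^{-1}$ carry the \emph{same} argument and hence their product commutes with $e_{12}(v)$ (the two rules in~\eqref{eq:he12-osp222} are identical); with your mismatched arguments $h_1(u-1)h_2(u-2)^{-1}$ the intermediate terms do not recombine and you would not obtain the single pole at $u-v-2$ in \eqref{eq:osp222-23}.

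Two smaller points. For \eqref{eq:osp222-33}, the commutators $[t_{12}(u),t_{32}(v)]$ and $[t_{23}(u),t_{21}(v)]$ carry no $Q$-term at all (neither $\delta_{ki'}$ nor $\delta_{\ell j'}$ is triggered), so your remark about a $u-v+1$ denominator there is off; more importantly, the direct route is cluttered by $t_{32}(v)=f_{31}(v)h_1(v)e_{12}(v)+f_{32}(v)h_2(v)$, whereas the paper's use of Lemma~\ref{lem:ef-com-tl} with $t^{[1]}_{32}(v)=f_{32}(v)h_2(v)$ kills the extra terms at once and gives $[e_{12}(u),f_{32}(v)]=0$ in two lines. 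For \eqref{eq:osp222-43}, the auxiliary object you need is not a $t_{13}(v)t_{11}(u)$ rearrangement but the conjugate $h_1(u)^{-1}e_{24}(v)h_1(u)$, extracted from $[t_{11}(u),t_{24}(v)]$, together with the identity $e_{24}(v)=e_{12}(v)e_{23}(v)-e_{13}(v)$ obtained by specializing the resulting relation at $u=v-1$; without these two steps the leftover $\frac{1}{u-v+1}$-terms do not cancel. With these corrections your plan reproduces the paper's proof.
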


\begin{proof}
The relation~\eqref{eq:osp222-1} follows directly from Corollary~\ref{cor:commutativity}.
Alternatively, it follows from the commutativity $[h_1(u),h_1(v)]=[h_1(u),h_2(v)]=[h_2(u),h_2(v)]=0$
and the equality of Lemma~\ref{lem:cseries-evenN-odd}:
\begin{equation}\label{eq:h3-osp222}
  h_3(u)=c_V(u-1)h_1(u-2)h_2(u-2)^{-1}h_1(u-1)^{-1} \,.
\end{equation}

According to~\eqref{eq:Atype-eh} and Corollaries~\ref{cor:A-resonance},~\ref{cor:A-type relations},
we have the following relations:
\begin{equation}\label{eq:he12-osp222}
\begin{split}
  & e_{12}(v)h_1(u)=h_1(u)\left(\frac{u-v-1}{u-v}e_{12}(v)+\frac{1}{u-v}e_{12}(u)\right) \,, \\
  & e_{12}(v)h_2(u)=h_2(u)\left(\frac{u-v-1}{u-v}e_{12}(v)+\frac{1}{u-v}e_{12}(u)\right) \,, \\
  & h_1(u)e_{12}(v)=\left(\frac{u-v}{u-v-1}e_{12}(v)-\frac{1}{u-v-1}e_{12}(u-1)\right)h_1(u) \,, \\
  & h_2(u)e_{12}(v)=\left(\frac{u-v}{u-v-1}e_{12}(v)-\frac{1}{u-v-1}e_{12}(u-1)\right)h_2(u) \,,
\end{split}
\end{equation}
which allow one to pull currents $h_1(u)^{\pm 1}$ and $h_2(u)^{\pm 1}$ past $e_{12}(v)$ either
to the left or to the right. In particular, evoking~\eqref{eq:h3-osp222}, we obtain:
\begin{multline*}
  e_{12}(v)h_3(u)=c_V(u-1)e_{12}(v)h_1(u-2)h_2(u-2)^{-1}h_1(u-1)^{-1}= \\
  c_V(u-1)h_1(u-2)\left(\frac{u-v-3}{u-v-2}e_{12}(v)+\frac{1}{u-v-2}e_{12}(u-2)\right)h_2(u-2)^{-1}h_1(u-1)^{-1}= \\
  c_V(u-1)h_1(u-2)h_2(u-2)^{-1}e_{12}(v)h_1(u-1)^{-1}=h_3(u)\left(\frac{u-v-1}{u-v-2}e_{12}(v)-\frac{1}{u-v-2}e_{12}(u-2)\right) \,,
\end{multline*}
where we pull all the $h_\bullet$-currents to the left of $e_{12}(v)$ using~\eqref{eq:he12-osp222}. Subtracting
$h_3(u)e_{12}(v)$ from both sides of the equality above, we get the first relation of~\eqref{eq:osp222-23}.

We note that the second relation of~\eqref{eq:osp222-23} follows directly by applying the anti-automorphism
$\tau$ of $X^\rtt(\fosp(V))$ given by~\eqref{eq:tau-t} to the first relation of~\eqref{eq:osp222-23} and
using the formulas~\eqref{eq:tau-efh}.

\medskip
The relations~\eqref{eq:osp222-24} follow immediately from Corollary~\ref{cor:commutativity}.
Alternatively, to prove the first relation of~\eqref{eq:osp222-24}, one can rewrite the defining
relation~\eqref{eq:RTT-termwise} for $[t_{11}(u),t_{23}(v)]$ in the form
\begin{multline*}
  h_2(v)[h_1(u),e_{23}(v)]=f_{21}(v)h_1(v)e_{13}(v)h_1(u)-h_1(u)f_{21}(v)h_1(v)e_{13}(v) \, + \\
  \frac{1}{u-v} \Big(f_{21}(u)h_1(u)h_1(v)e_{13}(v)-f_{21}(v)h_1(v)h_1(u)e_{13}(u)\Big) \,,
\end{multline*}
and then pull all the $h_\bullet$-currents in the right-hand side to the right to deduce $[h_1(u),e_{23}(v)]=0$.

\medskip
The shortest proof of~\eqref{eq:osp222-33} is based on Lemma~\ref{lem:ef-com-tl}. To this end, let us
consider the corresponding relation~\eqref{eq:useful-commutator-1} for $\ell=1$ and $k=2,i=3,j=2$:
\begin{equation}\label{eq:e12t32-osp222}
  [e_{12}(u),t^{[1]}_{32}(v)]=\frac{-1}{u-v} t^{[1]}_{32}(v) \Big(e_{12}(v) - e_{12}(u)\Big) \,.
\end{equation}
As $t^{[1]}_{32}(v)=f_{32}(v)h_2(v)$, we have
  $[e_{12}(u),t^{[1]}_{32}(v)]=[e_{12}(u),f_{32}(v)]h_2(v)+f_{32}(v)[e_{12}(u),h_2(v)]$.
Combining this with  $[e_{12}(u),h_2(v)]=\frac{1}{u-v}h_2(v)(e_{12}(u)-e_{12}(v))$, due
to~\eqref{eq:Atype-eh} and Corollary~\ref{cor:A-type relations}, we immediately obtain the commutativity
$[e_{12}(u),f_{32}(v)]=0$. Applying further the anti-automorphism $\tau$ of $X^\rtt(\fosp(V))$ given
by~\eqref{eq:tau-t}, we also obtain $[e_{23}(v),f_{21}(u)]=0$, due to the formulas~\eqref{eq:tau-efh}.

\medskip
Let us finally prove~\eqref{eq:osp222-43}. Applying the defining relation~\eqref{eq:RTT-termwise}
to $[t_{12}(u),t_{23}(v)]$, we get:
\begin{multline}\label{eq:t12t23-osp222}
  [t_{12}(u),t_{23}(v)]=\frac{-1}{u-v} \Big(t_{22}(u)t_{13}(v)-t_{22}(v)t_{13}(u)\Big) \, + \\
  \frac{1}{u-v+1} \Big(t_{24}(v)t_{11}(u)-t_{23}(v)t_{12}(u)+t_{22}(v)t_{13}(u)+t_{21}(v)t_{14}(u)\Big) \,.
\end{multline}
As $[h_1(u),h_2(v)]=0=[h_1(u),e_{23}(v)]$, the left-hand side of~\eqref{eq:t12t23-osp222} can be
expressed as follows:
\begin{multline}\label{eq:lhs-t12t23}
  [t_{12}(u),t_{23}(v)]=h_1(u)[e_{12}(u),h_2(v)e_{23}(v)] \, + \\
  [h_1(u)e_{12}(u),f_{21}(v)h_1(v)]e_{13}(v)-f_{21}(v)h_1(v)[h_1(u)e_{12}(u),e_{13}(v)] \,.
\end{multline}
The second summand in the right-hand side of~\eqref{eq:lhs-t12t23} can be simplified using~\eqref{eq:RTT-termwise}:
\begin{multline}\label{eq:2nd-t12t13-osp222}
  [h_1(u)e_{12}(u),f_{21}(v)h_1(v)]=[t_{12}(u),t_{21}(v)]=\frac{-1}{u-v}\Big(t_{22}(u)t_{11}(v)-t_{22}(v)t_{11}(u)\Big)=\\
  -\frac{h_2(u)h_1(v)+f_{21}(u)h_1(u)e_{12}(u)h_1(v)-h_2(v)h_1(u)-f_{21}(v)h_1(v)e_{12}(v)h_1(u)}{u-v} \,.
\end{multline}
Likewise, the third summand in the right-hand side of~\eqref{eq:lhs-t12t23} can also be simplified
using~\eqref{eq:RTT-termwise}:
\begin{multline}\label{eq:3rd-t12t13-osp222}
  [h_1(u)e_{12}(u),e_{13}(v)]=[t_{12}(u),t_{11}(v)^{-1}t_{13}(v)]= \\
  -t_{11}(v)^{-1}[t_{12}(u),t_{11}(v)]t_{11}(v)^{-1}t_{13}(v)+t_{11}(v)^{-1}[t_{12}(u),t_{13}(v)]= \\
  -\frac{1}{u-v} h_1(v)^{-1} \Big(t_{12}(u)t_{11}(v)-t_{12}(v)t_{11}(u)\Big)e_{13}(v)+
  \frac{1}{u-v} h_1(v)^{-1} \Big(t_{12}(u)t_{13}(v)-t_{12}(v)t_{13}(u)\Big) \, - \\
  \frac{1}{u-v+1}h_1(v)^{-1} \Big(t_{14}(v)t_{11}(u)-t_{13}(v)t_{12}(u)+t_{12}(v)t_{13}(u)+t_{11}(v)t_{14}(u)\Big) \,.
\end{multline}
Expressing all the $t_{\bullet \bullet}$-currents in terms of the Gauss coordinates in the right-hand side
of~\eqref{eq:3rd-t12t13-osp222} and plugging the resulting formula together with~\eqref{eq:2nd-t12t13-osp222}
into~\eqref{eq:lhs-t12t23}, we obtain:
\begin{multline}\label{eq:lhs2-t12t23}
  [t_{12}(u),t_{23}(v)]=
  h_1(u)h_2(v)[e_{12}(u),e_{23}(v)]+\frac{1}{u-v}h_1(u)h_2(v)\big(e_{12}(u)e_{23}(v)-e_{12}(v)e_{23}(v)\big) \, + \\
  \frac{\big(h_2(v)h_1(u)-h_2(u)h_1(v)-f_{21}(u)h_1(u)e_{12}(u)h_1(v)\big)e_{13}(v)+f_{21}(v)h_1(v)e_{12}(v)h_1(u)e_{13}(u)}
       {u-v} \, + \\
  \frac{f_{21}(v)h_1(v) \big( e_{14}(v)h_1(u)-e_{13}(v)h_1(u)e_{12}(u)+e_{12}(v)h_1(u)e_{13}(u)+h_1(u)e_{14}(u)\big)}
       {u-v+1} \,.
\end{multline}

Next, expressing all the $t_{\bullet \bullet}$-currents in the right-hand side of~\eqref{eq:t12t23-osp222} via
the Gauss coordinates, and canceling common terms with those that appear in~\eqref{eq:lhs2-t12t23}, we obtain:
\begin{multline}\label{eq:e12e23-osp222-one}
  h_1(u)h_2(v)[e_{12}(u),e_{23}(v)]=
  \frac{1}{u-v} h_1(u)h_2(v) \Big(e_{13}(u)-e_{13}(v)+e_{12}(v)e_{23}(v)-e_{12}(u)e_{23}(v) \Big) \, + \\
  \frac{1}{u-v+1} h_2(v) \Big( e_{24}(v)h_1(u)-e_{23}(v)h_1(u)e_{12}(u)+h_1(u)e_{13}(u) \Big) \,.
\end{multline}
Multiplying~\eqref{eq:e12e23-osp222-one} by $h_1(u)^{-1}h_2(v)^{-1}$ on the left and evoking $[h_1(u),e_{23}(v)]=0$, we get:
\begin{multline}\label{eq:e12e23-osp222-two}
  [e_{12}(u),e_{23}(v)]=\frac{1}{u-v} \Big(e_{13}(u)-e_{13}(v)+e_{12}(v)e_{23}(v)-e_{12}(u)e_{23}(v) \Big) \, + \\
  \frac{1}{u-v+1} \Big( h_1(u)^{-1}e_{24}(v)h_1(u)-e_{23}(v)e_{12}(u)+e_{13}(u) \Big) \,.
\end{multline}
It thus remains to evaluate the summand $h_1(u)^{-1}e_{24}(v)h_1(u)$ from the right-hand side of~\eqref{eq:e12e23-osp222-two}.
To this end, let us consider the defining relation~\eqref{eq:RTT-termwise} for $[t_{11}(u),t_{24}(v)]$:
\begin{multline}\label{eq:t11t24-osp222}
  [t_{11}(u),t_{24}(v)]=\frac{1}{u-v} \Big( t_{21}(u)t_{14}(v) - t_{21}(v)t_{14}(u)\Big) \, + \\
  \frac{1}{u-v+1} \Big(t_{24}(v)t_{11}(u)-t_{23}(v)t_{12}(u)+t_{22}(v)t_{13}(u)+t_{21}(v)t_{14}(u)\Big) \,.
\end{multline}
The left-hand side of~\eqref{eq:t11t24-osp222} can be expanded as follows:
\begin{multline}\label{eq:lhs-t11t24-osp222}
  [t_{11}(u),t_{24}(v)]=h_2(v)[h_1(u),e_{24}(v)]+[h_1(u),f_{21}(v)]h_1(v)e_{14}(v)+f_{21}(v)[h_1(u),h_1(v)e_{14}(v)] \,.
\end{multline}
Evoking the equality $[h_1(u),f_{21}(v)]=\frac{1}{u-v} \big(f_{21}(u)-f_{21}(v)\big)h_1(u)$, due to~\eqref{eq:Atype-fh}
and Corollary~\ref{cor:A-type relations}, applying further the defining relation~\eqref{eq:RTT-termwise} to
\begin{multline*}
  [h_1(u),h_1(v)e_{14}(v)]=[t_{11}(u),t_{14}(v)]=\frac{1}{u-v} \Big(t_{11}(u)t_{14}(v)-t_{11}(v)t_{14}(u)\Big) \, + \\
  \frac{1}{u-v+1} \Big( t_{14}(v)t_{11}(u)-t_{13}(v)t_{12}(u)+t_{12}(v)t_{13}(u)+t_{11}(v)t_{14}(u) \Big) \,,
\end{multline*}
and rearranging the terms, we obtain:
\begin{multline*}
  [t_{11}(u),t_{24}(v)]=h_2(v)[h_1(u),e_{24}(v)] +
  \frac{f_{21}(u)h_1(u)h_1(v)e_{14}(v)-f_{21}(v)h_1(v)h_1(u)e_{14}(u)}{u-v} \, + \\
  \frac{f_{21}(v)h_1(v)\big(e_{14}(v)h_1(u)-e_{13}(v)h_1(u)e_{12}(u)+e_{12}(v)h_1(u)e_{13}(u)+h_1(u)e_{14}(u)\big)}{u-v+1} .
\end{multline*}
Comparing this with the right-hand side of~\eqref{eq:t11t24-osp222}, where all the $t_{\bullet \bullet}$-currents
are expanded via the Gauss coordinates, and canceling common terms, we get:
\begin{equation}\label{eq:h1e24-osp222}
  [h_1(u),e_{24}(v)]=\frac{1}{u-v+1}\Big(e_{24}(v)h_1(u)-e_{23}(v)h_1(u)e_{12}(u)+h_1(u)e_{13}(u) \Big) \,.
\end{equation}
The equality~\eqref{eq:h1e24-osp222} is equivalent to:
\begin{equation}\label{eq:e24h11-osp222}
  h_1(u)^{-1}e_{24}(v)h_1(u)=\frac{u-v+1}{u-v+2}e_{24}(v)+\frac{1}{u-v+2}e_{23}(v)e_{12}(u)-\frac{1}{u-v+2}e_{13}(u) \,.
\end{equation}
Plugging this formula back into~\eqref{eq:e12e23-osp222-two}, we obtain:
\begin{multline}\label{eq:e12e23-osp222-three}
  [e_{12}(u),e_{23}(v)]=\frac{1}{u-v} \Big(e_{13}(u)-e_{13}(v)-e_{12}(u)e_{23}(v)+e_{12}(v)e_{23}(v) \Big) \, + \\
  \frac{1}{u-v+2} \Big( e_{24}(v)-e_{23}(v)e_{12}(u)+e_{13}(u) \Big) \,.
\end{multline}
Multiplying both sides of~\eqref{eq:e12e23-osp222-three} by $\frac{u-v+2}{u-v+1}$ and rearranging terms, we get:
\begin{multline}\label{eq:e12e23-osp222-four}
  [e_{12}(u),e_{23}(v)]=\frac{2}{u-v}\Big(e_{13}(u)-e_{13}(v)-e_{12}(u)e_{23}(v)+e_{12}(v)e_{23}(v) \Big) \, + \\
  \frac{1}{u-v+1} \Big(e_{13}(v)-e_{12}(v)e_{23}(v)+e_{24}(v)\Big) \,.
\end{multline}
Multiplying both sides of~\eqref{eq:e12e23-osp222-four} by $u-v+1$ and setting $u=v-1$ afterwards, we find
\begin{equation}\label{eq:vanishing-term2}
  e_{13}(v)-e_{12}(v)e_{23}(v)+e_{24}(v)=0 \,.
\end{equation}
Thus, plugging~\eqref{eq:vanishing-term2} into the equality~\eqref{eq:e12e23-osp222-four}, we obtain
precisely the desired relation~\eqref{eq:osp222-43}.

We note that the relation~\eqref{eq:osp222-44} follows directly by applying the anti-automorphism
$\tau$ of $X^\rtt(\fosp(V))$ given by~\eqref{eq:tau-t} to~\eqref{eq:osp222-43} and using
the formulas~\eqref{eq:tau-efh}.

\medskip
This completes our proof of Proposition~\ref{prop:osp22-01}.
\end{proof}


\subsubsection{Relations for $\fosp(5|0)$ case}
\label{ssec:osp-50}
\

In this case, we have $X^\rtt(\fosp(V))\simeq X^\rtt(\fso_5)$ by Remark~\ref{rem:super-to-nonsuper}.
The relations on the generating currents $e_{12}(u),f_{21}(u),h_1(u),h_2(u)$ already follow from those
for $Y^\rtt(\gl_2)$ from Theorem~\ref{thm:Drinfeld-A}, due to Corollary~\ref{cor:A-type relations}.
On the other hand, the relations on the currents $e_{23}(u),f_{32}(u),h_2(u),h_3(u)$ follow from
those for $X^\rtt(\fosp(V^{[1]}))\simeq X^\rtt(\fosp(3|0))\simeq X^\rtt(\fso_3)$ as specified in
Proposition~\ref{prop:osp30}.

\begin{Prop}\label{prop:osp50}
The following relations hold in $X^\rtt(\fosp(5|0))$:
\begin{equation}\label{eq:osp50-1}
  [h_1(u),h_3(v)]=0 \,,
\end{equation}
\begin{equation}\label{eq:osp50-21}
  [h_3(u),e_{12}(v)]=0 \,,\qquad
  [h_3(u),f_{21}(v)]=0 \,,
\end{equation}
\begin{equation}\label{eq:osp50-22}
  [h_1(u),e_{23}(v)]=0 \,,\qquad  [h_1(u),f_{32}(v)]=0 \,,
\end{equation}
\begin{equation}\label{eq:osp50-3}
  [e_{12}(u),f_{32}(v)]=0 \,,\qquad [e_{23}(u),f_{21}(v)]=0 \,,
\end{equation}
\begin{equation}\label{eq:osp50-41}
  [e_{12}(u),e_{23}(v)]=\frac{1}{u-v} \Big(e_{13}(v)-e_{13}(u)+e_{12}(u)e_{23}(v)-e_{12}(v)e_{23}(v) \Big) \,,
\end{equation}
\begin{equation}\label{eq:osp50-42}
  [f_{21}(u),f_{32}(v)]=\frac{1}{u-v} \Big(f_{31}(u)-f_{31}(v)-f_{32}(v)f_{21}(u)+f_{32}(v)f_{21}(v) \Big) \,.
\end{equation}
\end{Prop}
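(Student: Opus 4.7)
The plan is to mirror our treatment of Proposition~\ref{prop:osp22-01}, adapted to the non-super setting with $N=5$ (so $\kappa = 3/2$ by~\eqref{eq:kappa}). First I would dispatch the diagonal and $he$/$hf$ commutativities \eqref{eq:osp50-1}--\eqref{eq:osp50-22} via direct appeals to Corollary~\ref{cor:commutativity}: applying it at $\ell=2$ (for which $V^{[2]} = \BC v_3$ and so $t^{[2]}_{33}(u) = h_3(u)$) yields commutativity of $h_3(u)$ with every $t_{ab}(v)$ for $a,b\in\{1,2\}$, producing~\eqref{eq:osp50-21} and, combined with $[h_1(u),h_j(v)]=0$ for $j\in\{1,2\}$ from Corollary~\ref{cor:A-type relations}, also~\eqref{eq:osp50-1}; similarly, $\ell=1$ gives commutativity of $h_1(u)$ with $e_{23}(v)$ and $f_{32}(v)$, proving~\eqref{eq:osp50-22}.

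For the $ef$-vanishings~\eqref{eq:osp50-3}, I would invoke Lemma~\ref{lem:ef-com-tl}~\eqref{eq:useful-commutator-1} with $\ell=1$, $k=2$, $i=3$, $j=2$. Expanding $t^{[1]}_{32}(v) = f_{32}(v)h_2(v)$ on the left-hand side and using the $A$-type identity $[e_{12}(u),h_2(v)] = \frac{1}{u-v}h_2(v)(e_{12}(v)-e_{12}(u))$ from~\eqref{eq:Atype-eh} and Corollary~\ref{cor:A-type relations}, the $h_2$-contributions cancel exactly, leaving $[e_{12}(u),f_{32}(v)]h_2(v) = 0$ and hence $[e_{12}(u),f_{32}(v)] = 0$. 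The companion relation $[e_{23}(u),f_{21}(v)] = 0$ then follows by applying the anti-automorphism $\tau$ of~\eqref{eq:tau-t} and using~\eqref{eq:tau-efh}.

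The core of the argument is the cross-commutator~\eqref{eq:osp50-41}. I would apply the defining RTT relation~\eqref{eq:RTT-termwise} to $[t_{12}(u),t_{23}(v)]$; the crucial simplification in $\fso_5$ is that the $Q$-contribution vanishes, since $\delta_{k,i'}=\delta_{2,5}=0$ and $\delta_{\ell,j'}=\delta_{3,4}=0$ kill both terms on the $\kappa$-denominator side, reducing the relation to the purely $A$-type form $[t_{12}(u),t_{23}(v)] = \frac{1}{u-v}\bigl(t_{22}(u)t_{13}(v) - t_{22}(v)t_{13}(u)\bigr)$. I would then expand all four $t_{\bullet\bullet}$-currents via the Gauss decomposition~\eqref{eq:gauss-osp}---notably $t_{22}(u) = h_2(u) + f_{21}(u)h_1(u)e_{12}(u)$ and $t_{23}(v) = h_2(v)e_{23}(v) + f_{21}(v)h_1(v)e_{13}(v)$---pull all $h_\bullet$-currents to the leftmost position using~\eqref{eq:Atype-eh}--\eqref{eq:Atype-ef} together with the commutativities $[h_1(u),e_{23}(v)] = [f_{21}(u),e_{23}(v)] = 0$ from Corollary~\ref{cor:commutativity}, and finally multiply by $h_1(u)^{-1}h_2(v)^{-1}$ on the left. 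The resulting identity should rearrange into~\eqref{eq:osp50-41}, after which~\eqref{eq:osp50-42} follows by applying $\tau$ to~\eqref{eq:osp50-41} and invoking~\eqref{eq:tau-efh}. The main obstacle will be the bookkeeping in this core computation: tracking the several rational factors arising when shuffling Gauss coordinates past $h_\bullet$-currents, and verifying that the ``boundary'' contribution from $f_{21}(u)h_1(u)e_{12}(u)\cdot t_{13}(v)$ cleanly combines with the $h_2(u)t_{13}(v)$ terms (and their $u\leftrightarrow v$ partners) to yield precisely the asserted right-hand side---with the coefficient $\tfrac{1}{u-v}$ and the specific sign pattern distinguishing~\eqref{eq:osp50-41} from its $\fosp(2|2)$-analogues~\eqref{eq:osp222-43} and~\eqref{eq:osp04-43}---and without any residual $f_{21}$- or $e_{13}$-factors requiring further reduction.
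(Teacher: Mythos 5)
Your proposal is essentially correct, and for \eqref{eq:osp50-1}--\eqref{eq:osp50-3} it coincides with the route the paper takes (the paper proves the identical list of relations in Proposition~\ref{prop:osp321}, and for Proposition~\ref{prop:osp50} itself simply defers to~\cite{jlm}): Corollary~\ref{cor:commutativity} for the commutativities, then Lemma~\ref{lem:ef-com-tl} with $\ell=1,k=2,i=3,j=2$ plus the anti-automorphism $\tau$ for \eqref{eq:osp50-3}. Where you diverge is the core relation \eqref{eq:osp50-41}: you propose the heavy direct computation with $[t_{12}(u),t_{23}(v)]$ and the full Gauss decomposition, whereas the paper applies Lemma~\ref{lem:ef-com-tl} a second time, now with $\ell=1$, $k=2$, $i=2$, $j=3$, giving $[e_{12}(u),t^{[1]}_{23}(v)]=\frac{1}{u-v}t^{[1]}_{22}(v)(e_{13}(v)-e_{13}(u))$; since $t^{[1]}_{23}(v)=h_2(v)e_{23}(v)$ and $t^{[1]}_{22}(v)=h_2(v)$, expanding the left-hand side with $[e_{12}(u),h_2(v)]=\frac{1}{u-v}h_2(v)(e_{12}(v)-e_{12}(u))$ and cancelling $h_2(v)$ yields \eqref{eq:osp50-41} in two lines, with \eqref{eq:osp50-42} again by $\tau$. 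Your direct route does work --- your observation that the $Q$-terms of \eqref{eq:RTT-termwise} drop out for $(i,j,k,\ell)=(1,2,2,3)$ is correct, and the ensuing bookkeeping is exactly the $\ell=1$ case of the proof of Lemma~\ref{lem:ef-com-tl} --- but it re-derives a lemma the paper has already packaged for this purpose, and you leave the final rearrangement unverified (``should rearrange into''). One small correction: the commutativity $[f_{21}(u),e_{23}(v)]=0$ that you invoke in the core computation does \emph{not} follow from Corollary~\ref{cor:commutativity} (with $\ell=1$ that corollary only concerns $h_1(u)=t_{11}(u)$ on the left); it is the second relation of \eqref{eq:osp50-3}, which you have already established at that point, so you should cite that instead.
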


This result goes back to~\cite{jlm}. We note however that~\eqref{eq:osp50-42} corrects
a typo in~\cite[(5.29)]{jlm}.


\subsubsection{Relations for $\fosp(3|2)$ case with the parity sequence $(\bar{1},\bar{0})$}
\label{ssec:osp-32-1}
\

In this case, the relations on the generating currents $e_{12}(u),f_{21}(u),h_1(u),h_2(u)$ already
follow from those of Theorem~\ref{thm:Drinfeld-A} for $Y^\rtt(\gl(1|1))$ with the parity sequence
$\parity=\Parity=(\bar{1},\bar{0})$, due to Corollary~\ref{cor:A-type relations}. On the other hand,
the relations on the currents $e_{23}(u),f_{32}(u),h_2(u),h_3(u)$ readily follow from those for
$X^\rtt(\fosp(V^{[1]}))\simeq X^\rtt(\fosp(3|0))\simeq X^\rtt(\fso_3)$ as specified in
Proposition~\ref{prop:osp30}.

\begin{Prop}\label{prop:osp321}
The relations~\eqref{eq:osp50-1}--\eqref{eq:osp50-42} hold in $X^\rtt(\fosp(V))$.
\end{Prop}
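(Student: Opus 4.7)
The plan is to establish the six listed cross relations by adapting the strategies of Propositions~\ref{prop:osp22-10},~\ref{prop:osp22-01} and~\ref{prop:osp50}. Since the within-subblock relations for $\{h_1, h_2, e_1, f_1\}$ and $\{h_2, h_3, e_2, f_2\}$ already follow from Corollary~\ref{cor:A-type relations} and from the rank-reduction embedding $\psi_{V,1}$ of Theorem~\ref{thm:embedding} (noting that $V^{[1]}$ is purely even of dimension $3$, hence $X^\rtt(\fosp(V^{[1]})) \simeq X^\rtt(\fso_3)$, to which Proposition~\ref{prop:osp30} applies), the task reduces to verifying \eqref{eq:osp50-1}, \eqref{eq:osp50-21}, \eqref{eq:osp50-22}, \eqref{eq:osp50-3}, \eqref{eq:osp50-41}, \eqref{eq:osp50-42}.

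First, the commutativity relations \eqref{eq:osp50-1}, \eqref{eq:osp50-22}, \eqref{eq:osp50-21} all reduce to Corollary~\ref{cor:commutativity}. Applying it with $\ell = 1$ yields $[h_1(u), h_3(v)] = [h_1(u), e_2(v)] = [h_1(u), f_2(v)] = 0$ directly, settling \eqref{eq:osp50-1} and \eqref{eq:osp50-22}. For \eqref{eq:osp50-21}, I apply the corollary with $\ell = 2$ instead: since $V^{[2]} = \mathrm{span}\{v_3\}$ is one-dimensional, $T^{[2]}$ is the $1 \times 1$ matrix $(h_3(v))$, and the corollary gives $[t_{ab}(u), h_3(v)] = 0$ for all $a, b \in \{1, 2\}$. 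Taking $(a,b) = (1,2)$ or $(2,1)$, expanding $t_{12} = h_1 e_1$ or $t_{21} = f_1 h_1$, and using the already-known $[h_1(u), h_3(v)] = 0$, one concludes $[e_1(u), h_3(v)] = [f_1(u), h_3(v)] = 0$.

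Next, for \eqref{eq:osp50-3}, I invoke Lemma~\ref{lem:ef-com-tl} with $\ell = 1$ and $(i, j, k) = (3, 2, 2)$, noting that $k \ne j' = 4$ and that all relevant signs $(-1)^{\ol{1}\cdot\ol{3}+\ol{1}\cdot\ol{2}+\ol{3}\cdot\ol{2}}$ equal $+1$ since $\ol{2}=\ol{3}=\bar{0}$. Expanding $t^{[1]}_{32}(v) = f_2(v) h_2(v)$ and using the $Y^\rtt(\gl(1|1))$-relation $[e_1(u), h_2(v)] = \frac{h_2(v)(e_1(v) - e_1(u))}{u-v}$ from Corollary~\ref{cor:A-type relations}, the resulting identity collapses to $[e_1(u), f_2(v)]\, h_2(v) = 0$, whence $[e_1(u), f_2(v)] = 0$. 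The companion $[e_2(u), f_1(v)] = 0$ then follows by applying the anti-automorphism $\tau$ from~\eqref{eq:tau-t} together with~\eqref{eq:tau-efh}.

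Finally, for the Serre-type relations \eqref{eq:osp50-41}--\eqref{eq:osp50-42}, I mirror the derivation of \eqref{eq:osp222-43} in the proof of Proposition~\ref{prop:osp22-01}. A crucial simplification is that the $Q$-contribution in \eqref{eq:RTT-termwise} for $[t_{12}(u), t_{23}(v)]$ vanishes (both $\delta_{k,i'} = \delta_{2,5}$ and $\delta_{\ell,j'} = \delta_{3,4}$ are zero), leaving $[t_{12}(u), t_{23}(v)] = \frac{1}{u-v}(t_{22}(u) t_{13}(v) - t_{22}(v) t_{13}(u))$. After substituting the Gauss factorizations, commuting all $h_\bullet$-currents to the leftmost positions using the $Y^\rtt(\gl(1|1))$-relations, and invoking Lemma~\ref{lem:E-entries-oddN}(a,d) to express $e_{13}(u) = [e_1(u), e_2^{(1)}]$ and $e_{24}(u) = -e_2(u)^2 - [e_2(u), e_2^{(1)}]$, one arrives at \eqref{eq:osp50-41}; the relation \eqref{eq:osp50-42} is then obtained by applying $\tau$ and~\eqref{eq:tau-efh}. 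The main obstacle is the careful sign- and shift-tracking at this last step: although the target formula coincides with that of $\fso_5$, the odd grading $|e_1| = \bar{1}$ alters the commutation rules between $e_1$ and $h_1, h_2$, and one must verify that, after all cancellations, the rearranged expression agrees term-by-term with \eqref{eq:osp50-41}.
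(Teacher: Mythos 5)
Your proposal is correct, and for most of the relations it follows the paper's own route: \eqref{eq:osp50-1}--\eqref{eq:osp50-22} via Corollary~\ref{cor:commutativity}, and \eqref{eq:osp50-3} via Lemma~\ref{lem:ef-com-tl} with $\ell=1$, $(i,j,k)=(3,2,2)$ followed by the anti-automorphism $\tau$ — exactly as in the paper, including the observation that the sign prefactor is $+1$ since $\ol{2}=\ol{3}=\bar{0}$. The one place you diverge is \eqref{eq:osp50-41}--\eqref{eq:osp50-42}: you propose to redo the direct RTT computation of $[t_{12}(u),t_{23}(v)]$ in the style of the proof of \eqref{eq:osp222-43}, correctly noting that the $Q$-contribution vanishes here (since $\delta_{2,1'}=\delta_{3,2'}=0$), which does make that computation tractable and would yield the stated formula. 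The paper instead applies Lemma~\ref{lem:ef-com-tl} a second time, now with $\ell=1$ and $(i,j,k)=(2,3,2)$, giving $[e_{12}(u),t^{[1]}_{23}(v)]=\frac{1}{u-v}\,h_2(v)\big(e_{13}(v)-e_{13}(u)\big)$; expanding $t^{[1]}_{23}(v)=h_2(v)e_{23}(v)$ and using $[e_{12}(u),h_2(v)]=\frac{1}{u-v}h_2(v)(e_{12}(v)-e_{12}(u))$ produces \eqref{eq:osp50-41} in two lines, with no need for the explicit formulas for $e_{13}(u)$ or $e_{24}(u)$ from Lemma~\ref{lem:E-entries-oddN} and no delicate sign-tracking. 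Both arguments are valid; the lemma-based one is shorter and sidesteps precisely the bookkeeping you flag as the main obstacle.
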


\begin{proof}
The relations~(\ref{eq:osp50-1})--(\ref{eq:osp50-22}) follow directly from Corollary~\ref{cor:commutativity}.
The relations~\eqref{eq:osp50-3} can be proved alike~\eqref{eq:osp222-33} by using Lemma~\ref{lem:ef-com-tl}.
To do so, we consider the corresponding relation
\begin{equation}\label{eq:e12t32-osp321}
  [e_{12}(u),t^{[1]}_{32}(v)]=\frac{1}{u-v} t^{[1]}_{32}(v) \Big(e_{12}(v) - e_{12}(u)\Big) \,.
\end{equation}
As $t^{[1]}_{32}(v)=f_{32}(v)h_2(v)$, we have
  $[e_{12}(u),t^{[1]}_{32}(v)]=[e_{12}(u),f_{32}(v)]h_2(v)+f_{32}(v)[e_{12}(u),h_2(v)]$.
Combining this with $[e_{12}(u),h_2(v)]=\frac{1}{u-v}h_2(v)(e_{12}(v)-e_{12}(u))$, due to~\eqref{eq:Atype-eh}
and Corollary~\ref{cor:A-type relations}, we immediately obtain the commutativity $[e_{12}(u),f_{32}(v)]=0$.
Applying further the anti-automorphism $\tau$ of $X^\rtt(\fosp(V))$ given by~\eqref{eq:tau-t}, we also obtain
$[e_{23}(v),f_{21}(u)]=0$, due to the formulas~\eqref{eq:tau-efh}.

\medskip
The relations~(\ref{eq:osp50-41},~\ref{eq:osp50-42}) can be established similarly to~\eqref{eq:osp50-3}.
To this end, let us consider the corresponding relation~\eqref{eq:useful-commutator-1} for $\ell=1$ and
$k=2,i=2,j=3$:
\begin{equation}\label{eq:e12t23-osp321}
  [e_{12}(u),t^{[1]}_{23}(v)]=\frac{1}{u-v} t^{[1]}_{22}(v) \Big(e_{13}(v) - e_{13}(u)\Big) \,.
\end{equation}
As $t^{[1]}_{23}(v)=h_2(v)e_{23}(v)$, we have
  $[e_{12}(u),t^{[1]}_{23}(v)]=[e_{12}(u),h_2(v)]e_{23}(v)+h_2(v)[e_{12}(u),e_{23}(v)]$.
Combining this with $t^{[1]}_{22}(v)=h_2(v)$ and $[e_{12}(u),h_2(v)]=\frac{1}{u-v}h_2(v)(e_{12}(v)-e_{12}(u))$
from above, we obtain the desired relation~\eqref{eq:osp50-41}. Applying the anti-automorphism $\tau$ of
$X^\rtt(\fosp(V))$ given by~\eqref{eq:tau-t} to~\eqref{eq:osp50-41}, we also obtain~\eqref{eq:osp50-42},
due to the formulas~\eqref{eq:tau-efh}.

\medskip
This completes our proof of Proposition~\ref{prop:osp321}.
\end{proof}


\subsubsection{Relations for $\fosp(1|4)$ and for $\fosp(3|2)$ with the parity sequence $(\bar{0},\bar{1})$}
\label{ssec:osp-32-2}
\

In these cases, the relations on the generating currents $e_{12}(u),f_{21}(u),h_1(u),h_2(u)$ already follow
from those of Theorem~\ref{thm:Drinfeld-A} for $Y^\rtt(\gl(\VV))$ with the parity sequence $\parity=\Parity$
being $(\bar{0},\bar{1})$ or $(\bar{1},\bar{1})$, due to Corollary~\ref{cor:A-type relations}. On the
other hand, the relations on the currents $e_{23}(u),f_{32}(u),h_2(u),h_3(u)$ readily follow from those
for $X^\rtt(\fosp(V^{[1]}))\simeq X^\rtt(\fosp(1|2))$ as specified in Proposition~\ref{prop:osp12}.

\begin{Prop}\label{prop:osp322}
The following relations hold in $X^\rtt(\fosp(V))$:
\begin{equation}\label{eq:osp322-1}
  [h_1(u),h_3(v)]=0 \,,
\end{equation}
\begin{equation}\label{eq:osp322-21}
  [h_3(u),e_{12}(v)]=0 \,,\qquad
  [h_3(u),f_{21}(v)]=0 \,,
\end{equation}
\begin{equation}\label{eq:osp322-22}
  [h_1(u),e_{23}(v)]=0 \,,\qquad
  [h_1(u),f_{32}(v)]=0 \,,
\end{equation}
\begin{equation}\label{eq:osp322-3}
  [e_{12}(u),f_{32}(v)]=0 \,,\qquad [e_{23}(u),f_{21}(v)]=0 \,,
\end{equation}
\begin{equation}\label{eq:osp322-41}
  [e_{12}(u),e_{23}(v)]=\frac{1}{u-v} \Big(e_{13}(u)-e_{13}(v)-e_{12}(u)e_{23}(v)+e_{12}(v)e_{23}(v) \Big) \,,
\end{equation}
\begin{equation}\label{eq:osp322-42}
  [f_{21}(u),f_{32}(v)]=\frac{1}{u-v} \Big(f_{31}(v)-f_{31}(u)+f_{32}(v)f_{21}(u)-f_{32}(v)f_{21}(v) \Big) \,.
\end{equation}
Additionally, we also have the following relations:
\begin{equation}\label{eq:osp322-weird1}
  [e^{(1)}_{12},e_{24}(v)]=-e_{14}(v)-e_{14}(v-\sfrac{3}{2})+
  e_{12}(v)e_{24}(v)+e_{24}(v)e_{12}(v-\sfrac{3}{2})-(-1)^{\ol{1}}e_{23}(v)e_{13}(v-\sfrac{3}{2}) \,,
\end{equation}
\begin{equation}\label{eq:osp322-weird2}
  [f^{(1)}_{21},f_{42}(v)]=f_{41}(v)+f_{41}(v-\sfrac{3}{2})-
  f_{42}(v)f_{21}(v)-f_{21}(v-\sfrac{3}{2})f_{42}(v)-f_{31}(v-\sfrac{3}{2})f_{32}(v) \,.
\end{equation}
\end{Prop}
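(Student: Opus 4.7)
\medskip
The plan is to treat the seven families of relations in four groups of increasing difficulty, mirroring the strategy used for Propositions~\ref{prop:osp22-01} and~\ref{prop:osp321}. First, the commutativity relations \eqref{eq:osp322-1}--\eqref{eq:osp322-22} follow immediately from Corollary~\ref{cor:commutativity}, since each such pair involves a generator from $\{h_1(u), e_{12}(u), f_{21}(u)\}$ and one from the $V^{[1]}$-subalgebra indexed by $\{2,3,4\}$. Second, the cross-vanishing relations \eqref{eq:osp322-3} come from applying Lemma~\ref{lem:ef-com-tl}, instance \eqref{eq:useful-commutator-1} with $(\ell,k,i,j)=(1,2,3,2)$: splitting $t^{[1]}_{32}(v)=f_{32}(v)h_2(v)$ and plugging in the $A$-type commutator $[e_{12}(u),h_2(v)]$ from Corollary~\ref{cor:A-type relations} makes the $f_{32}(v)[e_{12}(u),h_2(v)]$ piece cancel the RHS, forcing $[e_{12}(u),f_{32}(v)]=0$. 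Its companion follows by $\tau$ of~\eqref{eq:tau-t} via~\eqref{eq:tau-efh}.

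The quadratic relations \eqref{eq:osp322-41} and \eqref{eq:osp322-42} are derived similarly but with \eqref{eq:useful-commutator-1} applied for $(\ell,k,i,j)=(1,2,2,3)$. Using $t^{[1]}_{23}(v)=h_2(v)e_{23}(v)$ and $t^{[1]}_{22}(v)=h_2(v)$, and pulling $h_2(v)$ to the left via the $A$-type formula \eqref{eq:Atype-eh}, one obtains \eqref{eq:osp322-41} after multiplying by $h_2(v)^{-1}$. Relation \eqref{eq:osp322-42} then follows by applying $\tau$.

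The genuinely new content lies in \eqref{eq:osp322-weird1} and \eqref{eq:osp322-weird2}, and this is where I expect the main obstacle. Lemma~\ref{lem:ef-com-tl} is no longer applicable: the instance $(\ell,k,i,j)=(1,2,2,4)$ would violate the hypothesis $k\neq j'$, since $4'=2=k$ in a $5$-dimensional $V$. Hence we must return to the full defining relation \eqref{eq:RTT-termwise} for the commutator $[t_{12}(u),t_{24}(v)]$, which now carries a nontrivial $Q$-contribution from $\delta_{\ell j'}=\delta_{4,2'}=1$. The denominator $u-v-\kappa$ combined with the rank-reduction identity $\kappa^{[1]}=\kappa-(-1)^{\ol{1}}$ produces, after isolating the index $p=1$ in the $Q$-sum versus the indices $p\in\{2,3,4\}$ (the latter corresponding to entries of the $V^{[1]}$-subalgebra), the distinctive shift by $-\kappa^{[1]}=3/2$, which is common to both cases $\fosp(1|4)$ and $\fosp(3|2)$ with parity $(\bar{0},\bar{1})$, since $V^{[1]}\simeq\fosp(1|2)$ in both.

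Concretely, after extracting the $u^{-1}$-coefficient of the resulting relation and using the Gauss resonance formulas of Corollary~\ref{cor:A-resonance} to rewrite shifted-argument currents, all $h_\bullet$-factors may be pulled past the generating currents. Contributions from the indices $p\in\{2,3\}$ in the $Q$-sum contribute the products $e_{24}(v)e_{12}(v-\sfrac{3}{2})$ and $-(-1)^{\ol{1}}e_{23}(v)e_{13}(v-\sfrac{3}{2})$ respectively, while contributions from the indices $p\in\{4,5\}$ are pinned down using \eqref{eq:ef-vanishing-rk1} applied within the rank-1 reductions and the commutativity from Group 1. The outcome is precisely \eqref{eq:osp322-weird1}; finally, \eqref{eq:osp322-weird2} follows by applying the anti-automorphism $\tau$ and using \eqref{eq:tau-efh}. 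The bookkeeping of sign factors $(-1)^{\ol{i}\cdot\ol{j}+\cdots}$ combined with the auxiliary $\theta_i,\theta_j$ factors in the $Q$-term, and in particular verifying the precise form of the terms with a $3/2$-shift, will be the main technical hurdle.
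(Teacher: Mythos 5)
Your treatment of \eqref{eq:osp322-1}--\eqref{eq:osp322-42} is correct and is exactly the paper's route: the commutativity relations come from Corollary~\ref{cor:commutativity}, the relations \eqref{eq:osp322-3}, \eqref{eq:osp322-41}, \eqref{eq:osp322-42} from Lemma~\ref{lem:ef-com-tl} applied with $(\ell,k,i,j)=(1,2,3,2)$ and $(1,2,2,3)$ together with the $A$-type commutator $[e_{12}(u),h_2(v)]$, and the companions via $\tau$. This matches the proof of Proposition~\ref{prop:osp321}, to which the paper reduces this part.

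For the additional relations \eqref{eq:osp322-weird1}--\eqref{eq:osp322-weird2}, however, there is a genuine gap. Your proposed mechanism for producing the $\sfrac{3}{2}$-shifted currents does not work: after extracting the $u^{-1}$-coefficient of the defining relation \eqref{eq:RTT-termwise} for $[t_{12}(u),t_{24}(v)]$, the $Q$-term $\frac{1}{u-v-\kappa}\sum_p(\cdots)t_{2p'}(v)t_{1p}(u)$ contributes only its $p=1$ summand (since $t_{1p}(u)=\delta_{1p}+O(u^{-1})$ and $\frac{1}{u-v-\kappa}=u^{-1}+O(u^{-2})$), namely a scalar multiple of $t_{25}(v)$ with \emph{no} argument shift; the summands $p\in\{2,3\}$ that you claim yield $e_{24}(v)e_{12}(v-\sfrac{3}{2})$ and $e_{23}(v)e_{13}(v-\sfrac{3}{2})$ simply drop out at this order. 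Corollary~\ref{cor:A-resonance} only produces integer shifts $\pm 1$, so it cannot manufacture the shift by $\kappa-(-1)^{\ol{1}}=-\sfrac{3}{2}$ either. What your argument actually yields is the intermediate identity $[e_{12}^{(1)},e_{24}(v)]=-e_{14}(v)+e_{12}(v)e_{24}(v)+(-1)^{\ol{1}}e_{25}(v)$ (equivalently Lemma~\ref{lem:E-entries-oddN}(e) for $i=1$), which still contains the current $e_{25}(v)$ absent from \eqref{eq:osp322-weird1}. The missing idea is to eliminate $e_{25}(v)$ using the central relation $T^t(v+\kappa)=T(v)^{-1}c_V(v+\kappa)$ of \eqref{eq:T-reln-used}: comparing its $(4,5)$, $(3,5)$ and $(2,5)$ matrix coefficients expresses $e_{12}(v-\sfrac{3}{2})$, $e_{13}(v-\sfrac{3}{2})$, $e_{14}(v-\sfrac{3}{2})$ through entries of $E(v)^{-1}$, whence
$(-1)^{\ol{1}}e_{25}(v)=-e_{14}(v-\sfrac{3}{2})+e_{24}(v)e_{12}(v-\sfrac{3}{2})-(-1)^{\ol{1}}e_{23}(v)e_{13}(v-\sfrac{3}{2})$;
substituting this back gives \eqref{eq:osp322-weird1} (the product terms arise from inverting the unitriangular matrix $E(v)$, not from the $Q$-sum). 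The deduction of \eqref{eq:osp322-weird2} via $\tau$ is fine once the first relation is in place.
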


\begin{proof}
The proof of~\eqref{eq:osp322-1}--\eqref{eq:osp322-42} is completely analogous to that
of Proposition~\ref{prop:osp321}; we leave details to the interested reader.

\medskip
Let us now prove~(\ref{eq:osp322-weird1},~\ref{eq:osp322-weird2}).
To this end, we start with the equality from Lemma~\ref{lem:E-entries-oddN}(e):
\begin{equation*}
  e_{25}(v)=(-1)^{\ol{1}}\left(e_{14}(v)-e_{12}(v)e_{24}(v)-[e_{24}(v),e^{(1)}_{12}]\right) \,.
\end{equation*}
We can rewrite it in the form:
\begin{equation}\label{eq:wd-1}
  [e_{12}^{(1)},e_{24}(v)] = -e_{14}(v)+e_{12}(v)e_{24}(v)+(-1)^{\ol{1}}e_{25}(v) \,.
\end{equation}
Thus, it remains to re-express $e_{25}(v)$. To do so, we recall the equality $T^t(v+\kappa)=T(v)^{-1}c_V(v+\kappa)$
of~\eqref{eq:T-reln-used}. In particular, comparing the $(4,5)$ matrix coefficients, we obtained
Lemma~\ref{lem:E-entries-oddN}(b):
\begin{equation*}
  e_{12}(v-\sfrac{3}{2})=(-1)^{\ol{1}}e_{45}(v) \,,
\end{equation*}
cf.~our proof of Lemma~\ref{lem:E-entries-evenN}(d). Here, we used the equality
$h_1(v+\kappa)=h_{5}(v)^{-1}c_V(v+\kappa)$ of~\eqref{eq:mat.coef.4}, the equality~\eqref{eq:h-to-e-1},
and finally the identity
\begin{equation*}
  \kappa-(-1)^{\ol{1}}=-\sfrac{3}{2} \,.
\end{equation*}
Likewise, comparing the $(3,5)$ matrix coefficients, we obtain:
\begin{equation*}
  e_{13}(v-\sfrac{3}{2})=(E(v)^{-1})_{35}=e_{34}(v)e_{45}(v)-e_{35}(v) \,,
\end{equation*}
cf.~our proof of Lemma~\ref{lem:E-entries-evenN}(i).
Finally, comparing the $(2,5)$ matrix coefficients, we obtain:
\begin{equation*}
  (-1)^{\ol{1}}e_{14}(v-\sfrac{3}{2})=(E(v)^{-1})_{25}=
  -\big(e_{25}(v)-e_{24}(v)e_{45}(v)-e_{23}(v)e_{35}(v)+e_{23}(v)e_{34}(v)e_{45}(v)\big) \,.
\end{equation*}
Combining the above formulas for $e_{14}(v-\sfrac{3}{2})$, $e_{13}(v-\sfrac{3}{2})$,
and $e_{12}(v-\sfrac{3}{2})$, we obtain:
\begin{equation}\label{eq:wd-2}
  (-1)^{\ol{1}}e_{25}(v)=
  -e_{14}(v-\sfrac{3}{2})+e_{24}(v)e_{12}(v-\sfrac{3}{2})-(-1)^{\ol{1}}e_{23}(v)e_{13}(v-\sfrac{3}{2}) \,.
\end{equation}
Plugging the right-hand side of~\eqref{eq:wd-2} instead of $(-1)^{\ol{1}}e_{25}(v)$ in~\eqref{eq:wd-1},
we obtain precisely~\eqref{eq:osp322-weird1}.

Applying the anti-automorphism $\tau$ of $X^\rtt(\fosp(V))$ given by~\eqref{eq:tau-t} to~\eqref{eq:osp322-weird1},
we also obtain~\eqref{eq:osp322-weird2}, due to the formulas~\eqref{eq:tau-efh}.
%
\end{proof}

%
%
%
%
%
%


\section{Drinfeld orthosymplectic Yangians}\label{sec:Drinfeld orthosymplectic}

In this section, we introduce the Drinfeld (extended) orthosymplectic Yangians of $\fosp(V)$
and identify them with their RTT counterparts from Section~\ref{sec:RTT orthosymplectic}.


\subsection{Drinfeld extended orthosymplectic super Yangian}
\label{ssec:Dr-ext-Yangian}
\

We fix $N,m$, and $V$ as in Subsection~\ref{ssec:setup}. Let $n=\lfloor N/2\rfloor$, so that
$N=2n$ or $N=2n+1$, and recall the notation $\ol{i}$ of~\eqref{eq:index-parity}. We define the
\emph{Drinfeld extended Yangian of $\fosp(V)$}, denoted by $X(\fosp(V))$, to be the associative
$\BC$-superalgebra generated by
  $\{e_{i,r},f_{i,r}\}_{1\leq i\leq n+m}^{r\geq 1}\cup \{h_{\imath,r}\}_{1\leq \imath\leq  n+m+1}^{r\geq 1}$
with the $\BZ_2$-grading given by
\begin{equation*}
\begin{split}
  & |e_{i,r}|=|f_{i,r}|=\ol{i}+\ol{i+1} \,, \quad |h_{\imath,r}|=\bar{0}
    \qquad \forall\ i<n+m \,,\, \imath\leq n+m+1 \,,\, r\geq 1 \,, \\
  & |e_{n+m,r}|=|f_{n+m,r}|=
  \begin{cases}
    \ol{n+m-1}+\ol{n+m} & \text{if} \quad N=2n \,,\, \ol{n+m}=\bar{0} \\
    \ol{n+m}+\ol{n+m+1} & \text{otherwise}
  \end{cases} \,,
\end{split}
\end{equation*}
and subject to the defining relations~\eqref{eq:Dr-osp-rel-ext1}--\eqref{eq:Dr-osp-rel-ext13.2}.
To state the relations, form the generating~series:
\begin{equation*}
  e_i(u)=\sum_{r\geq 1} e_{i,r}u^{-r} \,,\quad
  f_i(u)=\sum_{r\geq 1} f_{i,r}u^{-r} \,,\quad
  h_\imath(u)=1+\sum_{r\geq 1} h_{\imath,r}u^{-r}
\end{equation*}
for all $1\leq i\leq n+m$ and $1\leq \imath\leq n+m+1$, as well as
\begin{equation*}
\begin{split}
  & e^\circ_i(u)=\sum_{r\geq 2} e_{i,r}u^{-r} \,,\quad
    f^\circ_i(u)=\sum_{r\geq 2} f_{i,r}u^{-r} \,, \\
  & k_i(u)=
    \begin{cases}
      h_{n+m-1}(u)^{-1}h_{n+m+1}(u) & \text{if} \ \ N=2n, \ol{n+m}=\bar{0}, i=n+m \\
      h_{i}(u)^{-1}h_{i+1}(u) & \text{otherwise,\ with} \ 1\leq i\leq n+m
    \end{cases} \,.
\end{split}
\end{equation*}
Recall the basis $e^*_i$ of $\fh^*$ (dual to the basis $\{F_{ii}\}_{i=1}^{n+m}$ of the Cartan
subalgebra $\fh$ of $\fosp(V)$) from Subsection~\ref{ssec:osp-definition}, the bilinear form
$(\cdot,\cdot)$ on $\fh^*$ determined by~\eqref{eq:form-dual-Cartan}, the specific simple roots
$\{\alpha_1,\ldots,\alpha_{n+m}\}$ as specified in Subsection~\ref{ssec:Dynkin diagrams},
and the resulting Cartan matrix $A=(a_{ij})$ of~\eqref{eq:nonsymm-Cartan-matrix}.

\medskip
\noindent
\underline{Commutator of $h_i(u)$ and $h_j(v)$}
\begin{equation}\label{eq:Dr-osp-rel-ext1}
  [h_i(u),h_j(v)]=0 \qquad \forall\ 1\leq i,j\leq n+m+1 \,.
\end{equation}

\medskip
\noindent
\underline{Commutator of $e_i(u)$ and $f_j(v)$}
\begin{equation}\label{eq:Dr-osp-rel-ext2}
  [e_i(u),f_j(v)]=\delta_{ij}(-1)^{\ol{i+1}}2^{\varrho}\, \frac{k_i(u)-k_i(v)}{u-v}
  \qquad \forall\ 1\leq i,j\leq n+m \,,
\end{equation}
where
  $\varrho=
   \begin{cases}
     1 & \text{if} \quad N=2n \,,\, \ol{n+m}=\bar{1} \,,\, i=n+m \\
     0 & \text{otherwise}
   \end{cases}$.

\medskip
\noindent
\underline{Commutator of $h_i(u)$ and $e_j(v)$}
\begin{equation}\label{eq:Dr-osp-rel-ext3.1}
  [h_i(u),e_j(v)]=-(e^*_i,\alpha_j)h_i(u)\, \frac{e_j(u)-e_j(v)}{u-v}
    \qquad \forall\ 1\leq i,j\leq n+m \,,
\end{equation}
\begin{equation}\label{eq:Dr-osp-rel-ext3.2}
  [h_{n+m+1}(u),e_j(v)]=0 \qquad \forall\ 1\leq j<n+m-1 \,,
\end{equation}
\begin{equation}\label{eq:Dr-osp-rel-ext3.3}
  [h_{n+m+1}(u),e_{n+m-1}(v)]=
  \begin{cases}
    -h_{n+m+1}(u)\, \frac{e_{n+m-1}(u)-e_{n+m-1}(v)}{u-v} & \text{if} \quad N=2n \,,\, \ol{n+m}=\bar{0} \\
    h_{n+m+1}(u)\, \frac{e_{n+m-1}(u-2)-e_{n+m-1}(v)}{u-v-2} & \text{if} \quad N=2n \,,\, \ol{n+m}=\bar{1} \\
    0 & \text{if} \quad N=2n+1
  \end{cases} \,,
\end{equation}
\begin{equation}\label{eq:Dr-osp-rel-ext3.4}
  [h_{n+m+1}(u),e_{n+m}(v)]=
  \begin{cases}
    h_{n+m+1}(u)\, \frac{e_{n+m}(u)-e_{n+m}(v)}{u-v}
      \qquad \qquad \   \text{if} \quad N=2n \,,\, \ol{n+m}=\bar{0} \\
    -2h_{n+m+1}(u)\, \frac{e_{n+m}(u)-e_{n+m}(v)}{u-v}
      \qquad \quad  \text{if} \quad N=2n \,,\, \ol{n+m}=\bar{1} \\
    h_{n+m+1}(u)\, \frac{e_{n+m}(u)-e_{n+m}(v)}{2(u-v)} -
      \frac{e_{n+m}(u-1)-e_{n+m}(v)}{2(u-v-1)}\, h_{n+m+1}(u) \\
      \qquad \qquad \qquad \qquad \qquad \qquad \qquad \qquad
      \text{if} \quad N=2n+1 \,,\, \ol{n+m}=\bar{0} \\
    h_{n+m+1}(u) \left(\frac{e_{n+m}(u)-e_{n+m}(v)}{u-v} -
      \frac{e_{n+m}(u-1/2)-e_{n+m}(v)}{u-v-1/2}\right) \\
      \qquad \qquad \qquad \qquad \qquad \qquad \qquad \qquad
      \text{if} \quad N=2n+1 \,,\, \ol{n+m}=\bar{1}
  \end{cases} \,.
\end{equation}

\medskip
\noindent
\underline{Commutator of $h_i(u)$ and $f_j(v)$}
\begin{equation}\label{eq:Dr-osp-rel-ext4.1}
  [h_i(u),f_j(v)]=(e^*_i,\alpha_j)\frac{f_j(u)-f_j(v)}{u-v}\, h_i(u)
    \qquad \forall\ 1\leq i,j\leq n+m \,,
\end{equation}
\begin{equation}\label{eq:Dr-osp-rel-ext4.2}
  [h_{n+m+1}(u),f_j(v)]=0 \qquad \forall\ 1\leq j<n+m-1 \,,
\end{equation}
\begin{equation}\label{eq:Dr-osp-rel-ext4.3}
  [h_{n+m+1}(u),f_{n+m-1}(v)]=
  \begin{cases}
    \frac{f_{n+m-1}(u)-f_{n+m-1}(v)}{u-v}\, h_{n+m+1}(u) & \text{if} \quad N=2n \,,\, \ol{n+m}=\bar{0} \\
    -\frac{f_{n+m-1}(u-2)-f_{n+m-1}(v)}{u-v-2}\, h_{n+m+1}(u) & \text{if} \quad N=2n \,,\, \ol{n+m}=\bar{1} \\
    0 & \text{if} \quad N=2n+1
  \end{cases} \,,
\end{equation}
\begin{equation}\label{eq:Dr-osp-rel-ext4.4}
  [h_{n+m+1}(u),f_{n+m}(v)]=
  \begin{cases}
    -\frac{f_{n+m}(u)-f_{n+m}(v)}{u-v}\, h_{n+m+1}(u)
      \qquad \qquad \    \text{if} \quad N=2n \,,\, \ol{n+m}=\bar{0} \\
    2\, \frac{f_{n+m}(u)-f_{n+m}(v)}{u-v}\, h_{n+m+1}(u)
      \qquad \quad \quad \ \ \text{if} \quad N=2n \,,\, \ol{n+m}=\bar{1} \\
    -\frac{f_{n+m}(u)-f_{n+m}(v)}{2(u-v)}\, h_{n+m+1}(u) +
      h_{n+m+1}(u)\, \frac{f_{n+m}(u-1)-f_{n+m}(v)}{2(u-v-1)} \\
      \qquad \qquad \qquad \qquad \qquad \qquad \qquad \qquad  \quad
      \text{if} \quad N=2n+1 \,,\, \ol{n+m}=\bar{0} \\
    \left(-\frac{f_{n+m}(u)-f_{n+m}(v)}{u-v} + \frac{f_{n+m}(u-1/2)-f_{n+m}(v)}{u-v-1/2}\right) h_{n+m+1}(u) \\
      \qquad \qquad \qquad \qquad \qquad \qquad \qquad \qquad  \quad
      \text{if} \quad N=2n+1 \,,\, \ol{n+m}=\bar{1}
  \end{cases} \,.
\end{equation}

\medskip
\noindent
\underline{Commutator of $e_i(u)$ and $e_i(v)$}
\

\noindent
Unless $N=2n+1$, $\ol{n+m}=\bar{1}$, and $i=n+m$, we impose:
\begin{equation}\label{eq:Dr-osp-rel-ext5.1}
   [e_i(u),e_i(v)]=\frac{(\alpha_i,\alpha_i)}{2} \, \frac{(e_i(u)-e_i(v))^2}{u-v} \,.
\end{equation}
For the remaining case $N=2n+1$, $\ol{n+m}=\bar{1}$, and $i=n+m$,
following~(\ref{eq:osp12-6},~\ref{eq:osp12-8}), we impose:
\begin{multline}\label{eq:Dr-osp-rel-ext5.2}
   [e_{n+m}(u),e_{n+m}(v)]=
   \frac{e'_{n+m}(u)-e'_{n+m}(v)}{u-v} + \frac{e_{n+m}(u)^2-e_{n+m}(v)^2}{u-v} \, + \\
   \frac{e_{n+m}(u)e_{n+m}(v)-e_{n+m}(v)e_{n+m}(u)}{2(u-v)} - \frac{(e_{n+m}(u)-e_{n+m}(v))^2}{2(u-v)^2} \,,
\end{multline}
where we define $e'_{n+m}(u)=-e_{n+m}(u)^2-[e_{n+m}(u),e_{n+m,1}]$.

\medskip
\noindent
\underline{Commutator of $f_i(u)$ and $f_i(v)$}
\

\noindent
Unless $N=2n+1$, $\ol{n+m}=\bar{1}$, and $i=n+m$, we impose:
\begin{equation}\label{eq:Dr-osp-rel-ext6.1}
   [f_i(u),f_i(v)]=-\frac{(\alpha_i,\alpha_i)}{2} \, \frac{(f_i(u)-f_i(v))^2}{u-v} \,.
\end{equation}
For the remaining case $N=2n+1$, $\ol{n+m}=\bar{1}$, and $i=n+m$,
following~(\ref{eq:osp12-7},~\ref{eq:osp12-8}), we impose:
\begin{multline}\label{eq:Dr-osp-rel-ext6.2}
  [f_{n+m}(u),f_{n+m}(v)]=
  \frac{f'_{n+m}(v)-f'_{n+m}(u)}{u-v} + \frac{f_{n+m}(u)^2-f_{n+m}(v)^2}{u-v} \, + \\
  \frac{f_{n+m}(v)f_{n+m}(u)-f_{n+m}(u)f_{n+m}(v)}{2(u-v)} - \frac{(f_{n+m}(v)-f_{n+m}(u))^2}{2(u-v)^2} \,,
\end{multline}
where we define $f'_{n+m}(u)=f_{n+m}(u)^2+[f_{n+m}(u),f_{n+m,1}]$.

\medskip
\noindent
\underline{Commutator of $e_i(u)$ and $e_j(v)$ for $i<j$}
\

\noindent
Unless $N=2n, \ol{n+m}=\bar{0}, \ol{n+m-1}=\bar{1}$, and $j=i+1=n+m$, we impose:
\begin{equation}\label{eq:Dr-osp-rel-ext7.1}
   u[e^\circ_i(u),e_j(v)]-v[e_i(u),e^\circ_j(v)] = -(\alpha_i,\alpha_j) e_i(u)e_j(v) \,.
\end{equation}
For $N=2n, \ol{n+m}=\bar{0}, \ol{n+m-1}=\bar{1}$, and $j=i+1=n+m$, following~\eqref{eq:osp221-4simplify} we impose:
\begin{equation}\label{eq:Dr-osp-rel-ext7.2}
   u[e^\circ_{n+m-1}(u),e_{n+m}(v)]-v[e_{n+m-1}(u),e^\circ_{n+m}(v)] = e_{n+m-1}(u)e_{n+m}(v)-e_{n+m}(v)e_{n+m-1}(u) \,.
\end{equation}

\medskip
\noindent
\underline{Commutator of $f_i(u)$ and $f_j(v)$ for $i<j$}
\

\noindent
Unless $N=2n, \ol{n+m}=\bar{0}, \ol{n+m-1}=\bar{1}$, and $j=i+1=n+m$, we impose:
\begin{equation}\label{eq:Dr-osp-rel-ext8.1}
   u[f^\circ_i(u),f_j(v)]-v[f_i(u),f^\circ_j(v)] = (\alpha_i,\alpha_j) f_j(v)f_i(u) \,.
\end{equation}
For $N=2n, \ol{n+m}=\bar{0}, \ol{n+m-1}=\bar{1}$, and $j=i+1=n+m$, following~\eqref{eq:osp221-5simplify} we impose:
\begin{equation}\label{eq:Dr-osp-rel-ext8.2}
   u[f^\circ_{n+m-1}(u),f_{n+m}(v)]-v[f_{n+m-1}(u),f^\circ_{n+m}(v)] = -f_{n+m-1}(u)f_{n+m}(v)+f_{n+m}(v)f_{n+m-1}(u) \,.
\end{equation}

\medskip
\noindent
\underline{``Additional'' relations for $N=2n+1$ and $\ol{n+m}=\bar{1}$}
\

For $N=2n+1$ and $\ol{n+m}=\bar{1}$, following~(\ref{eq:osp322-weird1},~\ref{eq:osp322-weird2}), we impose:
\begin{multline}\label{eq:Dr-osp-rel-ext-weird1}
  [e_{n+m-1,1},e'_{n+m}(v)]=-e'''_{n+m}(v)-e'''_{n+m}(v-\sfrac{3}{2}) \, + \\
  e_{n+m-1}(v)e'_{n+m}(v)+e'_{n+m}(v)e_{n+m-1}(v-\sfrac{3}{2})-(-1)^{\ol{n+m-1}}e_{n+m}(v)e''_{n+m}(v-\sfrac{3}{2}) \,,
\end{multline}
\begin{multline}\label{eq:Dr-osp-rel-ext-weird2}
  [f_{n+m-1,1},f'_{n+m}(v)]=f'''_{n+m}(v)+f'''_{n+m}(v-\sfrac{3}{2}) \, - \\
  f'_{n+m}(v)f_{n+m-1}(v)-f_{n+m-1}(v-\sfrac{3}{2})f'_{n+m}(v)-f''_{n+m}(v-\sfrac{3}{2})f_{n+m}(v) \,,
\end{multline}
where $e'_{n+m}(u),f'_{n+m}(u)$ are as above, and following
Lemmas~\ref{lem:E-entries-oddN}(a,b),~\ref{lem:F-entries-oddN}(a,b) we also define:
\begin{equation*}
\begin{split}
  & e''_{n+m}(v)=-[e_{n+m-1}(v),e_{n+m,1}] \,, \qquad
    e'''_{n+m}(v)=\big[[e_{n+m-1}(v),e_{n+m,1}],e_{n+m,1}\big] \,, \\
  & f''_{n+m}(v)=-[f_{n+m,1},f_{n+m-1}(v)]  \,, \qquad
    f'''_{n+m}(v)=-\big[f_{n+m,1},[f_{n+m,1},f_{n+m-1}(v)]\big] \,.
\end{split}
\end{equation*}

\medskip
\noindent
\underline{Standard Serre relations}
\

\noindent
For $1\leq i\ne j\leq n+m$ such that $a_{ii}\ne 0$ or $a_{ij}=0$, we impose:
\begin{equation}\label{eq:Dr-osp-rel-ext9.1}
  (\mathrm{ad}_{e_{i,1}})^{1-a_{ij}}(e_{j,1}) = 0 \,,
\end{equation}
\begin{equation}\label{eq:Dr-osp-rel-ext9.2}
  (\mathrm{ad}_{f_{i,1}})^{1-a_{ij}}(f_{j,1})=0 \,.
\end{equation}
For $1\leq i\leq n+m$ such that $a_{ii}=0$, we impose:
\begin{equation}\label{eq:Dr-osp-rel-ext9.3}
  [e_{i,1},e_{i,1}] = 0 \,,
\end{equation}
\begin{equation}\label{eq:Dr-osp-rel-ext9.4}
  [f_{i,1},f_{i,1}]=0 \,.
\end{equation}

\medskip
\noindent
\underline{Higher order Serre relations of degree $4$}
\

\noindent
For any of the sub-diagrams~\eqref{eq:pic-Serre-new4a}--\eqref{eq:pic-Serre-new4b}, we impose:
\begin{equation}\label{eq:Dr-osp-rel-ext10.1}
  \big[[e_{j,1},e_{t,1}],[e_{t,1},e_{k,1}]\big] = 0 \,,
\end{equation}
\begin{equation}\label{eq:Dr-osp-rel-ext10.2}
  \big[[f_{j,1},f_{t,1}],[f_{t,1},f_{k,1}]\big] = 0 \,,
\end{equation}
cf.~\eqref{eq:Lie-Serre-superA}.

\smallskip
\noindent
\underline{Higher order Serre relations of degree $3$}
\

\noindent
For the sub-diagram~\eqref{eq:pic-Serre-new3} (corresponding to $N=2n, n+m\geq 3$,
and $\Parity$ ending $\bar{1}\bar{0}$), we impose:
\begin{equation}\label{eq:Dr-osp-rel-ext11.1}
  \big[e_{t,1},[e_{s,1},e_{i,1}]\big]-\big[e_{s,1},[e_{t,1},e_{i,1}]\big]=0 \,,
\end{equation}
\begin{equation}\label{eq:Dr-osp-rel-ext11.2}
  \big[f_{t,1},[f_{s,1},f_{i,1}]\big]-\big[f_{s,1},[f_{t,1},f_{i,1}]\big]=0 \,,
\end{equation}
cf.~\eqref{eq:Lie-Serre-new3}.

\smallskip
\noindent
\underline{Higher order Serre relations of degree $6$}
\

\noindent
For the sub-diagram~\eqref{eq:pic-Serre-new6} (corresponding to $N=2n$, $n+m\geq 3$,
and $\Parity$ ending $\bar{1} \bar{0} \bar{1}$), we impose:
\begin{equation}\label{eq:Dr-osp-rel-ext12.1}
  \Big[[e_{j,1},e_{t,1}],\big[[e_{j,1},e_{t,1}],[e_{t,1},e_{k,1}]\big]\Big]=0 \,,
\end{equation}
\begin{equation}\label{eq:Dr-osp-rel-ext12.2}
  \Big[[f_{j,1},f_{t,1}],\big[[f_{j,1},f_{t,1}],[f_{t,1},f_{k,1}]\big]\Big]=0 \,,
\end{equation}
cf.~\eqref{eq:Lie-Serre-new6}.

\smallskip
\noindent
\underline{Higher order Serre relations of degree $7$}
\

\noindent
For the sub-diagram~\eqref{eq:pic-Serre-new7} (corresponding to $N=2n$, $n+m\geq 4$,
and $\Parity$ ending $\bar{0} \bar{0} \bar{1}$), we impose:
\begin{equation}\label{eq:Dr-osp-rel-ext13.1}
  \Big[\big[e_{i,1},[e_{j,1},e_{t,1}]\big],\big[[e_{j,1},e_{t,1}],[e_{t,1},e_{k,1}]\big]\Big]=0 \,,
\end{equation}
\begin{equation}\label{eq:Dr-osp-rel-ext13.2}
  \Big[\big[f_{i,1},[f_{j,1},f_{t,1}]\big],\big[[f_{j,1},f_{t,1}],[f_{t,1},f_{k,1}]\big]\Big]=0 \,,
\end{equation}
cf.~\eqref{eq:Lie-Serre-new7}.

\smallskip
Recall the generators
  $\{e_i^{(r)},f_i^{(r)}\}_{1\leq i\leq n+m}^{r\geq 1}\cup \{h_\imath^{(r)}\}_{1\leq \imath\leq n+m+1}^{r\geq 1}$
of $X^\rtt(\fosp(V))$, see~\eqref{eq:rtt-generators}. The following relation between $X(\fosp(V))$ and
$X^\rtt(\fosp(V))$ is the main result of the present~section.

\begin{Thm}\label{thm:Main-Theorem-ext}
The assignment
\begin{equation}\label{eq:Dr-to-RTT-assignment-ext}
   e_{i,r}\mapsto e_i^{(r)} \,,\quad f_{i,r}\mapsto f_i^{(r)} \,, \quad h_{\imath,r}\mapsto h_\imath^{(r)}
     \qquad \forall\ i\,,\, \imath \,,\, r
\end{equation}
gives rise to a superalgebra isomorphism
  $$\Upsilon\colon X(\fosp(V))\iso X^\rtt(\fosp(V)) \,.$$
\end{Thm}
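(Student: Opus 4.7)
The plan is to follow the standard strategy originating in~\cite{bk}: first construct the map $\Upsilon$ as a superalgebra homomorphism, then argue surjectivity directly from the Gauss decomposition, and finally deduce injectivity via the PBW theorem for $X^\rtt(\fosp(V))$ by passing to the associated graded.

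To verify that $\Upsilon$ is well-defined, I will check that each defining relation (\ref{eq:Dr-osp-rel-ext1})--(\ref{eq:Dr-osp-rel-ext13.2}) of $X(\fosp(V))$ holds in $X^\rtt(\fosp(V))$ after the substitution~\eqref{eq:Dr-to-RTT-assignment-ext}. The relations only involving currents with indices $i < n+m$ and $\imath \leq n+m$ follow directly from Corollary~\ref{cor:A-type relations} via the super $A$-type embedding~\eqref{eq:gl-to-osp} and, when $N = 2n$ with $|v_{n+m}| = \bar{0}$, from Corollary~\ref{cor:other-A-type relations}. The relations involving the special indices $n+m$ or $n+m+1$ reduce, via the rank-reduction Theorem~\ref{thm:embedding} and Corollary~\ref{cor:commutativity}, to the rank~$\leq 2$ computations carried out in Propositions~\ref{prop:osp20}--\ref{prop:osp322}; in particular, the ``additional'' cubic relations (\ref{eq:Dr-osp-rel-ext-weird1})--(\ref{eq:Dr-osp-rel-ext-weird2}) come from Proposition~\ref{prop:osp322} combined with Theorem~\ref{thm:embedding}. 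The standard and higher-order Serre relations (\ref{eq:Dr-osp-rel-ext9.1})--(\ref{eq:Dr-osp-rel-ext13.2}) follow from the corresponding relations established in Subsection~\ref{ssec:Serre-Yangian} (Lemmas~\ref{lem:Serre-deg3}--\ref{lem:Serre-deg6}), again lifted to arbitrary rank through~\eqref{eq:psi-embed}; the degree~$4$ relations attached to sub-diagram~\eqref{eq:pic-Serre-new4a} are absorbed into the super $A$-type via~\eqref{eq:Atype-superserre}.

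Surjectivity of $\Upsilon$ follows from the explicit Gauss decomposition of $T(u)$: the matrix entries $h_\imath^{(r)}$ and the simple off-diagonal entries $e_i^{(r)}, f_i^{(r)}$ generate $X^\rtt(\fosp(V))$, because all higher $e_{ij}^{(r)}, f_{ji}^{(r)}$ are expressed as iterated commutators of the simple ones by the explicit formulas of Lemmas~\ref{lem:E-entries-evenN}--\ref{lem:F-entries-oddN}, and the remaining diagonal entries $h_{i'}(u)$ are recovered from $h_\imath(u)$ and $c_V(u)$ via Lemmas~\ref{lem:h-recover-1}--\ref{lem:h-recover-3}, while $c_V(u)$ itself is a product of the $h_\imath$'s by Lemmas~\ref{lem:cseries-evenN-even}--\ref{lem:cseries-oddN}.

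For injectivity, I will equip $X(\fosp(V))$ with the filtration $\deg e_{i,r} = \deg f_{i,r} = \deg h_{\imath,r} = r-1$, matching the filtration~\eqref{eq:gradings} on $X^\rtt(\fosp(V))$ via~\eqref{eq:Dr-to-RTT-assignment-ext}. Writing $\wt{e}_{i,r}, \wt{f}_{i,r}, \wt{h}_{\imath,r}$ for the symbols in the associated graded $\Gr X(\fosp(V))$, the defining relations specialize so that $\{\wt{e}_{i,r}, \wt{f}_{i,r}\}$ and $\{\wt{h}_{\imath,r} - \frac{1}{2}\delta_{\imath \ne n+m+1}\,\text{(central part)}\}$ generate a copy of $U(\fosp(V)[\lt])$ tensored with a polynomial algebra on central generators (coming from $h_{n+m+1,r}$ together with the sums $\wt{h}_{i,r}+\wt{h}_{i',r}$). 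Comparing this with the description of $\Gr X^\rtt(\fosp(V))$ in Proposition~\ref{prop:assoc.graded}(a), the induced map $\Gr \Upsilon$ is surjective by the same reasoning as above and is a homomorphism between algebras of the same PBW-type, hence an isomorphism; standard filtered/graded arguments then give injectivity of $\Upsilon$ itself.

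The main obstacle is the third step: controlling $\Gr X(\fosp(V))$. Concretely, one must show that the relations~(\ref{eq:Dr-osp-rel-ext1})--(\ref{eq:Dr-osp-rel-ext13.2}), after passing to leading symbols, impose no more than the loop-algebra relations of $\fosp(V)[\lt]$ plus centrality of a suitable collection of series (including $h_{n+m+1}(u)$ modulo extracted Cartan data). The quadratic relations~(\ref{eq:Dr-osp-rel-ext5.2}),~(\ref{eq:Dr-osp-rel-ext6.2}),~(\ref{eq:Dr-osp-rel-ext7.2}),~(\ref{eq:Dr-osp-rel-ext8.2}) and the cubic auxiliary relations~(\ref{eq:Dr-osp-rel-ext-weird1})--(\ref{eq:Dr-osp-rel-ext-weird2}) involve subtle non-leading terms, so one needs to check that their symbols reduce to the expected Lie-theoretic relations (the higher-order Serre relations of Subsection~\ref{ssec:Serre-classical}). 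Once this is verified, an ordered-monomial counting argument using the PBW basis of $U(\fosp(V)[\lt])$ combined with Corollary~\ref{cor:pbw-thm} completes the proof.
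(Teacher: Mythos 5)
Your overall strategy is the same as the paper's: the construction of $\Upsilon$ as a homomorphism (via Corollaries~\ref{cor:A-type relations},~\ref{cor:other-A-type relations}, the commutativity of Corollary~\ref{cor:commutativity}, and the rank $\leq 2$ computations) and the surjectivity argument (via the explicit Gauss decomposition formulas) match the paper and are fine. The genuine gap is in the injectivity step, which you yourself flag as ``the main obstacle'' and then leave unverified. The assertion that the leading symbols of the relations ``impose no more than the loop-algebra relations of $\fosp(V)[\lt]$'' is exactly the statement that has to be proved, and it does not follow formally from the presentation: since $X(\fosp(V))$ only has generators attached to simple roots, one must first \emph{define} the non-simple root vectors $e^{(r)}_{ij}$, $f^{(r)}_{ji}$ inside $X(\fosp(V))$ by iterated commutators mirroring Lemmas~\ref{lem:E-entries-evenN}--\ref{lem:F-entries-oddN}, and then prove that their symbols $\bar{e}^{(r)}_{ij}$ in the associated graded of the positive subalgebra satisfy the full loop-algebra commutation relations~\eqref{eq:key-commut-rel}. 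This is the bulk of the paper's proof: an induction on $r+s$ driven by the adjoint action of $\bar{h}^{(2)}_\imath$ (Lemma~\ref{lem:ind-claim-1}) together with the non-degeneracy of a $2\times 2$ matrix of pairings $(e^*_\imath,\alpha_{ij})$ to separate the two commutators appearing in~\eqref{eq:ind-step}, the consistency statements of Lemma~\ref{lem:ind-claim-2} showing that the iterated-commutator definitions are independent of how the degree is distributed among the factors, and a separate treatment of the diagonal case $(i,j)=(k,\ell)$. In particular, the ``additional'' relations~(\ref{eq:Dr-osp-rel-ext-weird1},~\ref{eq:Dr-osp-rel-ext-weird2}) are not merely relations whose symbols one checks afterwards; they are used in an essential way at precisely this stage (to establish~\eqref{eq:weird-used} when $N=2n+1$ and $\ol{n+m}=\bar{1}$), which is why they appear among the defining relations at all. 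Without carrying out this spanning argument, the claim that $\Gr\,\Upsilon$ is ``a homomorphism between algebras of the same PBW-type, hence an isomorphism'' is circular.

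A secondary imprecision: your description of the central part of $\Gr\, X(\fosp(V))$ in terms of ``sums $\wt{h}_{i,r}+\wt{h}_{i',r}$'' does not parse in the Drinfeld presentation, whose Cartan generators are only $h_{\imath,r}$ with $\imath\leq n+m+1$; the comparison with $\Gr\, X^\rtt(\fosp(V))\simeq U(\fosp(V)[\lt])\otimes\BC[\sfc_1,\sfc_2,\dots]$ of Proposition~\ref{prop:assoc.graded}(a) has to be routed through the factorization of $c_V(u)$ from Lemmas~\ref{lem:cseries-evenN-even}--\ref{lem:cseries-oddN} rather than through nonexistent generators $h_{i',r}$. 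This is fixable, but as written the Cartan/central bookkeeping on the two sides is not aligned.
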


\begin{proof}
First, we verify that the series $e_i(u),f_i(u),h_\imath(u)$ satisfy the defining
relations~\eqref{eq:Dr-osp-rel-ext1}--\eqref{eq:Dr-osp-rel-ext13.2}, so that the
assignment~\eqref{eq:Dr-to-RTT-assignment-ext} gives rise to a superalgebra homomorphism
\begin{equation}\label{eq:Upsilon-extended}
  \Upsilon\colon X(\fosp(V))\longrightarrow X^\rtt(\fosp(V)) \,.
\end{equation}
For $1\leq i,j<n+m$ and $1\leq \imath\leq n+m$, all these relations follow from Corollary~\ref{cor:A-type relations}
combined with the corresponding super $A$-type relations of Theorem~\ref{thm:Drinfeld-A}. In the remaining cases,
the relations follow from the commutativity of Corollary~\ref{cor:commutativity} and the rank $\leq 2$ relations
of Section~\ref{sec:rank 1-2}. The surjectivity of the homomorphism $\Upsilon$ from~\eqref{eq:Upsilon-extended}
follows from the results of Subsections~\ref{ssec:e-currents}--\ref{ssec:c-current}.

To prove the injectivity of~\eqref{eq:Upsilon-extended}, we follow the classical argument of~\cite{bk}.
First, we note that Corollary~\ref{cor:pbw-thm} implies in the standard way (cf.~\cite[\S6]{m}) that
the set of ordered monomials in
\begin{equation}\label{eq:pbw-monomials}
  \big\{ h^{(r)}_{\imath},e^{(r)}_{ij},f^{(r)}_{ji} \,\big|\,
         1\leq \imath\leq n+m+1 \,, i<j\leq i'-\delta_{\ol{i},\bar{0}} \,, r\geq 1 \big\} \,,
\end{equation}
with the powers of odd generators not exceeding $1$, form a basis of $X^\rtt(\fosp(V))$. We define the elements
$\{e_{ij}^{(r)},f_{ji}^{(r)}\}$ with $i<j\leq i'-\delta_{\ol{i},\bar{0}}$ and $r\geq 1$ in the algebra $X(\fosp(V))$,
so that the series $e_{ij}(u)=\sum_{r\geq 1} e_{ij}^{(r)} u^{-r}$ and $f_{ji}(u)=\sum_{r\geq 1} f_{ji}^{(r)} u^{-r}$
are expressed through $e_i(u),f_i(u)$ as in Subsections~\ref{ssec:e-currents}--\ref{ssec:f-currents}. These notations
are compatible with those in $X^\rtt(\fosp(V))$ as we clearly have $\Upsilon(e_{ij}(u))=e_{ij}(u)$ and
$\Upsilon(f_{ji}(u))=f_{ji}(u)$. Thus, to prove the injectivity of~\eqref{eq:Upsilon-extended} it suffices to show
that $X(\fosp(V))$ is spanned by the ordered monomials in~\eqref{eq:pbw-monomials}, with the powers of odd generators
not exceeding $1$.

Let $X^>(\fosp(V))$ denote the \emph{positive subalgebra} of $X(\fosp(V))$ generated by all $\{e_{i,r}\}$.
We consider a filtration on $X^>(\fosp(V))$ defined via $\deg\, e_{i,r}=r-1$, cf.~\eqref{eq:gradings}.
Likewise, let $X^{\geq }(\fosp(V))$ denote the \emph{non-negative subalgebra} of $X(\fosp(V))$ generated
by all $\{e_{i,r},h_{\imath,r}\}$, and consider a filtration on $X^{\geq}(\fosp(V))$ defined via
$\deg\, e_{i,r}=\deg\, h_{\imath,r}=r-1$. Let $\Gr\, X^>(\fosp(V))$, $\Gr\, X^\geq(\fosp(V))$ denote
the corresponding associated graded algebras. Similarly to Subsection~\ref{ssec:quasiclassical},
let $\hat{e}^{(r)}_{ij}:=(-1)^{\ol{i}}\, e^{(r)}_{ij}$. We shall denote the images of $\hat{e}^{(r)}_{ij}$
in $\Gr_{r-1} X^>(\fosp(V))$ or $\Gr_{r-1} X^{\geq}(\fosp(V))$ simply by
$\bar{e}^{(r)}_{ij}$\footnote{Instead of a more confusing notation $\wt{\hat{e}}^{(r)}_{ij}$ as if using notations
from Subsection~\ref{ssec:quasiclassical}.}. Let also $\bar{h}^{(r)}_\imath$ denote the image of $h_{\imath,r}$
in $\Gr_{r-1} X^{\geq}(\fosp(V))$. Finally, we extend $\bar{e}^{(r)}_{ij}$ to all $1\leq i<j\leq 1'$ via
\begin{equation}\label{eq:bare-skew}
  \bar{e}^{(r)}_{ij}=-(-1)^{\ol{i}\cdot \ol{j}+\ol{i}}\theta_i\theta_j\, \bar{e}^{(r)}_{j'i'} \,,
\end{equation}
similarly to the relation satisfied by $F_{ij}\in \fosp(V)$. To establish the aforementioned spanning property
of $X^>(\fosp(V))$, it suffices to show that $\bar{e}^{(r)}_{ij}$ satisfy the commutation relations
alike~\eqref{eq:F-commutation}:
\begin{multline}\label{eq:key-commut-rel}
  [\bar{e}^{(r)}_{ij},\bar{e}^{(s)}_{k\ell}] =
  \delta_{kj} \bar{e}^{(r+s-1)}_{i \ell} - \delta_{\ell i} (-1)^{(\ol{i}+\ol{j})(\ol{k}+\ol{\ell})}\, \bar{e}^{(r+s-1)}_{kj} \, - \\
  \delta_{k i'} (-1)^{\ol{i}\cdot \ol{j}+\ol{i}}\theta_i\theta_j\, \bar{e}^{(r+s-1)}_{j' \ell} +
  \delta_{\ell j'} (-1)^{\ol{i}\cdot \ol{k} + \ol{\ell}\cdot \ol{k}} \theta_{i'}\theta_{j'}\, \bar{e}^{(r+s-1)}_{k i'} \,.
\end{multline}

We prove~\eqref{eq:key-commut-rel} by induction on $r+s$. The base of induction $r=s=1$ is trivial as our
relations~\eqref{eq:Dr-osp-rel-ext1}--\eqref{eq:Dr-osp-rel-ext13.2} are compatible with the defining relations
of $\fosp(V)\oplus \BC\cdot \sfc$, cf.~Theorem~\ref{thm:Chevalley-Serre-Zhang}. The proof of the induction step
relies on Lemmas~\ref{lem:ind-claim-1} and~\ref{lem:ind-claim-2}. First, we define
$\{\alpha_{ij}\}_{1\leq i<j\leq 1'}\subset \fh^*$:
\begin{equation}\label{eq:roots}
\begin{split}
  & \alpha_{ij}=\alpha_{j'i'}=e^*_i-e^*_j \,,\quad
    \alpha_{ij'}=\alpha_{ji'}=e^*_i+e^*_j \qquad \forall\ 1\leq i<j\leq n+m \,,\\
  & \alpha_{i,n+m+1}=\alpha_{n+m+1,i'}=e^*_i  \qquad \forall\ 1\leq i\leq n+m \quad \mathrm{if} \quad N=2n+1 \,, \\
  & \alpha_{ii'}=
    \begin{cases}
      2e^*_i & \text{if} \quad \ol{i}=\bar{1} \\
      0 & \text{otherwise}
    \end{cases} \,.
\end{split}
\end{equation}
According to~\eqref{eq:Dr-osp-rel-ext3.1}, we have
$[h_{\imath,2},e_{j,r}]=(e^*_\imath,\alpha_j)\, (e_{j,r+1}+h_{\imath,1}e_{j,r})$, so that
\begin{equation}\label{eq:bar-h2}
  [\bar{h}^{(2)}_\imath, \bar{e}^{(r)}_{j}]=(e^*_\imath,\alpha_j) \bar{e}^{(r+1)}_{j}
    \qquad \forall\ 1\leq \imath,j\leq n+m \,.
\end{equation}
This result can be generalized as follows:

\begin{Lem}\label{lem:ind-claim-1}
For any $1\leq i<j\leq 1'$, $1\leq \imath\leq n+m$, and $r\geq 1$, we have
\begin{equation}\label{bar-h2-general}
  [\bar{h}^{(2)}_\imath, \bar{e}^{(r)}_{ij}]=(e^*_\imath,\alpha_{ij}) \bar{e}^{(r+1)}_{ij} \,.
\end{equation}
\end{Lem}

Applying $\mathrm{ad}_{\bar{h}^{(2)}_\imath}$ to~\eqref{eq:key-commut-rel}, we thus obtain:
\begin{equation}\label{eq:ind-step}
\begin{split}
  & (e^*_\imath,\alpha_{ij})[\bar{e}^{(r+1)}_{ij},\bar{e}^{(s)}_{k\ell}] +
    (e^*_\imath,\alpha_{k\ell})[\bar{e}^{(r)}_{ij},\bar{e}^{(s+1)}_{k\ell}] \, = \\
  & \qquad \qquad \delta_{kj} (e^*_\imath,\alpha_{ij}+\alpha_{k\ell}) \bar{e}^{(r+s)}_{i \ell} -
    \delta_{\ell i} (e^*_\imath,\alpha_{ij}+\alpha_{k\ell}) (-1)^{(\ol{i}+\ol{j})(\ol{k}+\ol{\ell})}\, \bar{e}^{(r+s)}_{kj} \, - \\
  & \qquad \qquad
    \delta_{k i'} (e^*_\imath,\alpha_{ij}+\alpha_{k\ell})
      (-1)^{\ol{i}\cdot \ol{j}+\ol{i}}\theta_i\theta_j\, \bar{e}^{(r+s)}_{j' \ell} +
    \delta_{\ell j'} (e^*_\imath,\alpha_{ij}+\alpha_{k\ell})
      (-1)^{\ol{i}\cdot \ol{k} + \ol{\ell}\cdot \ol{k}} \theta_{i'}\theta_{j'}\, \bar{e}^{(r+s)}_{k i'} \,,
\end{split}
\end{equation}
where we used the equalities
\begin{equation*}
\begin{split}
  & \delta_{kj} (e^*_\imath,\alpha_{i \ell}) = \delta_{kj} (e^*_\imath,\alpha_{ij}+\alpha_{k\ell}) \,,\quad
    \delta_{\ell i} (e^*_\imath,\alpha_{kj}) = \delta_{\ell i} (e^*_\imath,\alpha_{ij}+\alpha_{k\ell}) \,, \\
  & \delta_{k i'} (e^*_\imath,\alpha_{j' \ell}) = \delta_{k i'} (e^*_\imath,\alpha_{ij}+\alpha_{k\ell}) \,,\quad
    \delta_{\ell j'} (e^*_\imath,\alpha_{k i'}) = \delta_{\ell j'} (e^*_\imath,\alpha_{ij}+\alpha_{k\ell}) \,,
\end{split}
\end{equation*}
which follow by comparing $\fh$-eigenvalues of all summands in~\eqref{eq:F-commutation}. Note that if
$\alpha_{ij}\ne \alpha_{k\ell}$, then we can find $1\leq \imath\ne \jmath\leq n+m$ such that the matrix
  $\Big(\begin{smallmatrix}
      (e^*_\imath,\alpha_{ij}) & (e^*_\imath,\alpha_{k\ell}) \\
      (e^*_\jmath,\alpha_{ij}) & (e^*_\jmath,\alpha_{k\ell})
   \end{smallmatrix}\Big)$
is non-degenerate. Then, combining~\eqref{eq:ind-step} for $\imath,\jmath$, we obtain the desired
formulas~\eqref{eq:key-commut-rel} for both commutators $[\bar{e}^{(r+1)}_{ij},\bar{e}^{(s)}_{k\ell}]$
and $[\bar{e}^{(r)}_{ij},\bar{e}^{(s+1)}_{k\ell}]$, completing the induction step. It thus remains
to prove~\eqref{eq:key-commut-rel} for $(i,j)=(k,\ell)$.

The proof of the latter result as well as the proof of Lemma~\ref{lem:ind-claim-1} rely on Lemma~\ref{lem:ind-claim-2}.
To state this result, let us first summarize the inductive definition of $\bar{e}^{(r)}_{ij}$:
\begin{equation}\label{eq:bare-iterative-1}
  \bar{e}^{(r)}_{i,j+1}=[\bar{e}^{(r)}_{ij},\bar{e}^{(1)}_{j,j+1}]
\end{equation}
for $1\leq i<j\leq n+m$ if $N=2n+1$ or $1\leq i<j<n+m$ if $N=2n$,
\begin{equation}\label{eq:bare-iterative-2}
  \bar{e}^{(r)}_{i,n+m+1}=
  \begin{cases}
     [\bar{e}^{(r)}_{i,n+m-1},\bar{e}^{(1)}_{n+m-1,n+m+1}]
       & \text{if} \quad N=2n \ \ \mathrm{and} \ \ \ol{n+m}=\bar{0} \\
     \sfrac{1}{2}[\bar{e}^{(r)}_{i,n+m},\bar{e}^{(1)}_{n+m,n+m+1}]
       & \text{if} \quad N=2n \ \ \mathrm{and} \ \ \ol{n+m}=\bar{1}
  \end{cases} \,,
\end{equation}
as well as
\begin{equation}\label{eq:bare-iterative-3}
  \bar{e}^{(r)}_{ij'}=(-1)^{1+\ol{j}+\ol{j}\cdot \ol{j+1}}\, [\bar{e}^{(r)}_{i(j+1)'},\bar{e}^{(1)}_{j,j+1}]
\end{equation}
for $1\leq i<j\leq n+m$ if $N=2n+1$ or $1\leq i<j<n+m$ if $N=2n$, as well as $i=j$ if $\ol{i}=\bar{1}$.

\begin{Lem}\label{lem:ind-claim-2}
(a) For $1\leq i<j\leq \lfloor \sfrac{N-1}{2} \rfloor+m$ and $r,s\geq 1$, we have:
\begin{equation}\label{eq:bare-iterative-1-general}
  \bar{e}^{(r+s-1)}_{i,j+1}=[\bar{e}^{(r)}_{ij},\bar{e}^{(s)}_{j,j+1}] \,.
\end{equation}

\noindent
(b) For $N=2n$ and $r,s\geq 1$, we have:
\begin{equation}\label{eq:bare-iterative-2-general}
  \bar{e}^{(r+s-1)}_{i,n+m+1}=
  \begin{cases}
     [\bar{e}^{(r)}_{i,n+m-1},\bar{e}^{(s)}_{n+m-1,n+m+1}]
       & \text{if} \quad N=2n \ \ \mathrm{and} \ \ \ol{n+m}=\bar{0} \\
     \sfrac{1}{2}[\bar{e}^{(r)}_{i,n+m},\bar{e}^{(s)}_{n+m,n+m+1}]
       & \text{if} \quad N=2n \ \ \mathrm{and} \ \ \ol{n+m}=\bar{1}
  \end{cases} \,.
\end{equation}

\noindent
(c) For $1\leq i<j\leq \lfloor \sfrac{N-1}{2} \rfloor+m$ as well as $i=j$ if $\ol{i}=\bar{1}$,
and $r,s\geq 1$, we have:
\begin{equation}\label{eq:bare-iterative-3-general}
  \bar{e}^{(r+s-1)}_{ij'}=
  (-1)^{1+\ol{j}+\ol{j}\cdot \ol{j+1}}\, [\bar{e}^{(r)}_{i(j+1)'},\bar{e}^{(s)}_{j,j+1}] \,.
\end{equation}
\end{Lem}

\begin{proof}[Proofs of Lemma~\ref{lem:ind-claim-1} and Lemma~\ref{lem:ind-claim-2}]
\

We shall prove Lemma~\ref{lem:ind-claim-2} by induction on $i,j$,
while at the same time also proving Lemma~\ref{lem:ind-claim-1}.

\medskip
(a) We prove~\eqref{eq:bare-iterative-1-general} by induction on $j-i$. According to the defining
relations~(\ref{eq:Dr-osp-rel-ext7.1},~\ref{eq:Dr-osp-rel-ext7.2}), we have
  $[\bar{e}^{(r+1)}_{j-1,j},\bar{e}^{(s)}_{j,j+1}]=[\bar{e}^{(r)}_{j-1,j},\bar{e}^{(s+1)}_{j,j+1}]$,
establishing the base of induction. As for the induction step:
\begin{multline}\label{eq:proof-lem2a}
  [\bar{e}^{(r)}_{ij},\bar{e}^{(s)}_{j,j+1}]=
  \big[[\bar{e}^{(r)}_{i,j-1},\bar{e}^{(1)}_{j-1,j}],\bar{e}^{(s)}_{j,j+1}\big]=
  \big[\bar{e}^{(r)}_{i,j-1},[\bar{e}^{(1)}_{j-1,j},\bar{e}^{(s)}_{j,j+1}]\big]= \\
  \big[\bar{e}^{(r)}_{i,j-1},[\bar{e}^{(s)}_{j-1,j},\bar{e}^{(1)}_{j,j+1}]\big]=
  \big[[\bar{e}^{(r)}_{i,j-1},\bar{e}^{(s)}_{j-1,j}],\bar{e}^{(1)}_{j,j+1}\big]=
  [\bar{e}^{(r+s-1)}_{ij},\bar{e}^{(1)}_{j,j+1}]\overset{\eqref{eq:bare-iterative-1}}{=}\bar{e}^{(r+s-1)}_{i,j+1} \,.
\end{multline}
Here, we used the induction hypothesis in the first, third, and fifth equalities, while the second and fourth
equalities relied on the commutativity $[\bar{e}^{(\sharp)}_{i,j-1},\bar{e}^{(\sharp')}_{j,j+1}]=0$, which
follows from~\eqref{eq:Dr-osp-rel-ext9.1}.

We can now also prove~\eqref{bar-h2-general} for $1\leq i<j\leq \lfloor \sfrac{N-1}{2} \rfloor+m$
arguing by induction on $j-i$:
\begin{multline}\label{eq:proof-lem1a}
  [\bar{h}^{(2)}_\imath, \bar{e}^{(r)}_{ij}]\overset{\eqref{eq:bare-iterative-1}}{=}
  \big[\bar{h}^{(2)}_\imath, [\bar{e}^{(r)}_{i,j-1},\bar{e}^{(1)}_{j-1,j}]\big]=
  \big[[\bar{h}^{(2)}_\imath, \bar{e}^{(r)}_{i,j-1}],\bar{e}^{(1)}_{j-1,j}]\big] +
  \big[\bar{e}^{(r)}_{i,j-1},[\bar{h}^{(2)}_\imath,\bar{e}^{(1)}_{j-1,j}]\big] = \\
  (e^*_\imath,\alpha_{i,j-1}) [\bar{e}^{(r+1)}_{i,j-1},\bar{e}^{(1)}_{j-1,j}] +
  (e^*_\imath,\alpha_{j-1,j}) [\bar{e}^{(r)}_{j-1,j},\bar{e}^{(2)}_{j-1,j}]
  \overset{\eqref{eq:bare-iterative-1-general}}{=} (e^*_\imath,\alpha_{ij}) \bar{e}^{(r+1)}_{ij} \,.
\end{multline}

(b) The proofs of part (b) and of Lemma~\ref{lem:ind-claim-1} in that case are completely analogous to part (a).

\medskip
(c) We prove~\eqref{eq:bare-iterative-3-general} by a decreasing induction on $j$ (with an inner decreasing
induction on $i$). Let us note that once~\eqref{eq:bare-iterative-3-general} is established for specific $i,j$
and any $r,s$, the validity of~\eqref{bar-h2-general} for the same $i,j$ and arbitrary $r,\imath$ is derived
exactly as explained in the proof of (a) above. For the base of induction, we shall consider the cases
$N=2n$ and $N=2n+1$ separately.

\medskip
\noindent
\underline{Case~1:} $N=2n$ and $j=n+m-1$.
\

First, we treat the case $i=n+m-1$ with $\ol{n+m-1}=\bar{1}$. In this case,~\eqref{eq:bare-iterative-3-general} is equivalent to:
\begin{equation}\label{eq:missed-case}
  [\bar{e}^{(r)}_{n+m-1,n+m+1},\bar{e}^{(s)}_{n+m-1,n+m}]=[\bar{e}^{(r+s-1)}_{n+m-1,n+m+1},\bar{e}^{(1)}_{n+m-1,n+m}] \,.
\end{equation}
If $\ol{n+m}=\bar{0}$, then~\eqref{eq:missed-case} follows from~\eqref{eq:Dr-osp-rel-ext7.2}. On the other hand,
for $\ol{n+m}=\bar{1}$, we have $\bar{e}^{(r)}_{n+m-1,n+m+1}=\frac{1}{2}[\bar{e}^{(r)}_{n+m-1,n+m},\bar{e}^{(1)}_{n+m,n+m+1}]$
by~\eqref{eq:bare-iterative-2}, $[\bar{e}^{(\sharp)}_{n+m-1,n+m},\bar{e}^{(\sharp')}_{n+m-1,n+m}]=0$
by~\eqref{eq:Dr-osp-rel-ext5.1}. Therefore, we get:
\begin{multline*}
  [\bar{e}^{(r)}_{n+m-1,n+m+1},\bar{e}^{(s)}_{n+m-1,n+m}]=
  \sfrac{1}{2}\big[[\bar{e}^{(r)}_{n+m-1,n+m},\bar{e}^{(1)}_{n+m,n+m+1}],\bar{e}^{(s)}_{n+m-1,n+m}\big]=\\
  \sfrac{1}{2}\big[\bar{e}^{(r)}_{n+m-1,n+m},[\bar{e}^{(1)}_{n+m,n+m+1},\bar{e}^{(s)}_{n+m-1,n+m}]\big]\overset{(b)}{=}
  \sfrac{1}{2}\big[\bar{e}^{(r)}_{n+m-1,n+m},[\bar{e}^{(s)}_{n+m,n+m+1},\bar{e}^{(1)}_{n+m-1,n+m}]\big]=\\
  \sfrac{1}{2}\big[[\bar{e}^{(r)}_{n+m-1,n+m},\bar{e}^{(s)}_{n+m,n+m+1}],\bar{e}^{(1)}_{n+m-1,n+m}\big]\overset{(b)}{=}
  [\bar{e}^{(r+s-1)}_{n+m-1,n+m+1},\bar{e}^{(1)}_{n+m-1,n+m}] \,.
\end{multline*}
This completes our proof of~\eqref{eq:missed-case}.

Next, we treat the case $i=n+m-2$. There are two cases to consider: $\ol{n+m}=\bar{1}$ and $\ol{n+m}=\bar{0}$.
If $\ol{n+m}=\bar{1}$, then
  $\bar{e}^{(r)}_{n+m-2,n+m+1}=\sfrac{1}{2}[\bar{e}^{(r)}_{n+m-2,n+m},\bar{e}^{(1)}_{n+m,n+m+1}]$
and so we have:
\begin{multline*}
  [\bar{e}^{(r)}_{n+m-2,n+m+1},\bar{e}^{(s)}_{n+m-1,n+m}]=
  \sfrac{1}{2}\big[[\bar{e}^{(r)}_{n+m-2,n+m},\bar{e}^{(1)}_{n+m,n+m+1}],\bar{e}^{(s)}_{n+m-1,n+m}\big]= \\
  \sfrac{1}{2}\big[\bar{e}^{(r)}_{n+m-2,n+m},[\bar{e}^{(1)}_{n+m,n+m+1},\bar{e}^{(s)}_{n+m-1,n+m}]\big]\overset{(b)}{=}
  \sfrac{1}{2}\big[\bar{e}^{(r)}_{n+m-2,n+m},[\bar{e}^{(s)}_{n+m,n+m+1},\bar{e}^{(1)}_{n+m-1,n+m}]\big]= \\
  \sfrac{1}{2}\big[[\bar{e}^{(r)}_{n+m-2,n+m},\bar{e}^{(s)}_{n+m,n+m+1}],\bar{e}^{(1)}_{n+m-1,n+m}\big]\overset{(b)}{=}
  [\bar{e}^{(r+s-1)}_{n+m-2,n+m+1},\bar{e}^{(1)}_{n+m-1,n+m}]\overset{\eqref{eq:bare-iterative-3}}{=} \\
  (-1)^{1+\ol{n+m-1}+\ol{n+m-1}\cdot \ol{n+m}}\, \bar{e}^{(r+s-1)}_{n+m-2,n+m+2} \,,
\end{multline*}
where we used an already established $[\bar{e}^{(\sharp)}_{n+m-2,n+m},\bar{e}^{(\sharp')}_{n+m-1,n+m}]=0$
in the second and fourth equalities. If $\ol{n+m}=\bar{0}$, then instead we have
  $\bar{e}^{(r)}_{n+m-2,n+m+1}=[\bar{e}^{(r)}_{n+m-2,n+m-1},\bar{e}^{(1)}_{n+m-1,n+m+1}]$
as well as
  $[\bar{e}^{(\sharp)}_{n+m-1,n+m+1},\bar{e}^{(\sharp')}_{n+m-1,n+m}]=0$,
due to~\eqref{eq:Dr-osp-rel-ext9.1}, cf.~\eqref{eq:Dr-osp-rel-ext9.1-series}. Therefore, we obtain:
\begin{multline*}
  [\bar{e}^{(r)}_{n+m-2,n+m+1},\bar{e}^{(s)}_{n+m-1,n+m}]=
  \big[[\bar{e}^{(r)}_{n+m-2,n+m-1},\bar{e}^{(1)}_{n+m-1,n+m+1}],\bar{e}^{(s)}_{n+m-1,n+m}\big]= \\
  -(-1)^{\ol{n+m-1}(1+\ol{n+m-2})}\,
    \big[\bar{e}^{(1)}_{n+m-1,n+m+1},[\bar{e}^{(r)}_{n+m-2,n+m-1},\bar{e}^{(s)}_{n+m-1,n+m}]\big]\overset{(a)}{=} \\
  -(-1)^{\ol{n+m-1}(1+\ol{n+m-2})}\,
    \big[\bar{e}^{(1)}_{n+m-1,n+m+1},[\bar{e}^{(r+s-1)}_{n+m-2,n+m-1},\bar{e}^{(1)}_{n+m-1,n+m}]\big]= \\
  \big[[\bar{e}^{(r+s-1)}_{n+m-2,n+m-1},\bar{e}^{(1)}_{n+m-1,n+m+1}],\bar{e}^{(1)}_{n+m-1,n+m}\big]
  \overset{\eqref{eq:bare-iterative-3}}{=} (-1)^{1+\ol{n+m-1}}\, \bar{e}^{(r+s-1)}_{n+m-2,n+m+2} \,.
\end{multline*}

The rest proceeds by a decreasing induction on $i$ (with the base $i=n+m-2$ established above).
To this end, we note:
\begin{multline*}
  [\bar{e}^{(r)}_{i,n+m+1},\bar{e}^{(s)}_{n+m-1,n+m}]=
  \big[[\bar{e}^{(r)}_{i,i+1},\bar{e}^{(1)}_{i+1,n+m+1}],\bar{e}^{(s)}_{n+m-1,n+m}\big]= \\
  \big[\bar{e}^{(r)}_{i,i+1},[\bar{e}^{(1)}_{i+1,n+m+1},\bar{e}^{(s)}_{n+m-1,n+m}]\big]=
  \big[\bar{e}^{(r)}_{i,i+1},[\bar{e}^{(s)}_{i+1,n+m+1},\bar{e}^{(1)}_{n+m-1,n+m}]\big]= \\
  \big[[\bar{e}^{(r)}_{i,i+1},\bar{e}^{(s)}_{i+1,n+m+1}],\bar{e}^{(1)}_{n+m-1,n+m}\big]=
  [\bar{e}^{(r+s-1)}_{i,n+m+1},\bar{e}^{(1)}_{n+m-1,n+m}]\overset{\eqref{eq:bare-iterative-3}}{=}
  (-1)^{1+\ol{n+m-1}(1+\ol{n+m})}\bar{e}^{(r+s-1)}_{i,n+m+2} \,,
\end{multline*}
where in the first and fifth equalities we used already established cases of~\eqref{eq:key-commut-rel},
while the second and fourth equalities relied on the commutativity
  $[\bar{e}^{(\sharp)}_{i,i+1},\bar{e}^{(\sharp')}_{n+m-1,n+m}]=0$,
due to~\eqref{eq:Dr-osp-rel-ext9.1}.

\medskip
\noindent
\underline{Case~2:} $N=2n+1$ and $j=n+m$.
\

The proof is by a decreasing induction on $i$. We shall only give details for the base of induction
($i=n+m$ or $i=n+m-1$), as the step of induction is identical to the above one for even $N$.

If $i=n+m$ with $\ol{n+m}=\bar{1}$, then according to~\eqref{eq:Dr-osp-rel-ext5.2} we get:
\begin{equation*}
  [\bar{e}^{(r)}_{n+m,n+m+1},\bar{e}^{(s)}_{n+m,n+m+1}]=
  [\bar{e}^{(r+s-1)}_{n+m,n+m+1},\bar{e}^{(1)}_{n+m,n+m+1}]=
  \bar{e}^{(r+s-1)}_{n+m,n+m+2} \,.
\end{equation*}
If $i=n+m$ with $\ol{n+m}=\bar{0}$, then $[\bar{e}^{(r)}_{n+m,n+m+1},\bar{e}^{(s)}_{n+m,n+m+1}]=0$
according to~\eqref{eq:Dr-osp-rel-ext5.1}.

Let us now treat the case $i=n+m-1$. If $\ol{n+m}=\bar{0}$, then
$[\bar{e}^{(\sharp)}_{n+m,n+m+1},\bar{e}^{(\sharp')}_{n+m,n+m+1}]=0$ as just shown.
Therefore:
\begin{multline*}
  [\bar{e}^{(r)}_{n+m-1,n+m+1},\bar{e}^{(s)}_{n+m,n+m+1}]=
  \big[[\bar{e}^{(r)}_{n+m-1,n+m},\bar{e}^{(1)}_{n+m,n+m+1}],\bar{e}^{(s)}_{n+m,n+m+1}\big]= \\
  -\big[\bar{e}^{(1)}_{n+m,n+m+1},[\bar{e}^{(r)}_{n+m-1,n+m},\bar{e}^{(s)}_{n+m,n+m+1}]\big]\overset{(a)}{=}
  -[\bar{e}^{(1)}_{n+m,n+m+1},\bar{e}^{(r+s-1)}_{n+m-1,n+m+1}]= \\
  [\bar{e}^{(r+s-1)}_{n+m-1,n+m+1},\bar{e}^{(1)}_{n+m,n+m+1}]
  \overset{\eqref{eq:bare-iterative-3}}{=}-\bar{e}^{(r+s-1)}_{n+m-1,n+m+2} \,.
\end{multline*}
If $\ol{n+m}=\bar{1}$, then according to~\eqref{eq:bare-iterative-3} it suffices to verify:
\begin{equation}\label{eq:weird-used}
  [\ol{e}^{(r)}_{n+m-1,n+m},\ol{e}^{(s)}_{n+m,n+m+2}] = [\ol{e}^{(r+s-1)}_{n+m-1,n+m},\ol{e}^{(1)}_{n+m,n+m+2}] \,.
\end{equation}
To prove the latter, we recall~\eqref{eq:Dr-osp-rel-ext-weird1} which implies
  $[\bar{e}^{(1)}_{n+m-1,n+m}, \bar{e}^{(s)}_{n+m,n+m+2}]=2\bar{e}^{(s)}_{n+m-1,n+m+2}$
for any $s\geq 1$. We also recall that
  $$e_{n+m-1,n+m+2}(v)=e'''_{n+m}(v)=\big[[e_{n+m-1}(v),e^{(1)}_{n+m,n+m+1}],e^{(1)}_{n+m,n+m+1}\big] \,.$$
Applying the super Jacobi identity to the latter, we find
\begin{equation*}
  e_{n+m-1,n+m+2}(v)=\sfrac{1}{2}\big[e_{n+m-1,n+m}(v),[e^{(1)}_{n+m,n+m+1},e^{(1)}_{n+m,n+m+1}]\big] \,,
\end{equation*}
so that
  $\bar{e}^{(s)}_{n+m-1,n+m+2}=\sfrac{1}{2}[\bar{e}^{(s)}_{n+m-1,n+m}, \bar{e}^{(1)}_{n+m,n+m+2}]$.
This establishes~\eqref{eq:weird-used} for $r=1$, $s\geq 1$. Commuting this further with
$\bar{h}^{(2)}_{n+m-1}$ several times, we derive the equality~\eqref{eq:weird-used} for any $r,s\geq 1$.

\medskip
The above completes the base of induction on $j$. For the step of induction, we argue as follows:
\begin{multline*}
  [\bar{e}^{(r)}_{i(j+1)'},\bar{e}^{(s)}_{j,j+1}]=
  (-1)^{1+\ol{j+1}+\ol{j+1}\cdot \ol{j+2}}\, \big[[\bar{e}^{(r)}_{i(j+2)'},\bar{e}^{(1)}_{j+1,j+2}],\bar{e}^{(s)}_{j,j+1}\big]= \\
  (-1)^{1+\ol{j+1}+\ol{j+1}\cdot \ol{j+2}}\, \big[\bar{e}^{(r)}_{i(j+2)'},[\bar{e}^{(1)}_{j+1,j+2},\bar{e}^{(s)}_{j,j+1}]\big]
  \overset{(a)}{=}
  (-1)^{1+\ol{j+1}+\ol{j+1}\cdot \ol{j+2}}\, \big[\bar{e}^{(r)}_{i(j+2)'},[\bar{e}^{(s)}_{j+1,j+2},\bar{e}^{(1)}_{j,j+1}]\big]= \\
  (-1)^{1+\ol{j+1}+\ol{j+1}\cdot \ol{j+2}}\, \big[[\bar{e}^{(r)}_{i(j+2)'},\bar{e}^{(s)}_{j+1,j+2}],\bar{e}^{(1)}_{j,j+1}]\big]=
  [\bar{e}^{(r+s-1)}_{i(j+1)'},\bar{e}^{(1)}_{j,j+1}]\overset{\eqref{eq:bare-iterative-3}}{=}
  (-1)^{1+\ol{j}+\ol{j}\cdot \ol{j+1}}\, \bar{e}^{(r+s-1)}_{ij'} \,.
\end{multline*}
Here, we used the induction hypothesis in the first and fifth equalities, while the second and fourth equalities
used the commutativity $[\bar{e}^{(\sharp)}_{i(j+2)'},\bar{e}^{(\sharp')}_{j,j+1}]=0$, due to already established
cases of~\eqref{eq:key-commut-rel}.

This completes our proof of part (c).
\end{proof}

It remains to treat the cases $(i,j)=(k,\ell)$. The case $j=n+m+1$ for $N=2n+1$ has been already treated in the proof
of Lemma~\ref{lem:ind-claim-2}(c) above. Otherwise, we need to show that $[\bar{e}^{(r)}_{ij},\bar{e}^{(s)}_{ij}]=0$,
assuming $1\leq i<j\leq i' - \delta_{\ol{i},\bar{0}}$. For $j=i+1$ (as well as for $j=i+2=n+m+1$ when $N=2n$ and
$\ol{n+m}=\bar{0}$), this commutativity follows from~\eqref{eq:Dr-osp-rel-ext5.1}. Otherwise, let us use already
established cases of~\eqref{eq:key-commut-rel} to write $\bar{e}^{(s)}_{ij}=[\bar{e}^{(s)}_{ik},\bar{e}^{(1)}_{kj}]$
for any $i<k<j$ with $k\ne j'$. Then, $[\bar{e}^{(r)}_{ij},\bar{e}^{(s)}_{ij}]=0$ follows from already established
equalities $[\bar{e}^{(r)}_{ij},\bar{e}^{(s)}_{ik}]=0$, $[\bar{e}^{(r)}_{ij},\bar{e}^{(1)}_{kj}]=0$. The only case when
such $k$ may not exist is for $N=2n$ with $i=n+m-1,j=n+m+1$, and $\ol{n+m}=\bar{1}$ (as the case $\ol{n+m}=\bar{0}$
has been already treated above). However,
  $\bar{e}^{(s)}_{n+m-1,n+m+1}=\sfrac{1}{2}[\bar{e}^{(s)}_{n+m-1,n+m},\bar{e}^{(1)}_{n+m,n+m+1}]$
in this case, and thus the desired commutativity $[\bar{e}^{(r)}_{n+m-1,n+m+1},\bar{e}^{(s)}_{n+m-1,n+m+1}]=0$ follows
from already established equalities
  $[\bar{e}^{(r)}_{n+m-1,n+m+1},\bar{e}^{(1)}_{n+m,n+m+1}]=0$,
  $[\bar{e}^{(r+s-1)}_{n+m-1,n+m+2},\bar{e}^{(1)}_{n+m,n+m+1}]=0$.

\medskip
This completes our proof of the equality~\eqref{eq:key-commut-rel}, hence also of Theorem~\ref{thm:Main-Theorem-ext}.
\end{proof}

\begin{Rem}
We note that the ``additional'' relations~(\ref{eq:Dr-osp-rel-ext-weird1},~\ref{eq:Dr-osp-rel-ext-weird2})
were used in the proof of~\eqref{eq:weird-used}.
\end{Rem}

\begin{Rem}\label{rem:Serre-extosp-series-general}
While the Serre relations~\eqref{eq:Dr-osp-rel-ext9.1}--\eqref{eq:Dr-osp-rel-ext13.2} are literally the same as
those for $\fosp(V)$ in Theorem~\ref{thm:Chevalley-Serre-Zhang}, the classical argument allows to deduce more
general Serre relations by commuting the above further with the Cartan series $h_i(u)$, cf.~\cite[Remark 2.61(b)]{t}.
Explicitly, we have:

\medskip
\noindent
(a) Generalizing~(\ref{eq:Dr-osp-rel-ext9.1},~\ref{eq:Dr-osp-rel-ext9.2}), the following relations hold:
\begin{equation}\label{eq:Dr-osp-rel-ext9.1-series}
  \mathrm{Sym}\, \Big[e_i(u_{1}),\big[e_i(u_{2}),\cdots,[e_i(u_{1-a_{ij}}),e_j(v)]\cdots \big]\Big] = 0 \,,
\end{equation}
\begin{equation}\label{eq:Dr-osp-rel-ext9.2-series}
  \mathrm{Sym}\, \Big[f_i(u_{1}),\big[f_i(u_{2}),\cdots,[f_i(u_{1-a_{ij}}),f_j(v)]\cdots \big]\Big] = 0 \,,
\end{equation}
where  $\mathrm{Sym}$ denotes the symmetrization with respect to all permutations of $\{u_1,\ldots,u_{1-a_{ij}}\}$.

\medskip
\noindent
(b) Generalizing~(\ref{eq:Dr-osp-rel-ext10.1},~\ref{eq:Dr-osp-rel-ext10.2}), the following relations hold
(cf.~\eqref{eq:Atype-superserre}):
\begin{equation}\label{eq:Dr-osp-rel-ext10.1-series}
  \big[[e_{j}(u),e_t(v_1)],[e_t(v_2),e_{k}(w)]\big] +
  \big[[e_{j}(u),e_t(v_2)],[e_t(v_1),e_{k}(w)]\big] = 0 \,,
\end{equation}
\begin{equation}\label{eq:Dr-osp-rel-ext10.2-series}
  \big[[f_{j}(u),f_t(v_1)],[f_t(v_2),f_{k}(w)]\big] +
  \big[[f_{j}(u),f_t(v_2)],[f_t(v_1),f_{k}(w)]\big] = 0 \,.
\end{equation}

\medskip
\noindent
(c) Generalizing~(\ref{eq:Dr-osp-rel-ext11.1},~\ref{eq:Dr-osp-rel-ext11.2}), the following relations hold:
\begin{equation}\label{eq:Dr-osp-rel-ext11.1-series}
  \big[e_{t}(u),[e_{s}(v),e_{i}(w)]\big]-\big[e_{s}(v),[e_{t}(u),e_{i}(w)]\big]=0 \,,
\end{equation}
\begin{equation}\label{eq:Dr-osp-rel-ext11.2-series}
  \big[f_{t}(u),[f_{s}(v),f_{i}(w)]\big]-\big[f_{s}(v),[f_{t}(u),f_{i}(w)]\big]=0 \,.
\end{equation}

\medskip
\noindent
(d) Generalizing~(\ref{eq:Dr-osp-rel-ext12.1},~\ref{eq:Dr-osp-rel-ext12.2}), the following relations hold:
\begin{equation}\label{eq:Dr-osp-rel-ext12.1-series}
  \mathrm{Sym}\, \Big[[e_{j}(u_1),e_{t}(v_1)],\big[[e_{j}(u_2),e_{t}(v_2)],[e_{t}(v_3),e_{k}(w)]\big]\Big]=0 \,,
\end{equation}
\begin{equation}\label{eq:Dr-osp-rel-ext12.2-series}
  \mathrm{Sym}\, \Big[[f_{j}(u_1),f_{t}(v_1)],\big[[f_{j}(u_2),f_{t}(v_2)],[f_{t}(v_3),f_{k}(w)]\big]\Big]=0 \,,
\end{equation}
where $\mathrm{Sym}$ denotes the symmetrization with respect to all permutations of $\{u_1,u_2\}$, $\{v_1,v_2,v_3\}$.

\medskip
\noindent
(e) Generalizing~(\ref{eq:Dr-osp-rel-ext13.1},~\ref{eq:Dr-osp-rel-ext13.2}), the following relations hold:
\begin{equation}\label{eq:Dr-osp-rel-ext13.1-series}
  \mathrm{Sym}\, \Big[\big[e_{i}(z),[e_{j}(u_1),e_{t}(v_1)]\big],\big[[e_{j}(u_2),e_{t}(v_2)],[e_{t}(v_3),e_{k}(w)]\big]\Big]=0 \,,
\end{equation}
\begin{equation}\label{eq:Dr-osp-rel-ext13.2-series}
  \mathrm{Sym}\, \Big[\big[f_{i}(z),[f_{j}(u_1),f_{t}(v_1)]\big],\big[[f_{j}(u_2),f_{t}(v_2)],[f_{t}(v_3),f_{k}(w)]\big]\Big]=0 \,,
\end{equation}
where $\mathrm{Sym}$ denotes the symmetrization with respect to all permutations of $\{u_1,u_2\}$, $\{v_1,v_2,v_3\}$.
\end{Rem}

\begin{Rem}
We note that we presently derived~(\ref{eq:Dr-osp-rel-ext10.1-series},~\ref{eq:Dr-osp-rel-ext10.2-series}) from
their simplest cases~(\ref{eq:Dr-osp-rel-ext10.1},~\ref{eq:Dr-osp-rel-ext10.2}), unlike the super $A$-type
of~\cite{t} where we rather derived the former from the more general relations
\begin{equation}\label{eq:osp-vs-sl-deg4}
  \big[[e_{j,r+1},e_{t,1}],[e_{t,1},e_{k,s+1}]\big] = 0 = \big[[f_{j,r+1},f_{t,1}],[f_{t,1},f_{k,s+1}]\big]
  \qquad \forall\ r,s\geq 0 \,.
\end{equation}
In fact, the only reason we used this more general form~\eqref{eq:osp-vs-sl-deg4} in~\cite{t} instead of
just~(\ref{eq:Dr-osp-rel-ext10.1},~\ref{eq:Dr-osp-rel-ext10.2}) is to treat the special case of $\gl(2|2)$
with the parity sequence $(\bar{0},\bar{0},\bar{1},\bar{1})$ or $(\bar{1},\bar{1},\bar{0},\bar{0})$.
\end{Rem}


\subsection{Drinfeld orthosymplectic super Yangian}
\label{ssec:Dr-nonext-Yangian}
\

Following the above notations, we define the \emph{Drinfeld Yangian of $\fosp(V)$}, denoted by $Y(\fosp(V))$,
to be the associative $\BC$-superalgebra generated by $\{\sfx^\pm_{i,r},k_{i,r}\}_{1\leq i\leq n+m}^{r\geq 0}$
with the $\BZ_2$-grading given by
\begin{equation*}
\begin{split}
  & |\sfx^\pm_{i,r}|=\ol{i}+\ol{i+1} \,, \quad |k_{\imath,r}|=\bar{0}
      \qquad \forall\ i<n+m \,,\, \imath\leq n+m \,,\, r\geq 0 \,, \\
  & |\sfx^\pm_{n+m,r}|=
    \begin{cases}
      \ol{n+m-1}+\ol{n+m} & \text{if} \quad N=2n \,,\, \ol{n+m}=\bar{0} \\
      \ol{n+m}+\ol{n+m+1} & \text{otherwise}
    \end{cases} \,,
\end{split}
\end{equation*}
and subject to the defining relations~\eqref{eq:Dr-osp-rel-1}--\eqref{eq:Dr-osp-rel-9}.
To state the relations, form the generating~series:
\begin{equation}
  \sfx^\pm_i(u)=\sum_{r\geq 0} \sfx^\pm_{i,r}u^{-r-1} \,,\qquad
  k_i(u)=1+\sum_{r\geq 0} k_{i,r}u^{-r-1} \,.
\end{equation}
We also recall the symmetrized Cartan matrix $B=(b_{ij})$ of~\eqref{eq:symm-Cartan-matrix} with
$b_{ij}=(\alpha_i,\alpha_j)$ and the Cartan matrix $A=(a_{ij})$ of~\eqref{eq:nonsymm-Cartan-matrix}.
The defining relations of $Y(\fosp(V))$ are as follows:
\begin{equation}\label{eq:Dr-osp-rel-1}
  [k_{i,r},k_{j,s}]=0 \qquad \forall\ 1\leq i,j\leq n+m \,,\, r,s\geq 0 \,,
\end{equation}
\begin{equation}\label{eq:Dr-osp-rel-2}
  [\sfx^+_{i,r},\sfx^-_{j,s}]=\delta_{ij}k_{i,r+s}
    \qquad \forall\ 1\leq i,j\leq n+m \,,\, r,s\geq 0 \,,
\end{equation}
\begin{equation}\label{eq:Dr-osp-rel-3.1}
  [k_{i,0},\sfx^\pm_{j,s}]=\pm b_{ij}\, \sfx^\pm_{j,s}
    \qquad \forall\ 1\leq i,j\leq n+m \,,\, s\geq 0 \,,
\end{equation}
\begin{equation}\label{eq:Dr-osp-rel-3.2}
  [k_{i,r+1},\sfx^\pm_{j,s}]-[k_{i,r},\sfx^\pm_{j,s+1}]=\pm \frac{b_{ij}}{2}\, \{k_{i,r},\sfx^\pm_{j,s}\}
    \qquad \mathrm{unless}\ i=j\ \mathrm{and}\ |\alpha_i| =\bar{1}\,,
\end{equation}
\begin{equation}\label{eq:Dr-osp-rel-3.3}
  [k_{i,r},\sfx^\pm_{i,s}]=0  \qquad \mathrm{for}\  |\alpha_i| =\bar{1}
    \quad \mathrm{unless}\ N=2n+1, \overline{n+m}=\bar{1},i=n+m \,,
\end{equation}
and in the latter case of $N=2n+1,\overline{n+m}=\bar{1}, i=n+m$, we rather impose:
\begin{equation}\label{eq:Dr-osp-rel-3-spec}
\begin{split}
  & [k_{n+m}(u),\sfx^-_{n+m}(v)]=-k_{n+m}(u)
    \left(\frac{1}{3}\, \frac{\sfx^-_{n+m}(u-1/2)-\sfx^-_{n+m}(v)}{u-v-1/2}+
           \frac{2}{3}\, \frac{\sfx^-_{n+m}(u+1)-\sfx^-_{n+m}(v)}{u-v+1} \right) \,, \\
  & [k_{n+m}(u),\sfx^+_{n+m}(v)]=
    \left(\frac{1}{3}\, \frac{\sfx^+_{n+m}(u-1/2)-\sfx^+_{n+m}(v)}{u-v-1/2}+
          \frac{2}{3}\, \frac{\sfx^+_{n+m}(u+1)-\sfx^+_{n+m}(v)}{u-v+1} \right)k_{n+m}(u) \,,
\end{split}
\end{equation}
\begin{equation}\label{eq:Dr-osp-rel-4}
  [\sfx^\pm_{i,r+1},\sfx^\pm_{j,s}]-[\sfx^\pm_{i,r},\sfx^\pm_{j,s+1}]=
  \pm \frac{b_{ij}}{2}\, \{\sfx^\pm_{i,r},\sfx^\pm_{j,s}\}
    \qquad \mathrm{unless}\ N=2n+1,\overline{n+m}=\bar{1},i=j=n+m\,,
\end{equation}
and in the latter case of $N=2n+1, \overline{n+m}=\bar{1}, i=j=n+m$, we rather impose:
\begin{equation}\label{eq:Dr-osp-rel-4-spec}
\begin{split}
  & [\sfx^+_{n+m}(u),\sfx^+_{n+m}(v)]=
    \frac{\sfx^{'+}_{n+m}(v)-\sfx^{'+}_{n+m}(u)}{u-v} + \frac{\sfx^+_{n+m}(u)^2-\sfx^+_{n+m}(v)^2}{u-v} \, + \\
  & \qquad \qquad \qquad \qquad \qquad
    \frac{\sfx^+_{n+m}(v)\sfx^+_{n+m}(u)-\sfx^+_{n+m}(u)\sfx^+_{n+m}(v)}{2(u-v)} -
    \frac{(\sfx^+_{n+m}(v)-\sfx^+_{n+m}(u))^2}{2(u-v)^2} \,, \\
  & [\sfx^-_{n+m}(u),\sfx^-_{n+m}(v)]=
    \frac{\sfx^{'-}_{n+m}(u)-\sfx^{'-}_{n+m}(v)}{u-v} + \frac{\sfx^-_{n+m}(u)^2-\sfx^-_{n+m}(v)^2}{u-v} \, + \\
  & \qquad \qquad \qquad \qquad \qquad
    \frac{\sfx^-_{n+m}(u)\sfx^-_{n+m}(v)-\sfx^-_{n+m}(v)\sfx^-_{n+m}(u)}{2(u-v)} -
    \frac{(\sfx^-_{n+m}(u)-\sfx^-_{n+m}(v))^2}{2(u-v)^2} \,,
\end{split}
\end{equation}
where we set
\begin{equation*}
  \sfx^{'+}_{n+m}(u)=\sfx^+_{n+m}(u)^2+[\sfx^+_{n+m}(u),\sfx^+_{n+m,0}] \,,\qquad
  \sfx^{'-}_{n+m}(u)=-\sfx^-_{n+m}(u)^2-[\sfx^-_{n+m}(u),\sfx^-_{n+m,0}] \,,
\end{equation*}
for $N=2n+1$ and $\ol{n+m}=\bar{1}$, we also impose:
\begin{multline}\label{eq:Dr-osp-rel-weird1}
  [\sfx^-_{n+m-1,0},\sfx^{'-}_{n+m}(v)]=-\sfx^{'''-}_{n+m}(v+\sfrac{1}{2})-\sfx^{'''-}_{n+m}(v-1) \, + \\
  \sfx^-_{n+m-1}(v+\sfrac{1}{2})\sfx^{'-}_{n+m}(v)+\sfx^{'-}_{n+m}(v)\sfx^-_{n+m-1}(v-1)-
  (-1)^{\ol{n+m-1}}\sfx^-_{n+m}(v)\sfx^{''-}_{n+m}(v-1) \,,
\end{multline}
\begin{multline}\label{eq:Dr-osp-rel-weird2}
  [\sfx^+_{n+m-1,0},\sfx^{'+}_{n+m}(v)]=\sfx^{'''+}_{n+m}(v+\sfrac{1}{2})+\sfx^{'''+}_{n+m}(v-1) \, - \\
  \sfx^{'+}_{n+m}(v) \sfx^+_{n+m-1}(v+\sfrac{1}{2})-\sfx^{+}_{n+m-1}(v-1)\sfx^{'+}_{n+m}(v)-
  \sfx^{''+}_{n+m}(v-1)\sfx^+_{n+m}(v) \,,
\end{multline}
with
\begin{equation*}
\begin{split}
  & \sfx^{''-}_{n+m}(v)=-[\sfx^{-}_{n+m-1}(v),\sfx^-_{n+m,0}] \,, \qquad
    \sfx^{'''-}_{n+m}(v)=\big[[\sfx^{-}_{n+m-1}(v),\sfx^-_{n+m,0}],\sfx^-_{n+m,0}\big] \,, \\
  & \sfx^{''+}_{n+m}(v)=-[\sfx^+_{n+m,0},\sfx^{+}_{n+m-1}(v)]  \,, \qquad
     \sfx^{'''+}_{n+m}(v)=-\big[\sfx^+_{n+m,0},[\sfx^+_{n+m,0},\sfx^{+}_{n+m-1}(v)]\big] \,,
\end{split}
\end{equation*}
as well as the \emph{standard Serre relations}
\begin{equation}\label{eq:Dr-osp-rel-5.1}
  (\mathrm{ad}_{\sfx^\pm_{i,0}})^{1-a_{ij}}(\sfx^\pm_{j,0}) = 0
  \qquad \mathrm{for} \quad i\ne j \,,\quad \mathrm{with}\ a_{ii}\ne 0 \ \mathrm{or}\ a_{ij}=0 \,,
\end{equation}
\begin{equation}\label{eq:Dr-osp-rel-5.2}
  [\sfx^\pm_{i,0},\sfx^\pm_{i,0}] = 0  \qquad \mathrm{if} \quad a_{ii}=0 \,,
\end{equation}
and the following \emph{higher order Serre relations}:
\begin{equation}\label{eq:Dr-osp-rel-6}
  \big[[\sfx^\pm_{j,0},\sfx^\pm_{t,0}],[\sfx^\pm_{t,0},\sfx^\pm_{k,0}]\big] = 0
    \qquad \mathrm{for\ subdiagrams\ \eqref{eq:pic-Serre-new4a}-\eqref{eq:pic-Serre-new4b}} \,,
\end{equation}
\begin{equation}\label{eq:Dr-osp-rel-7}
  \big[\sfx^\pm_{t,0},[\sfx^\pm_{s,0},\sfx^\pm_{i,0}]\big]-\big[\sfx^\pm_{s,0},[\sfx^\pm_{t,0},\sfx^\pm_{i,0}]\big] = 0
    \qquad \mathrm{for\ subdiagram\ \eqref{eq:pic-Serre-new3}} \,,
\end{equation}
\begin{equation}\label{eq:Dr-osp-rel-8}
  \Big[[\sfx^\pm_{j,0},\sfx^\pm_{t,0}],\big[[\sfx^\pm_{j,0},\sfx^\pm_{t,0}],[\sfx^\pm_{t,0},\sfx^\pm_{k,0}]\big]\Big] = 0
    \qquad \mathrm{for\ subdiagram\ \eqref{eq:pic-Serre-new6}} \,,
\end{equation}
\begin{equation}\label{eq:Dr-osp-rel-9}
  \Big[\big[\sfx^\pm_{i,0},[\sfx^\pm_{j,0},\sfx^\pm_{t,0}]\big],
       \big[[\sfx^\pm_{j,0},\sfx^\pm_{t,0}],[\sfx^\pm_{t,0},\sfx^\pm_{k,0}]\big]\Big] = 0
   \qquad \mathrm{for\ subdiagram\ \eqref{eq:pic-Serre-new7}} \,.
\end{equation}

\begin{Rem}\label{rem:relations-via-series}
(a) The relation~\eqref{eq:Dr-osp-rel-1} can be equivalently written via the generating series as:
\begin{equation}\label{eq:Dr-osp-rel-1-series}
  [k_i(u),k_j(v)]=0 \qquad \forall\ 1\leq i,j\leq n+m  \,.
\end{equation}

\noindent
(b) The relation~\eqref{eq:Dr-osp-rel-2} can be equivalently written via the generating series as:
\begin{equation}\label{eq:Dr-osp-rel-2-series}
  [\sfx^+_i(u),\sfx^-_j(v)]=-\delta_{ij}\, \frac{k_i(u)-k_i(v)}{u-v} \qquad \forall\ 1\leq i,j\leq n+m \,.
\end{equation}

\noindent
(c) The relations~\eqref{eq:Dr-osp-rel-3.1}--\eqref{eq:Dr-osp-rel-3.3} can be equivalently and
uniformly written via the generating series:
\begin{equation}\label{eq:Dr-osp-rel-3-series}
  [k_i(u),\sfx^\pm_j(v)]=\mp \frac{b_{ij}}{2}\, \frac{\{k_i(u),\sfx^\pm_j(u)-\sfx^\pm_j(v)\}}{u-v}
    \qquad \forall\ 1\leq i,j\leq n+m \,.
\end{equation}

\noindent
(d) The relations~\eqref{eq:Dr-osp-rel-4} imply the following equality on the generating series:
\begin{equation}\label{eq:Dr-osp-rel-4-series}
  [\sfx^\pm_i(u),\sfx^\pm_j(v)]-[\sfx^\pm_i(v),\sfx^\pm_j(u)]=
  \mp \frac{b_{ij}}{2}\, \frac{\{\sfx^\pm_i(u)-\sfx^\pm_i(v),\sfx^\pm_j(u)-\sfx^\pm_j(v)\}}{u-v} \,.
\end{equation}
The left-hand side above is usually written as $[\sfx^\pm_i(u),\sfx^\pm_j(v)]+[\sfx^\pm_j(u),\sfx^\pm_i(v)]$
in non-super case, but it rather becomes $[\sfx^\pm_i(u),\sfx^\pm_j(v)]-[\sfx^\pm_j(u),\sfx^\pm_i(v)]$ if
both simple roots $\alpha_i,\alpha_j$ are odd.

\medskip
\noindent
(e) It is not clear to us if~\eqref{eq:Dr-osp-rel-4-series} alone imply~\eqref{eq:Dr-osp-rel-4} unless
$i=j$ or $b_{ij}=0$. In non-super case, one can first derive the $r=s=0$ case of~\eqref{eq:Dr-osp-rel-4}
from~\eqref{eq:Dr-osp-rel-4-series}, and then establish the general case of~\eqref{eq:Dr-osp-rel-4} by
utilizing~\eqref{eq:Dr-osp-rel-3-series}, see e.g.~\cite[Remark~2.61(b)]{t}. In the present setup,
since~\eqref{eq:Dr-osp-rel-3-series} holds always except for $N=2n,\ol{n+m}=\bar{1}, i=j=n+m$,
one can thus derive \eqref{eq:Dr-osp-rel-4} from~\eqref{eq:Dr-osp-rel-4-series} combined
with~\eqref{eq:Dr-osp-rel-3-series} for all cases but $N+2m=5, |v_2|=\bar{1},i\ne j$.
\end{Rem}

\begin{Rem}\label{rem:osp12-alt}
We note that~\eqref{eq:Dr-osp-rel-3-spec} can be equivalently written as follows
(see~\eqref{eq:k-e-osp12-alternative} below):
\begin{equation}\label{eq:Dr-osp-rel-3-spec-alt}
\begin{split}
  & [k_{n+m}(u),\sfx^-_{n+m}(v)]=-
    \left(\frac{1}{3}\, \frac{\sfx^-_{n+m}(u+1/2)-\sfx^-_{n+m}(v)}{u-v+1/2}+
           \frac{2}{3}\, \frac{\sfx^-_{n+m}(u-1)-\sfx^-_{n+m}(v)}{u-v-1} \right)k_{n+m}(u) \,, \\
  & [k_{n+m}(u),\sfx^+_{n+m}(v)]=k_{n+m}(u)
    \left(\frac{1}{3}\, \frac{\sfx^+_{n+m}(u+1/2)-\sfx^+_{n+m}(v)}{u-v+1/2}+
          \frac{2}{3}\, \frac{\sfx^+_{n+m}(u-1)-\sfx^+_{n+m}(v)}{u-v-1} \right) \,.
\end{split}
\end{equation}
\end{Rem}

Let us now relate the above algebra $Y(\fosp(V))$ to $Y^\rtt(\fosp(V))$ of Subsection~\ref{ssec:center}.
To do so, we follow the same strategy as in $A$-type, see~\cite[\S2.5]{t}. First, we define a sequence
$u_1,\ldots,u_{n+m}$ via
\begin{equation}\label{u-shifts}
  u_1:=u \qquad \mathrm{and} \qquad
  u_{i+1}=u_i+\frac{b_{i,i+1}}{2} \qquad \mathrm{for} \quad 1\leq i<n+m \,.
\end{equation}
Thus, $u_{i}=u_{i-1}-\frac{(-1)^{\ol{i}}}{2}$ for $1\leq i<n+m$, while $u_{n+m}$ satisfies
\begin{equation}
  u_{n+m}-u_{n+m-1}=
  \begin{cases}
    0 & \text{if} \quad N=2n \,,\, \ol{n+m}=\bar{0} \\
    1 & \text{if} \quad N=2n \,,\, \ol{n+m}=\bar{1} \\
    -\sfrac{1}{2} & \text{if} \quad N=2n+1 \,,\, \ol{n+m}=\bar{0} \\
    \sfrac{1}{2} & \text{if} \quad N=2n+1 \,,\, \ol{n+m}=\bar{1}
  \end{cases} \,.
\end{equation}
We also consider the following generating series with coefficients in $X^\rtt(\fosp(V))$:
\begin{equation}\label{eq:xk-series-1}
  X^+_i(u)=f_{i+1,i}(u_i) \,,\quad
  X^-_i(u)=(-1)^{\ol{i}}e_{i,i+1}(u_i) \,,\quad
  K_i(u)=h_i(u_i)^{-1}h_{i+1}(u_i)
  \quad \forall\ 1\leq i<n+m \,,
\end{equation}
while $X^\pm_{n+m}(u), K_{n+m}(u)$ are defined by~\eqref{eq:xk-series-1} for odd $N$,
and otherwise are given by:
\begin{equation}\label{eq:xk-series-last-f}
  X^+_{n+m}(u)=
  \begin{cases}
    f_{n+m+1,n+m-1}(u_{n+m-1}) & \text{if} \quad N=2n \,,\, \ol{n+m}=\bar{0} \\
    f_{n+m+1,n+m}(u_{n+m}) & \text{if} \quad N=2n \,,\, \ol{n+m}=\bar{1}
  \end{cases} \,,
\end{equation}
\begin{equation}\label{eq:xk-series-last-e}
  X^-_{n+m}(u)=
  \begin{cases}
    (-1)^{\ol{n+m}}\, e_{n+m-1,n+m+1}(u_{n+m-1}) & \text{if} \quad N=2n \,,\, \ol{n+m}=\bar{0} \\
    \sfrac{1}{2} (-1)^{\ol{n+m}}\, e_{n+m,n+m+1}(u_{n+m}) & \text{if} \quad N=2n \,,\, \ol{n+m}=\bar{1}
  \end{cases} \,,
\end{equation}
\begin{equation}\label{eq:xk-series-last-k}
  K_{n+m}(u)=
  \begin{cases}
    h_{n+m-1}(u_{n+m-1})^{-1}h_{n+m+1}(u_{n+m-1}) & \text{if} \quad N=2n \,,\, \ol{n+m}=\bar{0} \\
    h_{n+m}(u_{n+m})^{-1}h_{n+m+1}(u_{n+m}) & \text{if} \quad N=2n \,,\, \ol{n+m}=\bar{1}
  \end{cases} \,.
\end{equation}
We shall denote their coefficients by $\{X^+_{i,r}, X^-_{i,r}, K_{i,r}\}_{1\leq i\leq n+m}^{r\geq 0}$,
respectively, so that
\begin{equation}\label{eq:XK-coefficients}
  X^\pm_i(u)=\sum_{r\geq 0} X^\pm_{i,r} u^{-r-1} \,, \qquad
  K_i(u)=1+\sum_{r\geq 0} K_{i,r} u^{-r-1} \,.
\end{equation}
We note right away that all these elements actually belong to $Y^\rtt(\fosp(V))$ of~\eqref{eq:RTT-non-extended}.

\medskip
The following is the main result of this subsection:

\begin{Thm}\label{thm:Main-Theorem-nonext}
The assignment
\begin{equation}\label{eq:Dr-to-RTT-assignment}
  \sfx^\pm_{i,r}\mapsto X^\pm_{i,r} \,,\quad k_{i,r}\mapsto K_{i,r}
    \qquad \forall\ 1\leq i\leq n+m \,,\, r\geq 0
\end{equation}
gives rise to a superalgebra isomorphism
  $$\Upsilon\colon Y(\fosp(V))\iso Y^\rtt(\fosp(V)) \,.$$
\end{Thm}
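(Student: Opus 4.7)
The plan is to deduce Theorem~\ref{thm:Main-Theorem-nonext} from Theorem~\ref{thm:Main-Theorem-ext} by systematically passing from the extended algebras to the non-extended ones on both sides. First, I verify that the elements $X^\pm_{i,r}$ and $K_{i,r}$ in fact lie in $Y^\rtt(\fosp(V))$: under the automorphism $\mu_f$ from~\eqref{eq:mu-automorphims}, the Gauss factor $H(u)$ transforms as $h_\imath(u)\mapsto f(u)h_\imath(u)$ while $E(u),F(u)$ are fixed, so the scaling factors $f(u)$ cancel in each ratio $K_i(u)=h_i(u_i)^{-1}h_{i+1}(u_i)$ and are absent from every $X^\pm_i(u)$. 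It thus suffices to verify that the assignment~\eqref{eq:Dr-to-RTT-assignment} respects the defining relations~\eqref{eq:Dr-osp-rel-1}--\eqref{eq:Dr-osp-rel-9} of $Y(\fosp(V))$; surjectivity onto $Y^\rtt(\fosp(V))$ and injectivity are then established separately.

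The main technical step is to translate the extended Drinfeld relations~\eqref{eq:Dr-osp-rel-ext1}--\eqref{eq:Dr-osp-rel-ext13.2}, which hold in $X^\rtt(\fosp(V))$ by Theorem~\ref{thm:Main-Theorem-ext}, into the non-extended Drinfeld relations~\eqref{eq:Dr-osp-rel-1}--\eqref{eq:Dr-osp-rel-9} under the substitutions $e_i\leftrightarrow (-1)^{\ol{i}}\sfx^-_i$, $f_i\leftrightarrow \sfx^+_i$, $h_i^{-1}h_{i+1}\leftrightarrow k_i$ composed with the spectral shifts~\eqref{u-shifts}. The commutativity~\eqref{eq:Dr-osp-rel-1} and the $ef$-commutator~\eqref{eq:Dr-osp-rel-2} are immediate from~\eqref{eq:Dr-osp-rel-ext1}--\eqref{eq:Dr-osp-rel-ext2}. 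For the anticommutator-form Cartan commutators~\eqref{eq:Dr-osp-rel-3.2}, one combines the product-form relations~(\ref{eq:Dr-osp-rel-ext3.1}--\ref{eq:Dr-osp-rel-ext3.4}) for $h_i(u)$ and $h_{i+1}(u)$ acting on $\sfx^\pm_j(v)$: using the commutativity $[h_i(u),h_{i+1}(u)]=0$ and the fact that the eigenvalue factors $(e^*_i,\alpha_j),(e^*_{i+1},\alpha_j)$ produce complementary rational prefactors, the ratio $K_i=h_i^{-1}h_{i+1}$ acquires the symmetric anticommutator form, with the shifts $u_i$ in~\eqref{u-shifts} designed precisely so that the resulting residues line up. The exceptional relations~(\ref{eq:Dr-osp-rel-3-spec},~\ref{eq:Dr-osp-rel-4-spec},~\ref{eq:Dr-osp-rel-weird1},~\ref{eq:Dr-osp-rel-weird2}) for the $\fosp(1|2)$-type sub-case ($N=2n+1$, $\ol{n+m}=\bar 1$) follow from Proposition~\ref{prop:osp12} and the extended relations~(\ref{eq:Dr-osp-rel-ext-weird1},~\ref{eq:Dr-osp-rel-ext-weird2}), while the Serre relations~(\ref{eq:Dr-osp-rel-5.1}--\ref{eq:Dr-osp-rel-9}) transfer verbatim from~(\ref{eq:Dr-osp-rel-ext9.1}--\ref{eq:Dr-osp-rel-ext13.2}) as they involve only numerical coefficients.

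For surjectivity of $\Upsilon$, I invoke the quotient realization $Y^\rtt(\fosp(V))\simeq X^\rtt(\fosp(V))/(c_V(u)-1)$ from~\eqref{eq:non-extended-quotient}. Lemmas~\ref{lem:h-recover-1},~\ref{lem:h-recover-2},~\ref{lem:h-recover-3} express each $h_{\imath'}(u)$ as an explicit product of $h_\imath,\ldots,h_{n+m+1}$ and $c_V$; together with the $c_V$-factorization of Lemmas~\ref{lem:cseries-evenN-even},~\ref{lem:cseries-evenN-odd},~\ref{lem:cseries-oddN} and the relation $c_V=1$, one obtains a formula expressing $h_1(u)$ itself in terms of the ratios $K_i(u)$ alone. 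Iterating $h_{i+1}=h_i K_i$ then recovers every $h_\imath(u)$ from the $K_i$'s in the quotient. Combined with the Gauss-coordinate expansions of Subsections~\ref{ssec:e-currents}--\ref{ssec:f-currents} (which express all $e_{ij}, f_{ji}$ as iterated commutators of the $\sfx^\pm_{k,r}$), this shows that every generator of $Y^\rtt(\fosp(V))$ lies in the image of $\Upsilon$.

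For injectivity, I would argue via associated graded algebras exactly as in the proof of Theorem~\ref{thm:Main-Theorem-ext}. Endow $Y(\fosp(V))$ with the filtration $\deg\sfx^\pm_{i,r}=\deg k_{i,r}=r$ and $Y^\rtt(\fosp(V))$ with the filtration induced from~\eqref{eq:gradings}; by Proposition~\ref{prop:assoc.graded}(b) one has $\Gr\, Y^\rtt(\fosp(V))\simeq U(\fosp(V)[\lt])$. It then suffices to prove that $Y(\fosp(V))$ is spanned by the ordered PBW monomials in $\{\sfx^\pm_{i,r},k_{i,r}\}$ whose $\Upsilon$-images in $\Gr\, Y^\rtt(\fosp(V))$ form the standard PBW basis of $U(\fosp(V)[\lt])$. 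This is established by the same double induction as in Lemmas~\ref{lem:ind-claim-1},~\ref{lem:ind-claim-2}, wherein iterated commutators of the simple $\sfx^\pm_{i,r}$ are shown to recover all non-simple root vectors at arbitrary loop level. The hardest part of the whole proof will be the bookkeeping surrounding the anticommutator relation~\eqref{eq:Dr-osp-rel-3.2} and the $\fosp(1|2)$-type exceptional cases, since these require carefully unpacking the spectral shifts $u_i$ while keeping track of signs and parity-dependent factors such as $2^\varrho$; the rank $\leq 2$ analysis of Section~\ref{sec:rank 1-2} ultimately reduces these to finite verifications.
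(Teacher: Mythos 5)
Your proposal is correct and follows the paper's strategy in all essential respects: the homomorphism property is checked by splitting into the super $A$-type range (Corollaries~\ref{cor:A-type relations},~\ref{cor:other-A-type relations}), the commuting range (Corollary~\ref{cor:commutativity}), and the rank~$\leq 2$ cases of Section~\ref{sec:rank 1-2}; surjectivity comes from recovering the $h_\imath$'s from the ratios $K_i$ modulo the center via the factorization of $c_V(u)$; and your preliminary observation that the $X^\pm_{i,r},K_{i,r}$ are $\mu_f$-invariant is exactly the paper's (tacit) justification that they lie in $Y^\rtt(\fosp(V))$. The one place you diverge is injectivity: you propose to rerun the whole associated-graded PBW-spanning induction (the analogue of Lemmas~\ref{lem:ind-claim-1},~\ref{lem:ind-claim-2}) directly for $Y(\fosp(V))$ against $\Gr\, Y^\rtt(\fosp(V))\simeq U(\fosp(V)[\lt])$, whereas the paper simply deduces injectivity from the already-established injectivity of the extended map~\eqref{eq:Upsilon-extended}, using the tensor decomposition~\eqref{eq:extended-vs-nonextended} to identify $Y(\fosp(V))$ with the preimage of $Y^\rtt(\fosp(V))$ under $X(\fosp(V))\to X^\rtt(\fosp(V))$. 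Your route is self-contained but duplicates a long induction that Theorem~\ref{thm:Main-Theorem-ext} already provides for free; the paper's route is shorter at the cost of having to set up the compatibility between $Y(\fosp(V))$ and $X(\fosp(V))$. Either works.
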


\begin{proof}
First, we verify that the currents $X^\pm_i(u),K_i(u)$ satisfy the defining
relations~\eqref{eq:Dr-osp-rel-1}--\eqref{eq:Dr-osp-rel-9}, so that the
assignment~\eqref{eq:Dr-to-RTT-assignment} gives rise to a superalgebra homomorphism
  $$\Upsilon\colon Y(\fosp(V))\to Y^\rtt(\fosp(V)) \,.$$
For $1\leq i,j<n+m$ (respectively, $i,j\in \{1,\ldots,n+m-2,n+m\}$ for $N=2n$, $\ol{n+m}=\bar{0}$), all these
relations follow from Corollary~\ref{cor:A-type relations} (respectively, Corollary~\ref{cor:other-A-type relations})
combined with the corresponding super $A$-type relations of~\cite[Theorem 2.67]{t}. In the remaining cases with
$\max\{i,j\}=n+m$ and $|i-j|\geq 2$, all the above relations follow from the commutativity statement of
Corollary~\ref{cor:commutativity}. It~thus remains to treat the cases $i=j=n+m$ or $\{i,j\}=\{n+m-1,n+m\}$.
Evoking Theorem~\ref{thm:embedding}, these actually reduce to the corresponding relations in rank 1 (four cases
treated in Subsection~\ref{ssec:rank-1}) and rank 2 (eight cases treated in Subsection~\ref{ssec:rank-2}),
which are verified case-by-case.

A uniform way to check the commutation formulas between $K_i(u)$ and $X^\pm_j(v)$ with $i,j\in \{n+m,n+m-1\}$ is
to pull $h_i(u)^{-1}$ and $h_{i+1}(u)$ to the leftmost and rightmost sides (in fact, only one of the two options
works, as the other produces poles) in both the left-hand and right-hand sides of~\eqref{eq:Dr-osp-rel-3-series}.
The only exception from this rule are the cases $i=j=n+m$ for odd $N=2n+1$. The latter essentially reduces to the
rank $1$ cases of $\fosp(3|0)$ and $\fosp(1|2)$, which we treat next:

\medskip
\noindent
$\bullet$
\emph{Verification of~\eqref{eq:Dr-osp-rel-3-series} for $\fosp(3|0)$ (see also~\cite{jlm})}.

According to~(\ref{eq:osp30-2},~\ref{eq:osp30-3}), we have
  $[h_2(u),e_{12}(v)]=\frac{h_2(u)(e_{12}(u)-e_{12}(v))}{2(u-v)} - \frac{(e_{12}(u-1)-e_{12}(v))h_2(u)}{2(u-v-1)}$
and $[h_1(u),e_{12}(v)]=-\frac{h_1(u)(e_{12}(u)-e_{12}(v))}{u-v}$. The latter equality implies:
\begin{equation}\label{eq:h1-e12-thm2}
  h_1(u)^{-1}e_{12}(v)=\left(\frac{u-v-1}{u-v}\, e_{12}(v)+\frac{1}{u-v}\, e_{12}(u)\right)h_1(u)^{-1} \,.
\end{equation}
Therefore, we obtain:
\begin{multline}
  [h_1(u)^{-1}h_2(u),e_{12}(v)]=
  h_1(u)^{-1}[h_2(u),e_{12}(v)]-h_1(u)^{-1}[h_1(u),e_{12}(v)]h_1(u)^{-1}h_2(u)=\\
  \frac{1}{2(u-v)}h_1(u)^{-1}h_2(u)\big(e_{12}(u)-e_{12}(v)\big) -
    \frac{1}{2(u-v-1)}h_1(u)^{-1}\big(e_{12}(u-1)-e_{12}(v)\big)h_2(u) \, + \\
  \frac{1}{u-v}\big(e_{12}(u)-e_{12}(v)\big)h_1(u)^{-1}h_2(u) \,.
\end{multline}
Using~\eqref{eq:h1-e12-thm2}, we see that the second summand above simplifies to:
\begin{equation*}
   -\frac{1}{2(u-v-1)}h_1(u)^{-1}\big(e_{12}(u-1)-e_{12}(v)\big)h_2(u)=
   -\frac{1}{2(u-v)}\big(e_{12}(u)-e_{12}(v)\big)h_1(u)^{-1}h_2(u) \,.
\end{equation*}
Combining the above two equalities, we obtain the desired relation (cf.~\eqref{eq:Dr-osp-rel-3-series}):
\begin{equation*}
  [h_1(u)^{-1}h_2(u),e_{12}(v)]=\frac{1}{2}\frac{\big\{h_1(u)^{-1}h_2(u),e_{12}(u)-e_{12}(v)\big\}}{u-v} \,.
\end{equation*}

\medskip
\noindent
$\bullet$
\emph{Verification of~\eqref{eq:Dr-osp-rel-3-spec} for $\fosp(1|2)$}.

According to~(\ref{eq:osp12-2},~\ref{eq:osp12-3}), we have
  $[h_2(u),e_{12}(v)]=h_2(u)\left(\frac{e_{12}(u)-e_{12}(v)}{u-v}+\frac{e_{12}(v)-e_{12}(u-1/2)}{u-v-1/2}\right)$
and $[h_1(u),e_{12}(v)]=\frac{h_1(u)(e_{12}(u)-e_{12}(v))}{u-v}$. The latter equality also implies:
\begin{equation}\label{eq:e12-h1inv-thm2}
  e_{12}(v)h_1(u)^{-1}=h_1(u)^{-1}\left(\frac{u-v}{u-v+1}e_{12}(v)+\frac{1}{u-v+1}e_{12}(u+1)\right) \,.
\end{equation}
Therefore, we obtain:
\begin{multline}
  [h_2(u)h_1(u)^{-1},e_{12}(v)] = h_2(u)[h_1(u)^{-1},e_{12}(v)]+[h_2(u),e_{12}(v)]h_1(u)^{-1} = \\
  h_2(u)h_1(u)^{-1}\, \frac{e_{12}(v)-e_{12}(u+1)}{u-v+1} +
  h_2(u) \left(\frac{e_{12}(u)-e_{12}(v)}{u-v} - \frac{e_{12}(u-1/2)-e_{12}(v)}{u-v-1/2} \right) h_1(u)^{-1} \,.
\end{multline}
Using~\eqref{eq:e12-h1inv-thm2} to move $h_1(u)^{-1}$ to the leftmost part,
we obtain the desired relation (cf.~\eqref{eq:Dr-osp-rel-3-spec}):
\begin{equation*}
  [h_2(u)h_1(u)^{-1},e_{12}(v)] = h_2(u)h_1(u)^{-1}
  \left(-\frac{1}{3}\, \frac{e_{12}(u-1/2)-e_{12}(v)}{u-v-1/2}
        -\frac{2}{3}\, \frac{e_{12}(u+1)-e_{12}(v)}{u-v+1}\right) \,.
\end{equation*}
One could alternatively move both $h_1(u)^{-1}, h_2(u)$ to the rightmost part,
thus deriving (cf.~\eqref{eq:Dr-osp-rel-3-spec-alt}):
\begin{equation}\label{eq:k-e-osp12-alternative}
  [h_1(u)^{-1}h_2(u),e_{12}(v)] =
  \left(-\frac{1}{3}\, \frac{e_{12}(u+1/2)-e_{12}(v)}{u-v+1/2}
        -\frac{2}{3}\, \frac{e_{12}(u-1)-e_{12}(v)}{u-v-1}\right) h_1(u)^{-1}h_2(u) \,.
\end{equation}

\medskip
Let us also comment on the commutation formulas~(\ref{eq:Dr-osp-rel-4},~\ref{eq:Dr-osp-rel-4-spec})
between $X^\pm_i(u)$ and $X^\pm_j(v)$ for $i,j\in \{n+m,n+m-1\}$. For $i=j=n+m$ with $N=2n$, the result
follows from the commutator formulas~(\ref{eq:Atype-ee-1},~\ref{eq:Atype-ff-1}) through
Corollaries~\ref{cor:A-type relations},~\ref{cor:other-A-type relations}, see also
Remark~\ref{rem:relations-via-series}(e). For $i=j=n+m$, $N=2n+1$, $\ol{n+m}=\bar{0}$, the relations follow
from the similar relations~(\ref{eq:osp30-6},~\ref{eq:osp30-7}) in the rank $1$ case of $\fosp(3|0)$.
Likewise, for $i=j=n+m$, $N=2n+1$, $\ol{n+m}=\bar{1}$, the relation~\eqref{eq:Dr-osp-rel-4-spec} follows
from the similar relations~(\ref{eq:osp12-6},~\ref{eq:osp12-7}) in the rank $1$ case of $\fosp(1|2)$.
Finally, verification of~\eqref{eq:Dr-osp-rel-4} for $\{i,j\}=\{n+m-1,n+m\}$ reduces to the rank $2$ cases.
Unless $N=2n$ and $\ol{n+m}=\bar{0}$, the corresponding relations always had the form:
\begin{equation*}
\begin{split}
  & [e_{12}(u),e_{23}(v)]=\frac{\sharp}{u-v}\Big(e_{13}(u)-e_{13}(v)-e_{12}(u)e_{23}(v)+e_{12}(v)e_{23}(v)\Big) \,, \\
  & [f_{21}(u),f_{32}(v)]=\frac{\sharp}{u-v}\Big(f_{31}(v)-f_{31}(u)+f_{32}(v)f_{21}(u)-f_{32}(v)f_{21}(v)\Big) \,,
\end{split}
\end{equation*}
with $\sharp\in \{-1,1,2\}$. These relations imply~\eqref{eq:Dr-osp-rel-4}: this is explained
in~\cite[End of \S5]{bk} for $\sharp=-1$. If $N=2n$, $\ol{n+m}=\bar{0}$, $\ol{n+m-1}=\bar{0}$,
then~\eqref{eq:Dr-osp-rel-4} follows from~\eqref{eq:osp40-5}. In the remaining case $N=2n$,
$\ol{n+m}=\bar{0}$, $\ol{n+m-1}=\bar{1}$, the relation~\eqref{eq:Dr-osp-rel-4} follows in turn
from~(\ref{eq:osp221-4simplify},~\ref{eq:osp221-5simplify}).

\medskip
Combining the fact that the coefficients of $\{e_i(u),f_i(u),h_\imath(u)\}_{1\leq i\leq n+m}^{1\leq \imath\leq n+m+1}$
generate $X^\rtt(\fosp(V))$ with the tensor product decomposition~\eqref{eq:extended-vs-nonextended}, description of
the center $ZX^\rtt(\fosp(V))$, and the factorization of the central generating series $c_V(u)$ from
Lemmas~\ref{lem:cseries-evenN-even},~\ref{lem:cseries-evenN-odd},~\ref{lem:cseries-oddN}, we conclude
that the homomorphism $\Upsilon$ is surjective. The injectivity of $\Upsilon$ follows from the injectivity
of~\eqref{eq:Upsilon-extended}.

Alternatively, one can use~\eqref{eq:extended-vs-nonextended} and identify $Y(\fosp(V))$ with the preimage
of $Y^\rtt(\fosp(V))$ under~\eqref{eq:Upsilon-extended}. This amounts to checking that the subalgebra of
$X(\fosp(V))$ generated by the same-named currents \eqref{eq:xk-series-1}--\eqref{eq:xk-series-last-k} is
isomorphic to $Y(\fosp(V))$ defined via generators and relations.
\end{proof}

\begin{Rem}
The Serre relations~\eqref{eq:Dr-osp-rel-5.1}--\eqref{eq:Dr-osp-rel-9} can be generalized exactly as
in Remark~\ref{rem:Serre-extosp-series-general}.
\end{Rem}


%
%
%

\newpage
\appendix
\section{Low rank identification through 6-fold fusion}\label{sec:6-fold fusion}


For $m=0$ (respectively, $N=0$), our straightforward treatment of the corresponding RTT orthogonal
(respectively, symplectic) Yangians is slightly different from the one in~\cite{jlm}. More specifically,
the arguments of~\cite{jlm} crucially utilized (see the proof of~\cite[Proposition 5.4]{jlm}) the low level
isomorphisms established in~\cite[Section~4]{amr}. The aim of this appendix is thus twofold. Starting from
the \emph{6-fold $R$-matrix fusion} argument of~\cite{amr}, used to explicitly construct isomorphisms
  $X^\rtt(\fso_3)\simeq Y^\rtt(\gl_2)$ and $Y^\rtt(\fso_3)\simeq Y^\rtt(\ssl_2)$,
we construct analogous isomorphisms\footnote{These isomorphisms are known to experts, but we did not
find explicit RTT-type realizations in the literature.}
  $X^\rtt(\fso_6)\simeq Y^\rtt(\gl_4)$ and $Y^\rtt(\fso_6)\simeq Y^\rtt(\ssl_4)$.
Finally, we explain why applying this approach to $Y^\rtt(\gl(1|2))$ recovers an algebra that looks
surprisingly different\footnote{We thank A.~Molev who noted that there is actually an algebra isomorphism
$X(\fosp(2|2))\simeq Y(\gl(1|2))$ between the Drinfeld realizations of these Yangians, which however
does not admit any nice RTT-type interpretation.} from $X^\rtt(\fosp(2|2))$.

\medskip
\noindent
$\bullet$ $\fso_3$ vs $\gl_2$.

Consider the Yangian $Y^\rtt(\gl_2)=Y^\rtt(\gl(\BC^2))$ associated with the $R$-matrix
$\sfR(u)=\ID-\frac{\Pop}{u}$, where $\Pop\in \End\, (\BC^2\otimes \BC^2)$ is the permutation operator.
Here, we choose a basis $\{\sfv_1,\sfv_2\}$ of $\BC^2$ and use $\sfT(u)$ to denote the corresponding
$2\times 2$ generator matrix of $Y^\rtt(\gl_2)$, see Subsection~\ref{ssec:RTT gl-Yangian}.

The symmetric square $V=S^2(\BC^2)=\sfR(-1)(\BC^2\otimes \BC^2)$ has a basis
  $$v_1=\sfv_1\otimes \sfv_1 \,,\quad
    v_2=\sfrac{1}{\sqrt{2}}(\sfv_1\otimes \sfv_2+\sfv_2\otimes \sfv_1) \,,\quad
    v_3=-\sfv_2\otimes \sfv_2 \,.$$
Let $X^\rtt(\fso_3)$ be the corresponding RTT extended orthogonal Yangian of
Subsection~\ref{ssec:RTT osp-Yangian}. Here, $N=3,m=0$, $\kappa=1/2$, $\theta_1=\theta_2=\theta_3=1$,
$P,Q$ are as in~(\ref{eq:P},~\ref{eq:Q}), and $R(u)$ is defined in~\eqref{eq:osp-Rmatrix}.

\begin{Rem}\label{rem:so3=gl2-Lie}
The above choice of $V$, its basis $\{v_1,v_2,v_3\}$, and the key RTT-type construction of
Proposition~\ref{prop:so3=gl2} are all crucially based on the following two simple observations:
\begin{enumerate}

\item[(a)]
the assignment $e\mapsto \sqrt{2}F_{12}, f\mapsto \sqrt{2}F_{21}, h\mapsto 2F_{11}$, where $\{h,e,f\}$
denotes the standard basis of $\ssl_2$ and $F_{ij}$ are as in~\eqref{eq:F-elements}, gives rise to
a Lie algebra isomorphism $\rho\colon \ssl_2\iso \fso_3$;

\item[(b)]
the vector space isomorphism $\rho\colon S^2(\BC^2)\iso \BC^3$ mapping $v_1,v_2,v_3$ to the
standard basis of $\BC^3$  is compatible with the above Lie algebra isomorphism, that is:
$\rho(x(v))=\rho(x)(\rho(v))$.

\end{enumerate}
\end{Rem}

Consider the tensor product space $(\BC^2)^{\otimes 4}$, and we shall view $V\otimes V$ as a natural
subspace of $(\BC^2)^{\otimes 2}\otimes (\BC^2)^{\otimes 2}=(\BC^2)^{\otimes 4}$. Moreover, the operator
  $\frac{1+\Pop_{12}}{2}\cdot \frac{1+\Pop_{34}}{2}=\frac{1}{4}\sfR_{12}(-1)\sfR_{34}(-1)$
defines a projection of $(\BC^2)^{\otimes 2}\otimes (\BC^2)^{\otimes 2}$ onto this subspace
$V\otimes V$. Let us consider the following
\begin{equation}\label{eq:RV-of-amr}
  \mathrm{\textbf{6-fold\ fusion}} \quad
  R_V(u):=\frac{1+\Pop_{12}}{2}\cdot \frac{1+\Pop_{34}}{2}\cdot
  \sfR_{14}(2u-1) \sfR_{13}(2u) \sfR_{24}(2u) \sfR_{23}(2u+1) \,,
\end{equation}
which can be equivalently written as
\begin{equation*}
  R_V(u) =
  \sfR_{23}(2u+1) \sfR_{13}(2u) \sfR_{24}(2u) \sfR_{14}(2u-1) \cdot
  \frac{1+\Pop_{12}}{2}\cdot \frac{1+\Pop_{34}}{2} \,,
\end{equation*}
since the $R$-matrix $\sfR(u)$ satisfies the Yang–Baxter equation~\eqref{eq:YBE-gl}. The subspace
$V\otimes V$ is clearly stable under the operator $R_V(u)$. The following observation first appeared
in~\cite[Lemma 4.5]{amr}:

\begin{Lem}\label{lem:amr-fusion}
We have the equality of operators in $V\otimes V$:
\begin{equation}\label{eq:Rfused-1}
  R_V(u) =
  \frac{2u-1}{2u+1}\cdot \left(\ID - \frac{P}{u} + \frac{Q}{u-1/2} \right) = \frac{2u-1}{2u+1}\cdot R(u) \,.
\end{equation}
\end{Lem}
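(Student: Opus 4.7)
The plan is to exploit the $\fso_3 \simeq \ssl_2$-equivariance of both sides. Each factor $\sfR_{ij}(c) = \ID - \Pop_{ij}/c$ commutes with the diagonal $\ssl_2$-action on $(\BC^2)^{\otimes 4}$, and the symmetrizers $(1+\Pop_{12})/2, (1+\Pop_{34})/2$ are $\ssl_2$-equivariant projections onto $V = S^2(\BC^2)$ in positions $(12)$ and $(34)$. Under the Lie algebra isomorphism $\rho\colon \ssl_2 \iso \fso_3$ of Remark~\ref{rem:so3=gl2-Lie}, the restriction of the resulting $\ssl_2$-action on $V\otimes V$ agrees with the standard $\fso_3$-action, so $R_V(u)$ is an $\fso_3$-intertwiner. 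The right-hand side is manifestly $\fso_3$-equivariant since $P,Q$ commute with the diagonal $\fso_3$-action.

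Decompose $V \otimes V = S^2_0(V) \oplus \Lambda^2(V) \oplus \BC\cdot \omega$ into $\fso_3$-irreducibles of dimensions $5,3,1$, corresponding under $\rho$ to $V(4)\oplus V(2)\oplus V(0)$. Schur's lemma reduces the identity to matching three pairs of scalars. For $R(u)$, one uses $P=\pm 1$ on symmetric/antisymmetric tensors, $Q^2 = N Q = 3Q$, and $\mathrm{im}(Q)=\BC\omega$ to obtain the eigenvalues $(u-1)/u$, $(u+1)/u$, and, via the identity $u^2+3u/2+1/2=(u+1)(u+1/2)$, the value $(u+1)(u+1/2)/(u(u-1/2))$. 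After multiplication by $(2u-1)/(2u+1)$ these become $(2u-1)(u-1)/(u(2u+1))$, $(2u-1)(u+1)/(u(2u+1))$, and $(u+1)/u$ respectively.

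For $R_V(u)$, I would evaluate on one test vector per summand. On $S^2_0(V)$, take $v_1\otimes v_1 = \sfv_1^{\otimes 4}$, which is totally symmetric, so the symmetrizers act trivially and each $\sfR_{ij}(c)$ contributes $(c-1)/c$; the product telescopes to $(u-1)(2u-1)/(u(2u+1))$. On $\BC\cdot \omega$, take the $\fso_3$-invariant, whose expansion in $(\BC^2)^{\otimes 4}$ is (up to scalar) a product of the two $\ssl_2$-invariants in the $(13)$- and $(24)$-slots; then $\sfR_{13}(2u)$ and $\sfR_{24}(2u)$ each yield $(2u+1)/(2u)$, while $\sfR_{14}(2u-1)$ and $\sfR_{23}(2u+1)$ act by explicitly computable scalars, and after symmetrization the product collapses to $(u+1)/u$. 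On $\Lambda^2(V)$, use a $\rho$-highest weight vector built from $v_1 \otimes v_2 - v_2 \otimes v_1$ and propagate through the four $\sfR_{ij}$'s to reach $(u+1)(2u-1)/(u(2u+1))$.

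The main obstacle is the middle ($\Lambda^2$) case, since the chosen test vector is not a common eigenvector of the four two-site $R$-matrices, so one must carefully track several terms and rely on cancellations enforced by the final symmetrization. To mitigate this, I would first record the explicit action of each $\sfR_{ij}(c)$ on the $\ssl_2$-weight basis of $V(2)\subset \Lambda^2(V)$ and verify the identity weight space by weight space, or equivalently apply the Yang--Baxter equation to reorder the factors into a form where the projectors can be applied first.
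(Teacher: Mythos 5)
Your argument is correct in outline, but it takes a genuinely different route from the paper's. The paper does not prove Lemma~\ref{lem:amr-fusion} at all: it cites \cite[Lemma~4.5]{amr}, and the analogous statements it does prove (Lemma~\ref{lem:new-fusion} and the $\fosp(2|2)$ fusion) are handled by brute force -- one expands the fourfold product $\sfR_{14}\sfR_{13}\sfR_{24}\sfR_{23}$, uses projection identities of the type~\eqref{eq:projections} to collapse it to a short closed expression in the $P_{ij}$ (cf.~\eqref{eq:RV-simplified-2}), and then compares matrices entry by entry. You instead observe that both sides are $\fso_3\simeq\ssl_2$-intertwiners on the multiplicity-free module $V\otimes V\cong V(4)\oplus V(2)\oplus V(0)$, so by Schur's lemma it suffices to match three scalars. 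Your eigenvalues for the right-hand side are all correct ($P=\pm1$ on the symmetric/antisymmetric parts, $Q=3$ on the invariant, and $u^2+\tfrac{3u}{2}+\tfrac12=(u+1)(u+\tfrac12)$), the telescoping on $\sfv_1^{\otimes 4}$ is correct, and the three target values pass the consistency check $5\lambda_{S^2_0}+3\lambda_{\Lambda^2}+\lambda_{\omega}=\mathrm{tr}\big(\tfrac{2u-1}{2u+1}R(u)\big)$. What each approach buys: the direct expansion is mechanical, requires no structure theory, and is the one that survives in the $\fosp(2|2)$ setting of Remark~\ref{rem:sop22=gl12-Lie}, where the fused operator is precisely \emph{not} proportional to $R(u)$; your argument is shorter and explains conceptually why a single scalar identity per irreducible summand is all that is at stake.

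Two small points to tighten. First, the $\fso_3$-invariant in $V\otimes V$ is not the pure product $\epsilon_{13}\epsilon_{24}$ of the two slotwise $\ssl_2$-invariants but its symmetrization $\tfrac12(\epsilon_{13}\epsilon_{24}+\epsilon_{14}\epsilon_{23})$; this is harmless because, writing $R_V(u)$ with the projectors on the right as in the display after~\eqref{eq:RV-of-amr}, you may feed the pure product into the four $R$-matrices and project afterwards, but it should be said. Second, the $V(2)$ eigenvalue is the one place where real work remains, since your test vector is not a common eigenvector of the four $\sfR_{ij}$; your proposed fix (work weight space by weight space, or reorder via Yang--Baxter so the projectors act first) is exactly what is needed, and is no harder than the entrywise comparison the paper's method would require for the same $3\times 3$ block.
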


Thus, $R_V(u)\in \End\,V\otimes\End\,V$ coincides with the $R$-matrix $R(u)$ for $\fso_3=\fso(V)$, up to a scalar
factor. Combining this result with the repeated application of the defining RTT relation~\eqref{eq:RTT relation-gl}
and the PBW theorem for $X^\rtt(\fso_3)$, one easily obtains~\cite[Proposition 4.4, Corollary 4.6]{amr}:

\begin{Prop}\label{prop:so3=gl2}
(a) The assignment
\begin{equation*}
  T(u)\mapsto
  \frac{1+\Pop}{2}\cdot \sfT_1(2u) \sfT_2(2u+1) = \sfT_2(2u+1) \sfT_1(2u) \cdot \frac{1+\Pop}{2}
\end{equation*}
gives rise to an algebra isomorphism $\phi\colon X^\rtt(\fso_3) \iso Y^\rtt(\gl_2)$.

\medskip
\noindent
(b) The restriction of the isomorphism from (a) to the subalgebra $Y^\rtt(\fso_3)$ of $X^\rtt(\fso_3)$
gives rise to an algebra isomorphism $\phi\colon Y^\rtt(\fso_3) \iso Y^\rtt(\ssl_2)$.
\end{Prop}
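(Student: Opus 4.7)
The plan is to first verify that the proposed formula defines an algebra homomorphism $\phi\colon X^\rtt(\fso_3)\to Y^\rtt(\gl_2)$, then establish bijectivity through an associated-graded argument, and finally descend to the non-extended subalgebras in part~(b).

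For the first step, I set $\widetilde T(u):=\frac{1+\Pop_{12}}{2}\cdot \sfT_1(2u)\sfT_2(2u+1)$ as a series with coefficients in $\End(V)\otimes Y^\rtt(\gl_2)$, where $V=S^2(\BC^2)$ is embedded in $\BC^2\otimes\BC^2$ via the symmetrizer. To verify the RTT relation
\begin{equation*}
  R(u-v)\,\widetilde T_1(u)\widetilde T_2(v) = \widetilde T_2(v)\widetilde T_1(u)\,R(u-v)
\end{equation*}
in $\End V\otimes\End V\otimes Y^\rtt(\gl_2)$, I would work on $(\BC^2)^{\otimes 4}$ with the first two factors hosting the first copy of $V$ and the last two hosting the second copy. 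Using Lemma~\ref{lem:amr-fusion} to substitute for $R(u-v)$ (up to the nonzero scalar $\tfrac{2(u-v)-1}{2(u-v)+1}$), the desired equality reduces to the identity obtained by moving $\sfT_1(2u)\sfT_2(2u+1)$ past $\sfT_3(2v)\sfT_4(2v+1)$ via four applications of the $A$-type RTT relation~\eqref{eq:RTT relation-gl}; each application produces exactly one of the four factors $\sfR_{13}(2(u-v))$, $\sfR_{14}(2(u-v)-1)$, $\sfR_{23}(2(u-v)+1)$, $\sfR_{24}(2(u-v))$ that appear in the fused formula of Lemma~\ref{lem:amr-fusion}. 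The two symmetrizers $\tfrac{1+\Pop_{12}}{2}$ and $\tfrac{1+\Pop_{34}}{2}$ may be dragged past the intermediate $\sfR$- and $\sfT$-products using the standard identities $(1+\Pop_{ij})\sfR_{ij}(w)=\frac{w-1}{w}(1+\Pop_{ij})\cdot(\ldots)$ and commutativity of disjoint factors.

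For bijectivity, I would equip $X^\rtt(\fso_3)$ with the filtration from Subsection~\ref{ssec:quasiclassical} ($\deg t_{ij}^{(r)}=r-1$) and $Y^\rtt(\gl_2)$ with the analogous one ($\deg \sft_{ij}^{(r)}=r-1$). Since $(2u)^{-r}$ and $(2u+1)^{-r}$ expand as $(2u)^{-r}$ plus strictly lower-order terms in $u^{-1}$, the homomorphism $\phi$ respects these filtrations. By Proposition~\ref{prop:assoc.graded} on the orthogonal side and its $A$-type analogue on the general linear side, the associated graded of $\phi$ becomes a map
\begin{equation*}
  \gr\phi\colon U(\fso_3[\lt])\otimes \BC[\sfc_1,\sfc_2,\ldots] \longrightarrow U(\gl_2[\lt])
\end{equation*}
which is induced in filtration degree $0$ by the Lie algebra isomorphism $\rho\colon \ssl_2\iso\fso_3$ of Remark~\ref{rem:so3=gl2-Lie}(a) together with an identification of the central series $\sfc_r$ with the central generators of $U(\gl_2[\lt])$ coming from the trace of the loop matrix. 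Using the PBW bases on both sides, one sees that $\gr\phi$ sends a basis to a basis, and is therefore an isomorphism; hence so is $\phi$.

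For part~(b), I would check that $\phi$ intertwines the rescaling automorphisms: namely, $\phi\circ\mu_f=\mu_g\circ\phi$ whenever $g(w)g(w+1)=f(w/2)$, which is immediate from the definition of $\widetilde T(u)$. Since for every $g\in 1+w^{-1}\BC[[w^{-1}]]$ the choice $f(u):=g(2u)g(2u+1)$ lies in $1+u^{-1}\BC[[u^{-1}]]$, any element fixed by \emph{all} $\mu_f$ in $X^\rtt(\fso_3)$ is sent by $\phi$ into the joint fixed locus of all $\mu_g$ on $Y^\rtt(\gl_2)$; that is, $\phi(Y^\rtt(\fso_3))\subseteq Y^\rtt(\ssl_2)$. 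Surjectivity follows from a parallel argument (or from a second associated-graded comparison $\gr Y^\rtt(\fso_3)\simeq U(\fso_3[\lt])\simeq U(\ssl_2[\lt])\simeq \gr Y^\rtt(\ssl_2)$), which gives the restricted isomorphism.

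The main technical obstacle is the first step: the chain of RTT manipulations on four copies of $\BC^2$ must be executed with careful bookkeeping of spectral shifts, and one has to verify that the two symmetrizers can be transported through the intermediate $\sfR$-matrix products so that the fused identity of Lemma~\ref{lem:amr-fusion} closes up on both sides of the RTT relation. This computation is classical and is essentially the content of~\cite[Proposition~4.4]{amr}; the subsequent filtration and fixed-locus arguments are then routine.
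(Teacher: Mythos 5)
Your proposal follows essentially the same route the paper takes (and attributes to~\cite[Proposition~4.4, Corollary~4.6]{amr}): the 6-fold fusion identity of Lemma~\ref{lem:amr-fusion} combined with repeated applications of the $A$-type RTT relation~\eqref{eq:RTT relation-gl} to get the homomorphism, the PBW/associated-graded comparison via Proposition~\ref{prop:assoc.graded} and Remark~\ref{rem:so3=gl2-Lie} for bijectivity, and the intertwining $\mu_g\circ\phi=\phi\circ\mu_f$ with $f(u)=g(2u)g(2u+1)$ for part~(b), exactly as in the paper's remark following Proposition~\ref{prop:so6=gl4}. The spectral-parameter bookkeeping you give ($2(u-v)$, $2(u-v)\pm 1$) matches the arguments appearing in~\eqref{eq:RV-of-amr}, so the argument is correct and is the intended one.
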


We refer the interested reader to~\cite{amr} for more details and the explicit formulas for $\phi(t_{ij}(u))$.

\medskip
\noindent
$\bullet$ $\fso_6$ vs $\gl_4$.

Consider the Yangian $Y^\rtt(\gl_4)=Y^\rtt(\gl(\BC^4))$ associated with the $R$-matrix
$\sfR(u)=\ID-\frac{\Pop}{u}$ of~\eqref{eq:gl-Rmatrix}. Here, we apply the construction
of Subsection~\ref{ssec:RTT gl-Yangian} to $\VV=\BC^4$, and fix its specific basis
$\{\sfv_1, \sfv_2, \sfv_3,\sfv_4\}$. We shall use $\sfT(u)$ to denote the corresponding
$4\times 4$ generator matrix of $Y^\rtt(\gl_4)$.

The second exterior power $V=\Lambda^2(\BC^4)=\sfR(1)(\BC^4\otimes \BC^4)$ has a basis
\begin{equation}\label{eq:6-basis}
  v_1=\sfv_1 \wedge \sfv_2 \,,\, v_2=\sfv_1 \wedge \sfv_3 \,,\, v_3=\sfv_2 \wedge \sfv_3 \,,\,
  v_4=\sfv_1 \wedge \sfv_4 \,,\, v_5=\sfv_4 \wedge \sfv_2 \,,\, v_6=\sfv_3 \wedge \sfv_4 \,.
\end{equation}
Let $X^\rtt(\fso_6)$ be the corresponding RTT extended orthogonal Yangian of
Subsection~\ref{ssec:RTT osp-Yangian}. Here, $N=6,m=0$, $\kappa=2$, $\theta_1=\dots=\theta_6=1$,
$P,Q$ are as in~(\ref{eq:P},~\ref{eq:Q}), and $R(u)$ is defined in~\eqref{eq:osp-Rmatrix}.

\begin{Rem}\label{rem:so6=gl4-Lie}
The above choice of $V$, its basis $\{v_k\}_{k=1}^6$, and the key RTT-type construction of
Proposition~\ref{prop:so6=gl4} are all crucially based on the following two simple observations:
\begin{enumerate}

\item[(a)]
the assignment $E_{12}\mapsto F_{23}, E_{23}\mapsto F_{12}, E_{34}\mapsto F_{24}$,
$E_{21}\mapsto F_{32}, E_{32}\mapsto F_{21}, E_{43}\mapsto F_{42}$, with $F_{ij}\in \gl(V)$
from~\eqref{eq:F-elements}, gives rise to a Lie algebra isomorphism $\rho\colon \ssl_4\iso \fso_6$;

\item[(b)]
the vector space isomorphism $\rho\colon \Lambda^2(\BC^4)\iso \BC^6$ mapping $v_1,\ldots,v_6$
to the standard basis of $\BC^6$  is compatible with the above Lie algebra isomorphism, that is:
$\rho(x(v))=\rho(x)(\rho(v))$.

\end{enumerate}
\end{Rem}

Consider the tensor product space $(\BC^4)^{\otimes 4}$, and we shall view $V\otimes V$ as
a natural subspace of $(\BC^4)^{\otimes 2}\otimes (\BC^4)^{\otimes 2}=(\BC^4)^{\otimes 4}$.
Moreover, the operator
  $\frac{1-\Pop_{12}}{2}\cdot \frac{1-\Pop_{34}}{2}=\frac{1}{4}\sfR_{12}(1)\sfR_{34}(1)$
defines a projection of $(\BC^4)^{\otimes 2}\otimes (\BC^4)^{\otimes 2}$ onto this subspace
$V\otimes V$. Let us consider the following
\begin{equation}\label{eq:RV-new}
  \mathrm{\textbf{6-fold fusion}} \quad
  R_V(u):=\frac{1-\Pop_{12}}{2}\cdot \frac{1-\Pop_{34}}{2}\cdot
  \sfR_{14}(u+1) \sfR_{13}(u) \sfR_{24}(u) \sfR_{23}(u-1) \,,
\end{equation}
which can be equivalently written as
\begin{equation*}
  R_V(u) =
  \sfR_{23}(u-1) \sfR_{13}(u) \sfR_{24}(u) \sfR_{14}(u+1) \cdot
  \frac{1-\Pop_{12}}{2}\cdot \frac{1-\Pop_{34}}{2} \,,
\end{equation*}
since the $R$-matrix $\sfR(u)$ satisfies the Yang–Baxter equation~\eqref{eq:YBE-gl}.
The subspace $V\otimes V$ is clearly stable under the operator $R_V(u)$.
The following result is analogous to Lemma~\ref{lem:amr-fusion}:

\begin{Lem}\label{lem:new-fusion}
We have the equality of operators in $V\otimes V$:
\begin{equation}\label{eq:Rfused-2}
  R_V(u) =
  \frac{u-2}{u-1}\cdot \left(\ID - \frac{P}{u} + \frac{Q}{u-2} \right) = \frac{u-2}{u-1}\cdot R(u) \,.
\end{equation}
\end{Lem}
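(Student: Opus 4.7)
The plan is to exploit the $\fso_6$-equivariance of both sides of~\eqref{eq:Rfused-2}, so that Schur's lemma reduces the identity to checking a scalar eigenvalue on each irreducible component of $V \otimes V$. Under $\fso_6$, this $36$-dimensional space decomposes into three mutually non-isomorphic irreducible summands:
\[
  V \otimes V \ =\ \BC \cdot t \ \oplus\ \Lambda^2 V\ \oplus\ S^2_0(V),
\]
of dimensions $1$, $15$, $20$ respectively, where $t = \sum_{a=1}^{6} v_a \otimes v_{a'}$ is the trace vector. The invariance of $R_V(u)$ on $V \otimes V$ follows because each factor $\sfR_{ij}(u-c) = \ID - \Pop_{ij}/(u-c)$ commutes with the diagonal $\gl_4$-action on $(\BC^4)^{\otimes 4}$ and the projectors $(1-\Pop_{12})/2$, $(1-\Pop_{34})/2$ cut out $\Lambda^2 \BC^4 \otimes \Lambda^2 \BC^4 = V \otimes V$ equivariantly. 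Via the Lie algebra isomorphism $\rho \colon \ssl_4 \iso \fso_6$ of Remark~\ref{rem:so6=gl4-Lie}, the induced $\ssl_4$-action on $\Lambda^2 \BC^4$ matches the defining $\fso_6$-action on $V$, so $R_V(u)|_{V \otimes V}$ is $\fso_6$-invariant; the operator $R(u)$ is $\fso_6$-invariant by the standard properties of $P$ and $Q$.

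Using $P t = t$, $P|_{S^2_0(V)} = +\ID$, $P|_{\Lambda^2 V} = -\ID$, together with $Q|_{\Lambda^2 V \oplus S^2_0(V)} = 0$ and $Q t = 6\, t$ (which follows from $Q(v_a \otimes v_{a'}) = t$ since all $\theta_a = 1$), the eigenvalues of $R(u) = \ID - P/u + Q/(u-2)$ are computed to be
\[
  \frac{(u+1)(u+2)}{u(u-2)}\ \text{on }\BC \cdot t, \qquad
  \frac{u+1}{u}\ \text{on }\Lambda^2 V, \qquad
  \frac{u-1}{u}\ \text{on }S^2_0(V).
\]
Multiplying by the predicted prefactor $(u-2)/(u-1)$ yields the target eigenvalues
\[
  \frac{(u+1)(u+2)}{u(u-1)}, \qquad \frac{(u+1)(u-2)}{u(u-1)}, \qquad \frac{u-2}{u}
\]
for $R_V(u)$ on the three respective summands. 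The task is thus reduced to verifying these three scalars by evaluating $R_V(u)$ on a single representative in each summand; convenient choices are $w_{+} = v_1 \otimes v_1 \in S^2_0(V)$ (clearly symmetric with no component along $t$), an antisymmetric vector such as $w_{-} = v_2 \otimes v_3 - v_3 \otimes v_2 \in \Lambda^2 V$, and the trace vector $t$ itself.

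The remaining computation is carried out in $(\BC^4)^{\otimes 4}$ by unfolding each $v_a = \sfv_i \wedge \sfv_j$ as the antisymmetrized tensor $\sfv_i \otimes \sfv_j - \sfv_j \otimes \sfv_i$ in the appropriate pair of factors and applying the four operators $\sfR_{14}(u+1)$, $\sfR_{13}(u)$, $\sfR_{24}(u)$, $\sfR_{23}(u-1)$ in order, followed by the projectors. The main obstacle will be the trace vector $t$: as a sum of six tensor products, a naive expansion generates many intermediate terms. The cleanest route is to use $\fso_6$-invariance to observe that $R_V(u) \cdot t$ is already known to lie in $\BC \cdot t$, so it suffices to read off the coefficient of a \emph{single} monomial, for example the coefficient of $(\sfv_1 \wedge \sfv_2) \otimes (\sfv_3 \wedge \sfv_4)$ in $R_V(u)\,t$, and match it against that of the corresponding monomial in $R(u)\,t$. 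A parallel localization reduces $w_{+}$ and $w_{-}$ to short calculations involving at most a handful of permutations acting on $\sfv_1 \otimes \sfv_2 \otimes \sfv_1 \otimes \sfv_2$ and its variants. The spectral parameters $u \pm 1, u, u$ in the fusion are precisely engineered so that the rational factors combine into the asserted eigenvalues, with the common prefactor $(u-2)/(u-1)$ emerging from the projectors $\tfrac{1}{4}\sfR_{12}(1)\sfR_{34}(1)$.
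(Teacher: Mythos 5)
Your proposal is correct in all of its stated intermediate claims, but it takes a genuinely different route from the paper, whose entire proof of Lemma~\ref{lem:new-fusion} is the single line ``Straightforward computation'' --- i.e.\ a direct verification of the $36\times 36$ matrix identity, presumably along the lines of the explicit simplification the authors carry out for the $\fosp(2|2)$ fusion later in the appendix (first rewriting the product of $\sfR$'s using relations such as $\Pi\, P_{14}P_{23}=\Pi\, P_{13}P_{24}$ to collapse $R_V(u)$ to a short polynomial in the $P_{ij}$'s, then comparing entries). You instead observe that both sides are $\fso_6$-intertwiners of $V\otimes V\simeq \BC\,t\oplus\Lambda^2V\oplus S^2_0V$ with pairwise non-isomorphic summands, so Schur's lemma reduces the identity to three scalar equalities. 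Your equivariance argument for $R_V(u)$ (via the diagonal $\gl_4$-action commuting with all $\Pop_{ij}$, transported through $\rho$ of Remark~\ref{rem:so6=gl4-Lie}) is sound, and your eigenvalue computations for the right-hand side are correct: $Pt=t$, $Qt=6t$, $Q$ annihilates $\Lambda^2V\oplus S^2_0V$, giving $\frac{(u+1)(u+2)}{u(u-2)}$, $\frac{u+1}{u}$, $\frac{u-1}{u}$ for $R(u)$ on the three summands, hence the three target scalars after multiplying by $\frac{u-2}{u-1}$. What your approach buys is a reduction of the verification from a full matrix identity to three short rational-function checks, each localized to a handful of monomials in $(\BC^4)^{\otimes 4}$; what it costs is the representation-theoretic overhead, which the paper's brute-force route avoids entirely. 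The one thing you have not actually done is evaluate $R_V(u)$ on your three representatives $v_1\otimes v_1$, $v_2\otimes v_3-v_3\otimes v_2$, and $t$ --- you only outline how. Since these are finite computations with a clearly sound plan (and the paper itself offers no more detail), this is not a gap in the logic, but to make the proof self-contained you should carry out at least one of them explicitly; the $S^2_0$ case, which lives entirely inside $\mathrm{span}\{\sfv_1,\sfv_2\}^{\otimes 4}$, is the natural one to write out.
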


\begin{proof}
Straightforward computation.
\end{proof}

Thus, $R_V(u)\in \End\,V\otimes\End\,V$ coincides with the $R$-matrix $R(u)$ for $\fso_6=\fso(V)$, up to a scalar
factor. Combining this with the repeated application of the defining RTT relation~\eqref{eq:RTT relation-gl}
and the PBW theorem for $X^\rtt(\fso_6)$, one obtains the following analogue of Proposition~\ref{prop:so3=gl2}:

\begin{Prop}\label{prop:so6=gl4}
(a) The assignment
\begin{equation*}
  T(u)\mapsto
  \frac{1-\Pop}{2}\cdot \sfT_1(u+1) \sfT_2(u) = \sfT_2(u) \sfT_1(u+1) \cdot \frac{1-\Pop}{2}
\end{equation*}
gives rise to an algebra isomorphism $\phi\colon X^\rtt(\fso_6) \iso Y^\rtt(\gl_4)$.

\medskip
\noindent
(b) The restriction of the isomorphism from (a) to the subalgebra $Y^\rtt(\fso_6)$ of $X^\rtt(\fso_6)$
gives rise to an algebra isomorphism $\phi\colon Y^\rtt(\fso_6) \iso Y^\rtt(\ssl_4)$.
\end{Prop}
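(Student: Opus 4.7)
The plan is to mimic the proof of Proposition~\ref{prop:so3=gl2}(a,b) from~\cite{amr}, using Lemma~\ref{lem:new-fusion} in place of Lemma~\ref{lem:amr-fusion}. First I would define the map at the level of operators. Set
\begin{equation*}
  \mathcal{T}(u) \,:=\, \tfrac{1-\Pop}{2}\cdot \sfT_1(u+1)\sfT_2(u) \,\in\, \End(V)\otimes Y^\rtt(\gl_4)[[u^{-1}]] \,,
\end{equation*}
where we view $V=\Lambda^2(\BC^4)\subset \BC^4\otimes \BC^4$ via the idempotent $\tfrac{1-\Pop}{2}$. The equality of the two expressions in the statement is just the RTT relation~\eqref{eq:RTT relation-gl} applied to $\sfR_{12}(1)\sfT_1(u+1)\sfT_2(u)$ combined with $\sfR(1)=2\cdot\tfrac{1-\Pop}{2}$. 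To check that $\mathcal{T}(u)$ satisfies the $\fso_6$ RTT-relation~\eqref{eq:RTT relation}, consider $\mathcal{T}_1(u)\mathcal{T}_2(v)$ in $\End(V)\otimes \End(V)\otimes Y^\rtt(\gl_4)$, realize it inside $\End((\BC^4)^{\otimes 4})\otimes Y^\rtt(\gl_4)$, and repeatedly apply the Yang--Baxter equation~\eqref{eq:YBE-gl} to push the four $\sfR$-factors $\sfR_{14}(u-v+1)\sfR_{13}(u-v)\sfR_{24}(u-v)\sfR_{23}(u-v-1)$ to the other side; by Lemma~\ref{lem:new-fusion} this product (times the two projectors) equals $\tfrac{u-v-2}{u-v-1}R(u-v)$, and the scalar prefactor cancels between the two sides. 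Thus the assignment $T(u)\mapsto \mathcal{T}(u)$ defines a superalgebra homomorphism $\phi\colon X^\rtt(\fso_6)\to Y^\rtt(\gl_4)$.

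Next I would verify bijectivity by a standard associated-graded argument, using Proposition~\ref{prop:assoc.graded}. Equip both sides with their loop filtrations: $\Gr X^\rtt(\fso_6)\simeq U(\fso_6[\lt])\otimes \BC[\sfc_1,\sfc_2,\dots]$ and $\Gr Y^\rtt(\gl_4)\simeq U(\gl_4[\lt])$. Reading off the $u^{-1}$-coefficient of $\phi(t_{ij}(u))$ gives, up to sign, the image of $F_{ij}$ under the Lie algebra isomorphism $\rho\colon \ssl_4\iso \fso_6$ of Remark~\ref{rem:so6=gl4-Lie}, with the diagonal entries producing in addition the central element $c_1$ coming from $\Tr \sfT(u+1)\sfT(u)$. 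Together with the $\lt$-action provided by the shifts $\sfT_1(u+1)\sfT_2(u)$ this shows that the associated-graded map $\Gr\phi$ is an isomorphism of $U(\fso_6[\lt])\otimes \BC[\sfc_1,\sfc_2,\dots]\iso U(\gl_4[\lt])\simeq U(\ssl_4[\lt])\otimes U(\BC\cdot\mathrm{id}[\lt])$, hence $\phi$ itself is an isomorphism of filtered algebras.

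For part~(b), recall the defining property~\eqref{eq:RTT-non-extended} of $Y^\rtt(\fso_6)$ as the fixed-point subalgebra under the automorphisms $\mu_f$ of~\eqref{eq:mu-automorphims}. Under $\phi$, the rescaling $T(u)\mapsto f(u)T(u)$ corresponds on the $Y^\rtt(\gl_4)$ side to $\sfT(u)\mapsto g(u)\sfT(u)$ for $g(u)=\sqrt{f(u-1/2)}$ (any formal square root, fixed via the central series $\sz_V$ of Remark~\ref{rem:tau-generators}), which is exactly the automorphism of $Y^\rtt(\gl_4)$ used to cut out $Y^\rtt(\ssl_4)$ in~\eqref{eq:RTT-non-extended-gl}. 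Thus $\phi$ intertwines the two families of automorphisms and restricts to an isomorphism $Y^\rtt(\fso_6)\iso Y^\rtt(\ssl_4)$.

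The main obstacle I anticipate is the combinatorial bookkeeping in step one: one has to keep careful track of the basis~\eqref{eq:6-basis} of $V=\Lambda^2(\BC^4)$ and of the constants $\theta_i$ together with the sign conventions in~\eqref{eq:Tmatrix} in order to ensure that the $Q$-term produced by the fusion in Lemma~\ref{lem:new-fusion} is exactly the $Q$-operator of~\eqref{eq:Q} for $\fso_6$ rather than a twisted version of it, and that $\kappa=2$ really appears in the $R$-matrix. Once that identification is fixed, the RTT-relation check reduces to the scalar Yang--Baxter manipulation and the rest is automatic.
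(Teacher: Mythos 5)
Your proposal follows essentially the same route as the paper: Lemma~\ref{lem:new-fusion} plus repeated use of the RTT-relation~\eqref{eq:RTT relation-gl} to get the homomorphism, the PBW/associated-graded comparison (Proposition~\ref{prop:assoc.graded} together with Remark~\ref{rem:so6=gl4-Lie}) for bijectivity, and the intertwining of the rescaling automorphisms for part~(b). The only slip is your explicit formula $g(u)=\sqrt{f(u-1/2)}$ in part~(b): what is needed is $f(u)=g(u)g(u+1)$, since $\mu_g$ multiplies $\phi(T(u))=\tfrac{1-\Pop}{2}\sfT_1(u+1)\sfT_2(u)$ by $g(u+1)g(u)$, and your candidate gives $\sqrt{f(u-1/2)f(u+1/2)}\ne f(u)$; the correct $g$ exists (and is unique in $1+u^{-1}\BC[[u^{-1}]]$) by solving the difference equation recursively, which is exactly how the paper argues, so the intertwining $\mu_g\circ\phi=\phi\circ\mu_f$ and hence part~(b) go through once this is fixed.
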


\begin{Rem}
(a) As for any $f(u)\in 1+u^{-1}\BC[[u^{-1}]]$ there exists $g(u)\in 1+u^{-1}\BC[[u^{-1}]]$
satisfying $f(u)=g(u)g(u+1)$, we have $\mu_g\circ \phi=\phi\circ \mu_f$, so that part (b)
follows immediately from part (a).

\medskip
\noindent
(b) Combining $\phi$ of Proposition~\ref{prop:so6=gl4}(b) with the evaluation homomorphism
$Y^\rtt(\ssl_4)\twoheadrightarrow U(\ssl_4)$ (given by
  $\sft_{ij}(u)\mapsto \delta_{ij}+(E_{ij}-\delta_{ij}\frac{E_{11}+E_{22}+E_{33}+E_{44}}{4})u^{-1}$)
and the isomorphism $U(\ssl_4)\simeq U(\fso_6)$ of Remark~\ref{rem:so6=gl4-Lie}(a), we obtain
an algebra epimorphism $Y^\rtt(\fso_6)\twoheadrightarrow U(\fso_6)$, cf.~\cite[Corollary~4.7]{amr}.

\medskip
\noindent
(c) The images $\phi(t_{k\ell}(u))$ can be explicitly described as follows:
\begin{equation*}
  \phi(t_{k\ell}(u))=
  \frac{1}{2} \Big( \sft_{ap}(u+1)\sft_{bq}(u) - \sft_{aq}(u+1)\sft_{bp}(u) -
  \sft_{bp}(u+1)\sft_{aq}(u) + \sft_{bq}(u+1)\sft_{ap}(u) \Big) \,,
\end{equation*}
for unique indices $1\leq a,b,p,q\leq 4$ satisfying $v_k=\sfv_a\wedge \sfv_b$ and $v_\ell=\sfv_p\wedge \sfv_q$,
see~\eqref{eq:6-basis}.
\end{Rem}

\medskip
\noindent
$\bullet$ $\fosp(2|2)$ vs $\gl(1|2)$.

Consider a superspace $\VV=\BC^{1|2}$ with a basis $\{\sfv_1, \sfv_2, \sfv_3\}$ whose parity is
$|\sfv_1|=\bar{1}, |\sfv_2|=\bar{0}, |\sfv_3|=\bar{1}$. Let $Y^\rtt(\gl(\BC^{1|2}))$ be the corresponding
RTT Yangian associated with the $R$-matrix $\sfR(u)=\ID-\frac{\Pop}{u}$ and let $\sfT(u)$ denote the
corresponding $3\times 3$ generator matrix of $Y^\rtt(\gl(\VV))$, see Subsection~\ref{ssec:RTT gl-Yangian}.

We note that the $\gl(\BC^{1|2})$-module\footnote{Recall that in super-case the action on the tensor product
is given by $x(v\otimes w)=x(v)\otimes w + (-1)^{|x|\cdot |v|}v\otimes x(w)$.} $\BC^{1|2}\otimes \BC^{1|2}$
decomposes into the direct sum of 4-dimensional $S^2(\BC^{1|2})=\sfR(-1)(\BC^{1|2}\otimes \BC^{1|2})$
and 5-dimensional $\Lambda^2(\BC^{1|2})=\sfR(1)(\BC^{1|2}\otimes \BC^{1|2})$ submodules.
The symmetric square $V=S^2(\VV)=S^2(\BC^{1|2})$ has a basis
  $$v_1=\sfv_1 \otimes \sfv_2 + \sfv_1 \otimes \sfv_2 \,,\quad
    v_2=\sfv_2 \otimes \sfv_2 \,,\quad
    v_3=\sfv_1 \otimes \sfv_3 - \sfv_3 \otimes \sfv_1 \,,\quad
    v_4=\sfv_2 \otimes \sfv_3 + \sfv_3 \otimes \sfv_2 \,,$$
with a parity $|v_1|=|v_4|=\bar{1}, |v_2|=|v_3|=\bar{0}$. Let $X^\rtt(\fosp(V))$ be the corresponding
RTT extended orthosymplectic Yangian of Subsection~\ref{ssec:RTT osp-Yangian}. Here, $N=2,m=1$, $\kappa=-1$
by~\eqref{eq:kappa}, $\theta_1=\theta_2=\theta_3=1, \theta_4=-1$ according to~\eqref{eq:theta},
$P,Q$ are as in~(\ref{eq:P},~\ref{eq:Q}), and $R(u)$ is as in~\eqref{eq:osp-Rmatrix}.

\begin{Rem}\label{rem:sop22=gl12-Lie}
(a) The Dynkin diagram of $\ssl(\BC^{1|2})=A(\BC^{1|2})$ is
  $\begin{tikzpicture}
   \foreach \a in {1} {
     \begin{scope}[shift={(0.7*\a,0)}]
       \draw[fill=gray] (0.3*\a,0) circle (0.3cm);
       \draw[black,thick] (0.3*\a+0.3,0)--(0.3*\a+2.2,0);
     \end{scope}
    }
      \draw[fill=gray] (3,0) circle (0.3cm);
   \end{tikzpicture}$
which coincides with the Dynkin diagram of $\fosp(V)$ for the parity sequence $\Parity=(\bar{1},\bar{0})$,
see Subsection~\ref{ssec:Dynkin diagrams}. Therefore, one has an abstract isomorphism of Lie superalgebras
$\ssl(\VV)\simeq \fosp(V)$.

\medskip
\noindent
(b) The assignment
\begin{equation*}
\begin{split}
  & E_{12}\mapsto \sfrac{1}{\sqrt{2}}F_{12} \,,\ \
    E_{23}\mapsto \sfrac{1}{\sqrt{2}}F_{13} \,,\ \
    E_{13}\mapsto \sfrac{1}{2}F_{14} \,, \\
  & E_{21}\mapsto \sfrac{1}{\sqrt{2}}F_{21} \,,\ \
    E_{32}\mapsto -\sfrac{1}{\sqrt{2}}F_{31} \,,\ \
    E_{31}\mapsto \sfrac{1}{2}F_{41} \,, \\
  & E_{11}+E_{22} \mapsto \sfrac{1}{2}(F_{11}+F_{22}) \,,\ \
    E_{22}+E_{33} \mapsto -\sfrac{1}{2}(F_{11}-F_{22}) \,,
\end{split}
\end{equation*}
with $F_{ij}\in \gl(V)$ of~\eqref{eq:F-elements}, gives rise to a Lie superalgebra isomorphism
$\rho\colon \ssl(\VV) \iso \fosp(V)$, cf.~(a).

\medskip
\noindent
(c) However, in contrast to Remarks~\ref{rem:so3=gl2-Lie}(b),~\ref{rem:so6=gl4-Lie}(b), there is
\underline{no isomorphism} between $\ssl(\VV)$-module $S^2(\VV)$ and the natural $\fosp(V)$-module $V$,
intertwined by the isomorphism $\rho$ from part~(b).

\medskip
\noindent
(d) According to~\cite{ma}, the Lie superalgebra $\ssl(\BC^{1|2})$ admits a $1$-parameter family of
non-isomorphic 4-dimensional modules, denoted by $[b,1/2]$. The generators $S_\pm, V_\pm, \ol{V}_\pm$
of~\cite[\S2.1]{ma} may be related to ours via:
\begin{equation}\label{eq:ma-to-us}
  V_+ \leftrightarrow \sfrac{1}{\sqrt{2}} E_{12} \,,\ \ol{V}_+ \leftrightarrow \sfrac{1}{\sqrt{2}} E_{23} \,,\
  V_- \leftrightarrow \sfrac{1}{\sqrt{2}} E_{32} \,,\ \ol{V}_- \leftrightarrow -\sfrac{1}{\sqrt{2}} E_{21} \,,\
  S_+ \leftrightarrow E_{13} \,,\, S_- \leftrightarrow E_{31} \,.
\end{equation}
The explicit action of $\ssl(\BC^{1|2})$ on $[b,1/2]$ is provided in~\cite[\S4.1]{ma}.
In particular, combining~\cite[(21,~22)]{ma} with~\eqref{eq:ma-to-us}, the lower-triangular generators
can be represented by the following matrices:
\begin{equation*}
  E_{21}\mapsto
  \left(\begin{array}{cccc}
        0 & 0 & 0 & 0 \\
        \sqrt{2}\beta & 0 & 0 & 0 \\
        0 & 0 & 0 & 0 \\
        0 & 0 & -\sqrt{2}\gamma & 0
       \end{array}\right) \,,\
  E_{32}\mapsto
  \left(\begin{array}{cccc}
        0 & 0 & 0 & 0 \\
        0 & 0 & 0 & 0 \\
        -\sqrt{2}\alpha & 0 & 0 & 0 \\
        0 & \sqrt{2}\epsilon & 0 & 0
       \end{array}\right) \,,\
  E_{31}\mapsto
  \left(\begin{array}{cccc}
        0 & 0 & 0 & 0 \\
        0 & 0 & 0 & 0 \\
        0 & 0 & 0 & 0 \\
        1 & 0 & 0 & 0
       \end{array}\right) \,,
\end{equation*}
with the constants $\alpha,\beta,\gamma,\epsilon$ satisfying $4\alpha\gamma=1+2b, 4\beta\epsilon=1-2b$.
It is now straightforward to check that the $4$-dimensional $\ssl(\VV)$-module $S^2(\VV)$ corresponds
to $b=-3/2$, while the pull-back of the $4$-dimensional $\fosp(V)$-module $V$ under the isomorphism
$\rho$ of part (b) corresponds to $b=0$.
\end{Rem}

Consider the tensor product space $(\BC^{1|2})^{\otimes 4}$. We shall view $V\otimes V$ as a natural
subspace of $(\BC^{1|2})^{\otimes 2}\otimes (\BC^{1|2})^{\otimes 2}=(\BC^{1|2})^{\otimes 4}$, while
the operator $\frac{1+\Pop_{12}}{2}\cdot \frac{1+\Pop_{34}}{2}=\frac{1}{4}\sfR_{12}(-1)\sfR_{34}(-1)$
defines a projection of $(\BC^{1|2})^{\otimes 2}\otimes (\BC^{1|2})^{\otimes 2}$ onto this subspace
$V\otimes V$. Similarly to~\eqref{eq:RV-of-amr}, we consider
\begin{multline}\label{eq:fused-osp}
  \mathrm{\textbf{6-fold fusion}}  \quad
  R_V(u):=\frac{1+\Pop_{12}}{2}\cdot \frac{1+\Pop_{34}}{2}\cdot
  \sfR_{14}(u-1) \sfR_{13}(u) \sfR_{24}(u) \sfR_{23}(u+1) = \\
  \sfR_{23}(u+1) \sfR_{13}(u) \sfR_{24}(u) \sfR_{14}(u-1) \cdot \frac{1+\Pop_{12}}{2}\cdot \frac{1+\Pop_{34}}{2} \,.
\end{multline}
The subspace $V\otimes V$ is clearly stable under $R_V(u)$. Moreover, this operator satisfies
the Yang-Baxter equation according to our next result:

\begin{Lem}
The operator $R_V(u)\in \End\,V \otimes \End\,V$ satisfies the Yang-Baxter equation~\eqref{eq:YB intro}.
\end{Lem}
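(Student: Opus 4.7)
The plan is to derive the Yang--Baxter equation for $R_V(u)$ from the Yang--Baxter equation~\eqref{eq:YBE-gl} for the auxiliary $R$-matrix $\sfR(u)$ via a standard \emph{fusion} argument, adapted to the super setting. Work on the $6$-fold tensor product $W=(\BC^{1|2})^{\otimes 6}$, and label the six factors by pairs $(a_i,\bar a_i)$ for $i=1,2,3$, so that the $i$-th copy of $V$ sits inside $\BC^{1|2}_{a_i}\otimes \BC^{1|2}_{\bar a_i}$ as the image of the projector $\pi_i:=\tfrac{1+\Pop_{a_i \bar a_i}}{2}$. For $i<j$ set
\begin{equation*}
  M_{ij}(u):=\sfR_{a_i\bar a_j}(u-1)\,\sfR_{a_i a_j}(u)\,\sfR_{\bar a_i\bar a_j}(u)\,\sfR_{\bar a_i a_j}(u+1),
\end{equation*}
so that $R_{V,ij}(u)=\pi_i\pi_j\,M_{ij}(u)=M_{ij}(u)\,\pi_i\pi_j$ (the equality uses~\eqref{eq:fused-osp} rewritten on both sides). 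The desired equality $R_{V,12}(u)R_{V,13}(u+v)R_{V,23}(v)=R_{V,23}(v)R_{V,13}(u+v)R_{V,12}(u)$ on $V^{\otimes 3}$ will then follow from the identity
\begin{equation*}
  \pi_1\pi_2\pi_3\cdot M_{12}(u)\,M_{13}(u+v)\,M_{23}(v) \;=\; M_{23}(v)\,M_{13}(u+v)\,M_{12}(u)\cdot \pi_1\pi_2\pi_3
\end{equation*}
on all of $W$.

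The main step is to show that on the image of $\pi_1\pi_2\pi_3$ the two products of twelve $\sfR$-factors are actually equal \emph{before} imposing symmetry. This is achieved by repeatedly applying the Yang--Baxter equation~\eqref{eq:YBE-gl} to adjacent triples among the twelve $\sfR$-factors, moving the $M_{23}$-block leftmost: each $\sfR$ in $M_{23}(v)$ acts on a pair $\{a_2 \text{ or }\bar a_2,\,a_3\text{ or }\bar a_3\}$, and has to be pushed past the eight $\sfR$-factors of $M_{12}(u)\,M_{13}(u+v)$ acting on pairs $\{a_1\text{ or }\bar a_1,\,a_2\text{ or }\bar a_2\text{ or }a_3\text{ or }\bar a_3\}$. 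The spectral parameter shifts in the definition of $M_{ij}(u)$ are precisely what makes the spectral parameters in each triple match the form required by~\eqref{eq:YBE-gl}. I would execute this exchange in the classical order: first swap $M_{13}$ with $M_{23}$ using YBEs involving the three spaces $a_1$ or $\bar a_1$ at a time; then swap $M_{12}$ with the rearranged right-hand factors. Crucially, one does not need the projectors for the interchange itself, only the $\sfR$-YBE, exactly as in Lemmas~\ref{lem:amr-fusion} and~\ref{lem:new-fusion}; the projectors are only needed to identify the result as an operator on $V^{\otimes 3}$.

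The main obstacle I anticipate is bookkeeping of super-signs. Since $\BC^{1|2}$ is a $\BZ_2$-graded superspace, the graded tensor product~\eqref{eq:graded tensor product} produces Koszul signs whenever odd operators pass each other. A clean way to avoid mistakes is to write $\sfR(u)$ and $\Pop$ in the unified form~(\ref{eq:P},~\ref{eq:gl-Rmatrix}) with explicit $(-1)^{\ol j}$ factors, and to verify that for each application of~\eqref{eq:YBE-gl} the three indices involved live in three distinct tensor slots among $W$, so that the corresponding signs cancel symmetrically. A helpful shortcut is that fusion commutes with the super-trace/supertransposition formalism, so one may first verify the non-super analogue (where the argument is well known, see e.g.\ the proof strategy used in Lemma~\ref{lem:new-fusion}) and then note that every single step carries over verbatim provided each $\sfR$-factor is read as the graded operator of~\eqref{eq:gl-Rmatrix}.

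A complementary sanity check, which I would carry out in parallel to build confidence in the sign conventions, is to compute $R_V(u)$ explicitly on a handful of weight vectors in $V\otimes V$ and confirm that the resulting rational function of $u$ satisfies YBE by direct inspection on three-fold weight vectors; this is feasible because $\dim V=4$, so $V\otimes V\otimes V$ is only $64$-dimensional and splits into manageable weight spaces under the diagonal $\fh$-action. This computational check would also make transparent why $R_V(u)$ does \emph{not} coincide (even up to a scalar) with the $\fosp(2|2)$ $R$-matrix from~\eqref{eq:osp-Rmatrix}, reflecting the module-theoretic discrepancy noted in Remark~\ref{rem:sop22=gl12-Lie}(d).
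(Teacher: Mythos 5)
Your proposal is correct in substance but takes a genuinely different route from the paper. The paper does not run the fusion argument at all: it first uses the projector identities to collapse the twelve-factor product \eqref{eq:fused-osp} to the closed form $R_V(u)=1-\frac{P_{14}+P_{24}+P_{13}+P_{23}}{u+1}+\frac{2P_{13}P_{24}}{u(u+1)}$ on $V\otimes V$ (see \eqref{eq:RV-simplified-1}--\eqref{eq:RV-simplified-2}), then writes out the resulting $16\times 16$ matrix \eqref{eq:fusedmatrix-explicit} and verifies the Yang--Baxter equation by a direct computer check. Your approach is the conceptual one: since the symmetrizer is $\frac{1+\Pop}{2}=\frac{1}{2}\sfR(-1)$ and the shifts $(u-1,u,u,u+1)$ in \eqref{eq:fused-osp} are exactly the standard fused pattern for that degeneration point, the unprojected products $M_{12}(u)M_{13}(u+v)M_{23}(v)$ and $M_{23}(v)M_{13}(u+v)M_{12}(u)$ agree on all of $(\BC^{1|2})^{\otimes 6}$ by the train argument, and the projectors only serve to restrict to $V^{\otimes 3}$. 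This buys a computer-free, conceptual proof; the paper's computation buys the explicit matrix \eqref{eq:fusedmatrix-explicit}, which it needs anyway for the subsequent comparison with the $R$-matrix of \cite{rm}, so the two proofs serve different ends. Two points you should tighten if you write this up: (i) the identity $R_V(u)=\pi_1\pi_2 M_{12}(u)=M_{12}(u)\pi_1\pi_2$ is not literally what follows from \eqref{eq:fused-osp} --- the paper's second expression is the \emph{reversed} product $M'$ with the arguments of $\sfR_{14}$ and $\sfR_{23}$ interchanged, so what you actually get is $\pi M=M'\pi$ and $M\pi=\pi M'$; these two together do imply that $M$ preserves both $\mathrm{Im}(\pi)$ and $\ker(\pi)$, hence commutes with $\pi$, but that deduction should be made explicit; (ii) the twelve-fold reshuffling is asserted rather than executed, and one must check that the specific ordering of the four elementary factors inside each $M_{ij}$ is compatible with the lexicographic ordering the train argument requires (it is, since $M_{ij}$ factors as a product of two one-row monodromies with inhomogeneities $\mp\frac{1}{2}$). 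Your remark that the super-signs cause no trouble is correct for the reason given in Remark \ref{sign explanation}: $\Pop$ and $\sfR(u)$ are even operators, so the graded Yang--Baxter equation \eqref{eq:YBE-gl} is used verbatim.
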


\begin{proof}
First, let us note the following equalities of operators in $(\End\, \BC^{1|2})^{\otimes 4}$:
\begin{equation}\label{eq:projections}
\begin{split}
   &  \frac{1+\Pop_{12}}{2}\cdot \frac{1+\Pop_{34}}{2}\cdot P_{14}P_{13} =
      \frac{1+\Pop_{12}}{2}\cdot \frac{1+\Pop_{34}}{2}\cdot P_{14} \,, \\
   &  \frac{1+\Pop_{12}}{2}\cdot \frac{1+\Pop_{34}}{2}\cdot P_{14}P_{24} =
      \frac{1+\Pop_{12}}{2}\cdot \frac{1+\Pop_{34}}{2}\cdot P_{14} \,, \\
   &  \frac{1+\Pop_{12}}{2}\cdot \frac{1+\Pop_{34}}{2}\cdot P_{14}P_{23}=
      \frac{1+\Pop_{12}}{2}\cdot \frac{1+\Pop_{34}}{2}\cdot P_{13}P_{24} \,, \\
   &  \frac{1+\Pop_{12}}{2}\cdot \frac{1+\Pop_{34}}{2}\cdot P_{13}P_{23}=
      \frac{1+\Pop_{12}}{2}\cdot \frac{1+\Pop_{34}}{2}\cdot P_{13} \,, \\
   &  \frac{1+\Pop_{12}}{2}\cdot \frac{1+\Pop_{34}}{2}\cdot P_{24}P_{23}=
      \frac{1+\Pop_{12}}{2}\cdot \frac{1+\Pop_{34}}{2}\cdot P_{24} \,.
\end{split}
\end{equation}
Using~\eqref{eq:projections}, we obtain the following simplified formula for $R_V(u)$ of~\eqref{eq:fused-osp}:
\begin{equation}\label{eq:RV-simplified-1}
   R_V(u)=\frac{1+\Pop_{12}}{2}\cdot \frac{1+\Pop_{34}}{2}\cdot
  \left(1-\frac{P_{14}+P_{24}+P_{13}+P_{23}}{u+1}+\frac{2P_{13}P_{24}}{u(u+1)}\right) \,.
\end{equation}
Therefore, the restriction of $R_V(u)$ to $V\otimes V$ is simply given by:
\begin{equation}\label{eq:RV-simplified-2}
   R_V(u)=1-\frac{P_{14}+P_{24}+P_{13}+P_{23}}{u+1}+\frac{2P_{13}P_{24}}{u(u+1)} \,,
\end{equation}
cf.~\cite[(4.21)]{amr}.

\medskip
Using the formula~\eqref{eq:RV-simplified-2}, it is easy now to compute the corresponding
$16\times 16$ matrix for the action of $R_V(u)$ in the ordered basis
$\{v_1\otimes v_1, v_1\otimes v_2, \ldots, v_4\otimes v_3, v_4\otimes v_4\}$ of $V\otimes V$:
\begin{equation}\label{eq:fusedmatrix-explicit}
  R_V(u) =
  \left(\begin{smallmatrix}
    a(u) & 0 & 0 & 0 & 0 & 0 & 0 & 0 & 0 & 0 & 0 & 0 & 0 & 0 & 0 & 0 \\
    0 & b(u) & 0 & 0 & c(u) & 0 & 0 & 0 & 0 & 0 & 0 & 0 & 0 & 0 & 0 & 0 \\
    0 & 0 & d(u) & 0 & 0 & 0 & 0 & 0 & e(u) & 0 & 0 & 0 & 0 & 0 & 0 & 0 \\
    0 & 0 & 0 & f(u) & 0 & 0 & -k(u) & 0 & 0 & -k(u) & 0 & 0 & g(u) & 0 & 0 & 0 \\
    0 & c(u) & 0 & 0 & b(u) & 0 & 0 & 0 & 0 & 0 & 0 & 0 & 0 & 0 & 0 & 0 \\
    0 & 0 & 0 & 0 & 0 & i(u) & 0 & 0 & 0 & 0 & 0 & 0 & 0 & 0 & 0 & 0 \\
    0 & 0 & 0 & h(u) & 0 & 0 & 1 & 0 & 0 & j(u) & 0 & 0 & -h(u) & 0 & 0 & 0 \\
    0 & 0 & 0 & 0 & 0 & 0 & 0 & b(u) & 0 & 0 & 0 & 0 & 0 & c(u) & 0 & 0 \\
    0 & 0 & e(u) & 0 & 0 & 0 & 0 & 0 & d(u) & 0 & 0 & 0 & 0 & 0 & 0 & 0 \\
    0 & 0 & 0 & h(u) & 0 & 0 & j(u) & 0 & 0 & 1 & 0 & 0 & -h(u) & 0 & 0 & 0 \\
    0 & 0 & 0 & 0 & 0 & 0 & 0 & 0 & 0 & 0 & l(u) & 0 & 0 & 0 & 0 & 0 \\
    0 & 0 & 0 & 0 & 0 & 0 & 0 & 0 & 0 & 0 & 0 & d(u) & 0 & 0 & e(u) & 0 \\
    0 & 0 & 0 & g(u) & 0 & 0 & k(u) & 0 & 0 & k(u) & 0 & 0 & f(u) & 0 & 0 & 0 \\
    0 & 0 & 0 & 0 & 0 & 0 & 0 & c(u) & 0 & 0 & 0 & 0 & 0 & b(u) & 0 & 0 \\
    0 & 0 & 0 & 0 & 0 & 0 & 0 & 0 & 0 & 0 & 0 & e(u) & 0 & 0 & d(u) & 0 \\
    0 & 0 & 0 & 0 & 0 & 0 & 0 & 0 & 0 & 0 & 0 & 0 & 0 & 0 & 0 & a(u)
  \end{smallmatrix}\right)
\end{equation}
where
\begin{equation}\label{eq:fusedmatrix-entries}
\begin{split}
  & a(u)=\frac{(u-1)(u+2)}{u(u+1)} \,, \quad
    c(u)=\frac{-2(u-1)}{u(u+1)} \,, \quad
    e(u)=\frac{u+2}{u(u+1)} \,, \\
  & g(u)=\frac{u-2}{u(u+1)} \,, \quad
    i(u)=\frac{(u-1)(u-2)}{u(u+1)} \,, \quad
    j(u)=\frac{2}{u(u+1)} \,, \\
  & b(u)=\frac{u-1}{u+1} \,, \quad
    d(u)=\frac{u+2}{u+1} \,, \quad
    f(u)=\frac{u}{u+1} \,, \\
  & h(u)=\frac{-2}{u+1} \,, \quad
    k(u)=\frac{1}{u+1} \,, \quad
    l(u)=\frac{u+2}{u} \,.
\end{split}
\end{equation}

Finally, we have verified on the computer that the above
matrix~(\ref{eq:fusedmatrix-explicit},~\ref{eq:fusedmatrix-entries}) indeed satisfies
the Yang-Baxter equation~\eqref{eq:YB intro}.
\end{proof}

In view of Remark~\ref{rem:sop22=gl12-Lie}(c,d), it is not surprising that $R_V(u)$ is
\underline{not} a scalar multiple of the orthosymplectic $R$-matrix $R(au)$ of $\fosp(2|2)=\fosp(V)$
for any $a\in \BC$, in contrast to Lemma~\ref{lem:amr-fusion}.

\begin{Rem}\label{rem:comparison-to-RM}
Let us match both the $6$-fold fusion $R_V(u)$ and the orthosymplectic $R$-matrix $R(u)$ with the
special cases of the $R$-matrix from~\cite{rm}. We use $\check R_{RM}(u,b)$ to denote the $16\times 16$
matrix of~\cite[(2)]{rm}, which at $u=0$ reduces to the identity and not to the permutation operator.

\medskip
\noindent
(a) We have
\begin{equation}
  \frac{u(u+1)}{(u-1)(u+2)}\, R_V(u)= S\, \check R_{RM}(u,-\sfrac{3}{2})\, PS^{-1}
\end{equation}
with
\begin{equation}\label{eq:P-app}
  P =
  \left(\begin{smallmatrix}
    -1 & 0 & 0 & 0 & 0 & 0 & 0 & 0 & 0 & 0 & 0 & 0 & 0 & 0 & 0 & 0 \\
    0 & 0 & 0 & 0 & 1 & 0 & 0 & 0 & 0 & 0 & 0 & 0 & 0 & 0 & 0 & 0 \\
    0 & 0 & 0 & 0 & 0 & 0 & 0 & 0 & 1 & 0 & 0 & 0 & 0 & 0 & 0 & 0 \\
    0 & 0 & 0 & 0 & 0 & 0 & 0 & 0 & 0 & 0 & 0 & 0 & -1 & 0 & 0 & 0 \\
    0 & 1 & 0 & 0 & 0 & 0 & 0 & 0 & 0 & 0 & 0 & 0 & 0 & 0 & 0 & 0 \\
    0 & 0 & 0 & 0 & 0 & 1 & 0 & 0 & 0 & 0 & 0 & 0 & 0 & 0 & 0 & 0 \\
    0 & 0 & 0 & 0 & 0 & 0 & 0 & 0 & 0 & 1 & 0 & 0 & 0 & 0 & 0 & 0 \\
    0 & 0 & 0 & 0 & 0 & 0 & 0 & 0 & 0 & 0 & 0 & 0 & 0 & 1 & 0 & 0 \\
    0 & 0 & 1 & 0 & 0 & 0 & 0 & 0 & 0 & 0 & 0 & 0 & 0 & 0 & 0 & 0 \\
    0 & 0 & 0 & 0 & 0 & 0 & 1 & 0 & 0 & 0 & 0 & 0 & 0 & 0 & 0 & 0 \\
    0 & 0 & 0 & 0 & 0 & 0 & 0 & 0 & 0 & 0 & 1 & 0 & 0 & 0 & 0 & 0 \\
    0 & 0 & 0 & 0 & 0 & 0 & 0 & 0 & 0 & 0 & 0 & 0 & 0 & 0 & 1 & 0 \\
    0 & 0 & 0 & -1 & 0 & 0 & 0 & 0 & 0 & 0 & 0 & 0 & 0 & 0 & 0 & 0 \\
    0 & 0 & 0 & 0 & 0 & 0 & 0 & 1 & 0 & 0 & 0 & 0 & 0 & 0 & 0 & 0 \\
    0 & 0 & 0 & 0 & 0 & 0 & 0 & 0 & 0 & 0 & 0 & 1 & 0 & 0 & 0 & 0 \\
    0 & 0 & 0 & 0 & 0 & 0 & 0 & 0 & 0 & 0 & 0 & 0 & 0 & 0 & 0 & -1
  \end{smallmatrix}\right)
\end{equation}
and
\begin{equation}\label{eq:S-app}
  S =
  \left(\begin{smallmatrix}
    0 & \frac{1}{2} & 0 & 0 & \frac{1}{2} & 0 & 0 & 0 & 0 & 0 & 0 & 0 & 0 & 0 & 0 & 0 \\
    1 & 0 & 0 & 0 & 0 & -1 & 0 & 0 & 0 & 0 & 0 & 0 & 0 & 0 & 0 & 0 \\
    0 & 0 & -\frac{1}{2} & \frac{1}{3} & 0 & 0 & \frac{i}{3 \sqrt{2}} & 0 & -\frac{1}{2} & -\frac{i}{3 \sqrt{2}}
      & 0 & 0 & \frac{1}{3} & 0 & 0 & 0 \\
    0 & 0 & \frac{1}{2} & 0 & 0 & 0 & 0 & 0 & -\frac{1}{2} & 0 & 1 & \frac{1}{2} & 0 & 0 & -\frac{1}{2} & 0 \\
    1 & 0 & 0 & 0 & 0 & 1 & 0 & 0 & 0 & 0 & 0 & 0 & 0 & 0 & 0 & 0 \\
    0 & -\frac{1}{2} & 0 & 0 & \frac{1}{2} & 0 & 0 & 0 & 0 & 0 & 0 & 0 & 0 & 0 & 0 & 0 \\
    0 & 0 & \frac{1}{2} & 0 & 0 & 0 & -\frac{1}{2} & 0 & -\frac{1}{2} & -\frac{1}{2} & 0 & -1 & 0 & 0 & 1 & 0 \\
    0 & 0 & 0 & \frac{2}{3} & 0 & 0 & -\frac{i}{3 \sqrt{2}} & 0 & 0 & \frac{i}{3 \sqrt{2}} & 0 & 0 & -\frac{1}{3} & 0 & 0 & 0 \\
    0 & 0 & \frac{1}{2} & \frac{1}{3} & 0 & 0 & \frac{i}{3 \sqrt{2}} & 0 & \frac{1}{2} & -\frac{i}{3 \sqrt{2}}
      & 0 & 0 & \frac{1}{3} & 0 & 0 & 0 \\
    0 & 0 & \frac{1}{2} & 0 & 0 & 0 & \frac{1}{2} & 0 & -\frac{1}{2} & \frac{1}{2} & 0 & -1 & 0 & 0 & 1 & 0 \\
    0 & 0 & 0 & 0 & 0 & 0 & 0 & -\frac{1}{2} & 0 & 0 & 0 & 0 & 0 & \frac{1}{2} & 0 & 0 \\
    0 & 0 & 0 & 0 & 0 & 0 & 0 & -\frac{1}{2} & 0 & 0 & 0 & 0 & 0 & -\frac{1}{2} & 0 & 1 \\
    0 & 0 & -\frac{1}{2} & 0 & 0 & 0 & 0 & 0 & \frac{1}{2} & 0 & 1 & -\frac{1}{2} & 0 & 0 & \frac{1}{2} & 0 \\
    0 & 0 & 0 & -\frac{1}{3} & 0 & 0 & -\frac{i}{3 \sqrt{2}} & 0 & 0 & \frac{i}{3 \sqrt{2}} & 0 & 0 & \frac{2}{3} & 0 & 0 & 0 \\
    0 & 0 & 0 & 0 & 0 & 0 & 0 & \frac{1}{2} & 0 & 0 & 0 & 0 & 0 & \frac{1}{2} & 0 & 1 \\
    0 & 0 & 0 & 0 & 0 & 0 & 0 & 0 & 0 & 0 & 0 & \frac{1}{2} & 0 & 0 & \frac{1}{2} & 0
  \end{smallmatrix}\right) \,.
\end{equation}
We note that it is $\check R_{RM}(u,b)\, P$ and not $\check R_{RM}(u,b)$ that satisfy the
Yang-Baxter equation~\eqref{eq:YB intro}.

\medskip
\noindent
(b) Likewise, the orthosymplectic $R$-matrix $R(u)$ of~\eqref{eq:osp-Rmatrix} for $N=2,m=1$ (so that $\kappa=-1$)
with the parity sequence $\Parity=(\bar{1},\bar{0})$ is explicitly given by the following matrix:
\begin{equation}
  R(u) =
  \left(\begin{smallmatrix}
    \frac{u+1}{u} & 0 & 0 & 0 & 0 & 0 & 0 & 0 & 0 & 0 & 0 & 0 & 0 & 0 & 0 & 0 \\
    0 & 1 & 0 & 0 & -\frac{1}{u} & 0 & 0 & 0 & 0 & 0 & 0 & 0 & 0 & 0 & 0 & 0 \\
    0 & 0 & 1 & 0 & 0 & 0 & 0 & 0 & -\frac{1}{u} & 0 & 0 & 0 & 0 & 0 & 0 & 0 \\
    0 & 0 & 0 & \frac{u-2}{u-1} & 0 & 0 & -\frac{1}{u-1} & 0 & 0 & \frac{1}{u-1} & 0 & 0 & -\frac{1}{(u-1) u} & 0 & 0 & 0 \\
    0 & -\frac{1}{u} & 0 & 0 & 1 & 0 & 0 & 0 & 0 & 0 & 0 & 0 & 0 & 0 & 0 & 0 \\
    0 & 0 & 0 & 0 & 0 & \frac{u-1}{u} & 0 & 0 & 0 & 0 & 0 & 0 & 0 & 0 & 0 & 0 \\
    0 & 0 & 0 & \frac{1}{u-1} & 0 & 0 & \frac{u}{u-1} & 0 & 0 & -\frac{2 u-1}{(u-1) u} & 0 & 0 & \frac{1}{u-1} & 0 & 0 & 0 \\
    0 & 0 & 0 & 0 & 0 & 0 & 0 & 1 & 0 & 0 & 0 & 0 & 0 & -\frac{1}{u} & 0 & 0 \\
    0 & 0 & -\frac{1}{u} & 0 & 0 & 0 & 0 & 0 & 1 & 0 & 0 & 0 & 0 & 0 & 0 & 0 \\
    0 & 0 & 0 & -\frac{1}{u-1} & 0 & 0 & -\frac{2 u-1}{(u-1) u} & 0 & 0 & \frac{u}{u-1} & 0 & 0 & -\frac{1}{u-1} & 0 & 0 & 0 \\
    0 & 0 & 0 & 0 & 0 & 0 & 0 & 0 & 0 & 0 & \frac{u-1}{u} & 0 & 0 & 0 & 0 & 0 \\
    0 & 0 & 0 & 0 & 0 & 0 & 0 & 0 & 0 & 0 & 0 & 1 & 0 & 0 & -\frac{1}{u} & 0 \\
    0 & 0 & 0 & -\frac{1}{(u-1) u} & 0 & 0 & -\frac{1}{u-1} & 0 & 0 & \frac{1}{u-1} & 0 & 0 & \frac{u-2}{u-1} & 0 & 0 & 0 \\
    0 & 0 & 0 & 0 & 0 & 0 & 0 & -\frac{1}{u} & 0 & 0 & 0 & 0 & 0 & 1 & 0 & 0 \\
    0 & 0 & 0 & 0 & 0 & 0 & 0 & 0 & 0 & 0 & 0 & -\frac{1}{u} & 0 & 0 & 1 & 0 \\
    0 & 0 & 0 & 0 & 0 & 0 & 0 & 0 & 0 & 0 & 0 & 0 & 0 & 0 & 0 & \frac{u+1}{u}
  \end{smallmatrix}\right) \,.
\end{equation}
It is related to that of~\cite[(2)]{rm} via the following equality:
\begin{equation}
  \frac{u}{u-1}\, R(u) = \check R_{RM}(-\sfrac{u}{2},0)\, P \,,
\end{equation}
with $P$ as in~\eqref{eq:P-app}.
\end{Rem}


\end{document}